\newtheorem{theorem}{Theorem}
\newtheorem{corollary}[theorem]{Corollary}
\newtheorem{definition}[theorem]{Definition}
\newtheorem{lemma}[theorem]{Lemma}
\newtheorem{proposition}[theorem]{Proposition}
\newtheorem{remark}[theorem]{Remark}
\let\a=\alpha
\let\e=\varepsilon
\let\d=\delta
\let\pt=\partial
\let\t=\vartheta
\let\O=\Omega
\let\o=\omega
\let\g=\gamma
\let\th=\theta
\let\l=\lambda
\let\f=\varphi
\let \z=\zeta
\newcommand{\R}{\mathbb{R}}
\renewcommand{\P}{\mathbf{P}}
\renewcommand{\S}{\mathbf{S}}
\newcommand{\be}{\begin{equation}}
\newcommand{\bm}{\begin{multline}}
\newcommand{\ee}{\end{equation}}
\newcommand{\dd}{\mathrm{d}}
\numberwithin{equation}{subsection}
\numberwithin{theorem}{section}
\begin{document}
\date{}
\title[Stationary solutions to the Boltzmann equation\dots]{ {\large {%
Stationary solutions to the Boltzmann equation in the Hydrodynamic limit}}}
\author{R. Esposito}
\thanks{(R.E.) International Research Center M\&MOCS, Univ. dell'Aquila,
Cisterna di Latina, (LT) 04012 Italy}
\author{Y. Guo}
\thanks{(Y.G.) Division of Applied Mathematics, Brown University,
Providence, RI 02812, U.S.A.}
\author{C. Kim}
\thanks{(C.K.) Department of Mathematics, University of Wisconsin, Madison,
53706-1325 WI, U.S.A.}
\author{R. Marra}
\thanks{(R.M.) Dipartimento di Fisica and Unit\`a INFN, Universit\`a di Roma
Tor Vergata, 00133 Roma, Italy}

\begin{abstract}
Despite its conceptual and practical importance, the rigorous derivation of
the steady incompressible Navier-Stokes-Fourier system from the Boltzmann
theory has been {an} outstanding {open problem} for general domains in 3D.
We settle this open question in {the} affirmative, in the presence of a
small external field and a small boundary temperature variation for the
diffuse boundary condition. 
We employ a recent quantitative $L^{2}-L^{\infty }$ approach with new 
$L^{{6}}$ estimates for the hydrodynamic part $\mathbf{P}f$ of the
distribution function. Our results also imply the validity of Fourier law in the hydrodynamical
limit, and our method {leads to {asymptotical} stability of steady Boltzmann
solutions as well as the derivation of the {unsteady} Navier-Stokes-Fourier
system}.
\end{abstract}

\maketitle
\tableofcontents

\section{Introduction}

\subsection{Background}

The hydrodynamic limit of the Boltzmann equation has been the subject of
many studies since the pioneering work by Hilbert, who introduced {his
famous expansion in the Knudsen number $\e$} {in \cite{H1,H2}}, realizing
the first example {of} the program he {proposed} {in the sixth of his famous
questions \cite{H00}}. Mathematical results on the closeness of the Hilbert
expansion {of the Bolzmann equation to the solutions of the compressible
Euler equations} for small Knudsen number $\e$, were obtained by Caflisch 
\cite{Ca}{,} and {Lachowicz} \cite{Lac}, while Nishida \cite{Ni}, Asano and
Ukai \cite{AU} proved this by different methods. More recently, the
convergence in the presence of singularities for the Euler equations have
been obtained in \cite{SHY} and \cite{HWY}. The relativistic Euler limit has
been studied in \cite{SS}.

On a longer time scale $\e^{-1}$, where diffusion effects become
significant, the problem can be faced only in the low Mach numbers regime
(Mach number of order $\e$ or smaller) {due to the lack of scaling
invariance of the compressible Navier-Stokes equations.} {Hence} the
Boltzmann solution has been proved to be close to the incompressible
Navier-Stokes-Fourier system. Mathematical results were given, among the
others, in \cite{DEL, BU,Guo06,GJ,GJJ} for smooth solutions. For weak
solutions {(renormalized solutions)}, after several partial steps   \cite{BGL91,BGL93,BGL98,BGL00, LM, MS},  the the full result for
the convergence of the renormalized solutions has been obtained by Golse and
Saint-Raymond \cite{GSR}. For more references, see \cite{Saint}.

Much less is known about the {steady} solutions. {It is worth to notice
that, even for fixed Knudsen numbers, the analog of DiPerna-Lions'
renormalized solutions {\cite{DL}} is not available for the steady case},
due to lack of $L^{1}$ and entropy estimates. {In \cite{Gui1,Gui2}, steady
solutions were constructed in convex domains near Maxwellians, and their
positivity was left open. The only other results} are for special,
essentially one {dimensional geometry} (see \cite{AN} for results at fixed
Knudsen numbers and \cite{ELM1,ELM2,AEMN1,AEMN2} for results at small
Knudsen numbers in {certain} {special 2D geometry}). In a recent paper \cite%
{EGKM}, via a new $L^{2}-L^{\infty }$ framework, we have constructed the {%
steady} solution to the Boltzmann equation close {to} Maxwellians, in 3D
general domains, for a gas in contact with a boundary with a prescribed
temperature profile modeled by the diffuse reflection boundary condition.
The question about positivity of this steady solution was resolved as a
consequence of their dynamical stability. As pointed in \cite{Golse},
despite the importance of steady Navier-Stokes-Fourier equations in
applications, it has been an outstanding open problem to derive them from
the steady Boltzmann theory.

The goal of our paper is to employ the $L^{2}-L^{\infty }$ framework
developed in \cite{EGKM} to study the hydrodynamical limit of the solutions
to the {steady} Boltzmann equation, in the low Mach numbers regime, in a
general domain with boundary where a temperature profile is specified. {In
order to perform the hydrodynamic limit, uniform in $\e$ estimates are
required. To achieve this we need to show higer integrability than $L^2$ for
the slow modes of the solution.}

\vspace{8pt}

{Let $\O $ be a bounded open region of $\mathbb{R}^d$ {for either $d=2$ or $%
d=3$.} We consider the Boltzmann equation for the distribution density $%
F(t,x,v)$ with $t\in \mathbb{R}_{+}:=[0, \infty)$, $x\in \O $, $v\in \mathbb{%
R}^{3}$. In the diffusive regime, the time evolution of the gas, subject to
the action of a field $\vec{G}$, is described by the following \textit{%
rescaled} Boltzmann equation: 
\begin{equation}
\pt_t F+\e^{-1}v\cdot\nabla_x F+\vec{G}\cdot \nabla_v F=\e^{-2}Q(F,F),
\label{basic2}
\end{equation}%
}where the Boltzmann collision operator is defined as 
\begin{eqnarray*}
Q(F,H)(v)&:=&\int_{\mathbb{R}^{3}}\int_{{\mathbb{S}^{2}} }B(v-u,\o %
)[F(v^{\prime })H(u^{\prime })-F(v)H(u)] \mathrm{d}\o \mathrm{d} u \\
&:=& Q_{+}(F,H){(v)}-Q_{-}(F,H){(v)},
\end{eqnarray*}%
with $v^{\prime } = v-[(v-u)\cdot \o ]\o , \ u^{\prime }={v} + [(v-u)\cdot 
\o ]\o {.}$ {Here,} $B$ is chosen as \textit{the hard spheres cross section}
throughout this paper, 
\begin{equation}  \label{hard_shpere}
B(V,\o )=|V\cdot \o |.
\end{equation}

The interaction {of gas} with the boundary $\pt\O $ is given by the diffuse
reflection boundary condition, defined as follows: Let 
\begin{equation}
M_{\rho, u,T}:=\frac{\rho}{(2\pi T)^{\frac 3 2}}\exp\Big[-\frac {|v-u|^2}{2 T%
}\Big],  \notag
\end{equation}
be the {local} Maxwellian with density $\rho$, mean velocity $u$, and
temperature $T$. {For} a prescribed function ${T_{w}}$ on $\pt\O $, {we
define} 
\begin{equation}  \label{Tw}
M_w= \sqrt{\frac{2\pi}{{T_{w}}}}M_{1,0,{T_{w}}}.
\end{equation}
We impose {\textit{the diffuse reflection boundary condition}} as 
\begin{equation}
F = \mathcal{P}^w_\g(F),{\ \ \ \ \text{on} \ \ \gamma_{-},}  \label{bc0}
\end{equation}
where%
\begin{equation}
\mathcal{P}^w_{\gamma } {F}(x,v){\ \ := \ }M_w(x,v)\int_{n(x)\cdot {u}>0} {F}%
(x,{u})\{n(x)\cdot {u}\}\mathrm{d} {u}.  \label{pgamma}
\end{equation}%
Here, we denote by $n(x)$ the {outward} normal to $\pt\O $ at {$%
x\in\partial\Omega$} and we decompose the phase boundary $\gamma:=
\partial\Omega \times \mathbb{R}^{3}$ as 
\begin{equation}  \label{phase_bdry}
\begin{split}
\gamma _{\pm} & \ \ := \ \ \{(x,v)\in \partial \Omega \times \mathbb{R}^{3}{:%
} \ n(x)\cdot v\gtrless 0\}, \\
\gamma _{0} & \ \ := \ \ \{(x,v)\in \partial \Omega \times \mathbb{R}^{3}{:}
\ n(x)\cdot v=0\}.
\end{split}%
\end{equation}
{We remind that the boundary condition (\ref{bc0}), (\ref{pgamma}) ensures
the zero net mass flow at the boundary: 
\begin{equation}
\int_{\mathbb{R}^3} F(x,v) \{ n(x) \cdot v\} \mathrm{d} v =0, \ \ \ \text{%
for any } x\in \pt\O .  \notag
\end{equation}%
}

{The rescaled Boltzmann equation} (\ref{basic2}) is studied under the
assumption of low Mach numbers, meaning that the average velocity is small
compared to the sound speed. This can be achieved by looking for solutions 
\begin{equation}
F-\mu=\mathfrak{M}\sqrt{\mu} f,  \label{lmn}
\end{equation}
with {\textit{the global Maxwellian}} 
\begin{equation}  \label{g_Maxwellian}
{\mu(v)=M_{1,0,1} = \frac{1}{(2\pi)^{3/2}} e^{- \frac{|v|^{2}}{2}} . }
\end{equation}
{Here, the number} $\mathfrak{M}$ {is} proportional to the Mach number. The
case {of} $\mathfrak{M}=\e$ corresponds to the incompressible
Navier-Stokes-Fourier limit (INSF) that will be discussed in this paper. The
case {of} $\mathfrak{M}\ll \e$ corresponding to the incompressible
Stokes-Fourier {limit}, is simpler and the results of this paper also cover this case
which will not be discussed explicitly.

The condition (\ref{lmn}), once assumed initially, needs to be checked at
later times. By multiplying (\ref{basic2}) by $v$ and integrating on
velocities, we see that the change of mean velocity is proportional to $\vec{%
G}$. Thus, we need to assume $\vec{G}=\mathfrak{M}\Phi $ with {a} bounded ${%
\Phi}$. Moreover, to make (\ref{lmn}) compatible with the boundary
conditions, we need to assume that $T_{w}=1+\mathfrak{M}\t_w $. {In
particular, for the INSF case, we have } 
\begin{equation}  \label{G_T_NSF}
{\Vec{G}=\e\Phi, \ \ \ T_{w}=1+\e\t _{w}.}
\end{equation}

\subsection{Notation and preliminary definitions}

Let $\Theta_w$ be any fixed smooth function on $\O $ such that $\Theta_w|_{%
\pt\O }=\t_w$ and 
\begin{equation}
\|\Theta_w\|_{W^{1,\infty}(\O )}\lesssim \|\t_w\|_{W^{1,\infty}(\pt\O )} .
\label{Thetaw}
\end{equation}
Let 
\begin{equation}
f_w= \sqrt{\mu}[\Theta_w(|v|^2-3)/2+\rho_w], \quad \rho_w =-\Theta_w+ |\O %
|^{-1}\int_{\O } \Theta_w,  \label{fw}
\end{equation}
where $\mu$ is the standard Maxwellian in (\ref{g_Maxwellian}). The average
of $\Theta_w$ is added so that $\iint_{\O \times \mathbb{R}^3} f_w=0$. We
look for a solution in the form {%
\begin{equation}
F \ = \ \mu + \e \sqrt{\mu} \big(f_w + f).  \label{exprf}
\end{equation}%
} Note that there is no loss of generality in assuming that the zero-mass
condition 
\begin{equation}
\iint_{\O \times\mathbb{R}^3}f\sqrt{\mu}=0,  \label{zeromass1}
\end{equation}
so that $\iint_{\O \times\mathbb{R}^3}F=\iint_{\O \times\mathbb{R}^3}\mu=|\O %
|$.

Our aim is to show that we can construct $f$ such that $F$ solves (\ref%
{basic2}) and (\ref{bc0}) both in the steady and unsteady case. Moreover, as 
$\e\to 0$, $f$ converges to some suitable sense to $f_1$ given by 
\begin{equation}
f_1:=[\rho+u\cdot v+\frac{|v|^2-3}{2} \th ]\sqrt{\mu} ,  \label{f1}
\end{equation}
{where $(\rho, u, \th )$ represents the density, velocity, and temperature
fluctuations. The density and the temperature fluctuations satisfy the
Boussinesq relation} 
\begin{equation}
\nabla_x(\rho+\th )=0,  \label{div}
\end{equation}
and {the velocity and the temperature fluctuations satisf{y} the
Incompressible Navier-Stokes Fourier System (INSF)} 
\begin{equation}
\begin{split}  \label{INSF_dy}
\pt_t u+ u\cdot\nabla_x u +\nabla_x p= \mathfrak{v}\Delta u +\Phi, \ \
\nabla_x\cdot u= 0 \ \ \ &\text{in } \ \Omega, \\
\pt_t \th +u\cdot\nabla_x (\th +\Theta_w)= \kappa\Delta(\th +\Theta_w) \ \ \
&\text{in } \ \Omega, \\
u(x,0)=u_0(x), \ \ \th (x,0)= \th _0(x) \ \ \ &\text{in } \ \Omega, \\
u(x) =0, \quad \th (x) =0\ \ \ &\text{on} \ \partial\Omega,
\end{split}%
\end{equation}
where $\mathfrak{v}$ is the viscosity and $\kappa$ is the heat conductivity and $p$ the pressure.
We have used the function $\Theta_w$ in our definition to impose the null
boundary data for $\th $. 

We recall the definition of the linearized collision operator: 
\begin{equation}  \label{def_L}
Lf=-\frac{1}{\sqrt{\mu}}[Q(\mu,\sqrt{\mu} f)+ Q(\sqrt{\mu} ,f \mu)],
\end{equation}
and the nonlinear collision operator: 
\begin{equation}  \label{Gamma}
\Gamma(f,g)=\frac{1}{\sqrt{\mu} }[Q(\sqrt{\mu} f,\sqrt{\mu} g)+Q(\sqrt{\mu}
g,\sqrt{\mu} f)].
\end{equation}
The null space of $L$, $\text{Null} \/ L$ is a five-dimensional subspace of $%
L^2(\mathbb{R}^3)$ spanned by 
\begin{equation*}
\Big\{\sqrt{\mu}, \ v \sqrt{\mu}, \ \frac{|v|^{2}-3}{2} \sqrt{\mu} \Big\}.
\end{equation*}
We denote the orthogonal projection of $f$ onto $\text{Null}\/L$ as 
\begin{equation}  \label{Pabc}
\P f \ = \ a \sqrt{\mu} + v\cdot b \sqrt{\mu} + c \frac{|v|^{2}-3}{2} \sqrt{%
\mu},
\end{equation}
and $(\mathbf{I}-\mathbf{P})$ the projection on the orthogonal complement of 
$\text{Null}\/L$. The inverse operator $L^{-1}$ is defined as follows: {if $%
\P g=0$,} $L^{-1}g$ is the unique solution of $L(L^{-1} g) = g,$ and $%
\mathbf{P}(L^{-1}g)=0$.

Note that the functions $f_1$ and $f_w$ are in $\text{Null} \/ L$. \vspace{%
4pt}

It is well-known that, (see \cite{CIP}) 
\begin{equation}
L f= \nu f -Kf,  \notag
\end{equation}
where {the collision frequency is defined as} 
\begin{equation}
\nu(v)=\frac{1}{\sqrt{\mu} }Q_-(\sqrt{\mu} f,\mu)=\int_{\mathbb{R}^3} \int_{%
\mathbb{S}^2} |(v-u)\cdot \o | \sqrt{\mu}(u)\mathrm{d}\o \mathrm{d} u. 
\notag
\end{equation}
{For the hard sphere cross section (\ref{hard_shpere}),} there are positive
numbers $C_0$ and $C_1$ such that, for $\langle v\rangle:= \sqrt{1+ |v|^{2}}$%
, 
\begin{equation}
C_0 \langle v\rangle \le \nu(v)\le C_1 \langle v\rangle.  \label{nu0}
\end{equation}
Moreover the compact operator on $L^2(\mathbb{R}_v^3)$, $K$ is defined as 
\begin{equation}
K f= \frac{1}{\sqrt{\mu}}[Q_+(\mu,\sqrt{\mu} f)+Q_+(\sqrt{\mu} f,\mu)
-Q_-(\mu,\sqrt{\mu} f)]=\int_{\mathbb{R}^3} [\mathbf{k}_1(v,u)-\mathbf{k}%
_2(v,u)]f(u) \mathrm{d} u.  \notag
\end{equation}
The operator $L$ is symmetric on the  dense subspace $D_L=\{f\in L^2(\mathbb{R}_v^3)\,|\,\nu^{\frac 1 2} f \in  L^2(\mathbb{R}_v^3)\}$ : $%
(f,Lg)_{2}=(g,Lf)_{2}$ where $(\cdot\, , \,\cdot)_{2}$ is the $L^{2}(\mathbb{%
R}^3_v)$ inner product. 

The following spectral inequality holds for $L$: 
\begin{equation}
(f,Lf)_{2}\gtrsim \| \nu^{1/2}(\mathbf{I}-\mathbf{P}) f\|_{L^2(\mathbb{R}%
_v^3)}^2.  \label{spectL}
\end{equation}

\vspace{8pt}

\subsection{Boundary Conditions}

From the definition of $\Theta_w$, we have 
\begin{equation}
M_{1+ \e \rho_w,0, 1+ \e \Theta_w}\Big|_{\gamma_{-}} =\mathcal{P}^w_\gamma
(M_{1+ \e \rho_w, 0, 1+ \e \Theta_w}).  \notag
\end{equation}
Moreover, by expanding $M_{1+ \e \rho_w,0, 1+ \e \Theta_w} $ in $\e$, we get 
\begin{equation}  \label{mu_e}
M_{1+ \e \rho_w,0, 1+ \e \Theta_w} \ = \ \mu+ \e f_{w} \sqrt{\mu} + \e^2 \f_{%
\e} ,
\end{equation}
where 
\begin{equation}
{|\f_{\e} |\leq O(\| \t_w\|^2_{L^{\infty}(\partial\Omega)} )\langle
v\rangle^{4} \mu(v)}.  \label{phie}
\end{equation}
Therefore, on $\gamma_{-}$ 
\begin{equation}
\mu+ \e f_w \sqrt{\mu}+\e^2 \f_{\e} \sqrt{\mu} \ = \ \mathcal{P}^w_\gamma
(\mu + \e f_w \sqrt{\mu} +\e^2 \f_{\e} \sqrt{\mu}).  \label{bcmueps}
\end{equation}
On the other hand, from (\ref{bc0}) and (\ref{exprf}), on $\gamma_{-}$, 
\begin{equation}
\mu+ \e (f_w+f) \sqrt{\mu} \ = \ \mathcal{P}^w_\gamma[\mu+ \e (f_w+f) \sqrt{%
\mu}] .  \notag
\end{equation}
Subtracting above two equations, we obtain the boundary condition for $f$: 
\begin{equation}
f|_{\gamma_{-}} = \sqrt{\mu} ^{-1}\mathcal{P}^w_\gamma (\sqrt{\mu} f) +\e r ,
\notag
\end{equation}
with 
\begin{equation}
r= {\sqrt{\mu}}^{-1} \mathcal{P}^w_\gamma \big(\sqrt{\mu}\f_{\e}%
\big)-\f_{\e}, \ \ |r|_{L^{\infty} (\partial\O \times\mathbb{R}^{3})}\lesssim \e %
|\vartheta_{w}|_{L^{\infty} (\partial\Omega)}.  \label{r}
\end{equation}
Furthermore we can write 
\begin{equation}
{\sqrt{\mu}^{-1}}\mathcal{P}^w_\g ( \sqrt{\mu}f) \ = \ {P}_\g f+\e \mathcal{Q%
}f,  \notag
\end{equation}
with 
\begin{eqnarray}
{P}_\g f(x,v)&:=&\sqrt{2\pi}\sqrt{\mu(v)} \int_{n(x)\cdot u>0} f(u) \sqrt{%
\mu(u)} \{n(x) \cdot u\} \mathrm{d} u,  \label{P_gamma} \\
\mathcal{Q}f &:=& \e^{-1}\big[ {\sqrt{\mu}}^{-1} \mathcal{P}^w_\gamma(%
\sqrt{\mu} f) -{P}_\g f\big].  \label{defQ}
\end{eqnarray}
Note that the boundary operator $\mathcal{Q}$ is bounded uniformly in $\e$ {%
and, for $0 \leq \beta < \frac{1}{4}$, 
\begin{equation}
| e^{\beta|v|^{2}}\mathcal{Q}f |_{L^\infty(\pt \O \times\mathbb{R}%
^3)}\lesssim |\t _{w} |_{L^\infty(\pt\O )}.  \label{boundQ}
\end{equation}%
}This follows by expanding $M_w$ in (\ref{Tw}) with $T_{w}= 1+ \e \t_{w}$ in 
$\e$ to obtain 
\begin{equation}  \label{exp_Mw}
M_{w}(x,v) \ = \ \sqrt{2\pi} \mu(v) + \e \t_{w}\sqrt{2\pi} \big(\frac{|v|^{2}%
}{2} - 2\big) \mu(v) + \e^{2} O(|\t_{w}|^{2}) \langle v\rangle^{4} \mu(v).
\end{equation}
Hence the boundary condition for $f$ becomes 
\begin{equation}
\begin{split}
f \ = \ {P}_\g f +\e [\mathcal{Q}f +r], \ \ \ \text{on} \ \ \gamma_{-}
\label{bcR}
\end{split}%
\end{equation}
with $\mathcal{Q}$ in (\ref{defQ}) and $r$ in (\ref{r}).

{From $\int_{n\cdot v<0} \mu \{n\cdot v\} \mathrm{d} v=-1=\int_{n\cdot v<0}
M_w\{n\cdot v\} \mathrm{d} v$ and (\ref{r}) and (\ref{defQ}), it follows
that 
\begin{equation}  \label{Q_zero}
\int_{ n(x) \cdot v<0} \mathcal{Q} f \sqrt{\mu}\{ n(x) \cdot v\} \mathrm{d}
v \ = \ 0 \ = \ \int_{ n(x) \cdot v<0} r \sqrt{\mu} \{ n(x)\cdot v\} \mathrm{%
d} v , \quad\text{for any } x\in \pt\O .
\end{equation}%
}

\vspace{8pt}

\textit{Notation.} We use $\| \cdot \|_p$ and $\| \cdot \|_{L^{p}}$ for both
of the $L^p(\bar{\Omega }\times\mathbb{R}^3)$ norm and the $L^p(\bar{\Omega}%
) $ norm, and $( \, \cdot\,,\, \cdot \, )$ for the $L^2(\bar{\Omega}\times%
\mathbb{R}^3)$ inner product or $L^2( \mathbb{R}^3)$ inner product, where $%
\bar{\Omega} : = \Omega \cup \partial\Omega$. We subscript this to denote
the variables, thus $\| \cdot \|_{L^{p}_{y}}$ means $L^{p}(\{y \in Y\})$. We
denote $\|\cdot \|_{\nu}\equiv \|\nu^{1/2}\cdot \|_2$ and $\|f\|_{H^k}=
\|f\|_{2}+ \sum_{i=1}^{k}\|\nabla_x^i f \|_{2}$. We also denote $\| \cdot
\|_{L^{p}L^{q}}:= \| \cdot \|_{L^{p}(L^{q})} : = \big\| \| \cdot \|_{L^{q}}%
\big\|_{L^{p}}$. For the phase boundary integration, we define $\mathrm{d}%
\gamma = |n(x)\cdot v|\mathrm{d} S(x)\mathrm{d} v$ where $\mathrm{d} S(x) $
is the surface measure and define $|f|_p^p = \int_{\gamma} |f(x,v)|^p 
\mathrm{d}\gamma$ and the corresponding space as $L^p(\partial\Omega\times%
\mathbb{R}^3;\mathrm{d}\gamma)=L^p(\partial\Omega\times\mathbb{R}^3)$.
Further $|f|_{p,\pm}= |f \mathbf{1}_{\gamma_{\pm}}|_p$. We also use $|f|_p^p
= \int_{\partial\Omega} |f(x)|^p \mathrm{d} S(x)$. Denote $%
f_{\pm}=f_{\gamma_{\pm}}$. $X \lesssim Y$ is equivalent to $X \le C Y$,
where $C$ is a constant not depending on $X$ and $Y$. We subscript this to
denote dependence on parameters, thus $X \lesssim_\alpha Y$ means $X \leq
C_{\alpha} Y$. The notation $X \ll_{a} Y$ is equivalent to $X\leq C_{a} Y$,
where $C_{a}>0$ is sufficiently small.

\subsection{Main Results}

We first focus on the steady case. The steady solution to the Boltzmann
equation is obtained with the same procedure discussed before for the
unsteady case: 
\begin{equation}  \label{exprfs}
F_s =\mu +\e\sqrt{\mu}[f_w+ f_{s}],
\end{equation}
The unknown $f_s$ has to satisfy the following equation 
\begin{equation}  \label{equation_R_s}
v \cdot \nabla_{x} f_{s} + \e^{2} \frac{1}{\sqrt{\mu}} \Phi \cdot \nabla_{v} %
\Big[ \sqrt{\mu} f_{s} \Big] + \e^{-1}Lf_{s} = L_{1} f_{s} +
\Gamma(f_{s},f_{s})+ A_{s},
\end{equation}
where 
\begin{equation}
L_1 f_s= \frac 1{\sqrt{\mu}}[Q(\sqrt{\mu}f_w,\sqrt{\mu}f_s)+Q(\sqrt{\mu}f_s,%
\sqrt{\mu}f_w)],  \label{L1def}
\end{equation}
\begin{equation}
A_s= {\e\Phi\cdot v\sqrt{\mu}} -v\cdot \nabla_x f_w-\e^2\frac 1{\sqrt{\mu}}
\Phi\cdot \nabla_v \Big[ \sqrt{\mu} f_{w} \Big]+ {\Gamma(f_w,f_w)},
\label{A}
\end{equation}
with boundary conditions 
\begin{equation}
\begin{split}
f_s \ = \ {P}_\g f_s +\e [\mathcal{Q}f_s +r], \ \ \ \text{on} \ \ \gamma_{-}
\label{bcR1}
\end{split}%
\end{equation}
with $\mathcal{Q}$ in (\ref{defQ}) and $r$ in (\ref{r}).

Note that, by (\ref{fw}), 
\begin{equation}
\P A_s= {\e\Phi\cdot v\sqrt{\mu}}, \quad \iint_{\O \times \mathbb{R}^3} A_s 
\sqrt{\mu}=0.  \label{PAintA}
\end{equation}

\begin{theorem}
\label{mainth} {Assume $\Omega$ is an open bounded subset of $\mathbb{R}^{3}$
with $C^{3}$ boundary $\partial\Omega$. We also assume the hard sphere cross
section (\ref{hard_shpere}).}

If $\Phi = \Phi(x)\in 
C^{1}(\Omega)$, $\vartheta_{w} \in W^{1,\infty}(\partial\Omega)$ and 
\begin{equation}
\| {\t}_{w} \| _{H^{1/2}(\pt\O )}+\| \Phi \| _{L^{2}(\O )}\ll1,
\label{small_field}
\end{equation}
then, for $0<\e\ll 1$, there is a unique positive solution $F_{s} \ {\geq 0}$%
, given by (\ref{exprf}) with $f_{s} $ satisfying (\ref{equation_R_s}) and
the boundary condition (\ref{bcR}). 

Moreover, 
\begin{equation}  \label{energy_steady1}
\| f_{s} \|_{2} +\|\P f_s \|_6+\e^{-1} \|(\mathbf{I} - \mathbf{P}) f_{s}
\|_{\nu} + \e^{-\frac 1 2}|(1-P_\gamma)f_s |_{2,+}+ \e^{\frac 1 2} \| w
f_{s} \|_{\infty} \ll 1,
\end{equation}
where $w(v)= e^{\beta|v|^{2}}$ with $0< \beta \ll 1$. Finally, as $\e\to 0$, 
$f_s $ converges weakly to $f_{1,s} =[u_s\cdot v+\th _s(|v|^2-5)/2]\sqrt{\mu}
$ with $(p_{s},u_s,\th _s)$ the unique solution to the steady INSF with
Dirichlet boundary conditions and subject to the external field $\Phi$: 
\begin{equation}
\begin{split}  \label{INSF_st}
u_s\cdot\nabla_x u_s +\nabla_x p_s=\mathfrak{v}\Delta u_s +\Phi, \ \ \
\nabla_x\cdot u_s=0 \ \ \ & \text{in} \ \ \Omega, \\
u_s\cdot\nabla_x (\th _s+\Theta_w)=\kappa\Delta(\th _s +\Theta_w) \ \ \ & 
\text{in} \ \ \Omega, \\
u_s(x)=0, \quad \th _s(x)=0 \ \ \ & \text{on} \ \ \partial\Omega.
\end{split}%
\end{equation}
\end{theorem}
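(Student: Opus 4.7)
The plan is to construct $f_s$ as the fixed point of a linearization of \eqref{equation_R_s}, with $\e$-uniform bounds matching \eqref{energy_steady1}, and then pass $\e\to 0$ to recover the steady INSF \eqref{INSF_st}. Given $\bar f$ in the space carved out by \eqref{energy_steady1}, I would solve the linear problem
\[
 v\cdot\nabla_x f+\e^{-1}Lf \;=\; \mathcal{S}(\bar f),\qquad f|_{\gamma_-}=P_\gamma f+\e[\mathcal{Q}f+r],
\]
with $\mathcal{S}(\bar f):=-\e^{2}\mu^{-1/2}\Phi\cdot\nabla_v(\sqrt\mu\,\bar f)+L_1\bar f+\Gamma(\bar f,\bar f)+A_s$, using the existence scheme of \cite{EGKM}. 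All the difficulty lies in the $\e$-uniform a priori estimates, since \eqref{equation_R_s} is singular as $\e\to 0$.

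\textbf{Microscopic dissipation.} Testing by $f_s$ and using the spectral gap \eqref{spectL}, together with the boundary identity for diffuse reflection (which produces $|(1-P_\gamma)f_s|_{2,+}^{2}$ from $\int_\gamma f_s^{2}\,\dd\gamma$), yields
\[
 \e^{-2}\|\nu^{1/2}(\mathbf{I}-\mathbf{P})f_s\|_2^{2}+\e^{-1}|(1-P_\gamma)f_s|_{2,+}^{2}\;\lesssim\;\|A_s\|_2^{2}+\text{(cubic and field terms involving }\mathbf{P}f_s\text{)}.
\]
This controls the kinetic part only.

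\textbf{Macroscopic estimate: the main obstacle.} The hydrodynamic part $\mathbf{P}f_s=a\sqrt\mu+v\cdot b\sqrt\mu+c\frac{|v|^2-3}{2}\sqrt\mu$ must be extracted by moment-test-function techniques. Testing \eqref{equation_R_s} against carefully chosen test functions in $\mathrm{Null}\,L^{\perp}$ built from solutions of auxiliary elliptic problems yields the weak form of a Stokes--Fourier system for $(b,c)$ driven by $(\mathbf{I}-\mathbf{P})f_s$ and $A_s$, with boundary data satisfied up to an $O(\e)$ correction from $\mathcal{Q}f_s+r$; the Boussinesq constraint $\nabla_x(a+c)=0$ and the incompressibility $\nabla_x\cdot b=0$ also emerge to leading order. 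This produces $\|\mathbf{P}f_s\|_2$. The principal novelty, and the hardest step, is the promotion to $L^6$: I would obtain it by a duality argument pairing \eqref{equation_R_s} with adjoint Stokes/Poisson solutions in $W^{2,6/5}(\Omega)$ (admissible because $\partial\Omega$ is $C^3$), combined with the $3$D Sobolev embedding $W^{1,2}(\Omega)\hookrightarrow L^{6}(\Omega)$ to close the estimate on $(b,c)$ (and hence on $a$ via Boussinesq). The $O(\e)$ boundary defect is absorbed using the $\e^{-1/2}|(1-P_\gamma)f_s|_{2,+}$ factor obtained above and the smallness of \eqref{small_field}.

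\textbf{$L^\infty$ bootstrap, fixed point, limit and positivity.} With $L^2$ and $L^6$ control in hand, the $L^2$-$L^\infty$ Duhamel iteration of \cite{EGKM} along stochastic cycles that reflect diffusely at $\partial\Omega$, combined with the smoothing of the compact operator $K$ (via its kernel $\mathbf{k}_1-\mathbf{k}_2$), delivers $\e^{1/2}\|wf_s\|_\infty\ll 1$. Under \eqref{small_field}, the quadratic contributions $L_1f_s+\Gamma(f_s,f_s)$ are strictly subordinate to the linear dissipation, so the map $\bar f\mapsto f$ contracts on the Banach space defined by \eqref{energy_steady1}, producing $f_s$. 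For the hydrodynamic limit, weak $L^2$ compactness gives $f_s\rightharpoonup f_{1,s}$ with hydrodynamic part $(a_s+u_s\cdot v+c_s\tfrac{|v|^2-3}{2})\sqrt\mu$; the Boussinesq relation forces $a_s+c_s=\text{const}$, which is absorbed into the pressure so that $f_{1,s}=[u_s\cdot v+\theta_s(|v|^2-5)/2]\sqrt\mu$, and passing to the limit in the macroscopic moment equations recovers \eqref{INSF_st} with Dirichlet data supplied by the vanishing boundary defect. Uniqueness of $(u_s,\theta_s)$ under \eqref{small_field} is standard for small-data steady INSF. Finally, positivity $F_s\ge 0$ follows, as in \cite{EGKM}, from the companion dynamical stability theory: solving the unsteady Boltzmann equation with initial datum $F_s$ plus a small positive perturbation preserves non-negativity along characteristics, and asymptotic stability of $F_s$ in the unsteady theory forces $F_s\ge 0$.
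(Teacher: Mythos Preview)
Your overall architecture matches the paper's: an iteration on the linear problem of Theorem~\ref{prop_linear_steady}, closed via the $L^2$--$L^6$--$L^\infty$ hierarchy (Propositions~\ref{linearl2} and~\ref{point_s}), with the nonlinearity handled by Lemma~\ref{pl3l6}, and positivity deferred to the dynamical stability (Section~3.7). The description of the $L^6$ step is close in spirit; just note that the decisive choice is the \emph{nonlinear} test function $-\Delta\varphi_{c}=c^{5}$ (and analogously for $a,b$), not a linear adjoint Stokes problem: it is precisely this choice that produces $\|c\|_6^6$ as the main term and lets $W^{2,6/5}$ elliptic regularity plus the trace $W^{1,6/5}\to L^{4/3}(\partial\Omega)$ close the estimate.

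There is one concrete gap in your setup. You place the field term in the source, $\mathcal{S}(\bar f)\ni-\e^{2}\mu^{-1/2}\Phi\cdot\nabla_v(\sqrt{\mu}\,\bar f)=-\e^{2}\Phi\cdot\nabla_v\bar f+\tfrac{\e^{2}}{2}(\Phi\cdot v)\bar f$. The second piece is harmless, but $\nabla_v\bar f$ is \emph{not} controlled by the iteration norm~\eqref{norm_steady}, and for diffuse reflection solutions one cannot expect $\nabla_v\bar f\in L^2$ in general (there is a genuine singularity at the grazing set). Hence $\mathcal{S}(\bar f)\notin L^2$ and the linear step cannot be invoked. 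The paper avoids this by keeping $\e^{2}\Phi\cdot\nabla_v$ on the \emph{left} of the linear problem (see \eqref{linearf}, Lemma~\ref{calL}, Lemma~\ref{green}, Lemma~\ref{trace_s}, Proposition~\ref{point_s}): the characteristics, Green's identity, trace lemma, $L^2$-coercivity and the $L^\infty$ bootstrap are all developed for the operator $v\cdot\nabla_x+\e^{2}\Phi\cdot\nabla_v$, so that no $v$-derivative of the iterate ever appears in a source term. Once you make this correction, your scheme coincides with the paper's iteration~\eqref{steady_ell} (where also $\mathcal{Q}$ acts on the previous iterate $f^\ell$, not on the unknown), and the remainder of your outline goes through.
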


\noindent\textbf{Remark.} 
\textit{In particular Theorem \ref{mainth} implies the existence of solutions to the stationary INSF boundary value problem for small force and boundary temperature in the sense of (\ref{small_field}).}

\textit{Note that, if $\Phi=\nabla_x U$ is a potential field, $u_s\equiv0$, 
$p_s\equiv U$ is solution to the above system. Therefore, in order to have a
stationary solution with non vanishing velocity field, we may assume that $%
\Phi$ is not a potential field, such that $\nabla_x\cdot\Phi=0$. (See \cite%
{Golse})}

\bigskip

It is important to note that the key difficulty in this paper is to control
the `strong' nonlinear terms $\Gamma (f_{s},f_{s})$. \ The hard spheres
cross section is used to control the term $\e v\cdot \Phi f$ coming from the
external field.

We use the quantitative $L^{2}-L^{\infty }$ approach developed in \cite{EGKM}%
, in the presence of $\varepsilon$. We start with the energy estimates to
get 
\begin{equation*}
\frac{1}{\varepsilon }\| (\mathbf{I}-\mathbf{P})f_{s}\| _{\nu }\lesssim \|
\Gamma (f_{s},f_{s})\| _{2}+1.
\end{equation*}
The missing $\mathbf{P}R_{s}$ can be estimated by the coercivity estimates
in \cite{EGKM}, with carefully chosen proper test functions in the weak
formulation, such that (Proposition \ref{linearl2}): 
\begin{equation*}
\| \mathbf{P}f_{s}\| _{2}\lesssim \frac{1}{\varepsilon }\| (\mathbf{\ I}-%
\mathbf{P})f_{s}\| _{\nu }+\| \Gamma (f_{s},f_{s})\| _{2}+1.
\end{equation*}

We split 
\begin{equation*}
|\Gamma (f_{s},f_{s})|\ {\le }\ |\Gamma (\mathbf{P}f_{s},f_{s})|+|%
\Gamma ((\mathbf{I}-\mathbf{P})f_{s},f_{s})|\leq |\Gamma (\mathbf{P}f_{s},%
\mathbf{P}f_{s})|+|\Gamma (\mathbf{P}f_{s},(\mathbf{I}-\mathbf{P}%
)f_{s})|+|\Gamma ((\mathbf{I}-\mathbf{P})f_{s},f_{s})|.
\end{equation*}%
Since we expect ${\varepsilon }^{-1}\Vert (\mathbf{I}-\mathbf{P})f_{s}\Vert
_{\nu }\lesssim 1$, the last two parts of the nonlinear term are estimated
as 
\begin{equation*}
\|\Gamma (\mathbf{P}f_{s},(\mathbf{I}-\mathbf{P})f_{s})\|_{2}+{\Vert \Gamma
((\mathbf{I}-\mathbf{P})f_{s},f_{s})\Vert _{2}}\lesssim \lbrack \e^{-1}\Vert
(\mathbf{I}-\mathbf{P})_{s}\Vert _{2}][\e\Vert f_{s}\Vert _{\infty }].
\end{equation*}%
For the first main contribution, \ if we have 
\begin{equation}
\Vert \mathbf{P}f_{s}\Vert _{L^{6}}\lesssim 1,
\end{equation}%
then $\Vert \Gamma (\mathbf{P}f_{s},\mathbf{P}f_{s})\Vert _{2}\lesssim \Vert 
\mathbf{P}f_{s}\Vert _{3}\Vert \mathbf{P}f_{s}\Vert _{6}\lesssim 1.$ Thanks
to this $L^{6}$ bound, we can control the other two terms by establishing 
\begin{equation*}
\|f_{s}\|_{\infty }\lesssim \frac{1}{\sqrt{\varepsilon }}\Vert \mathbf{P}%
f_{s}\Vert _{L^{6}}+\frac{1}{\sqrt{\varepsilon }}[\e^{-1}\Vert (\mathbf{I}-%
\mathbf{P})_{s}\Vert _{2}]\lesssim \frac{1}{\sqrt{\varepsilon }}.
\end{equation*}

We now sketch the main idea for establishing such a crucial $L^{6}$ estimate
for $\mathbf{P}f$ as stated in Proposition \ref{linearl2}. For simplicity,
we consider a model problem of 
\begin{equation*}
v\cdot \nabla _{x}f=g\in L^{2},\text{ \ \ \ \ \ }\iint fdxdv=0,
\end{equation*}%
and look for estimate for $a(x)=\int_{\mathbf{R}^{3}}f\sqrt{\mu }dv.$ In 
\cite{EGKM}, we developed a quantitative method to estimate $L^{2}$ norm of $%
a(x),$ by carefully choosing a test function $\psi =(|v|^{2}-\beta
_{a})v\cdot \nabla _{x}\phi _{a}\sqrt{\mu }$ with $-\Delta _{x}\phi
_{a}=a(x) $, and the Neumann boundary condition $\frac{\partial \phi _{a}}{%
\partial n}=0.$ This process is similar in spirit to the energy estimates in
elliptic problems. The main contribution $\int -\Delta _{x}\phi
_{a}a=\|a\|_{L^{2}}^{2}$ results from the Green's identity of $\int v\cdot
\nabla _{x}f\psi dxdv,$ which leads to $L^{2}$ control of $a.$ The new
observation is that the key requirement for choosing $\phi _{a}$ is $\int
\int \nabla _{x}\phi _{a}$ $g<+\infty $, or 
\begin{equation*}
\nabla _{x}\phi _{a}\in L^{2}.
\end{equation*}%
By Sobolev embedding in 3D, it suffices to require $-\Delta \phi _{a}\in L^{%
\frac{6}{5}}$. Therefore, we may choose 
\begin{equation*}
-\Delta \phi _{a}=a^{5}(x)-|\O|^{-1}\int_{\O} a^{5},\text{ }
\end{equation*}%
to obtain a main contribution $\|a\|_{L^{6}}^{6}=\int -\Delta _{x}\phi
_{a}a. $ To close such a $L^{6}$ estimate, we need a $L^{6}$ bound of $(%
\mathbf{I}-\mathbf{P})f,$ which is controled by interpolation between a $%
L^{2}$ bound of $\varepsilon ^{-1}(\mathbf{I}-\mathbf{P})f$ \ (from energy
estimate) and $L^{\infty }$ bound for $\varepsilon ^{1/2}f.$ We also need to
control $L^{4/3}$ norm of $\nabla _{x}\phi _{a}$ at the boundary, which is
luckily bounded by $\|a\|^5_{L^{6}}$ exactly via the trace theorem. Such a $%
L^{6}$ estimate seems natural in terms of scalings of Sobolev spaces in 3D
and should lead to more applications in the future.

We remark that such a $L^{6}$ estimate is very different from the celebrated
Averaging Lemma for $v\cdot \nabla _{x}f=g\in L^{2}$, which states a gain of 
$L^{3}$ integrability ($H^{1/2}$ regularity) for \textit{any} velocity
average of $f$ in the whole space without boundary. Our $L^{6}$ estimates
are stronger than $L^{3}$, but only work with an additional assumption $(\mathbf{I}-\mathbf{P})f\in
L^{6}.$

\medskip

We remark that the convergence results provided by Theorem \ref{mainth} does
not give any indication on the rate of converge in $\e$ of the solution to
its limit. To discuss this we can be inspired by the Hilbert expansion.

Let us denote $F=\mu +\e\sqrt{\mu}g$ and $g_1=\sqrt{\mu}(\rho_s+u_s\cdot v+(%
\th _s+\Theta_w)(|v|^2-3)/2)$, 
\begin{equation}  \label{f2}
\begin{split}
g_2 \ := \ & \frac 1 2\sum_{i,j=1}^3 \mathcal{A}_{ij}[\pt_{x_i}u_{j,s}+\pt%
_{x_j}u_{i,s}]+\sum_{i=1}^3 \mathcal{B}_i \pt_{x_i}(\th _s+\Theta_w) \\
&-L^{-1}[\Gamma(f_1,f_1)] +\frac{|v|^2-3}{2} \th _2\sqrt{\mu},
\end{split}%
\end{equation}
where $\mathcal{A}_{ij}$ and $\mathcal{B}_i$ are given by 
\begin{equation}
\mathcal{A}_{ij}= L^{-1}\big( \sqrt{\mu}(v_iv_j-\frac{|v|^2}3\d_{i,j})\big)%
,\quad \mathcal{B}_i= L^{-1} \big(\sqrt{\mu} v_i(|v|^2-5)\big) ,  \notag
\end{equation}
and $\th _2=p_s-\fint p_s-(\th _s+\Theta_w)\rho_s$.

We define 
\begin{equation*}
R_{s}=\e^{-\frac{1}{2}}\{f_{s}+f_{w}-(g_{1}+\e g_{2})\},
\end{equation*}%
so that 
\begin{equation}
F_{s}=\mu +\e\sqrt{\mu }(g_{1}+\e g_{2}+\e^{\frac{1}{2}}R_{s}).
\label{expanR}
\end{equation}%
Then $R_{s}$ satisfies the equation 
\begin{equation}
v\cdot \nabla _{x}R_{s}+\e^{2}\frac{1}{\sqrt{\mu }}\Phi \cdot \nabla _{v}%
\Big[\sqrt{\mu }R_{s}\Big]+\e^{-1}LR_{s}=L_{1}R_{s}+\e^{1/2}\Gamma
(R_{s},R_{s})+\e^{1/2}\overline{A}_{s},  \label{equation_R_sR}
\end{equation}%
with the boundary condition 
\begin{equation}\label{bcRR}
R\ =\ {P}_{\g}R+\e\mathcal{Q}R+\e^{\frac{1}{2}}r,\ \ \ \text{on}\ \ \gamma
_{-}.
\end{equation}%
Here $\overline{A}_{s}$ is given by 
\begin{equation}
\overline{A}_{s}\ =-(\mathbf{I}-\mathbf{P})[v\cdot \nabla _{x}g_{2}]-2\Gamma
(g_{1},g_{2})  \label{Ast} \\
-\e\big\{\Phi \cdot \frac{1}{\sqrt{\mu }}\nabla _{v}\big[\sqrt{\mu }(g_{1}+%
\e g_{2})\big]-\Gamma (g_{2},g_{2})\big\}.
\end{equation}%
Equation (\ref{equation_R_sR}) is similar to the one for $f_{s}$ with the
extra factor $\sqrt{\e}$ in front of the non linear term $\Gamma
(R_{s},R_{s})$. In fact, using the same arguments employed to prove Theorems %
\ref{mainth} we can control the error between solutions of the
Navier-Stokes-Fourier approximation and the Boltzmann equation:

\begin{theorem}
\label{mainth2} %{

If $\Phi = \Phi(x)\in H^{2} (\Omega) \cap C^{1}(\Omega)$, $\vartheta_{w} \in
H^{7/2}(\Omega)$ and 
\begin{equation}
\| {\t}_{w} \| _{H^{1+ }(\pt\O )}+\| \Phi \| _{L^{ \frac{3}{2}+}(\O )}\ll1,
\label{small_field1}
\end{equation}
then, for $0<\e\ll 1$, there exist a unique $R_{s}$ satisfying (\ref%
{equation_R_sR}) and the boundary condition (\ref{bcRR}). 

Moreover, 
\begin{equation}  \label{energy_steady}
\begin{split}
\| R_{s} \|_{2} + \e^{-1} \|(\mathbf{I} - \mathbf{P}) R_{s} \|_{\nu} \ll 1 ,
\ \ \e^{\frac 1 2} \| w R_{s} \|_{\infty} \ll 1,
\end{split}%
\end{equation}
where $w(v)= e^{\beta|v|^{2}}$ with $0< \beta \ll 1$.
\end{theorem}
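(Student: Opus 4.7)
The plan is to treat equation (\ref{equation_R_sR}) as a rescaled version of (\ref{equation_R_s}) in which both the quadratic term $\G(R_s,R_s)$ and the forcing $\bar A_s$ carry an extra factor $\re$, and the boundary inhomogeneity $r$ is replaced by $\re r$. Since the linear operator on the left-hand side is unchanged, every a priori estimate derived for $f_s$ in the proof of Theorem \ref{mainth} --- namely the $L^{2}$ energy estimate, the coercivity/$L^{6}$ test-function bound of Proposition \ref{linearl2}, and the $L^{\infty}$ bound along characteristics via the diffuse-boundary stochastic cycles of \cite{EGKM} --- applies to $R_s$ after only cosmetic changes. What is genuinely new is a uniform-in-$\e$ control of the source $\bar A_s$ and of the quadratic term.

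The first step is to obtain the required regularity of the INSF profile $(u_s,\th_s,p_s)$ that feeds into $g_1$ and $g_2$. Under the hypotheses $\Phi\in H^{2}(\O)\cap C^{1}(\O)$ and $\t_{w}\in H^{7/2}(\pt\O)$, elliptic regularity for the steady system (\ref{INSF_st}) yields $(u_s,\th_s)\in H^{3}(\O)\cap W^{1,\infty}(\O)$ and $p_s\in H^{2}(\O)$; this is why the data hypotheses (\ref{small_field1}) are strictly stronger than (\ref{small_field}). Inserting these bounds into the explicit expression (\ref{Ast}) and using the $\langle v\rangle$-polynomial-times-$\sqrt\mu$ weights inherited from $\mathcal A_{ij}$, $\mathcal B_i$ and $L^{-1}\G(f_1,f_1)$, one obtains
\[
\|w\bar A_s\|_{\infty}+\|\bar A_s\|_{2}\ \lesssim\ \|(u_s,\th_s)\|_{H^{3}}+\|\Phi\|_{H^{2}}\ \lesssim\ 1,
\]
so that the source of (\ref{equation_R_sR}) is bounded uniformly in $\e$; the smallness needed to close the iteration then comes entirely from the $\re$ prefactor.

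The second step is to run the same three-tier scheme as for $f_s$. The weak formulation of (\ref{equation_R_sR}) tested against $R_s$, together with (\ref{spectL}) and the boundary identity (\ref{Q_zero}), supplies
\[
\tfrac{1}{\e}\|\ip R_s\|_{\nu}+\tfrac{1}{\re}|(1-P_\g)R_s|_{2,+}\ \lesssim\ \re\|\G(R_s,R_s)\|_{2}+\re\|\bar A_s\|_{2}+\re|r|_{2};
\]
the test-function argument with $-\Delta\phi_a=a^{5}-|\O|^{-1}\int_\O a^{5}$ and the analogous choices for $b,c$ upgrades this to $\|\P R_s\|_{6}\lesssim 1$; and the $L^{\infty}$ estimate along characteristics gives $\re\|wR_s\|_{\infty}\lesssim \|\P R_s\|_{6}+\e^{-1}\|\ip R_s\|_{2}+\re\|w\bar A_s\|_{\infty}+(\text{nonlinear})$. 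Splitting
\[
\G(R_s,R_s)=\G(\P R_s,\P R_s)+\G(\P R_s,\ip R_s)+\G(\ip R_s,R_s)
\]
exactly as in the outline after Theorem \ref{mainth}, each piece is dominated by products of the quantities already appearing on the left-hand sides, multiplied by the extra $\re$; they can therefore be absorbed for $\e\ll 1$, yielding (\ref{energy_steady}). Existence follows from a Picard iteration whose contraction constant is $O(\re)$, and uniqueness from a difference estimate in $\|\cdot\|_{2}+\re\|w\cdot\|_{\infty}$.

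The main obstacle, and the one step that truly differs from Theorem \ref{mainth}, is verifying that the Hilbert-type substitution $f_s=-f_w+g_1+\e g_2+\re R_s$ produces the clean form (\ref{Ast}) with no residuals of order $\e^{-1}$ or $1$ left over. This rests on the fluid cancellation $Lg_2=-\ip[v\cdot\nabla_xg_1]+2\G(g_1,g_1)$ combined with the Boussinesq identity $\th_2=p_s-\fint p_s-(\th_s+\Theta_w)\rho_s$, which together force the singular-in-$\e$ remainders to vanish so that $\bar A_s$ starts at order $1$. Once this algebraic verification is carried out and the elliptic regularity of the INSF limit is invoked, the subsequent $L^{2}$--$L^{6}$--$L^{\infty}$ analysis is a direct transcription of the machinery developed for Theorem \ref{mainth}, with the extra $\re$ prefactor making closure strictly easier.
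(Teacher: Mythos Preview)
Your approach is correct and matches the paper's own treatment: the paper explicitly states that ``since the nonlinear interaction of $R_{s}$ now is much weaker with an extra power of $\varepsilon^{1/2}$, the proof of Theorem \ref{mainth2} follows the same lines of the proof of Theorem \ref{mainth} and hence it will be omitted.'' You have filled in the details the paper leaves implicit, in particular the elliptic regularity of the INSF profile needed to bound $\bar A_s$ and the algebraic cancellation ensuring $\bar A_s$ contains no $O(\e^{-1})$ residuals. One minor quantitative slip: in your displayed energy inequality the boundary contribution should be $|r|_{2}$ rather than $\re|r|_{2}$ (the paper notes that with $\alpha=\tfrac12$ the term $\e^{-1}|\e^{1/2}r|_{2}^{2}=|r|_{2}^{2}$ is exactly $O(1)$, not $o(1)$), but since $r$ is small this does not affect the argument.
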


Since the nonlinear interaction of $R_{s}$ now is much weaker with an extra
power of $\varepsilon ^{1/2},$ the proof of Theorem \ref{mainth2} follows
the same lines of the proof of Theorem \ref{mainth} and hence it will be
omitted.

We remark that $\sqrt{\varepsilon }R_{s}$ is of higher order in $L^{p}$ for $%
2\leq p<6$. On the other hand, $\sqrt{\varepsilon }R_{s}$ is small, but not
infinitesimal in $\e$ in $L^{\infty }$, so that it is possible that $%
\{F_s-(\mu+\e\sqrt{\mu} g_1)\}/\e=O(1)$ in $L^\infty$.

We note that the chosen power $\sqrt{\e}$ is forced from the fact that
higher powers of $\e$ make the boundary term too singular. In fact, if we
use $\a$ instead of $\frac 1 2$ in the definition of $R_s$, the boundary
condition for $R$ becomes 
\begin{equation}
\begin{split}
R \ = \ {P}_\g R +\e \mathcal{Q}R +\e^{1-\a}r, \ \ \ \text{on} \ \ \gamma_{-}
\label{bcRR0}
\end{split}%
\end{equation}
with 
\begin{equation}
r= \frac{1}{\sqrt{\mu}} \mathcal{P}^w_\gamma \big(\sqrt{\mu}[f_2-\f_{\e}]%
\big)-[f_2-\f_{\e}].  \label{r0}
\end{equation}
Since in the energy inequality we need to compensate a factor $\e^{-1}$ in
front of $\| \e^{1-\a}r\|_{L^2(\gamma_-)}^2$, the choice $\a=\frac 1 2$ is
the best we can do. 
In conclusion, the rate of convergence of the solution to its hydrodynamic
limit is at least $O(\sqrt{\e})$ in $L^2$.

More accurate estimates of the errors would require the truncation of the
expansion to higher order terms and boundary layer analysis, but there are
serious difficulties in performing such a program. Although we do not follow
this strategy, let us shortly indicate the main difficulties.

The usual approach is based on the representation of the solution by means
of an Hilbert-like expansion in the bulk, suitably corrected at the boundary
to satisfy the boundary conditions \cite{ELM1,ELM2,AEMN1,AEMN2}:%
\begin{equation}
F=\mu +\varepsilon \sqrt{\mu} \lbrack f_{1}+\varepsilon
f_{2}+\cdots+\varepsilon ^{k} f_{k+1}+\varepsilon f_{1}^{B}+\varepsilon ^{2}
f_{2}^{B}+\cdots+\varepsilon ^{k+1} f_{k+1}^{B}+\varepsilon ^{k}R]{.}
\label{layer}
\end{equation}%
{Here, the functions} $f_{k}$ are corrections in the bulk, while $f_{k}^{B}$
are boundary layer corrections which solve Milne-like problems, and $%
R=R^{\varepsilon }$ denotes the remainder. 
The corrections at the boundary are computed by means of a boundary layer
expansion which, in a general domain, presents some issues hard to deal
with. The usual strategy is to solve the $k$-th term of the boundary layer
expansion by looking at it in terms of the rescaled distance from the
boundary (see e.g. \cite{So1}). Using of such a variable, the problem looks
like a half-space linear problem (Milne problem) \cite{BCN} with a
correction due to the geometry which {can be interpreted as} an external
field of the order of the Knudsen number. {The field{,} due to the $k$-th
term of the boundary layer expansion{,} is usually included as source term
in the equation for the $(k+1)$-th term \cite{So1}}, but the lack of
regularity makes this hard to control.

This strategy has been used in \cite{BLP} in the much simpler case of the
neutron transport equations, but recently in \cite{GW} it has been proved
that the result in \cite{BLP} breaks down exactly because of the lack of
regularity. Therefore, the geometric field, even if of small size, has to be
included in the equation for the $k$-term of the expansion, as in {\cite%
{ELM3,AEMN1}} for the case of the gravity and \cite{GW} for the geometrical
field in the neutron transport equation in a disk: 
\begin{equation*}
F=\mu +\varepsilon\sqrt{\mu} \lbrack f_{1}+\varepsilon f_{2}+ \cdots +
\varepsilon ^{k} f_{k+1}+\varepsilon f_{1,\varepsilon }^{B}+\varepsilon ^{2}
f_{2,\varepsilon }^{B}+ \cdots +\varepsilon ^{k+1} f_{k+1,\varepsilon
}^{B}+\varepsilon ^{k}R],
\end{equation*}%
where $f_{k,\varepsilon }^{B}$ depends on $\varepsilon$. Unfortunately, this
strategy fails even for a general 2D domain because the analysis of the
derivatives' singularities presents severe difficulties (see {\cite%
{GKTT,GKTT2}} for the analysis at $\e\approx 1$). The only significant
exception is the paper \cite{GW} where this expansion is completely proved
in the case of the Boltzmann equation in a disk.

\medskip

\vspace{8pt}

Next we investigate the stability properties of the stationary solution. To
discuss this, we study the unsteady problem. The solution to (\ref{basic2})
is written as 
\begin{equation}  \label{time_pert}
F(t) = \mu + \e\sqrt{\mu}f,\quad f=f_{w} + f_s +\tilde f.
\end{equation}
The aim is to show that $\tilde f\to\tilde f_1=\sqrt{\mu}(\tilde \rho
+\tilde u\cdot v+\tilde \vartheta(\frac{|v|^2-3}2))$, with $\nabla_x[\tilde
\rho +\tilde \vartheta]=0$ and $(\tilde{u}, \tilde{\vartheta}, \tilde{p})$
satisfying 
\begin{equation}
\begin{split}  \label{tilde_u_theta}
\partial_{t} \tilde{u} + \tilde{u} \cdot \nabla_{x} \tilde{u} + \tilde{u}
\cdot \nabla_{x}u_{s} + u_{s} \cdot\nabla_{x} \tilde{u} + \nabla_{x} \tilde{p%
} = \mathfrak{v}\Delta \tilde{u}, \ \ \nabla_{x} \cdot \tilde{u} =0 \ \ \ &%
\text{in} \ \Omega, \\
\partial_{t} \tilde{\vartheta} + \tilde{u} \cdot \nabla_{x} \tilde{\vartheta}
+ \tilde{u} \cdot \nabla_{x} \vartheta_{s} + u_{s} \cdot \nabla_{x} \tilde{%
\vartheta} = \kappa\Delta \tilde{\vartheta} \ \ \ &\text{in} \ \Omega, \\
\tilde{u} =0, \ \ \tilde{\vartheta}=0 \ \ \ &\text{on} \ \partial\Omega.
\end{split}%
\end{equation}
The equation of $\tilde{f}$ is given by 
\begin{equation}  \label{tild_R}
\partial_{t} \tilde{f} + \e^{-1} v\cdot\nabla_{x} \tilde{f} + \e \Phi \cdot
\nabla_{v} \tilde{f} + \e^{-2} L \tilde{f} = \ \e^{-1} L_{ {f_w}+ f_{s}} 
\tilde{f} + \e^{-1}\Gamma(\tilde{f}, \tilde{f}) + \e \frac{\Phi \cdot v}{2}%
\tilde{f}.
\end{equation}
Here we have used the notation $L_{\phi} \psi := - [ \Gamma( \phi, \psi) +
\Gamma( \psi, \phi) ]$. Note that, due to symmetry, for all $\psi_{1},
\psi_{2} \in L^{2}$, 
\begin{equation}  \label{sym_L1}
(L_{ \phi }\psi_{1}, \psi_{2} ) =(L_{ \phi }\psi_{1}, (\mathbf{I} - \mathbf{P%
}) \psi_{2}).
\end{equation}
The boundary condition of $\tilde{f}$ is given by 
\begin{equation}  \label{bdry_tildeR}
\begin{split}
\tilde{f} |_{\gamma_{-} } = P_{\gamma} \tilde{f} +\e \mathcal{Q} \tilde{f}.
\end{split}%
\end{equation}
We define the energy and the dissipation as 
\begin{eqnarray}
\mathcal{E}_{\lambda }[\tilde{f}](t) &:=&\sup_{0\leq s\leq t}\| e^{\lambda s}%
\tilde{f}(s)\| _{2}^{2}+\sup_{0\leq s\leq t}\| e^{\lambda s}\partial _{t}%
\tilde{f} (s)\| _{2}^{2},  \label{cale} \\
\mathcal{D}_{\lambda }[\tilde{f}](t) &:=& \frac{1}{ \e^{2}}\int_{0}^{t}\|
e^{\lambda s}(\mathbf{I}-\mathbf{P}) \tilde{f}\| _{\nu }^{2}+\frac{1}{ \e^{2}%
}\int_{0}^{t}\| e^{\lambda s} (\mathbf{I}-\mathbf{P}) \partial _{t}\tilde{f}%
\| _{\nu }^{2}  \notag \\
&&+ \frac 1 {\e}\int_{0}^{t} | e^{\lambda s} (1-P_\gamma)\tilde{f} |
_{2,\gamma }^{2}+\frac 1 {\e}\int_{0}^{t} | e^{\lambda s}(1-P_\gamma) \tilde{%
f}_t | _{2,\gamma }^{2}  \label{cald} \\
&&+\int_{0}^{t}| e^{\lambda s} \tilde{f} | _{2,\gamma }^{2}+\int_{0}^{t}|
e^{\lambda s} \tilde{f}_t | _{2,\gamma }^{2} + \int_{0}^{t}\| e^{\lambda s}%
\mathbf{P}\tilde{f}\| _{2}^{2}+\int_{0}^{t}\| e^{\lambda s}\mathbf{P}%
\partial _{t}\tilde{f}\| _{2}^{2} .  \notag
\end{eqnarray}

\begin{theorem}
\label{energy_nonlinear} We assume the same hypotheses of Theorem \ref%
{mainth}. Suppose $F_{0}= F_{s} + \e \sqrt{\mu} \tilde{f}_0 \geq 0$, 
\begin{equation}  \label{initial_assupm_11}
\mathcal{E}_0[\tilde{f}] (0) + \e^{3/2} \| w \partial_{t}\tilde{f}_{0}
\|_{\infty} + \big\| \int_{\mathbb{R}^{3}} | \tilde{f}_{0}(x,v)| \langle
v\rangle^{2} \sqrt{\mu} \mathrm{d} v\big\|_{L^{6}(\Omega)} \ll 1,
\end{equation}
where $w(v)= e^{\beta|v|^{2}}$ with $0< \beta \ll 1$.

Then there exists a unique global solution $F\geq 0$ given by (\ref%
{time_pert}) with $\tilde{f}$ solving (\ref{tild_R}) and the boundary
condition (\ref{bc0}). 

Moreover, for some $0 < \lambda \ll 1$, 
\begin{equation}  \label{energy_estimate}
\begin{split}
\mathcal{E}_{\lambda}[\tilde{f}](\infty) + \mathcal{D}_{\lambda}[\tilde{f}%
](\infty) + \sup_{0 \leq t\leq \infty} \e^{1/2} \| w \tilde{f}(t)
\|_{\infty} + \sup_{0 \leq t\leq \infty} \e^{3/2} \| w \partial_{t}\tilde{f}%
(t) \|_{\infty} \ll 1 ,
\end{split}%
\end{equation}
Finally, as $\e\to 0$, $\tilde{f}$ converges weakly to $f_{1}=[\tilde{u}%
\cdot v+\tilde{\t}(|v|^2-5)/2]\sqrt{\mu}$ with $(\tilde{p},\tilde{u},\tilde{%
\t})$ is a solution to (\ref{tilde_u_theta}).
\end{theorem}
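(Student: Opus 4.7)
The plan is to extend the $L^2$–$L^\infty$ framework of Theorem \ref{mainth} to the unsteady equation (\ref{tild_R}), augmenting it with estimates for $\partial_t\tilde f$ (needed both to close the $L^\infty$ bound via the trajectory method and to supply compactness in the hydrodynamic limit) and inserting the exponential weight $e^{\lambda s}$ with $\lambda\ll 1$ to extract the decay built into (\ref{cale})–(\ref{cald}). Local existence proceeds by the standard iteration
\begin{equation*}
\partial_{t} \tilde{f}^{n+1} + \e^{-1} v\cdot\nabla_{x} \tilde{f}^{n+1} + \e \Phi \cdot \nabla_{v} \tilde{f}^{n+1} + \e^{-2} L \tilde{f}^{n+1} = \e^{-1} L_{f_w + f_{s}} \tilde{f}^{n+1} + \e^{-1}\Gamma(\tilde{f}^n, \tilde{f}^n) + \e \tfrac{\Phi\cdot v}{2} \tilde{f}^n,
\end{equation*}
with $\tilde f^{n+1}|_{\gamma_-} = P_\gamma \tilde f^{n+1} + \e\mathcal{Q}\tilde f^{n+1}$; the linear theory below gives contractivity in $\mathcal E_0+\mathcal D_0$ on short intervals, while positivity of $F^{n+1}=\mu+\e\sqrt\mu(f_w+f_s+\tilde f^{n+1})$ is preserved since $F_s\ge 0$ by Theorem \ref{mainth}, $F_0\ge 0$, and $\e^{1/2}\|w\tilde f^n\|_\infty\ll 1$.

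For the a priori bounds, testing (\ref{tild_R}) against $e^{2\lambda s}\tilde f$ and integrating gives the standard Boltzmann energy identity: coercivity (\ref{spectL}) yields $\e^{-2}\|e^{\lambda s}(\mathbf{I}-\mathbf{P})\tilde f\|_\nu^2$, the diffuse reflection yields $\e^{-1}|e^{\lambda s}(1-P_\gamma)\tilde f|_{2,\gamma}^2$ modulo an $\e\mathcal Q$ contribution bounded via (\ref{boundQ}), and the bilinear pieces $L_{f_w+f_s}\tilde f$ and $\Gamma(\tilde f,\tilde f)$ are handled by the splitting sketched after Theorem \ref{mainth}: the dangerous $\Gamma(\mathbf{P}\tilde f,\mathbf{P}\tilde f)$ is absorbed through $\|\mathbf{P}\tilde f\|_3\|\mathbf{P}\tilde f\|_6$ once the $L^6$ bound is in hand, while the remaining pieces factor as $[\e^{-1}\|(\mathbf{I}-\mathbf{P})\tilde f\|_\nu][\e^{1/2}\|w\tilde f\|_\infty]$. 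Differentiating (\ref{tild_R}) in $t$ and repeating produces the analogous inequality for $\partial_t\tilde f$: since $f_w,f_s,\Phi$ are stationary there are no commutator terms, and $\|\partial_t\tilde f(0)\|_2$ is read off from the equation using the $L^\infty$ and $L^6$ moment hypotheses in (\ref{initial_assupm_11}). The missing hydrodynamic components $\|\mathbf{P}\tilde f\|_2,\|\mathbf{P}\partial_t\tilde f\|_2$ and the crucial bound $\|\mathbf{P}\tilde f\|_6\lesssim 1$ are recovered exactly as in Proposition \ref{linearl2}: one solves $-\Delta\phi_a=a^5-\fint a^5$ with $\partial_n\phi_a=0$ and plugs $\psi_a=(|v|^2-\beta_a)v\cdot\nabla_x\phi_a\sqrt\mu$ (and its $b,c$ analogues) into the weak formulation of (\ref{tild_R}), with the $\partial_t\tilde f$ term absorbed into $\|\mathbf{P}\partial_t\tilde f\|_2$ after a time integration.

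The $L^\infty$ bounds on $w\tilde f$ and $w\partial_t\tilde f$ are obtained by the double-Duhamel trajectory expansion of \cite{EGKM}: one writes the solution along characteristics, reflects once off the boundary via the diffuse kernel, and converts the iterated integral into an $L^2$ quantity through a change of variables. As in the steady proof this gives
\begin{equation*}
\e^{1/2}\|w\tilde f\|_\infty\ \lesssim\ \|\mathbf{P}\tilde f\|_6+\e^{-1}\|(\mathbf{I}-\mathbf{P})\tilde f\|_2,\qquad \e^{3/2}\|w\partial_t\tilde f\|_\infty\ \lesssim\ \|\mathbf{P}\partial_t\tilde f\|_2+\e^{-1}\|(\mathbf{I}-\mathbf{P})\partial_t\tilde f\|_2,
\end{equation*}
which combined with the energy–coercivity bound produces (\ref{energy_estimate}) after a continuity argument in $t$; the supplementary $L^6$ moment hypothesis on $\tilde f_0$ in (\ref{initial_assupm_11}) is exactly the seed that initiates this argument at $t=0$, and exponential decay of every component follows by Gr\"onwall in the $e^{2\lambda s}$ weight. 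For the hydrodynamic limit, the uniform bound $\e^{-1}\|(\mathbf{I}-\mathbf{P})\tilde f\|_\nu\ll 1$ forces any weak-$*$ limit to lie in $\mathrm{Null}\,L$ and hence to have the form $\tilde f_1=[\tilde\rho+\tilde u\cdot v+\tilde\theta(|v|^2-3)/2]\sqrt\mu$; testing (\ref{tild_R}) against $v_i\sqrt\mu$ and $\tfrac{|v|^2-5}{2}\sqrt\mu$ and using the decomposition $(\mathbf{I}-\mathbf{P})\tilde f=\e\{\mathcal A_{ij}\partial_{x_i}\tilde u_j+\mathcal B_i\partial_{x_i}\tilde\theta\}+o_{L^2}(1)$ coming from the equation itself yields the perturbed INSF system (\ref{tilde_u_theta}), the coupling terms $\tilde u\cdot\nabla u_s$ and $u_s\cdot\nabla\tilde u$ arising from the projection of $L_{f_w+f_s}\tilde f$.

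The principal obstacle is the bootstrap between the $L^6$ coercivity bound on $\mathbf{P}\tilde f$ and the $L^\infty$ bound on $w\tilde f$: the $L^6$ step requires the $L^\infty$ bound to absorb $\|\Gamma(\tilde f,\tilde f)\|_2$, while the $L^\infty$ step requires the $L^6$ bound to dominate the leading contribution to the double-Duhamel integrand. The continuity argument must be organized so that $\mathcal E_0[\tilde f](0)$, $\e^{3/2}\|w\partial_t\tilde f_0\|_\infty$, and the $L^6$ seed in (\ref{initial_assupm_11}) jointly produce simultaneously small $\|\mathbf{P}\tilde f\|_6$ and $\e^{1/2}\|w\tilde f\|_\infty$ for every $t\ge 0$; this mutual dependence is precisely what forces the weighted energy–dissipation formulation (\ref{cale})–(\ref{cald}) and the specific scalings appearing in (\ref{energy_estimate}).
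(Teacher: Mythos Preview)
Your outline captures the $L^2$--$L^\infty$ skeleton correctly and identifies the right list of ingredients (energy estimate, coercivity for $\mathbf{P}\tilde f$, $L^6$ via test functions, trajectory $L^\infty$ bound, $\partial_t$-estimates), but it has a real gap in the treatment of the nonlinear term in time. You write that $\Gamma(\mathbf{P}\tilde f,\mathbf{P}\tilde f)$ is absorbed via $\|\mathbf{P}\tilde f\|_3\|\mathbf{P}\tilde f\|_6$ ``once the $L^6$ bound is in hand.'' In the steady case this is fine because $\|\mathbf{P}f\|_3\le\|\mathbf{P}f\|_2^{1/2}\|\mathbf{P}f\|_6^{1/2}$. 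In the unsteady case, however, the energy estimate produces $\int_0^t e^{2\lambda s}\|\nu^{-1/2}\Gamma(\tilde f,\tilde f)\|_2^2\,\mathrm{d}s$, and the relevant factor is $\|\mathbf{P}\tilde f\|_{L^2_tL^3_x}$ (resp.\ $\|\mathbf{P}\tilde f_t\|_{L^2_tL^3_x}$). You never say how this is controlled. Naive interpolation between $\|e^{\lambda t}\mathbf{P}\tilde f\|_{L^2_tL^2_x}$ (from $\mathcal{D}_\lambda$) and $\|e^{\lambda t}\mathbf{P}\tilde f\|_{L^\infty_tL^6_x}$ does not yield $\|\mathbf{P}\tilde f\|_{L^2_tL^3_x}$ without picking up factors like $\lambda^{-1/2}$, which threatens the closure for small $\lambda$.

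The paper closes this by a genuinely new piece of machinery (Section~3.2): a cutoff $f_\delta$ near the grazing set, an extension lemma (Lemma~\ref{extension_dyn}) producing $\bar f\in L^2(\mathbb{R}\times\mathbb{R}^3\times\mathbb{R}^3)$ with $[\e\partial_t+v\cdot\nabla_x+\e^2\Phi\cdot\nabla_v]\bar f\in L^2$, and a velocity-averaging lemma adapted to the small external force (Lemma~\ref{flfs}). This yields a decomposition $\mathbf{P}\tilde f\lesssim\mathbf{S}_1\tilde f+\mathbf{S}_2\tilde f+\mathbf{S}_3\tilde f$ with $\|\mathbf{S}_1\tilde f\|_{L^2_tL^3_x}+\e^{-1/2}\|\mathbf{S}_2\tilde f\|_{L^2_tL^{12/5}_x}$ bounded by $L^2$ data (Proposition~\ref{prop_3}); these $\mathbf{S}_i$-norms are then built into the iteration norm $[\hskip-1pt[\hskip-1pt[\,\cdot\,]\hskip-1pt]\hskip-1pt]$ of (\ref{norm_unsteady}) and drive the nonlinear estimates of Lemma~\ref{nonlinear}. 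None of this is a cosmetic variant of the steady argument---the extension across $\partial\Omega$ along characteristics and the splitting $f=f_{la}+f_{sm}$ to handle $\e^2\Phi\cdot\nabla_v$ are exactly what your sketch is missing. A secondary point: positivity of $F$ is not inherited from the iteration you wrote down; the paper uses a separate gain--loss splitting scheme (Section~3.8) to establish $F\ge 0$ and then deduces $F_s\ge 0$ from the dynamical stability, not the other way around.
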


\begin{remark}
The initial assumption (\ref{initial_assupm_11}) in particular requires $\|%
\tilde{f}_t(0)\|_{L^2_{x,v}}\ll 1$. This is a very sharp condition of $%
\tilde{f}_0$, because, from equation (\ref{tild_R}) 
\begin{equation}  \label{tild_R1}
\partial_{t} \tilde{f}(0) =- \e^{-1} v\cdot\nabla_{x} \tilde{f}_0 - \e \Phi
\cdot \nabla_{v} \tilde{f}_0 - \e^{-2} L \tilde{f}_0 + \ \e^{-1} L_{{f_w}+
f_{s}} \tilde{f}_0 + \e^{-1}\Gamma(\tilde{f}_0, \tilde{f}_0) + \e \frac{\Phi
\cdot v}{2}\tilde{f}_0.
\end{equation}
To compensate the diverging factors one has to choose $\tilde f_0$ properly.
An example of such a choice is the following: let $\tilde f_{1,0}= (\tilde
\rho_0+\tilde u_0\cdot v+\tilde \th _0(|v|^2-3)/2)\sqrt{\mu}$ and assume
that $\nabla_x\cdot \tilde u_0=0$, $\nabla_x(\tilde\rho_0+\tilde\th _0)=0$,
so that $\P (v\cdot \nabla_x f_{1,0})=0$ and set 
\begin{equation*}
\tilde f_0=\tilde f_{0,1}-\e L^{-1}[v\cdot \nabla_x \tilde f_{1,0}- L_{ {f_w}%
+ f_{s}} \tilde{f}_0 -\Gamma(\tilde{f}_0, \tilde{f}_0)]+\e^2 h,
\end{equation*}
for some $L^2_{x,v}$ function $h$. Then, clearly, the diverging factors are
compensated and $f_t(0)$ is bounded in $L^2_{x,v}$. Thus, the smallness
condition is fulfilled by assuming $\tilde \rho$, $\tilde u$, $\tilde \th $
and $L h$ sufficiently small. Note that the initial data for the
hydrodynamic quantities are small but not depending on $\e$. Thus our result
implies the exponential stability of the constructed solution to the INSF
system.

We remark also that such an asymptotical stability implies non-negativity of
steady solution $F_{s}$ (Section 3.7). 
\end{remark}

We start with the energy estimates, as the steady case, to get 
\begin{equation*}
\|\tilde{f}(t)\|_{2}^{2}+\frac{1}{\varepsilon ^{2}}\int_{0}^{t}\Vert (%
\mathbf{I}-\mathbf{P})\tilde{f}\Vert _{\nu }^{2}\lesssim \int_{0}^{t}\Vert
\Gamma (\tilde{f},\tilde{f})\Vert _{2}^{2}+1.
\end{equation*}%
The missing $\mathbf{P}\tilde{f}$ can be estimated by the coercivity
estimates in \cite{EGKM}, with carefully chosen proper test functions in the
weak formulation together with the local conservation laws (Lemma \ref{dabc}%
): 
\begin{equation*}
\int_{0}^{t}\Vert \mathbf{P}\tilde{f}\Vert _{2}^{2}\lesssim \frac{1}{%
\varepsilon ^{2}}\int_{0}^{t}\Vert (\mathbf{I}-\mathbf{P})\tilde{f}\Vert
_{\nu }^{2}+\int_{0}^{t}\Vert \Gamma (\tilde{f},\tilde{f})\Vert _{2}^{2}+1.
\end{equation*}
We estimate the main nonlinear contribution as 
\begin{equation*}
\Vert \Gamma (\mathbf{P}\tilde{f},\mathbf{P}\tilde{f})\Vert
_{L_{t,x,v}^{2}}\lesssim \Vert \mathbf{P}\tilde{f}\Vert _{L_{t}^{\infty
}L_{x,v}^{6}}\cdot \Vert \mathbf{P}\tilde{f}\Vert _{L_{t}^{2}L_{x,v}^{3}}.
\end{equation*}%
The most important ingredient is to control $\Vert \mathbf{P}\tilde{f}\Vert
_{L_{t}^{\infty }L_{x,v}^{6}}$ as in the steady case, in the presence of the
term $\varepsilon f_{t}$ (Proposition \ref{p6time}). For further control of $%
\varepsilon f_{t}$, we repeat the energy estimate for $f_{t}$ and estimate
the nonlinear term  
\begin{equation*}
\Vert \Gamma (\mathbf{P}\tilde{f},\mathbf{P}\tilde{f}_{t})\Vert
_{L_{t,x,v}^{2}}\lesssim \Vert \mathbf{P}\tilde{f}\Vert _{L_{t}^{\infty
}L_{x,v}^{6}}\Vert \mathbf{P}\tilde{f}_{t}\Vert _{L_{t}^{2}L_{x,v}^{3}}
\end{equation*}%
with the same norm $\Vert \mathbf{P}\tilde{f}\Vert _{L_{t}^{\infty
}L_{x,v}^{6}}.$

To close the estimate, it suffices to control both $\Vert \mathbf{P}\tilde{f}%
\Vert _{L_{t}^{2}L_{x,v}^{3}}$ and $\Vert \mathbf{P}\tilde{f}_{t}\Vert
_{L_{t}^{2}L_{x,v}^{3}}$ by:%
\begin{eqnarray*}
\Vert \mathbf{P}\tilde{f}\Vert _{L_{t}^{2}L_{x,v}^{3}} &\lesssim &\ \Vert
\Gamma (\tilde{f},\tilde{f})\Vert _{L_{t,x,v}^{2}}{+\ \Vert \Gamma (f_{s},%
\tilde{f})\Vert _{L_{t,x,v}^{2}}}+1, \\
\Vert \mathbf{P}\tilde{f}_{t}\Vert _{L_{t}^{2}L_{x,v}^{3}} &\lesssim &\Vert
\Gamma (\tilde{f},\tilde{f}_{t})\Vert _{L_{t,x,v}^{2}}+\Vert \Gamma (\tilde{f%
}_{t},\tilde{f})\Vert _{L_{t,x,v}^{2}}+1.
\end{eqnarray*}%
We now illustrate the estimate for $\Vert \mathbf{P}\tilde{f}\Vert
_{L_{t}^{2}L_{x,v}^{3}}.$ In the absence of the external field and the
boundary, $\varepsilon ^{2}\Phi \equiv 0$ and $\Omega ={\mathbb{R}}^{3}$,
such gain of integrability is well-known from the Averaging Lemma \cite{GLPS, GP}
and the sharp Sobolev embedding $H^{1/2}\subset L^{3}$ (See also the case
for a convex bounded domain with $\varepsilon ^{2}\Phi \equiv 0$ in \cite%
{GLPS}). We need to extend this estimate properly to case of the bounded
domain $\Omega $ in the presence of the external field \ $\varepsilon
^{2}\Phi \neq 0$. We first consider an extension of $\tilde{f}$ to the whole
space, denoted by $\bar{f}$, such that $\bar{f}\in L^{2}$ and 
\begin{equation*}
\varepsilon \bar{f}_{t}+v\cdot \nabla _{x}\bar{f}+{\e^{2}\Phi \cdot \nabla
_{v}\bar{f}}\in L^{2}.
\end{equation*}%
This would require that $\bar{f}$ is continuous along all exterior
trajectories, matching with given incoming and outgoing data of $f$ on the
boundary. For a general domain $\Omega $ with $\varepsilon ^{2}\Phi \neq 0$,
the exterior trajectories can be complicated and they can connect the
outgoing set $\gamma _{+}$ and incoming set $\gamma _{-}$, arbitrarily near
the grazing set $\gamma _{0}$. It is not clear that an extension $\bar{f}$
would satisfy both $\bar{f}\in L^{2}$ and $\varepsilon \bar{f}_{t}+v\cdot
\nabla _{x}\bar{f}+{\e^{2}\Phi \cdot \nabla _{v}\bar{f}}\in L^{2}$, due to a
possible discontinuity of $\bar{f}$ \cite{Kim}.

We circumvent this difficulty via an extension lemma, Lemma \ref%
{extension_dyn}, which asserts that, for the function cutoff from the
grazing set $\gamma _{0}$, 
\begin{equation}
f_{\delta }\ {\sim }\ \mathbf{1}_{\{|v|<\frac{1}{\delta }\}}\mathbf{1}%
_{\{|n(x)\cdot v|>\delta \ \text{or}\ \text{dist}(x,\partial \Omega )>\delta
\}}\tilde{f},\ \ \ \text{for}\ \delta \ll 1,  \label{rextend}
\end{equation}%
such an extension $\overline{f_{\delta }}$ is indeed possible. Here, $\text{%
dist}(x,\partial \Omega ):=\inf_{y\in \partial \Omega }|x-y|$. Luckily, $%
\mathbf{P}f_{\delta }\thicksim \mathbf{P}\tilde{f}$ thanks to the estimate ${%
\varepsilon }^{-1}\Vert (\mathbf{I}-\mathbf{P})\tilde{f}\Vert _{2}\sim 1$.
In the presence of external field $\varepsilon ^{2}\Phi \neq 0$, we modify
the proof of averaging lemma for $\mathbf{P}\overline{f_{\delta }}$
(Proposition \ref{flfs}) via a careful splitting to show its effect is small
for our purpose. 
Similarly to the steady case, as in \cite{EGKM}, we may bootstrap such $L^{6}
$ estimates to an improved $L^{\infty }$ estimate for $\Omega \in \mathbf{R}%
^{3}$ 
\begin{equation*}
\Vert \tilde{f}\Vert _{L^{\infty }}\lesssim \frac{1}{\sqrt{\varepsilon }}%
\Vert \mathbf{P}\tilde{f}\Vert _{L_{t}^{\infty }L_{x,v}^{6}}+\frac{1}{%
\varepsilon ^{3/2}}\Vert (\mathbf{I}-\mathbf{P})\tilde{f}\Vert
_{L_{t}^{\infty }L_{x,v}^{2}}+1\lesssim \frac{1}{\sqrt{\varepsilon }}.
\end{equation*}%

\bigskip

As for the stationary case, Theorem \ref{energy_nonlinear} does not provide
the rate of convergence. Proceeding as in the steady case we define: 
\begin{equation*}
\tilde f_2 \ := \frac 1 2\sum_{i,j=1}^3 \mathcal{A}_{ij}[\pt_{x_i}\tilde
u_{j,s}+\pt_{x_j}\tilde u_{i,s}]+\sum_{i=1}^3 \mathcal{B}_i \pt_{x_i}\tilde 
\th \newline
-L^{-1}[\Gamma(\tilde f_1,\tilde f_1)] +\frac{|v|^2-3}{2} \tilde \th _2\sqrt{%
\mu},
\end{equation*}
with $\tilde\th _2=\tilde p-\fint \tilde p- \tilde\th \rho$ and 
\begin{equation}
\tilde R= \e^{-\frac 1 2}[F-\mu-\e\tilde f_1-\e^2\tilde f_2].
\end{equation}
Then $\tilde R$ has to solve 
\begin{equation}  \label{tild_RR}
\begin{split}
& \partial_{t} \tilde{R} + \e^{-1} v\cdot\nabla_{x} \tilde{R} + \e \Phi
\cdot \nabla_{v} \tilde{R} + \e^{-2} L \tilde{R} \\
= & \ \e^{-1} L_{1} \tilde{R} + \e^{-1} L_{\e^{1/2} R_{s}} \tilde{R} + \e%
^{-1}L_{R_{s}} (\tilde{f}_{1}+ \e \tilde{f}_{2}) + \e^{-1/2} \Gamma(\tilde{R}%
, \tilde{R}) + \e \frac{\Phi \cdot v}{2}\tilde{R}+ \e^{-1/2} \tilde{A},
\end{split}%
\end{equation}
where 
\begin{multline}  \label{Astaa}
\tilde A\ = -(\mathbf{I}-\mathbf{P}) [v\cdot \nabla_x (f_2 +\tilde
f_2)]-2\Gamma(g_1+\tilde f_1,g_2+ \tilde f_2) \\
-\e\big\{\pt_t \tilde f_2+\Phi\cdot\frac 1 {\sqrt{\mu}}\nabla_v \big[\sqrt{%
\mu}(g_1 +\tilde f_1+\e (g_2+\tilde f_2)) \big]-\Gamma(g_2+\tilde
f_2,g_2+\tilde f_2)\big\}-\overline{A}_s,
\end{multline}
and has to satisfy the boundary condition 
\begin{equation}  \label{bdry_tildeRR}
\begin{split}
\tilde{R} |_{\gamma_{-} } = P_{\gamma} \tilde{R} +\e \mathcal{Q} \tilde{R} + %
\e^{1/2}\tilde{r},
\end{split}%
\end{equation}
where $\tilde{r}:= \e^{-1} [\mu^{- \frac{1}{2}} \mathcal{P}^w_{\gamma} (%
\tilde{f}_{1} \sqrt{\mu}) - \tilde{f}_{1} ] + [ \mu^{- \frac{1}{2}} \mathcal{%
P}_{\gamma}^{w} (\tilde{f}_{2} \sqrt{\mu}) - \tilde{f}_{2} ]$.

We establish the error estimates between the solutions of the
Navier-Stokes-Fourier approximation and the Boltzmann equation as follows:

\begin{theorem}
\label{energy_nonlinear1} 
Suppose 
\begin{equation}  \label{initial_assupm_111}
\mathcal{E}_0[\tilde{R}] (0) + \e^{3/2} \| w \partial_{t}\tilde{R}_{0}
\|_{\infty} + \big\| \int_{\mathbb{R}^{3}} | \tilde{R}_{0}(x,v)| \langle
v\rangle^{2} \sqrt{\mu} \mathrm{d} v\big\|_{L^{3}(\Omega)} + \e^{\frac 1 2}
\| w \tilde{R}_{0} \|_{\infty} \ll 1,
\end{equation}
where $w(v)= e^{\beta|v|^{2}}$ with $0< \beta \ll 1$.

Then for $\e$ sufficiently small there exists a unique global solution 
$\tilde{R}$ solving (\ref{tild_RR}) and the boundary condition (\ref%
{bdry_tildeRR}). 

Moreover, for some $0 < \lambda \ll 1$, 
\begin{equation}  \label{energy_estimate1}
\begin{split}
\mathcal{E}_{\lambda}[\tilde{R}](\infty) + \mathcal{D}_{\lambda}[\tilde{R}%
](\infty) + \sup_{0 \leq t\leq \infty} \e^{3/2} \| w \partial_{t}\tilde{R}%
(t) \|_{\infty} +\sup_{0 \leq t\leq \infty} \e^{\frac 1 2} \| w \tilde{R}(t)
\|_{\infty} \ll 1 ,
\end{split}%
\end{equation}
where $\mathcal{E}_\lambda[\tilde{R}] (T)$ and $\mathcal{D}_\lambda[\tilde{R}%
] (T)$ are defined in (\ref{cale}) and (\ref{cald}) with $\tilde f$ replaced
by $\tilde{R}$. 
\end{theorem}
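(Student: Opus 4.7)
The plan is to mirror the proof of Theorem~\ref{energy_nonlinear} applied to the system (\ref{tild_RR})--(\ref{bdry_tildeRR}), taking advantage of the fact that the nonlinearity $\Gamma(\tilde R,\tilde R)$ now carries the weaker prefactor $\e^{-1/2}$ instead of $\e^{-1}$, at the cost of the additional inhomogeneous source $\e^{-1/2}\tilde A$, the boundary datum $\e^{1/2}\tilde r$ and the new linear couplings with the steady remainder $R_s$. Concretely, I would set up a linear iteration $\tilde R^{n}\mapsto\tilde R^{n+1}$ in which the nonlinear and $R_s$-coupling terms are frozen at step $n$, and show that the resulting map contracts on the ball defined by (\ref{energy_estimate1}). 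The $L^2$/$L^6$/$L^\infty$ linear machinery reused below is identical to the one developed for $\tilde f$ in Theorem~\ref{energy_nonlinear}.

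First I would perform the energy estimate for $\tilde R$ and $\pt_t\tilde R$ weighted by $e^{2\lambda t}$: pairing (\ref{tild_RR}) with $\tilde R$, the spectral gap (\ref{spectL}) produces the dissipation $\e^{-2}\|\ip\tilde R\|_\nu^2$ while the stress-energy identity together with (\ref{bdry_tildeRR}) produces the boundary gain $\e^{-1}|(1-P_\g)\tilde R|_{2,+}^2$. The symmetry (\ref{sym_L1}) ensures that $\e^{-1}L_1\tilde R$ and $\e^{-1}L_{\e^{1/2}R_s}\tilde R$ test only against $\ip\tilde R$ with prefactor $\|wf_w\|_\infty+\|wf_s\|_\infty+\e^{1/2}\|wR_s\|_\infty\ll 1$ by Theorems~\ref{mainth}--\ref{mainth2}, so they are absorbed by Cauchy--Schwarz. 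The boundary contribution is $\e^{-1}|\e^{1/2}\tilde r|_{2,-}^2\lesssim|\t_w|_\infty^2\ll 1$, the power $\e^{1/2}$ being exactly calibrated for this compensation. Proposition~\ref{linearl2} then upgrades the dissipation to $\int_0^t\|\mathbf{P}\tilde R\|_2^2$, and the analogous procedure applied to (\ref{tild_RR}) differentiated in $t$ controls $\pt_t\tilde R$; the initial value is finite thanks to the smallness assumption in (\ref{initial_assupm_111}).

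Next I would upgrade $\mathbf{P}\tilde R$ from $L^2_tL^2_{x,v}$ to $L^\infty_tL^6_{x,v}$ by invoking the averaging-lemma framework of Proposition~\ref{p6time}: cut off $\tilde R$ away from $\g_0$ via Lemma~\ref{extension_dyn}, extend to $\R^3$, and apply Proposition~\ref{flfs} to $\mathbf{P}\overline{\tilde R_\d}$, treating the external field $\e^2\Phi\cdot\nabla_v$ exactly as for $\tilde f$. Bootstrapping then yields
\[
\e^{1/2}\|w\tilde R\|_\infty\lesssim\|\mathbf{P}\tilde R\|_{L^\infty_tL^6}+\e^{-1}\|\ip\tilde R\|_{L^\infty_tL^2}+\e^{1/2}\|w\tilde R_0\|_\infty+(\text{source})\ll 1,
\]
and the analogous estimate for $\pt_t\tilde R$ with the $\e^{3/2}$ scaling. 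The nonlinear piece closes with one factor $\e^{1/2}$ to spare:
\[
\e^{-1/2}\|\Gamma(\tilde R,\tilde R)\|_{L^2_{t,x,v}}\lesssim \e^{-1/2}\|\mathbf{P}\tilde R\|_{L^\infty_tL^6}\|\mathbf{P}\tilde R\|_{L^2_tL^3}+\e^{-1/2}\bigl[\e^{-1}\|\ip\tilde R\|_2\bigr]\bigl[\e^{1/2}\|w\tilde R\|_\infty\bigr],
\]
and analogously for $\Gamma(\tilde R,\pt_t\tilde R)+\Gamma(\pt_t\tilde R,\tilde R)$ combining $L^\infty_tL^6$ of $\mathbf{P}\tilde R$ with $L^2_tL^3$ of $\mathbf{P}\pt_t\tilde R$.

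The main obstacle I anticipate is controlling the source $\e^{-1/2}\tilde A$ in (\ref{Astaa}) together with the coupling $\e^{-1}L_{R_s}(\tilde f_1+\e\tilde f_2)$. For the coupling, since $\|R_s\|_2\ll 1$ by Theorem~\ref{mainth2} and since $\tilde f_1,\tilde f_2$ are velocity polynomials times $\sqrt\mu$ whose hydrodynamic coefficients lie in $L^\infty_tH^s_x$ by standard small-data well-posedness of (\ref{tilde_u_theta}), the term only pairs with $\ip\tilde R$ and stays of size $\ll 1$. For $\tilde A$, the crucial point is that the definition of $\tilde f_2$ was engineered so that $\mathbf{P}[\pt_t\tilde f_1+\e^{-1}v\cdot\nabla_x\tilde f_1+v\cdot\nabla_x\tilde f_2]=0$ thanks to (\ref{tilde_u_theta}) and the Boussinesq relation $\nabla_x(\tilde\rho+\tilde\t)=0$; all top-order pieces cancel, leaving $\ip(v\cdot\nabla_x\tilde f_2)$, $\Gamma$-quadratic terms in $(g_1+\tilde f_1,g_2+\tilde f_2)$, subleading $\e$-corrections and the already-controlled $\overline A_s$ from the steady expansion. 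The matching $\pt_t\tilde A\in L^2$ bound needed for the $\pt_t\tilde R$ estimate forces the time-regularity built into (\ref{initial_assupm_111}). Once $\|\tilde A\|_{L^2}+\|\pt_t\tilde A\|_{L^2}\lesssim 1$ is in hand, the linear iteration contracts, yielding existence, uniqueness and (\ref{energy_estimate1}); weak convergence as $\e\to 0$ follows from the uniform bounds combined with averaging-lemma compactness, exactly as for Theorem~\ref{energy_nonlinear}.
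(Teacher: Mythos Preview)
Your proposal is correct and follows exactly the approach the paper indicates: the paper itself omits the proof entirely, stating only that ``since the nonlinearity for $\tilde R$ and $\tilde R_t$ is weaker with an extra power of $\varepsilon^{1/2}$, the proof of this theorem also follows along the same lines of Theorem~\ref{energy_nonlinear}.'' Your sketch supplies precisely those details---the weighted $L^2$ energy estimate, the $L^6$ upgrade via Proposition~\ref{p6time}, the $L^\infty$ bootstrap, and the handling of the new source $\tilde A$ and couplings with $R_s$---with the one minor slip that the $L^2$-coercivity for $\mathbf P\tilde R$ in the unsteady setting comes from Lemma~\ref{dabc}/Proposition~\ref{dlinearl2} rather than the steady Proposition~\ref{linearl2}.
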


Since the nonlinearity for $\tilde{R}$ and $\tilde{R}_{t}$ is weaker with an
extra power of $\varepsilon ^{1/2},$ the proof of this theorem also follows
along the same lines of Theorem \ref{energy_nonlinear} and it will omitted.

\begin{theorem}
\label{remb}{\ Assume that $(u(t),\th(t))$ is a solution to the INSF initial boundary value problem for $t\in [0,T]$, $T>0$, such that
\begin{equation}
\sup_{0 \leq t \leq T}\| {u}(t) \|_{H^{4}(\Omega)} +\sup_{0 \leq t \leq T}
\| {\th }(t) \|_{H^{4}(\Omega)} \ {<} \ \infty.  \notag
\end{equation}
Then there is $\e(T)>0$ such that for $\e\le \e(T)$ there exists a
unique solution $F(t) = \mu + \e [ {f}_{1} + \e
{f}_{2} + \e^{1/2}{R}(t)] \sqrt{\mu}\geq 0$ on $t \in [0,T]$ such that 
\begin{equation}
\begin{split}
\mathcal{E}_{0}[R] (T) + \mathcal{D}_{0}[R] (T)+ \sup_{0 \leq t\leq T} \e%
^{3/2} \| w {R}_{t}(t) \|_{\infty}+ \sup_{0 \leq t\leq T} \e^{\frac 1 2} \|
w {R}(t) \|_{\infty} \lesssim 1 ,
\end{split}
\notag
\end{equation}
where $\mathcal{E}_0[R] (T)$ and $\mathcal{D}_0[R] (T)$ are defined in (\ref%
{cale}) and (\ref{cald}) with $\tilde f$ replaced by $R$. Moreover, $f_1$
and $f_2$ are defined in (\ref{f1f2}). }
\end{theorem}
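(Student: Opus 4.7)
My plan is to adapt the $L^2$--$L^\infty$ framework used for Theorem \ref{energy_nonlinear1} to the finite time interval $[0,T]$, with the structural difference that the background hydrodynamic fields $(u,\theta)$ now come directly from the prescribed smooth INSF solution (rather than from a steady/unsteady decomposition), and the exponential weight parameter $\lambda$ in $\mathcal{E}_\lambda,\mathcal{D}_\lambda$ is set to zero, so that all constants are allowed to depend on $T$.

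First I would insert the ansatz $F=\mu+\epsilon[f_1+\epsilon f_2+\epsilon^{1/2}R]\sqrt{\mu}$ into (\ref{basic2}) and use (\ref{INSF_dy}), the Boussinesq relation (\ref{div}), and the defining identity of $f_2$ in the spirit of (\ref{f2}) to derive an equation for $R$ of the same shape as (\ref{tild_RR}), namely
\[
\partial_t R+\epsilon^{-1}v\cdot\nabla_x R+\epsilon\Phi\cdot\nabla_v R+\epsilon^{-2}LR \;=\; \epsilon^{-1}L_{f_1+\epsilon f_2}R+\epsilon^{-1/2}\Gamma(R,R)+\epsilon\tfrac{\Phi\cdot v}{2}R+\epsilon^{-1/2}A,
\]
paired with a boundary condition of type (\ref{bdry_tildeRR}) with an $O(\epsilon^{1/2})$ inhomogeneity. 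The $H^4$ hypothesis on $(u,\theta)$ is exactly what is required to bound $A$ and $\partial_t A$ in $L^\infty_t L^2_{x,v}$: the source $A$ contains $v\cdot\nabla_x f_2$ and $\partial_t f_2$, each of which already carries two spatial derivatives of $(u,\theta)$, and converting $\partial_t$ through the INSF system costs two further spatial derivatives.

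I would then run the same four-step estimate used in Theorems \ref{energy_nonlinear} and \ref{energy_nonlinear1}. Step one: the basic energy identity in $R$ and $\partial_t R$ produces the dissipations $\epsilon^{-2}\|(\mathbf I-\mathbf P)R\|_\nu^2$ and $\epsilon^{-1}|(1-P_\gamma)R|_{2,\gamma}^2$ together with their time-derivative analogues. Step two: the coercivity estimate (Lemma \ref{dabc} and Proposition \ref{linearl2}) recovers $\|\mathbf P R\|_2$ via the carefully chosen Neumann test function, up to $\epsilon^{-1}\|(\mathbf I-\mathbf P)R\|_\nu$, the nonlinearity, and $A$. Step three: the $L^6$ bound of Proposition \ref{p6time} combined with the $L^3$ gain from the averaging lemma (Proposition \ref{flfs}), obtained after extending $R$ off the grazing set as in (\ref{rextend}), controls the quadratic term via $\epsilon^{-1/2}\|\Gamma(R,R)\|_{L^2_{t,x,v}}\lesssim \epsilon^{-1/2}\|\mathbf P R\|_{L^\infty_tL^6}\|\mathbf P R\|_{L^2_tL^3}$; the extra $\epsilon^{1/2}$ in the prefactor is the key quantitative gain that makes the nonlinear bound subcritical. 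Step four: the $L^2$--$L^\infty$ bootstrap along stochastic trajectories of \cite{EGKM} yields $\epsilon^{1/2}\|wR\|_\infty\lesssim \|\mathbf P R\|_{L^\infty_tL^6}+\epsilon^{-1}\|(\mathbf I-\mathbf P)R\|_{L^\infty_tL^2}+1$, and the analogous estimate with the extra $\epsilon^{3/2}$ factor for $\partial_t R$.

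The main obstacle will be closing the system with $\lambda=0$. The absence of exponential damping means that Gronwall applied to the linear couplings $\epsilon^{-1}L_{f_1+\epsilon f_2}R$ and $\epsilon(\Phi\cdot v/2)R$ generates a factor $e^{CT}$; to absorb the nonlinearity one must then take a smallness threshold $\epsilon\le \epsilon(T)$ that shrinks with $T$, which is admissible because $T$ is fixed in the statement. Existence and uniqueness of $R$ then follow from a standard Picard iteration built on the linear theory developed in the earlier sections, with $R(0)$ chosen so that $F(0)$ agrees with the prescribed data at the initial time, while the non-negativity $F\ge 0$ is a consequence of the $\epsilon^{1/2}\|wR\|_\infty$ bound combined with the positivity-preserving structure of $Q$ and $\mathcal P_\gamma^w$, exactly as in the stability argument for the steady case (Section 3.7).
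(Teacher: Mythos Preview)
Your proposal is correct and follows essentially the same approach as the paper: derive the remainder equation with the $\e^{1/2}$-weakened nonlinearity, repeat the four-step $L^2$--$L^6$--$L^\infty$ machinery of Theorem~\ref{energy_nonlinear} with $\lambda=0$, and close via Gronwall on $[0,T]$ by choosing $\e(T)$ so that $T e^{\mathcal K T}\e^{1/2}\ll 1$ absorbs the quadratic term. The paper's proof is organized around the same iteration scheme and the same Gronwall step; the only cosmetic difference is that the linear coupling $L_{f_1+\e f_2}R$ is there bounded crudely by $\mathcal K\|R\|_{L^2_{t,x,v}}$ (since $f_1,f_2$ are not small) and fed directly into Gronwall, exactly as you describe.
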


The proof of this sharp local-in-time validity theorem is given in Section
3.9. We note that the interval of the validity is the same as the life-span
of the classical solutions to the NSF system. In particular, if $\O\subset \R^2$,
then $T$ can be arbitrary.

\section{Steady Problems}

\subsection{Preliminary and the linear theorem}

{Assume $\partial\Omega$ is $C^{3}$. Then for any $x_{0} \in \partial\Omega$%
, there exists $0<r_{0}, r_{1}\ll1$ and $C^{3}$ function $\eta:
\{x_{\parallel} = (x_{\parallel,1}, x_{\parallel,2}) \in \mathbb{R}^{2}
:|x_{\parallel}| < r_{1}\} \rightarrow \partial\Omega \cap B(x_{0}, r_{0})$
such that if $x \in \partial\Omega \cap B(x_{0}, r_{0})$ then there exists a
unique $x_{\parallel}\in \mathbb{R}^{2}$ with $|x_{\parallel}|< r_{1}$ which
satisfies $x= \eta(x_{\parallel})$. Here, we have used the notation $%
B(x_{0}, r_{0}) := \{x \in \mathbb{R}^{3}: |x-x_{0}| < r_{0}\}$. Without
loss of generality we assume that $|\partial_{x_{\parallel,i}}
\eta(x_{\parallel})| \neq 0$ for $i=1,2$. }

Assume $\text{dist}(x, \partial\Omega)\ll1$ and $x_{0} \in \partial\Omega$
such that $\text{dist}(x, x_{0}) = \text{dist} (x, \partial\Omega)$. Then
there exists $\eta$ which is a parametrization of $\partial\Omega$ around $%
x_{0}$. Clearly 
\begin{equation}  \label{mini_eta}
\nabla_{x_{\parallel}}| \eta(x_{\parallel})
-x|^{2}=(\partial_{x_{\parallel,1}} | \eta(x_{\parallel}) -x |^{2},
\partial_{x_{\parallel,2}} | \eta(x_{\parallel}) -x |^{2} )=0, \ \ \ \ \text{%
for some} \ \ x_{\parallel}.
\end{equation}
On the other hand, if $|\eta(x_{\parallel}) -x|\ll 1$, 
\begin{equation*}
\partial^{2}_{x_{\parallel,i}} | \eta(x_{\parallel}) - x|^{2} =
\partial_{x_{\parallel,i}} \big[ 2 \partial_{i} \eta(x_{\parallel}) \cdot(
\eta(x_{\parallel}) -x)\big] = O(|\eta(x_{\parallel} ) -x|) +2 |\partial_{i}
\eta(x_{\parallel})|^{2} \neq 0.
\end{equation*}
Then, by the implicit function theorem, there exists a unique $%
x_{\parallel}(x) \in C^{2}$ satisfying (\ref{mini_eta}). Moreover, 
\begin{eqnarray*}
\left[%
\begin{array}{cc}
\partial_{x_{i}} x_{\parallel,1} &  \\ 
\partial_{x_{i}} x_{\parallel,2} & 
\end{array}%
\right] = \left[%
\begin{array}{cc}
|\partial_{1} \eta |^{2} + \partial_{1}^{2} \eta \cdot ( \eta - x) & 
\partial_{1} \eta \cdot \partial_{2}\eta + \partial_{1}\partial_{2} \eta
\cdot ( \eta - x) \\ 
\partial_{1} \eta \cdot \partial_{2} \eta + \partial_{1} \partial_{2} \eta
\cdot ( \eta - x) & | \partial_{2} \eta| + \partial_{2}^{2} \eta \cdot (
\eta - x)%
\end{array}%
\right] ^{-1} \left[%
\begin{array}{cc}
-\partial_{1} \eta_{i} &  \\ 
-\partial_{2} \eta_{i} & 
\end{array}%
\right] ,
\end{eqnarray*}
where $\eta=\eta (x_{\parallel})$. Then we define $x_{\perp} \in C^{2}$ for $%
\text{dist}(x, \partial\Omega) \ll1$, 
\begin{equation}  \label{xperp}
x_{\perp} (x) := [x- \eta(x_{\parallel} (x))] \cdot n(x_{\parallel}(x)).
\end{equation}
Note that $\text{dist}(x, \partial\Omega) =|x_{\perp}(x)|$ if $\text{dist}%
(x, \partial\Omega) \ll 1$.

By the compactness of $\partial\Omega$, we conclude that if $\text{dist}%
(x,\partial\Omega)< 4r$ for some $0< r\ll_{\Omega} 1$ then there exists $%
(x_{\parallel}(x), x_{\perp}(x)) \in C^{2}$ such that $x=
\eta(x_{\parallel}(x)) + x_{\perp}(x) n(x_{\parallel}(x))$.

Finally we define the $C^{2}$ function $\xi : \mathbb{R}^{3} \rightarrow 
\mathbb{R}$ as 
\begin{equation}  \label{xi_def}
\xi(x) := x_{\perp}(x) \chi( \frac{|\text{dist}(x,\Omega)|^{2}}{4r^{2}} ) +r%
\big[ 1- \chi( \frac{|\text{dist}(x,\Omega)|^{2}}{r^{2}} )\big],
\end{equation}
where 
\begin{equation}  \label{chi}
\chi \in C^{\infty}_{c}(\mathbb{R}) \ \text{such that} \ 0 \leq \chi \leq 1,
\ \chi^{\prime} (x) \geq -4 \times \mathbf{1}_{\frac{1}{2} \leq |x| \leq 1}
\ \text{and} \ \chi(x)= 
\begin{cases}
1 & \text{if } |x| \leq \frac{1}{2}, \\ 
0 & \text{if } |x| \geq 1 .%
\end{cases}%
\end{equation}
Then $\Omega = \{x \in \mathbb{R}^{3}: \xi(x) <0\}$. If $|\xi(x)| \ll 1$
then $\xi(x)= x_{\perp}(x)$.

Moreover $n(x)\equiv \frac{\nabla \xi(x)}{|\nabla \xi(x)|}$ at the boundary $%
x \in\partial\Omega$. From now we define 
\begin{equation}  \label{n_inte}
n(x) \ := \ {\nabla \xi(x)} /{|\nabla \xi(x)|} \ \ \ \text{for} \ \ x \in 
\mathbb{R}^{3}.
\end{equation}

\vspace{8pt}

We use this new coordinate (\ref{xperp}) to extend $\Phi$ on the whole
space, and denote this extension by $\bar{\Phi}$, with $\| \bar{\Phi }
\|_{\infty} \leq \| {\Phi } \|_{\infty}$: For $0< \delta \ll1$, 
\begin{eqnarray*}
\bar{\Phi}(x) \ := \ \Phi(x) \mathbf{1}_{x \in \bar{\Omega}} + \Phi
(\eta(x_{\parallel} (x))) \chi( \frac{|\xi(x)|}{\delta} )\mathbf{1}_{x \in 
\mathbb{R}^{3} \backslash \bar{\Omega}}.
\end{eqnarray*}
Therefore without loss of generality we assume that $\Phi$ is defined on the
whole space $\mathbb{R}^{3}$.

\begin{definition}
Assume $\Phi= \Phi(x) \in C^{1}$. Consider the steady linear transport
equation 
\begin{equation}  \label{st_linear}
v\cdot \nabla_{x} f + \e^{2} \Phi \cdot \nabla_{x} f = g.
\end{equation}
The equations of the characteristics for (\ref{st_linear}) are 
\begin{equation}
\dot X=V, \quad \dot V=\e^2\Phi(X), \quad X(t;t;x,v)=x, \quad V(t;t;x,v)=v.
\label{char}
\end{equation}
{If $X(\tau;t,x,v) \in \Omega$ for all $\tau$ between $s$ and $t$, then} 
\begin{equation}  \label{intfor}
\begin{split}
& X(s;t;x,v)=x+v (s-t) +\e^2\int_t^s\int_t^\tau\Phi(X(\tau^{\prime};t;x,v))%
\mathrm{d} \tau^{\prime}\mathrm{d} \tau, \\
& V(s;t;x,v)= v+\e^2\int_t^s \Phi(\tau;s;x,v)) \mathrm{d} \tau .
\end{split}%
\end{equation}
Note that the ODE (\ref{char}) is autonomous since $\Phi$ is
time-independent.

Define 
\begin{equation}  \label{def_tb}
\begin{split}
t_{\mathbf{b}}(x,v)&:=\inf \{t\geq 0:X(-t;0;x,v)\notin \Omega \}, \\
x_{\mathbf{b}}(x,v)&:=X(-t_{\mathbf{b}}(x,v);0;x,v,0), \ \ \ v_{\mathbf{b}%
}(x,v):=V(-t_{\mathbf{b}}(x,v);0;x,v),
\end{split}%
\end{equation}
and 
\begin{equation}  \label{def_tf}
\begin{split}
t_{\mathbf{f}}(x,v)&:=\inf \{t\geq 0:X( t;0;x,v)\notin \Omega \}, \\
x_{\mathbf{f}}(x,v)&:=X( t_{\mathbf{f}}(x,v);0;x,v,0), \ \ \ v_{\mathbf{f}%
}(x,v) :=V( t_{\mathbf{f}}(x,v);0;x,v).
\end{split}%
\end{equation}
Clearly $(x_{\mathbf{b}}(x,v),v_{\mathbf{b}}(x,v))\in \g_-$ and $(x_{\mathbf{%
f}}(x,v),v_{\mathbf{f}}(x,v))\in \g_+$.
\end{definition}

\begin{lemma}
{\label{bdry_int}}For any open subset $\Omega\subset \mathbb{R}^{3}$, $B
\subset \partial\Omega$, and $f\in L^1(\Omega\times \mathbb{R}^3)$, 
\begin{eqnarray}
&&\iint_{\Omega \times \mathbb{R}^{3}} |f(x,v)| \mathbf{1}_{x_{\mathbf{b}%
}(x,v) \in B } \mathbf{1}_{t_{\mathbf{b}}(x,v) \leq \frac{1}{m} \ln \frac{1}{%
\e}} \mathrm{d} x \mathrm{d} v  \label{bdry_int2} \\
&=& \int_{B}\int_{n(y) \cdot u <0 } \int_{0}^{ \min\{ t_{\mathbf{f}}(y,u), 
\frac{1}{m} \ln \frac{1}{\e} \} } |f(X( s;0,y,u), V( s;0,y,u))|  \notag \\
&& \ \ \ \ \ \ \ \ \ \ \ \ \ \ \ \ \ \ \ \ \ \ \ \ \ \ \ \ \ \ \ \ \ \ \ \ \
\ \ \ \times \{ |n(y) \cdot u| + O(\e)(1+|u|) s \} \mathrm{d} s \mathrm{d} u 
\mathrm{d} S_{y} ,  \notag
\end{eqnarray}
and 
\begin{eqnarray}
&&\iint_{\Omega \times \mathbb{R}^{3}} |f(x,v)| \mathbf{1}_{x_{\mathbf{f}%
}(x,v) \in B } \mathbf{1}_{t_{\mathbf{f}}(x,v) \leq \frac{1}{m} \ln \frac{1}{%
\e}} \mathrm{d} x \mathrm{d} v  \label{bdry_int1} \\
&=& \int_{ B}\int_{n(y) \cdot u >0 } \int^{0}_{ -\min\{ t_{\mathbf{b}}(y,u), 
\frac{1}{m} \ln \frac{1}{\e} \} } | f(X( s;0,y,u), V( s;0,y,u))|  \notag \\
&& \ \ \ \ \ \ \ \ \ \ \ \ \ \ \ \ \ \ \ \ \ \ \ \ \ \ \ \ \ \ \ \ \ \ \ \ \
\ \ \ \ \ \ \ \ \ \times \{ |n(y) \cdot u| + O(\e)(1+|u|) |s| \} \mathrm{d}
s \mathrm{d} u \mathrm{d} S_{y} .  \notag
\end{eqnarray}
\end{lemma}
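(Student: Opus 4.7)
The lemma is a characteristic change of variables: the integral over $\Omega\times\mathbb{R}^3$ is reparametrized by following trajectories of (\ref{char}) back to (resp.\ forward to) the boundary. My plan is to introduce the map
\[
\Psi:(y,u,s)\ \longmapsto\ \bigl(X(s;0,y,u),\ V(s;0,y,u)\bigr),
\]
verify that it is a $C^1$ bijection on the appropriate region, and compute its Jacobian with the field $\e^2\Phi$ included. For (\ref{bdry_int2}) I restrict $\Psi$ to $(y,u)\in\g_-$ and $0<s<\min\{t_{\mathbf f}(y,u),\frac1m\log\frac1\e\}$. By uniqueness for the ODE (\ref{char}) (which uses $\Phi\in C^1$), $\Psi$ is injective there, and its image consists exactly of those $(x,v)$ with $x_{\mathbf b}(x,v)\in\mathrm{Im}(y)$ and $t_{\mathbf b}(x,v)\le\frac1m\log\frac1\e$, with $(y,u,s)=(x_{\mathbf b},v_{\mathbf b},t_{\mathbf b})$.

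Next I would compute the Jacobian using the local $C^3$ parametrization $y=\eta(y_\parallel)$ from the beginning of Section~2.1, so that $\mathrm{d}S_y=|\partial_1\eta\times\partial_2\eta|\,\mathrm{d}y_\parallel$. Differentiating (\ref{intfor}) and applying Gronwall gives
\[
\partial_{y_{\parallel,j}}X=\partial_j\eta+O(\e^2 s^2),\quad \partial_{u_i}X=sI_i+O(\e^2 s^3),\quad \partial_sX=u+O(\e^2 s),
\]
\[
\partial_{y_{\parallel,j}}V=O(\e^2 s),\quad \partial_{u_i}V=I_i+O(\e^2 s^2),\quad \partial_sV=\e^2\Phi(X(s)),
\]
where each $O(\cdot)$ is a function of $\|\Phi\|_{C^1}$, $|u|$ and $s$ through the Gronwall constant. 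At $\e=0$ the Jacobian matrix is block-structured and a direct column rearrangement gives $|\det J_0|=|\partial_1\eta\times\partial_2\eta|\,|n(y)\cdot u|$. Expanding $|\det J|=|\det J_0|\bigl(1+\mathrm{tr}(J_0^{-1}\delta J)\bigr)+\cdots$ and using that every entry of $\delta J$ carries a factor $\e^2$ together with at most a polynomial factor in $s$ and $|u|$, in the truncated region $s\le\frac1m\log\frac1\e$ we have $\e^2\,\mathrm{poly}(s,|u|)\le O(\e)(1+|u|)s$ for $\e$ small. Therefore
\[
\mathrm{d}x\,\mathrm{d}v\ =\ \bigl\{|n(y)\cdot u|+O(\e)(1+|u|)s\bigr\}\,\mathrm{d}S_y\,\mathrm{d}u\,\mathrm{d}s
\]
on the region of integration, which upon integration against $|f\circ\Psi|$ yields (\ref{bdry_int2}).

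For (\ref{bdry_int1}) I would run the symmetric argument with the forward trajectory: parametrize $(x,v)$ with $t_{\mathbf f}(x,v)\le\frac1m\log\frac1\e$ by $(y,u,s)=(x_{\mathbf f},v_{\mathbf f},-t_{\mathbf f})$, so $(y,u)\in\g_+$ and $s\in(-\min\{t_{\mathbf b}(y,u),\frac1m\log\frac1\e\},0)$, and apply the same Jacobian computation to $\Psi$. The only difference is the sign of $s$ and the orientation of the boundary, which produces the $|s|$ in the error $O(\e)(1+|u|)|s|$.

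\textbf{Main obstacle.} The delicate point is the Jacobian bookkeeping in the presence of the field: without the truncation $s\le\frac1m\log\frac1\e$, $\e^2 s^k$ could be large, but within this range one has $\e^2\log\frac1\e\ll\e$, which is precisely why the crude $O(\e)(1+|u|)s$ remainder suffices. Establishing injectivity of $\Psi$ and verifying that the cutoff $t_{\mathbf b}(x,v)\le\frac1m\log\frac1\e$ matches the cutoff $s\le\frac1m\log\frac1\e$ (modulo $s<t_{\mathbf f}(y,u)$) is a straightforward consequence of the well-posedness of (\ref{char}) together with the definitions (\ref{def_tb})--(\ref{def_tf}).
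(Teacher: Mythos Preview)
Your approach is essentially the paper's: both introduce the change of variables $(y,u,s)\mapsto(X(s;0,y,u),V(s;0,y,u))$ from the boundary, use a local graph $y=\eta(y_\parallel)$, derive Gronwall bounds on the derivatives of $(X,V)$, and compute the $6\times6$ Jacobian. The only technical difference is in how the Jacobian is evaluated. The paper applies the Schur complement formula
\[
\det\begin{pmatrix}A&B\\C&D\end{pmatrix}=\det D\cdot\det(A-BD^{-1}C)
\]
with $D=\partial V/\partial u=I+O(\e^2 s^2 e^{C_\Phi s})$, which is uniformly invertible on the truncated region, reducing the problem to a $3\times3$ determinant it computes directly. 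Your perturbative expansion around $\e=0$ also works, but the formula you wrote, $|\det J_0|\bigl(1+\mathrm{tr}(J_0^{-1}\delta J)\bigr)$, is not literally valid near grazing since $\det J_0\sim|n(y)\cdot u|$ degenerates there; what you are really using is the polynomial identity $\det(J_0+\delta J)=\det J_0+\mathrm{tr}\bigl(\mathrm{adj}(J_0)\,\delta J\bigr)+\cdots$, and since all minors of $J_0$ are bounded (at most linear in $u$ and polynomial in $s$) this indeed yields the $O(\e)(1+|u|)s$ remainder. One further small correction: the Gronwall bounds carry exponential factors $e^{C_\Phi s}$, not purely polynomial ones (cf.\ the paper's (\ref{free_DX})); this is precisely why the truncation $s\le\frac1m\log\frac1\e$ with $m\gg C_\Phi$ is needed to make $\e^2 e^{C_\Phi s}\le\e$.
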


\begin{proof}
\textit{Step 1.} From (\ref{char}), for $\nabla \in \{\nabla_{x}, \nabla_{v}
\}$, 
\begin{equation}
\frac{d}{ds} \binom{ \nabla X}{\nabla V} =\mathbb{A} \binom{ \nabla X}{%
\nabla V}, \quad \mathbb{A}=\left( 
\begin{array}{c|c}
0_{3,3} & I_{3,3} \\ \hline
\e^{2} \nabla_{x} \Phi & 0_{3,3}%
\end{array}
\right) .
\end{equation}
Note $\binom{ \nabla X}{\nabla V}|_{s=t} = Id$. Since the matrix $\mathbb{A}$
is bounded, there exists $C_{\Phi } >0$ such that 
\begin{equation}  \label{free_DX}
\begin{split}
& | \partial_{x_{j}} X_{i} (s; t,x,v ) | \ \leq \ \ C_{\Phi}e^{C_{\Phi}|t-s|
}, \ \ \ \ \ \ \ \ \ \ \ \ | \partial_{v_{j}} X_{i} (s; t,x,v ) | \ \ \leq \
\ C_{\Phi} |t-s| e^{C_{\Phi}|t-s| }, \\
& | \partial_{x_{j}} V_{i} (s; t,x,v ) | \ \leq \ \ C_{\Phi}\e^{2} |t-s|
e^{C_{\Phi}|t-s| }, \ \ | \partial_{v_{j}} V_{i} (s; t,x,v ) | \ \ \leq \ \
C_{\Phi}e^{C_{\Phi}|t-s| }.
\end{split}%
\end{equation}

\noindent\textit{Step 2.} Assume $n_{3}(x_{\mathbf{b}}(y,u)) \neq 0$ so that
the boundary $\partial\Omega$ is locally a graph of $\eta (y_{1},y_{2})$: $%
x= (y_{1},y_{2},y_{3}) \in \partial\Omega$ iff $y_{3} = \eta (y_{1},y_{2})$.
By the definitions, 
\begin{eqnarray*}
X&:=& X(s; 0,y, u) = \left( 
\begin{array}{c}
y_{2} \\ 
y_{2} \\ 
\eta (y_{1} , y_{2})%
\end{array}
\right)+ u s + \e^{2} \int^{s}_{0}\int^{\tau}_{0} \Phi(X(\tau^{\prime}; 0,
y, u)) \mathrm{d} \tau^{\prime} \mathrm{d} \tau , \\
V&:=& V(s ; 0,y, u ) = u+ \e^{2}\int^{s}_{0} \Phi (X(\tau;0,y,u)) \mathrm{d}
\tau.
\end{eqnarray*}
From (\ref{free_DX}), 
\begin{eqnarray*}
\frac{\partial X}{\partial (y_{1}, y_{2})} &=& \left(%
\begin{array}{cc}
1 & 0 \\ 
0 & 1 \\ 
\partial_{1} \eta (y_{1}, y_{2}) & \partial_{2} \eta (y_{1}, y_{2})%
\end{array}%
\right) + \e^{2} \int^{s}_{0} \int^{\tau}_{0} \nabla_{x}\Phi
(X(\tau^{\prime};0,y,u)) \cdot \nabla_{x} X (\tau^{\prime};0,y,u) \mathrm{d}
\tau^{\prime} \mathrm{d} \tau \\
&=& \left(%
\begin{array}{cc}
1 & 0 \\ 
0 & 1 \\ 
\partial_{1} \eta ( y_{1}, y_{2}) & \partial_{2} \eta ( y_{1}, y_{2})%
\end{array}%
\right) + O( \e^{2}) s ^{2} e^{C_{\Phi} s} , \\
\frac{\partial X}{\partial s} &=& V = u + O(\e^{2}) s, \ \ \ \frac{\partial V%
}{\partial s} \ = \ \e^{2} \Phi(X), \\
\frac{\partial X}{\partial v} &=& s I_{3,3} + \e^{2} \int^{s}_{0}
\int^{\tau}_{0} \nabla_{x} \Phi (X( \tau^{\prime} ;0,y,u)) \cdot \nabla_{v}
X( \tau^{\prime} ;0,y,u) \mathrm{d} \tau^{\prime} \mathrm{d} \tau = s
I_{3,3} + O(\e^{2})s^{3} e^{C_{\Phi} s} , \\
\frac{\partial V}{\partial (y_{1}, y_{2})} &=& \e^{2} \int^{s}_{0} \nabla
_{x}\Phi ( X(\tau; 0, y, u)) \cdot \nabla_{x} X(\tau; 0, y, u) \mathrm{d}
\tau = O(\e^{2}) s e^{C_{\Phi } s} , \\
\frac{\partial V}{\partial v} &=& I_{3,3} + \e^{2} \int^{s}_{0} \nabla_{x}
\Phi (X(\tau;0,y,u)) \cdot \nabla_{v} X(\tau;0,y,u) \mathrm{d} \tau =
I_{3,3} + O(\e^{2}) s^{2}e^{C_{\Phi} s}.
\end{eqnarray*}
We consider 
\begin{eqnarray*}
\det\left( \frac{\partial (X,V)}{\partial (y_{1},y_{2}, s, v)} \right)
&=&\det \left(%
\begin{array}{c|c|c}
I_{2,2} + O(\e^{2}) s^{2} e^{C_{\Phi} s} & v + O(\e^{2})s & sI_{3,3} + O(\e%
^{2}) s^{3}e^{C_{\Phi} s} \\ 
\nabla \eta+ O(\e^{2}) s^{2} e^{C_{\Phi} s} &  &  \\ \hline
O(\e^{2})s e^{C_{\Phi} s} & \e^{2} \Phi(X) & I_{3,3} +O( \e^{2}) s^{2} e^{
C_{\Phi} s}%
\end{array}
\right).
\end{eqnarray*}
Recall the formula for the block matrix when a submatrix $D$ is invertible 
\begin{equation*}
\det \left(%
\begin{array}{c|c}
A & B \\ \hline
C & D%
\end{array}%
\right)= \det(D) \det (A-BD^{-1}C).
\end{equation*}
For $s \leq \frac{1}{m} \ln \frac{1}{\e}$ for $m \gg 1$, the submatrix $%
\frac{\partial V}{\partial v}$ is invertible and 
\begin{eqnarray*}
\det \Big( \frac{\partial V}{\partial v}\Big) = 1 + O(\e^{2}) s^{2}
e^{C_{\Phi} s} \neq 0 , \ \ \ \Big ( \frac{\partial V}{\partial v}\Big) %
^{-1} = I_{3,3} - \frac{O(\e^{2}) s^{2} e^{C_{\Phi} s}}{1+ 3O(\e^{2}) s^{2}
e^{C_{\Phi} s} }.
\end{eqnarray*}
Furthermore, for $s \leq \frac{1}{m} \ln \frac{1}{\e}$ for $m \gg 1$, we
have $s^{k} e^{C_{\Phi} s} \lesssim \e^{0+}$ and therefore 
\begin{eqnarray*}
&&\det \Big( \frac{\partial V}{\partial v}\Big) \det \left( \frac{\partial X%
}{\partial (x,s)} - \frac{\partial X }{\partial v} \Big ( \frac{\partial V}{%
\partial v}\Big) ^{-1} \frac{\partial V}{\partial (x,s)} \right) \\
&=& \{1+ O(\e) s\} \det \bigg( \bigg(%
\begin{array}{cc}
I_{2,2} + O(\e^{2} ) s^{2} e^{C_{\Phi} s} & u+ O(\e^{2} ) s \\ 
\nabla \eta + O(\e^{2} ) s^{2} e^{C_{\Phi} s} & 
\end{array}
\bigg) \\
&& \ \ \ \ \ \ \ \ \ \ \ \ \ \ \ \ \ \ \ \ \ \ - \bigg( s I_{3,3} + O(\e^{2}
) s^{2} e^{C_{\Phi} s} \bigg) \bigg( I_{3,3} - \frac{O(\e^{2}) s^{2}
e^{C_{\Phi} s}}{1+ 3O(\e^{2}) s^{2} e^{C_{\Phi} s} } \bigg) \bigg( O(\e^{2}
) s^{2} e^{C_{\Phi} s} \bigg) \bigg) \\
&=& \{1+ O(\e) s\} \det \left( \bigg( 
\begin{array}{cc}
I_{2,2} + O(\e^{2} ) s^{2} e^{C_{\Phi} s} & u+ O(\e^{2} ) s \\ 
\nabla \eta + O(\e^{2} ) s^{2} e^{C_{\Phi} s} & 
\end{array}
\bigg) + \bigg( O(\e^{2} ) s^{3} e^{C_{\Phi} s} \bigg) \right) \\
&=&\{1+ O(\e) s\} \det \bigg(%
\begin{array}{cc}
I_{2,2} + O(\e ) s & u + O(\e ) s \\ 
\nabla \eta + O(\e ) s & 
\end{array}
\bigg) \\
&=& u\cdot \big(- \partial_{1} \eta(y_{1}, y_{2}), - \partial_{2}
\eta(y_{1}, y_{2}), 1\big) + O(\e) s(1+ |u|) \\
&=& -u \cdot n(y) \sqrt{1+ (\partial_{1} \eta(y_{1}, y_{2}))^{2}
+(\partial_{2} \eta(y_{1}, y_{2}))^{2} } + O(\e) s(1+ |u|).
\end{eqnarray*}
Therefore, 
\begin{eqnarray*}
\det\left( \frac{\partial (X,V)}{\partial (s, y,u)} \right) = O(1) n(y)
\cdot u + O(\e) (1+ |u|) s .
\end{eqnarray*}
These prove (\ref{bdry_int2}). For (\ref{bdry_int1}), note that if $s \leq 
\frac{1}{m} \ln \frac{1}{\e}, \ n(y)\cdot u>0$ then $t_{\mathbf{b}}( X(s; 0,
y,u), V(s;0,y,u))= s \leq \frac{1}{m} \ln \frac{1}{\e}$, and if $s \leq 
\frac{1}{m} \ln \frac{1}{\e}, \ n(y)\cdot u<0$ then $t_{\mathbf{f}}( X(s; 0,
y,u), V(s;0,y,u))= s \leq \frac{1}{m} \ln \frac{1}{\e}$. This confirms (\ref%
{bdry_int1}).
\end{proof}

Next lemma extends the Ukai's Lemma (\cite{CIP}) to the case with external
fields.

\begin{lemma}
\label{trace_s}Assume $\Omega$ is an open bounded subset of $\mathbb{R}^{3}$
with $\partial\Omega$ is $C^{3}$. We define 
\begin{equation}  \label{non_grazing}
\gamma_{\pm}^{\delta} : = \{ (x,v) \in \gamma_{\pm} : | n(x)\cdot v | >
\delta, \ \ \delta\leq |v| \leq \frac{1}{\delta} \}.
\end{equation}
Then 
\begin{equation*}
| f\mathbf{1}_{\gamma_{\pm}^{\delta}} |_{1} \lesssim_{\delta, \Omega} \| f
\|_{1} + \| v\cdot \nabla_{x} f + \e^{2} \Phi \cdot \nabla_{v} f \|_{1} .
\end{equation*}

\end{lemma}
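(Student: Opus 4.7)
The plan is to follow Ukai's trace argument along the characteristics $(X(s;0,y,u),V(s;0,y,u))$ from \eqref{char}, adapted to accommodate the external-field term $\e^{2}\Phi\cdot\nabla_{v}f$. Writing $h:=v\cdot\nabla_{x}f+\e^{2}\Phi\cdot\nabla_{v}f$, the chain rule along a characteristic gives $\frac{d}{ds}f(X(s),V(s))=h(X(s),V(s))$, hence the representation
\[
f(y,u)=f(X(s;0,y,u),V(s;0,y,u))-\int_{0}^{s}h(X(\tau;0,y,u),V(\tau;0,y,u))\,\dd\tau.
\]
The idea is to fix a small $t_{0}=t_{0}(\delta,\Omega,\|\Phi\|_{\infty})>0$ so that for every $(y,u)\in\gamma_{+}^{\delta}$ the backward characteristic $X(s;0,y,u)$ stays inside $\Omega$ for $s\in[-t_{0},0]$, and symmetrically for $\gamma_{-}^{\delta}$ with $s\in[0,t_{0}]$. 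This is checked through the defining function $\xi$ from \eqref{xi_def}: $\xi(y)=0$ and $\frac{d}{ds}\xi(X(s))|_{s=0}=|\nabla\xi(y)|\,n(y)\cdot u$, which is at least a positive multiple of $\delta$ on $\gamma_{\pm}^{\delta}$; the second-order correction is controlled by the curvature of $\partial\Omega$ and by $\e^{2}\|\Phi\|_{\infty}(1+|u|)\le\e^{2}\|\Phi\|_{\infty}(1+\delta^{-1})$, so a Taylor expansion shows that $\xi(X(s))$ keeps the correct sign on the full interval provided $t_{0}\ll\delta$ and $\e\ll 1$.

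Next I would integrate the representation over this time interval, divide by $t_{0}$, multiply by $|n(y)\cdot u|$, and integrate over $\gamma_{\pm}^{\delta}$, obtaining
\[
t_{0}\,|f\mathbf{1}_{\gamma_{\pm}^{\delta}}|_{1}\le\int_{\gamma_{\pm}^{\delta}}\!\!\int_{-t_{0}}^{0}|f(X(s),V(s))|\,\dd s\,\dd\gamma+\int_{\gamma_{\pm}^{\delta}}\!\!\int_{-t_{0}}^{0}\!\int_{s}^{0}|h(X(\tau),V(\tau))|\,\dd\tau\,\dd s\,\dd\gamma.
\]
For each of the two terms on the right, I would perform the change of variables $(y,u,s)\mapsto(X(s;0,y,u),V(s;0,y,u))$ provided by Lemma \ref{bdry_int}, whose Jacobian equals $O(1)\,|n(y)\cdot u|+O(\e)(1+|u|)|s|$. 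On $\gamma_{\pm}^{\delta}$ the non-grazing restriction forces $|n(y)\cdot u|\ge\delta$ and $|u|\le\delta^{-1}$, so for $|s|\le t_{0}$ with $t_{0}$ and $\e$ small the Jacobian is comparable to $|n(y)\cdot u|$, making the weight $|n(y)\cdot u|/|J|$ uniformly bounded. Hence the first term is bounded by $C(\delta,\Omega)\|f\|_{1}$ and the second by $C(\delta,\Omega)\,t_{0}\,\|h\|_{1}$, yielding $t_{0}|f\mathbf{1}_{\gamma_{\pm}^{\delta}}|_{1}\lesssim_{\delta,\Omega}\|f\|_{1}+t_{0}\|h\|_{1}$ and therefore the claimed bound after dividing by $t_{0}\sim\delta$.

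The main technical point is the uniform choice of $t_{0}$ on $\gamma_{\pm}^{\delta}$ in the presence of the $\e^{2}\Phi\cdot\nabla_{v}f$ term; once this is in hand, everything else is a packaging of the Jacobian computation already carried out in Lemma \ref{bdry_int}. For rough $f$, the representation formula is justified by mollification and a routine approximation argument, exploiting the $L^{1}$ assumption on both $f$ and $h$. Note that for the $\gamma_{-}^{\delta}$ case the signs are reversed: forward characteristics stay in $\Omega$ on $[0,t_{0}]$ and the inner integral $\int_{s}^{0}$ is replaced by $\int_{0}^{s}$ with $s\in[0,t_{0}]$, but the change-of-variables computation is otherwise identical.
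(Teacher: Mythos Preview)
Your proposal is correct and follows essentially the same Ukai trace argument as the paper: integrate the characteristic identity over a time window during which the trajectory stays in $\Omega$, then convert the boundary-time integral to a volume integral via the Jacobian computation of Lemma~\ref{bdry_int}. The only cosmetic difference is how you secure the time window: you fix a uniform $t_{0}=t_{0}(\delta,\Omega)$ by a Taylor expansion of $\xi(X(s))$, whereas the paper instead proves the equivalent lower bound $t_{\mathbf{b}}(x,v)\gtrsim_{\delta}1$ on $\gamma_{+}^{\delta}$ using the geometric estimate $|x_{\mathbf{b}}-x|\gtrsim|n(x)\cdot\frac{x-x_{\mathbf{b}}}{|x-x_{\mathbf{b}}|}|$ from~\cite{Guo08}, and then integrates over the full interval $[t-t_{\mathbf{b}},t]$ after checking that $t_{\mathbf{b}}\lesssim\delta^{-1}$ fits within the regime of Lemma~\ref{bdry_int}.
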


\begin{proof}
Let $f$ solve (\ref{st_linear}) in the sense of distributions. Then along
the trajectory for $(x,v) \in \gamma_{+}$, with $X(s)\equiv X(s;t;x,v)$ and $%
V(s)\equiv V(s;t;x,v)$,
\begin{eqnarray*}
| f(x,v)| &\lesssim& |f(X(s),V(s))| + \int^{t}_{s} | g(X(\tau), V(\tau) ) | 
\mathrm{d}\tau.
\end{eqnarray*}
Integrating over $s \in [ t - t_{\mathbf{b}}(x,v) , t ]$, we obtain 
\begin{equation}  \label{ukai_lower}
t_{\mathbf{b}}(x, v) |f(x,v)| \ \lesssim \ \int^{t}_{t-t_{\mathbf{b}}(x, v)}
|f(X(s),V(s))| \mathrm{d} s +t_{\mathbf{b}}(x, v) \int^{t}_{t-t_{\mathbf{b}%
}(x, v)} |g(X(\tau), V(\tau)) | \mathrm{d} \tau.
\end{equation}
On the other hand, for $(x,v) \in \gamma_{+}^{\delta}$, from $x_{\mathbf{b}%
}(x,v) = x- t_{\mathbf{b}}(x,v) v + O(\e^{2})(t_{\mathbf{b}}(x,v))^{2}$, 
\begin{equation}  \label{tb_expn}
t_{\mathbf{b}}(x,v) = |v|^{-1}{| x_{\mathbf{b}}(x,v)-x |} + O(\e^{2})
|v|^{-1} {(t_{\mathbf{b}}(x,v))^{2}} .
\end{equation}
We claim, for $(x,v) \in \gamma_{+}^{\delta}$, 
\begin{equation}  \label{lower_tb}
t_{\mathbf{b}}(x,v) \gtrsim_{\delta}1.
\end{equation}
For large $C\gg1$, we only need to consider $(x,v)\in \gamma_{+}^{\delta}$
such that $t_{\mathbf{b}}(x,v) \leq C$. From (\ref{tb_expn}), $t_{\mathbf{b}%
}(x,v) = \frac{| x_{\mathbf{b}}(x,v)-x |}{|v |} - t_{\mathbf{b}}(x,v)O(\e%
^{2}) \frac{ t_{\mathbf{b}}(x,v) }{|v |} \geq \frac{| x_{\mathbf{b}}(x,v)-x |%
}{|v |} - t_{\mathbf{b}}(x,v)O(\e^{2}) \frac{C }{\delta}$ so that 
\begin{equation*}
t_{\mathbf{b}}(x,v) \geq [{1+ O(\e^{2}) \frac{C}{\delta}}]^{-1} |v|^{-1}{|
x_{\mathbf{b}}(x,v)-x |} \gtrsim |v|^{-1}{|x_{\mathbf{b}}(x, v) -x |} .
\end{equation*}
From $|x_{\mathbf{b}} -x | \gtrsim |n(x) \cdot \frac{x-x_{\mathbf{b}}}{|x-x_{%
\mathbf{b}}|}|$ for $x_{\mathbf{b}} , x \in \partial\Omega$ (\cite{Guo08}), 
\begin{equation*}
t_{\mathbf{b}}(x, v) \ \gtrsim \ {|n(x) \cdot (x-x_{\mathbf{b}}(x,v))|} / {\
[|v\|x-x_{\mathbf{b}}(x,v)|]}.
\end{equation*}
On the other hand, for $(x,v) \in\gamma_{+}^{\delta}$ and $\e \ll 1$, 
\begin{eqnarray*}
|n(x) \cdot (x_{\mathbf{b}} - x) | = |n(x) \cdot [ t_{\mathbf{b}} v + O(\e%
^{2}) (t_{\mathbf{b}})^{2} ]| = t_{\mathbf{b}} |n(x)\cdot v| + O(\e^{2}) (t_{%
\mathbf{b}})^{2} \gtrsim t_{\mathbf{b}} |n(x)\cdot v| ,
\end{eqnarray*}
and $|x-x_{\mathbf{b}}| \leq t_{\mathbf{b}}|v| + O(\e^{2}) (t_{\mathbf{b}%
})^{2} \lesssim t_{\mathbf{b}} |v|$. Therefore, we conclude our claim (\ref%
{lower_tb}) by 
\begin{equation*}
t_{\mathbf{b}} \gtrsim {t_{\mathbf{b}} |n(x) \cdot v|}/ {[t_{\mathbf{b}}
|v|] } \gtrsim {|n(x)\cdot v|}{|v|} ^{-1}\gtrsim_{\delta} 1.
\end{equation*}

From (\ref{ukai_lower}) and (\ref{lower_tb}), 
\begin{equation*}
\mathbf{1}_{(x,v) \in \gamma_{+}^{\delta}}| f(x,v)| \lesssim_{\delta}
\int_{t-t_{\mathbf{b}}(x,v)}^{t} |f( X(s),V(s))| \mathrm{d} s + \int
^{t}_{t-t_{\mathbf{b}}(x,v)} |g( X(s), V(s))| \mathrm{d} s.
\end{equation*}
Integrating the above over $\gamma_{+}^{\delta}$, we deduce 
\begin{eqnarray*}
\int_{\gamma_{+}^{\delta}} |f(x,v)| |n(x)\cdot v| \mathrm{d} S_{x} \mathrm{d}
v &\lesssim&\int_{\gamma_{+}^{\delta}} \int^{t}_{t-t_{\mathbf{b}}(x,v)} |f(
X(s;t,x,v),V(s;t,x,v))| |n(x)\cdot v| \mathrm{d} s \mathrm{d} S_{x} \mathrm{d%
} v \\
& + &\int_{\gamma_{+}^{\delta}} \int_{t-t_{\mathbf{b}}(x,v)}^{t} |g(
X(s;t,x,v), V(s;t,x,v))| |n(x)\cdot v| \mathrm{d} s \mathrm{d} S_{x} \mathrm{%
d} v.
\end{eqnarray*}
We check that there exists $\e_{0}>0, m\gg 1$, and $\delta>0$ such that, for
all $0< \e < \e_{0}$, 
\begin{eqnarray*}
\gamma_{+} ^{\delta} \subset \{ (x,v) \in \gamma_{+} : t_{\mathbf{b}}(x, \pm
v) \leq m \ln \frac{1}{\e} \ \text{and} \ |v| \geq m \e^{2} \ln \frac{1}{\e}
\}.
\end{eqnarray*}
Clearly, $|v|> \delta\geq m \e^{2}_{0} \ln \frac{1}{\e_{0}} \geq m \e^{2}
\ln \frac{1}{\e}$. Since $\Omega$ is bounded, we have $|v|t_{\mathbf{b}}(x,
\pm v) \lesssim_{\Omega}1$ and $t_{\mathbf{b}}(x,\pm v)\lesssim \frac{1}{|v|}
\leq \frac{1}{\delta}\leq m \ln \frac{1}{\e_{0}}\leq m \ln \frac{1}{\e}$.
Then 
\begin{equation*}
O(\e) (1+ |v|) s \lesssim {O(\e)} (1+ \frac{1}{\delta}) m \ln \frac{1}{\e}
\lesssim_{\delta}o(1) |n(x) \cdot v|.
\end{equation*}
From (\ref{bdry_int1}), we conclude that 
\begin{eqnarray*}
\int_{\gamma_{+}^{\delta}} |f(x,v)| |n(x)\cdot v| \mathrm{d} S_{x} \mathrm{d}
v \ \lesssim \ \| f\|_{1} + \| g \|_{1} .
\end{eqnarray*}
The same arguments can be applied to bound $|f \mathbf{1}_{\gamma_{-}^{%
\delta}}|$.
\end{proof}

\begin{lemma}
{\label{green}}Let $\Phi \in C^{1}$. Assume that $f(x,v), \ g(x,v)\in
L^2(\Omega\times\mathbb{R}^3)$, $\{v \cdot \nabla_x+\e^2\Phi\cdot\nabla_v\}
f, \{v \cdot \nabla_x+\e^2\Phi\cdot\nabla_v\} g \in L^2(\Omega\times\mathbb{R%
}^3)$ and $f_{\gamma}, g_{\gamma}\in L^2(\partial\Omega\times\mathbb{R}^3)$.
Then 
\begin{eqnarray}
\iint_{\Omega\times\mathbb{R}^3}\{v \cdot \nabla_x f +\e^2\Phi\cdot\nabla_v
f \} g + \{v \cdot \nabla_x g+\e^2\Phi\cdot\nabla_v g\} f = \int_{\gamma_+}
f g - \int_{\gamma_-} f g .  \label{steadyGreen}
\end{eqnarray}
\end{lemma}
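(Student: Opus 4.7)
The plan is to proceed in two stages: first establish the identity for smooth test functions by direct computation, then extend to the assumed regularity class by a density/renormalization argument.

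For $f, g \in C^1_c(\bar\Omega \times \mathbb{R}^3)$, I would use the product rule to write
\begin{equation*}
\{v\cdot\nabla_x f + \e^2 \Phi\cdot\nabla_v f\} g + \{v\cdot\nabla_x g + \e^2 \Phi\cdot\nabla_v g\} f = v\cdot\nabla_x(fg) + \e^2 \Phi(x)\cdot\nabla_v(fg).
\end{equation*}
Integrating over $\Omega \times \mathbb{R}^3$, the velocity-divergence term vanishes: since $\Phi = \Phi(x)$ is independent of $v$, one has $\Phi\cdot\nabla_v(fg) = \nabla_v\cdot(\Phi(x) fg)$, and the integral over $\mathbb{R}^3_v$ of this full $v$-divergence is zero by compact support (or decay) in $v$. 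For the spatial term, the classical divergence theorem on $\Omega$ gives
\begin{equation*}
\iint_{\Omega\times\mathbb{R}^3} v\cdot\nabla_x(fg)\,\dd x\,\dd v = \int_{\partial\Omega}\int_{\mathbb{R}^3} (n(x)\cdot v)\, fg\, \dd v\, \dd S(x),
\end{equation*}
and splitting the inner integral according to the sign of $n(x)\cdot v$ (with $\gamma_0$ having measure zero for the measure $|n\cdot v|\dd S\dd v$) yields exactly $\int_{\gamma_+} fg - \int_{\gamma_-} fg$. This establishes \eqref{steadyGreen} in the smooth case.

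For the general case, I would approximate $f$ and $g$ by a sequence of $C^1$ functions via mollification. Because the boundary is $C^3$, one can first extend $f,g$ across $\partial\Omega$ by reflection along the characteristic flow \eqref{char} (using the $C^2$ coordinates $x_\perp, x_\parallel$ near the boundary and the trace assumption $f_\gamma, g_\gamma \in L^2(\gamma)$), obtaining functions defined in a neighborhood of $\bar\Omega$. Then standard mollification in $(x,v)$ with $\rho_\delta$ produces smooth approximations $f_\delta, g_\delta$ converging to $f, g$ in $L^2(\Omega\times\mathbb{R}^3)$. Invoking the DiPerna--Lions commutator lemma applied to the first-order operator $T := v\cdot\nabla_x + \e^2\Phi\cdot\nabla_v$ (whose coefficients $(v, \e^2\Phi(x))$ are locally Lipschitz, since $\Phi \in C^1$), the commutator $[T, \rho_\delta\ast](\cdot)$ tends to $0$ in $L^2$, so $T f_\delta \to T f$ and $T g_\delta \to T g$ in $L^2$.

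It remains to pass to the limit in the boundary term. By the trace hypothesis $f_\gamma, g_\gamma \in L^2(\gamma)$, combined with Lemma \ref{trace_s} applied to $f_\delta - f$ and $g_\delta - g$ on the non-grazing pieces $\gamma^\delta_{\pm}$ defined in \eqref{non_grazing}, one controls $|(f_\delta - f)\mathbf{1}_{\gamma^\delta_\pm}|_1$ by $\|f_\delta - f\|_1 + \|T(f_\delta - f)\|_1$, which tends to zero. Combining this with the $L^2$ trace continuity and exhausting $\gamma\setminus\gamma_0$ by the sets $\gamma^\delta_\pm$ as $\delta \to 0$ (using the dominating bound $fg \in L^1(\gamma)$ from the assumed $L^2$ traces), the boundary integrals converge. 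Taking $\delta \to 0$ in the identity for $(f_\delta, g_\delta)$ then yields \eqref{steadyGreen}.

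The main obstacle I anticipate is the trace passage near the grazing set $\gamma_0$: the mollified traces need not converge in $L^2(\gamma)$ a priori, so the argument requires a careful interplay between the extension procedure (which must respect the characteristic flow to ensure the approximants' transport derivatives are controlled) and the Ukai-type trace bound of Lemma \ref{trace_s} away from $\gamma_0$, combined with dominated convergence using the hypothesis $f_\gamma, g_\gamma \in L^2$.
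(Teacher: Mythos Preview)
Your approach is sound, and in fact you have done considerably more work than the paper itself: the paper's entire proof is a one-line citation, stating that the argument for the field-free Green identity in Chapter~9 of \cite{CIP}, equation~(2.18), goes through unchanged in the presence of a $C^1$ field $\Phi(x)$. The observation that makes this citation legitimate is exactly the one you isolate in your smooth-case computation, namely that $\Phi$ depends on $x$ only, so the extra term $\e^2\Phi\cdot\nabla_v(fg)$ is a pure $v$-divergence and contributes nothing after integration.

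The density argument you sketch (extension across $\partial\Omega$ along characteristics, mollification, DiPerna--Lions commutator, then Lemma~\ref{trace_s} for the trace passage) is a legitimate way to carry out what \cite{CIP} does, though somewhat heavier than their original renormalization-along-trajectories proof. One point to watch in your outline: Lemma~\ref{trace_s} gives only an $L^1$ bound on $\gamma_\pm^\delta$, so to pass to the limit in the bilinear boundary term $\int_\gamma f_\delta g_\delta$ you need to pair this $L^1$ control on $f_\delta-f$ with $L^\infty$ (or at least uniform $L^2$) control of $g_\delta$ on the non-grazing set, not just with $g_\delta\to g$ in $L^1$. This is manageable but deserves a sentence; alternatively, since both $f_\gamma,g_\gamma\in L^2(\gamma)$ by hypothesis, one can argue directly that the mollified traces converge in $L^2_{\mathrm{loc}}(\gamma\setminus\gamma_0)$ and use dominated convergence with the $L^1(\gamma)$ bound on $fg$ to handle the grazing region, as you suggest at the end.
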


\begin{proof}
It is easy to check that the proof in Chapter 9 of \cite{CIP}, equation
(2.18), still holds in the presence of $C^{1}$ field.
\end{proof}

\vspace{8pt}

In the following sections, Section 2.2 and Section 2.3, we prove the next
linear estimate.

\begin{theorem}
\label{prop_linear_steady} For the steady case, we define a norm 
\begin{equation}  \label{norm_steady}
[\hskip-1pt [ f ]\hskip-1pt ] : = \e^{-1} \| (\mathbf{I} - \mathbf{P}) f
\|_{\nu} + \e^{-1/2} | (1- P_{\gamma}) f|_{2 } + | f|_{2 } + \| \mathbf{P} f
\|_{6} + \e^{1/2} \| w f \|_{\infty}.
\end{equation}

Suppose $\Phi \in L^{\infty}$, $g \in L^{2}(\Omega \times \mathbb{R}^{3})$,
and $r \in L^{2}(\gamma_{-})$ such that 
\begin{equation}
\iint_ {\Omega \times \mathbb{R}^3} g(x,v) \sqrt{\mu} \mathrm{d} x \mathrm{d}
v \ = \ 0 \ = \ \int_{\gamma_{-}} r(x,v) \sqrt{\mu} \mathrm{d} \gamma .
\label{constraint}
\end{equation}
Then, for sufficiently small $\e>0$, there exists a unique solution to 
\begin{equation}
v\cdot \nabla _{x}f+ {\ \e^{2}\frac 1 {\sqrt{\mu}} \Phi\cdot \nabla_v \left[%
\sqrt{\mu}f\right]}+\e^{-1}Lf= g ,\ \ \ f|_{\gamma_{-}} =P_{\gamma }f+ r ,
\label{linearf}
\end{equation}%
such that 
\begin{equation}
{\iint_{\Omega \times \mathbb{R}^{3}}f(x,v)\sqrt{\mu }\ \mathrm{d} x \mathrm{%
d} v=0},  \label{0mass_s}
\end{equation}
and 
\begin{equation}  \label{linear_steady0}
[\hskip-1pt [ f ]\hskip-1pt ] \lesssim \e^{-1/2} |r|_{2}+\| \nu^{-1/2} (%
\mathbf{I} - \mathbf{P}){g} \| _{2} + \e^{-1} \| \mathbf{P} g\|_{2} +\e%
^{\frac 1 2}|w r|_\infty+\e^{\frac 3 2}\| \langle v\rangle^{-1}w g\|_\infty.
\end{equation}
\end{theorem}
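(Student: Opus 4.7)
The plan is to establish the a priori estimate \eqref{linear_steady0} in four nested tiers --- basic energy estimate, $L^{2}$ coercivity for $\P f$, the key $L^{6}$ estimate for $\P f$, and $L^{\infty}$ bootstrap --- and then obtain existence by a vanishing-regularization scheme, with uniqueness following immediately by linearity applied to the same estimate.

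First I would carry out the energy estimate. Testing \eqref{linearf} against $f$ and using the Green identity of Lemma \ref{green}, the field term $\e^{2}\mu^{-1/2}\Phi\cdot\nabla_{v}[\sqrt{\mu}f]$ produces an $O(\e^{2}\|\Phi\|_{\infty}\|f\|_{2}^{2})$ contribution that is absorbed for small $\e$, while $(Lf,f)\gtrsim\|\nu^{1/2}\ip f\|_{2}^{2}$ by \eqref{spectL}. The boundary contribution, after substituting $f|_{\gamma_{-}}=P_{\gamma}f+r$ and using the standard identity converting $|f|_{2,+}^{2}-|P_{\gamma}f+r|_{2,-}^{2}$ into $|(1-P_{\gamma})f|_{2,+}^{2}$ up to terms controlled by $|r|_{2}$, yields
\begin{equation*}
\e^{-1}\|\ip f\|_{\nu}^{2}+\e^{-1}|(1-P_{\gamma})f|_{2,+}^{2}\ \lesssim\ \|g\|_{2}\|f\|_{2}+\e^{-1}|r|_{2}^{2}.
\end{equation*}
Second, to recover $\|\P f\|_{2}$ I would employ the weak formulation with the test functions of \cite{EGKM}: for the density component $a$, take $\psi_{a}=(|v|^{2}-\beta_{a})v\cdot\nabla_{x}\phi_{a}\sqrt{\mu}$ with $-\Delta\phi_{a}=a$ and $\pt_{n}\phi_{a}|_{\pt\O}=0$ (well-defined since \eqref{0mass_s} and \eqref{constraint} force $\int_{\O}a=0$); the main contribution of the transport part produces $\|a\|_{2}^{2}$. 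Analogous test functions for $b$ and $c$ yield, after absorbing microscopic remainders and boundary contributions,
\begin{equation*}
\|\P f\|_{2}\ \lesssim\ \e^{-1}\|\ip f\|_{\nu}+\e^{-1/2}|(1-P_\g)f|_{2,+}+\|\nu^{-1/2}\ip g\|_{2}+\e^{-1}\|\P g\|_{2}+\e^{-1/2}|r|_{2}.
\end{equation*}

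The third and most delicate step is the $L^{6}$ control of $\P f$. Following the outline in the introduction, I replace the Poisson data by $-\Delta\phi_{a}=a^{5}-|\O|^{-1}\int_{\O}a^{5}$ with Neumann data, so that testing against $\psi_{a}=(|v|^{2}-\beta_{a})v\cdot\nabla_{x}\phi_{a}\sqrt{\mu}$ generates the main term $\|a\|_{6}^{6}$ from the transport operator via $\int(-\Delta\phi_{a})a\,\mathrm{d} x=\|a\|_{6}^{6}$. By elliptic regularity and the 3D Sobolev embedding applied to $a^{5}\in L^{6/5}$, one has $\|\nabla\phi_{a}\|_{2}\lesssim\|a\|_{6}^{5}$, and the trace theorem gives $|\nabla\phi_{a}|_{L^{4/3}(\pt\O)}\lesssim\|a\|_{6}^{5}$ --- this sharp scaling is precisely what makes the scheme close. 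All the remaining contributions (field, collisional remainder, boundary terms fed by $r$ and $|wr|_{\infty}$) are bounded by $\|a\|_{6}^{5}\times\text{RHS}$, and dividing by $\|a\|_{6}^{5}$ delivers the $L^{6}$ bound; the corresponding arguments for $b$ and $c$ are parallel. The coupling terms involving $\ip f$ in $L^{6}$ are handled by the interpolation $\|\ip f\|_{6}\lesssim\|\ip f\|_{2}^{\theta}\|wf\|_{\infty}^{1-\theta}$ and absorbed using the $L^{\infty}$ step.

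Finally, the $L^{\infty}$ estimate is produced by the $L^{2}$-$L^{\infty}$ bootstrap of \cite{EGKM}, adapted to the characteristics \eqref{char}: iterating the Duhamel representation along the field-perturbed trajectories and invoking Lemma \ref{trace_s} together with the change-of-variables formula of Lemma \ref{bdry_int} (which converts velocity integrals into boundary integrals with a small Jacobian factor) produces
\begin{equation*}
\e^{1/2}\|wf\|_{\infty}\ \lesssim\ \|\P f\|_{6}+\e^{-1}\|\ip f\|_{\nu}+\e^{1/2}|wr|_{\infty}+\e^{3/2}\|\langle v\rangle^{-1}wg\|_{\infty}.
\end{equation*}
Combining the four estimates and absorbing the small factors closes \eqref{linear_steady0}. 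Existence is then obtained by applying the uniform estimate to a regularized problem (adding $\lambda f$ with $\lambda\searrow 0$ together with a penalized boundary operator as in \cite{EGKM}), passing to a weak limit and verifying $\iint f\sqrt{\mu}=0$ via \eqref{constraint}; uniqueness is immediate by applying \eqref{linear_steady0} to the difference of two solutions. The hardest part is the $L^{6}$ step: sharply controlling $|\nabla\phi_{a}|_{L^{4/3}(\pt\O)}$ by $\|a\|_{6}^{5}$ and ensuring that every boundary contribution from the diffuse reflection is dominated by $\|a\|_{6}^{5}$ (\emph{not} the full $\|a\|_{6}^{6}$), for otherwise the division step cannot close the inequality.
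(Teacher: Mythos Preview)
Your proposal is correct and follows the same four-tier strategy as the paper (energy estimate, $L^{2}$ coercivity via the test functions $\psi_{a},\psi_{b},\psi_{c}$, the $L^{6}$ bound via the nonlinear Poisson data $-\Delta\phi=a^{5}-\fint a^{5}$ with the sharp $L^{4/3}(\partial\Omega)$ trace control, and the $L^{\infty}$ bootstrap), which the paper packages as Proposition~\ref{linearl2} together with Proposition~\ref{point_s}. The only minor technical discrepancies are that the paper's $L^{\infty}$ machinery runs through stochastic cycles and Lemma~\ref{small_lemma_s} rather than Lemma~\ref{bdry_int}, and the existence argument combines the penalized transport problem (Lemma~\ref{calL}) with Schaefer's fixed point theorem via the compactness of $K\mathcal{L}^{-1}$ (Lemma~\ref{compact1}) before sending $\lambda\to 0$.
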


The proof is in the end of Section 2.3.

\subsection{$L^{\infty }$ Estimate}

The main goal of this section is to prove the following {proposition}.

\begin{proposition}
\label{point_s} Let $f$ satisfies, 
\begin{equation}  \label{linear_K_s}
\begin{split}
\big[ v \cdot \nabla_{x} + \e^{2} \Phi \cdot \nabla_{v} + \e^{-1}
C_{0}\langle v\rangle +\l \big] |f | \leq \e^{-1} K_{\beta} |f| + |g |, \\
\big|f |_{\gamma_{-}} \big|\leq P_{\gamma} |f| + |r|,
\end{split}%
\end{equation}
where $\l \ge 0$, for $0 < \beta < \frac{1}{4}$, $K_{\beta} |f| = \int_{%
\mathbb{R}^{3}} \mathbf{k}_{\beta} (v,u) |f(u) |\mathrm{d} u$ and 
\begin{equation}  \label{kbeta_s}
\mathbf{k}_{\beta}(v,u):= \big\{ |v-u|+ |v-u|^{-1} \big\}\exp\big[{- \beta
|v-u|^{2} - \beta \frac{[ |v|^{2} - |u|^{2} ]^{2} }{|v-u|^{2}}}\big].
\end{equation}
If $\P f \in L^6(\O \times \mathbb{R}^3)$ and $(\mathbf{I}-\mathbf{P}) f\in
L^2(\O \times \mathbb{R}^3)$, then, for $w(v) = e^{\beta^{\prime}|v|^{2}}$
with $0< \beta^{\prime} \ll \beta$, 
\begin{equation}  \label{point1_s}
\begin{split}
\e^{\frac 12} \| w f \|_{\infty} \lesssim& \ \e^{\frac 12} | w r |_{\infty}
+ \e^{\frac 32} \| \langle v\rangle^{-1} w g \|_{\infty} \\
&+ \| \P f \|_{L^{6}(\Omega \times \mathbb{R}^{3} )} + {\e^{-1}} \| (\mathbf{%
I} - \mathbf{P})f \|_{L^{2}(\Omega \times \mathbb{R}^{3})}.
\end{split}%
\end{equation}
\end{proposition}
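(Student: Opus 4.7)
The plan is to follow the quantitative $L^{2}$–$L^{\infty }$ scheme of \cite{EGKM}, adapted to the presence of the external field $\e^{2}\Phi $. The starting point is to integrate the pointwise inequality (\ref{linear_K_s}) along the characteristics $(X(s;0,x,v), V(s;0,x,v))$ defined in (\ref{char}). Setting $\vartheta (s,t) := \int_{s}^{t} [\e^{-1} C_{0}\langle V(\tau )\rangle +\l ]\, d\tau $, Duhamel's formula gives, for $t=0$,
\begin{equation*}
|f|(x,v) \ \leq \ e^{-\vartheta (-\tb ,0)} |f|(\xb ,\vb ) + \int_{-\tb }^{0} e^{-\vartheta (s,0)}\Big[\e^{-1} K_{\beta}|f| + |g|\Big](X(s),V(s))\, ds .
\end{equation*}
At the boundary, $|f|(\xb ,\vb )\leq P_{\gamma }|f|(\xb ,\vb )+|r|(\xb ,\vb )$. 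Inserting the Gaussian average definition of $P_{\gamma }$ and iterating the Duhamel identity a second time on the incoming trajectory reduces the estimate for $\|wf\|_{\infty }$ to four types of contributions: (i) a boundary piece controlled by $|wr|_{\infty }$, (ii) a source piece controlled by $\|\langle v\rangle ^{-1}wg\|_{\infty }$, and (iii)–(iv) two iterated integrals involving $K_{\beta }|f|$.

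The boundary and source contributions are straightforward. The transport damping $e^{-\vartheta }$ yields $\int_{0}^{\infty } e^{-C_{0}\langle v\rangle t/\e }\, dt \lesssim \e \langle v\rangle^{-1}$, which turns the prefactor $\e^{-1}$ from the source into $O(1)$ and produces the desired $\e^{1/2}|wr|_{\infty }$ and $\e^{3/2}\|\langle v\rangle ^{-1}wg\|_{\infty }$ factors after multiplying by $w(v)$ and by the overall $\e^{1/2}$ appearing on the left-hand side of (\ref{point1_s}); here the weight $w(v)=e^{\beta ^{\prime }|v|^{2}}$ with $\beta ^{\prime }\ll \beta $ is chosen so that $\sup_{v}\int w(v)w(u)^{-1}\mathbf{k}_{\beta }(v,u)\, du<\infty $, which is the standard weighted-kernel estimate.

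The essential work lies in the doubly iterated $K_{\beta }$ term, which after the two Duhamel iterations becomes schematically
\begin{equation*}
\e^{-2}\int_{-\tb }^{0}\!\!\int_{-\tb ^{\prime }}^{0}\!\!\int_{\mathbb{R}^{3}\times \mathbb{R}^{3}} e^{-\vartheta -\vartheta ^{\prime }}\, \mathbf{k}_{\beta }(V(s),u)\, \mathbf{k}_{\beta }(V^{\prime }(s^{\prime }),u^{\prime })\, |f|(X^{\prime }(s^{\prime }),u^{\prime })\, du\, du^{\prime }\, ds^{\prime }\, ds .
\end{equation*}
My plan is to split $\mathbf{k}_{\beta }=\mathbf{k}_{\beta }\mathbf{1}_{|v-u|\leq \delta \text{ or }|u|\geq N}+\mathbf{k}_{\beta }\mathbf{1}_{\text{bounded}}$: the singular/large-velocity part contributes $o_{\delta ,N}(1)\e^{1/2}\|wf\|_{\infty }$, absorbed into the left-hand side. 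On the bounded piece I decompose $|f|\leq |\P f|+|\ip f|$. For $\ip f$ I apply Cauchy–Schwarz in $(s,s^{\prime },u,u^{\prime })$; the extra factor $\e$ gained from the transport damping (in place of one of the $\e^{-1}$ factors) yields exactly the $\e^{-1}\|\ip f\|_{2}$ term. For $\P f$, the crucial step is a change of variables $(s^{\prime },u^{\prime })\mapsto y=X^{\prime }(s^{\prime })$ for fixed $s,u$, converting the inner double integral into a spatial integral to which Hölder's inequality on the volume $\O $ delivers $\|\P f\|_{L^{6}}$ (and the remaining $L^{6/5}$ factor from the smooth kernel is uniformly bounded).

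The main obstacle is the change-of-variables step in the presence of the field: one must verify that the Jacobian $\det (\partial y/\partial (s^{\prime },u^{\prime }))$ is bounded below uniformly in $\e $. This uses the ODE bounds (\ref{free_DX}) restricted to characteristic times $|s|,|s^{\prime }|\lesssim \e |\ln \e |$, over which $\e^{2}\Phi $ generates only $O(\e )$ corrections to the free-flow Jacobian; the complementary regime $|s|\gg \e |\ln \e |$ is killed by the exponential transport damping $e^{-C_{0}\langle v\rangle |s|/\e }\lesssim \e ^{100}$. Combining all pieces and absorbing the small multiples of $\|wf\|_{\infty }$ on the left-hand side yields (\ref{point1_s}).
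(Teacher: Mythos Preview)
Your overall strategy---double Duhamel iteration of $K_\beta$, splitting the kernel into a bounded compactly-supported part and a small remainder, decomposing $|f|=|\P f|+|\ip f|$, and a change of variables to convert the $\P f$ piece into an $L^6$ spatial integral---matches the paper's approach. However, there is a genuine gap in your treatment of the boundary term.

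When you write that inserting $P_\gamma$ and iterating Duhamel once more ``reduces the estimate to four types of contributions,'' you have lost a term. After the second Duhamel pass on $|f|(\xb,u)$, the trajectory from $(\xb,u)$ hits the boundary again at some $(x_2,v_2)$, producing a new $P_\gamma|f|$ contribution. This term is \emph{not} small: the damping factor $e^{-\vartheta}$ over one bounce can be arbitrarily close to $1$ (grazing trajectories have small $\tb$), and integrating against the probability measure $d\sigma$ recovers a full $\|wf\|_\infty$ with coefficient $O(1)$, which cannot be absorbed. A single or double boundary reflection does not close.

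The paper resolves this by introducing an artificial time $t=T_0$ (fixed, independent of $\e$) and tracking back along \emph{stochastic cycles}: one iterates the Duhamel identity and the boundary condition $k=C_1 T_0^{5/4}$ times, picking up at each bounce a free velocity $v_j$ integrated against $d\sigma_j$. Two mechanisms then make the residual boundary term small. First, any cycle that reaches time $0$ before $k$ bounces carries the factor $e^{-C_0 T_0/\e}\ll 1$ (the terms (\ref{h1in_s}), (\ref{h2in_s})). Second, the cycles that have \emph{not} reached time $0$ after $k$ bounces have small measure in $\prod_j d\sigma_j$: this is Lemma~\ref{small_lemma_s}, which gives the factor $(1/2)^{C_2 T_0^{5/4}}$ for (\ref{h2er_s}). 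Choosing $T_0$ large makes both contributions absorbable.

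A smaller issue: your change of variables $(s',u')\mapsto y=X'(s')$ is $4$-dimensional to $3$-dimensional and cannot be a diffeomorphism. The correct map, as in the paper, is $u'\mapsto y$ at \emph{fixed} $s'$, with Jacobian $\sim |s-s'|^3$. One first cuts off $|s-s'|<\kappa\e$ (that slab contributes $O(\kappa)\,\e^{1/2}\|wf\|_\infty$ because of the $\e^{-1}$ prefactor), and on the complement the Jacobian is $\gtrsim\kappa^3\e^3$. This is what produces the precise powers $\e^{1/2}\cdot(\e^{-3})^{1/6}=O(1)$ for $\P f$ and $\e^{1/2}\cdot(\e^{-3})^{1/2}=\e^{-1}$ for $\ip f$.
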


We define the stochastic cycles for the steady case.

\begin{definition}
\label{cycle_dyn0} Define, for free variables $v_{k} \in\mathbb{R}^{3}$,
from (\ref{def_tb}) 
\begin{eqnarray*}
& {t}_{1} = t-t_{\mathbf{b}}(x,v) , \ \ {x}_{1} \ = \ X({t}_{1} ; t,x,v) =
x_{\mathbf{b}}(x,v) , & \\
& {t}_{2} = t_{1 }- t_{\mathbf{b}} (x_{1},v_{1}) , \ \ {x}_{2} = X( {t}_{2}
; {t}_{1} ,x_{1} ,v_{1} ) = x_{\mathbf{b}}(x_{1},v_{1}) ,& \\
& \vdots & \\
&{t}_{k+1} = {t}_{k} - t_{\mathbf{b}}(x_{k} ,v_{k} ) , \ \ {x}_{k+1} = X( {t}%
_{k+1} ; {t}_{k} ,x_{k} ,v_{k} ) = x_{\mathbf{b}}(x_{k},v_{k}) .&
\end{eqnarray*}

Set 
\begin{eqnarray*}
X_{\mathbf{cl}}(s;t,x,v) &:=& \sum_{k} \mathbf{1}_{[{t} _{k+1},{t} _{k})} (s)
X (s; {t}_{k} , x_{k} , v_{k} ),\\ V_{\mathbf{cl}}(s;t,x,v) &:=& \sum_{k} 
\mathbf{1}_{[{t}_{k+1},{t}_{k})} (s) V (s; {t}_{k} , x_{k} , v_{k} ).
\end{eqnarray*}

For $x\in\partial\Omega$, we define 
\begin{equation}
\mathcal{V}(x) : = \{ v \in \mathbb{R}^{3} : n(x) \cdot v>0 \}, \ \ \ 
\mathrm{d} \sigma(x, v) := \sqrt{2\pi} \mu(v) \{n(x) \cdot v\} \mathrm{d} v.
\label{tilde_w}
\end{equation}
For $j \in \mathbb{N}$, we denote 
\begin{equation}  \label{V_j}
\mathcal{V}_{j} : = \{ v_{j} \in \mathbb{R}^{3} : n(x_{j}) \cdot v_{j}>0 \},
\ \ \ \mathrm{d} \sigma_{j} := \sqrt{2\pi} \mu(v_{j}) \{n(x_{j}) \cdot
v_{j}\} \mathrm{d} v_{j}.
\end{equation}
\end{definition}

The following lemma is a generalized version of Lemma 23 of \cite{Guo08}.

\begin{lemma}[\protect\cite{Guo08}]
\label{small_lemma_s}Assume $\Phi = \Phi(x)\in C^{1}$. For sufficiently
large $T_{0}>0$, there exist constant $C_{1}, C_{2}>0$, independent of $%
T_{0} $, such that for $k= C_{1} T_{0}^{5/4}$, 
\begin{equation}
\sup_{(t, x,v) \in [0,T_{0}] \times \bar{\Omega} \times \mathbb{R}%
^{3}}\int_{\prod_{\ell =1}^{k-1} \mathcal{V}_{\ell}} \mathbf{1}_{ {t}_{k}
(t,x,v_{1},v_{2}, \cdots, v_{k-1})>0 } \Pi_{\ell=1}^{k-1} \mathrm{d}
\sigma_{\ell} < \Big\{ \frac{1}{2}\Big\}^{C_{2} T_{0}^{5/4}}.  \label{small1}
\end{equation}

\end{lemma}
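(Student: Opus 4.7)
The plan is to adapt the geometric/stochastic argument of Lemma 23 in \cite{Guo08}, accounting for the $\e^2\Phi$ perturbation of the characteristics. The intuition is standard: most velocity iterates either have large modulus (suppressed by $\mu$) or are too tangential (small phase-space measure), and those in the complementary ``non-grazing, non-large'' set cost a definite amount of time between successive reflections; since the total time budget is $T_0$, only a bounded number of ``good'' iterates can occur, so the measure of $k$-bounce trajectories with $t_k>0$ must decay geometrically in $k-O(T_0)$.

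First, I would fix a small $\delta>0$ (to be chosen) and define, for each $\ell$,
\begin{equation*}
\mathcal{V}_\ell^{\delta} \ := \ \{v_\ell\in\mathcal{V}_\ell:\ |v_\ell|\le \tfrac{1}{\delta},\ n(x_\ell)\cdot v_\ell\ge \delta\},
\qquad \mathcal{V}_\ell^{\mathrm{bad}}:=\mathcal{V}_\ell\setminus \mathcal{V}_\ell^{\delta}.
\end{equation*}
By direct computation with $\mathrm{d}\sigma_\ell=\sqrt{2\pi}\mu(v_\ell)\{n(x_\ell)\cdot v_\ell\}\mathrm{d}v_\ell$, one gets $\int_{\mathcal{V}_\ell^{\mathrm{bad}}}\mathrm{d}\sigma_\ell\le C\delta$ uniformly in $x_\ell$, while $\int_{\mathcal{V}_\ell}\mathrm{d}\sigma_\ell=1$. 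Hence splitting each of the $k-1$ factors into ``good/bad'' and using $\int_{\mathcal V_\ell}\mathrm d\sigma_\ell=1$ on the unselected factors, I would bound
\begin{equation*}
\int_{\prod \mathcal{V}_\ell}\mathbf{1}_{t_k>0}\prod_{\ell=1}^{k-1}\mathrm{d}\sigma_\ell
\ \le\ \sum_{j=0}^{M}\binom{k-1}{j}(C\delta)^{k-1-j}
\ +\ \int_{\prod\mathcal{V}_\ell^\delta}\mathbf{1}_{t_k>0}\prod_{\ell=1}^{k-1}\mathrm{d}\sigma_\ell,
\end{equation*}
where $M$ is the maximal number of ``good'' bounces compatible with $t_k>0$.

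The core step is the lower bound on inter-bounce time for $v_\ell\in\mathcal{V}_\ell^\delta$. From (\ref{tb_expn})--(\ref{lower_tb}) in Lemma \ref{trace_s}, for $\e\ll 1$ and $v_\ell\in\mathcal V_\ell^\delta$ the perturbed flight time satisfies $t_{\mathbf b}(x_\ell,v_\ell)\gtrsim \delta^{2}$ (the factor $\delta^2$ coming from $|n(x_\ell)\cdot v_\ell|/|v_\ell|^2\ge \delta\cdot\delta$), because the $\e^2\Phi$ correction in (\ref{intfor}) is $O(\e^2 t_{\mathbf b}^2)$ and hence subdominant. Consequently, if all of $v_{\ell_1},\dots,v_{\ell_j}$ lie in their respective good sets, then $t_1-t_{j+1}\ge c\delta^{2}j$, so $t_k>0$ forces $j\le T_0/(c\delta^{2})=:M$. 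Combining with the binomial bound and using $\binom{k-1}{j}\le k^j$ gives, for $k\ge 2M$,
\begin{equation*}
\int_{\prod \mathcal{V}_\ell}\mathbf{1}_{t_k>0}\prod_{\ell=1}^{k-1}\mathrm{d}\sigma_\ell
\ \le\ (M+1)\,k^M (C\delta)^{k-1-M}.
\end{equation*}

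Finally I optimize the parameters. Choose $\delta$ so that $C\delta\le 1/4$ (independent of $T_0$), so $M=c'T_0$. Then for $k=C_1T_0^{5/4}$ with $C_1$ sufficiently large, $k\ge 2M$ and $k^M(1/4)^{k-1-M}\le 2^{-C_2 T_0^{5/4}}$ for some $C_2>0$, since $M\log k = O(T_0\log T_0)=o(k)$. Taking the supremum over $(t,x,v)$ (the estimate is uniform because $M$ depends only on $T_0$, $\delta$, $\Omega$, and $\|\Phi\|_\infty$) yields (\ref{small1}). The main obstacle is the lower bound $t_{\mathbf b}\gtrsim \delta^2$ on good iterates in the presence of the external field; this is precisely where the $\e\ll 1$ smallness and the expansion (\ref{intfor}) enter to ensure that the curved characteristics still exit $\Omega$ on the same timescale as straight-line characteristics, so that Guo's original combinatorial argument carries over essentially verbatim.
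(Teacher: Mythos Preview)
Your proposal is correct and follows the same strategy as the paper: split each $\mathcal V_\ell$ into a non-grazing set of full measure up to $O(\delta)$ and a bad set of $\sigma$-measure $\le C\delta$, establish a uniform lower bound on the inter-bounce time for good velocities, and then invoke the combinatorial argument of Lemma~23 in \cite{Guo08}.

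One minor quantitative slip: your justification ``$|n(x_\ell)\cdot v_\ell|/|v_\ell|^2\ge \delta\cdot\delta$'' is not quite right. With $|v_\ell|\le 1/\delta$ you have $1/|v_\ell|^2\ge \delta^2$, so the correct bound is $|n(x_\ell)\cdot v_\ell|/|v_\ell|^2\ge \delta\cdot\delta^2=\delta^3$, and indeed the paper proves $|t_\ell-t_{\ell+1}|\ge \delta^3/C_\Omega$ rather than $\delta^2$. (The argument you cite from Lemma~\ref{trace_s} yields $t_{\mathbf b}\gtrsim |n\cdot v|/|v|$, hence $\delta^2$, but the paper's proof of the present lemma goes through $t_{\mathbf b}\gtrsim |n\cdot v|/|v|^2$.) This does not affect your conclusion, since $\delta$ is fixed independent of $T_0$ and thus $M=c'T_0$ in either case, so the rest of the combinatorial estimate goes through unchanged.
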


\begin{proof}
For $0< \delta \ll1$, we define 
\begin{equation*}
\mathcal{V}_{\ell}^{\delta} : = \big\{ v_{\ell} \in \mathcal{V}_{\ell} : \
|v_{\ell} \cdot n(x_{\ell}) | > \delta \ \ \text{and} \ \ \delta<|v_{\ell}|
< \frac{1}{\delta} \big\}.
\end{equation*}
Clearly, $\int_{\mathcal{V}_{\ell} \backslash \mathcal{V}_{\ell}^{\delta}} 
\mathrm{d} \sigma_{\ell} \leq C \delta, $ where $C$ is independent of $\ell$%
. We claim that 
\begin{equation}  \label{tb_lower}
|t_{\ell} - t_{\ell+1}| \ \geq \ {\delta^{3}} / {C_{\Omega}}, \ \ \ \text{for%
} \ \ v_{\ell} \in \mathcal{V}_{\ell}^{\delta}.
\end{equation}
It suffices to prove, for $(x,v) \in \gamma^{\delta}_{-}$ and $0< \e \ll 1$, 
\begin{equation*}
t_{\mathbf{b}}(x,v) \ \gtrsim \ {|v|^{-2}}{|n(x) \cdot v|}.
\end{equation*}
Note that $\frac{|n(x) \cdot v|}{|v|^{2}} \leq \delta^{2}$. Therefore we
only need to consider the case of $t_{\mathbf{b}}(x,v)< \delta^{2}$.

From $|v|>\delta$ and $x_{\mathbf{b}} = x+ t_{\mathbf{b}} v + O(\e^{2})(t_{%
\mathbf{b}})^{2}$, 
\begin{equation*}
t_{\mathbf{b}} = {| x_{\mathbf{b}}-x |}{|v |^{-1}} + O(\e^{2}) {(t_{\mathbf{b%
}})^{2}}{|v |^{-1}} = {| x_{\mathbf{b}}-x |}{|v |^{-1}} + t_{\mathbf{b}} {\
O(\e^{2}) \delta } .
\end{equation*}
For fixed $\delta>0$ and $\e < \e_{0}\ll_{\delta} 1$, 
\begin{equation*}
t_{\mathbf{b}}(x, v) \gtrsim {|x_{\mathbf{b}}(x, v) -x |}{|v |^{-1}}.
\end{equation*}

From the fact $|x_{\mathbf{b}} -x | \gtrsim |n(x) \cdot \frac{x-x_{\mathbf{b}%
}}{|x-x_{\mathbf{b}}|}|$ for $x_{\mathbf{b}} , x \in \partial\Omega$ from 
\cite{Guo08}, we have 
\begin{equation*}
t_{\mathbf{b}}(x, v) \gtrsim {\big|n(x) \cdot [x-x_{\mathbf{b}}(x,v)]\big|%
^{1/2}}{|v|^{-1}}.
\end{equation*}
On the other hand, for $(x,v) \in\gamma_{-}^{\delta}$ and $\e \ll 1$ 
\begin{equation*}
|n(x) \cdot (x_{\mathbf{b}} - x) | = \big|n(x) \cdot [ t_{\mathbf{b}} v + O(%
\e^{2}) (t_{\mathbf{b}})^{2} ]\big| = t_{\mathbf{b}} |n(x)\cdot v| + O(\e%
^{2}) (t_{\mathbf{b}})^{2} \gtrsim t_{\mathbf{b}} |n(x)\cdot v|.
\end{equation*}
Therefore we prove our claim. The rest of proof of (\ref{small1}) is
identical to the proof of Lemma 23 on \cite{Guo08}.
\end{proof}

Now we are ready to prove the main result of this section:

\begin{proof}[\textbf{Proof of Proposition \protect\ref{point_s}}]
Define, for $w(v) = e^{\beta^{\prime} |v|^{2}}$, 
\begin{equation}  \label{h_dyn}
h(t,x,v) : = w(v) f(t,x,v).
\end{equation}
From Lemma 3 of \cite{Guo08}, there exists $\tilde{\beta}=\tilde{\beta}%
(\beta, \beta^{\prime})>0$ such that $\mathbf{k}_{\beta}(v,u) \frac{w(v)}{%
w(u)} \ \lesssim \ \mathbf{k}_{\tilde{\beta}} (v,u)$.

Then, from (\ref{linear_K_s}), 
\begin{equation}
\begin{split}  \label{h_eq_s}
\big[ v \cdot \nabla_{x} + \e^{2} \Phi \cdot \nabla_{v} + \e^{-1}
C_{0}\langle v\rangle + \frac{\e^{ {2}} \Phi \cdot \nabla_{v}w}{w} \big] | \e%
^{\frac 1 2} h| \leq \e^{-1} \int_{\mathbb{R}^{3}}\mathbf{k}_{\tilde{\beta}
}(v,u) | \e^{\frac 1 2} h(u) | \mathrm{d} u + \e^{\frac 1 2}|wg |.
\end{split}%
\end{equation}
Clearly $\e^{-1} C_{0}\langle v\rangle + \frac{\e^{ {2}} \Phi \cdot
\nabla_{v}w}{w} \sim \e^{-1} C_{0}\langle v\rangle$.

From (\ref{linear_K_s}), on $(x,v) \in \gamma_{-}$, 
\begin{equation}  \label{h_BC}
\begin{split}
\e^{\frac 1 2} |h( x,v)| &\leq \sqrt{2\pi} w(v) \sqrt{\mu(v)} \int_{n(x)
\cdot u>0}\e^{\frac 1 2}|h (x,u) |\frac{\sqrt{\mu(u)}}{w(u)} \{n(x) \cdot
u\} \mathrm{d} u + \e^{\frac 1 2} w(v) |{r} ( x,v) |. \\
& {\lesssim}\frac{1}{\tilde{w}(v)} \int_{n(x) \cdot u>0} \e^{\frac 1 2}| h (
x,u)| \tilde{w} (u) \mathrm{d} \sigma+ \e^{\frac 1 2} w(v) | {r} ( x,v)| ,
\end{split}%
\end{equation}
where we define 
\begin{equation} 
\tilde{w}(v) := \frac{1}{w(v)\sqrt{\mu(v)}}.  \notag
\end{equation}

\vspace{4pt}

We claim, for $t= T_{0}$ {defined as in Lemma \ref{small_lemma_s} (not
depending on $\e$)}, 
\begin{equation}  \label{claim1_s}
\begin{split}
|\e^{\frac 1 2}h ( x,v)| &\leq \ \big[ CT_{0}^{5/4}\Big\{\frac{4}{5} \Big\}%
^{C_{2} T_{0}^{5/4}} + o(1)C_{T_{0}}  \big] \| \e^{\frac 1 2} h
\|_{\infty} + C_{T_{0}}   \e^{\frac 1 2} \| w r \|_{\infty} +C_{T_{0}}   \e^{\frac 3 2}
\| \langle v\rangle^{-1} w g \|_{\infty} \\
& \ \ +  C_{T_{0}}   \Big[ \| \P f\|_{L^{6}(\Omega )} + \frac{1}{\e} \| (%
\mathbf{I} - \mathbf{P})f \|_{L^{2}(\Omega \times \mathbb{R}^{3})} \Big] .
\end{split}%
\end{equation}
Once (\ref{claim1_s}) holds, Proposition \ref{point_s} is a direct
consequence.

We first prove (\ref{claim1_s}). From (\ref{h_eq_s}), for ${t}_{1} (t,x,v) <
s \leq t$, 
\begin{eqnarray*}
&&\frac{d}{ds } \Big[ e^{- \int^{t}_{s} \frac{ C_{0} }{\e } \langle V( \tau
;t,x,v) \rangle \mathrm{d}\tau } \e h ( X _{\mathbf{cl}}(s;t,x,v), V _{%
\mathbf{cl}}(s;t,x,v)) \Big] \\
& \leq& e^{- \int^{t}_{s} \frac{ C_{0} }{\e } \langle V( \tau ;t,x,v)
\rangle \mathrm{d}\tau } \frac{1}{\e } \int_{\mathbb{R}^{3}} \mathbf{k}_{%
\tilde{\beta}} (V _{\mathbf{cl}}(s;t,x,v) ,v^{\prime}) | \e h( X _{\mathbf{cl%
}}(s;t,x,v), v^{\prime} )| \mathrm{d} v^{\prime} \\
&&+ e^{- \int^{t}_{s} \frac{ C_{0} }{\e } \langle V( \tau;t,x,v) \rangle 
\mathrm{d}\tau } |\e w g( X _{\mathbf{cl}}(s;t,x,v), V _{\mathbf{cl}}(
s;t,x,v) )|.
\end{eqnarray*}
Along the stochastic cycles, for $k= C_{1} T_{0}^{5/4}$, we deduce the
following estimate: 
\begin{eqnarray}
&& | \e^{\frac 1 2} h ( x,v) |  \notag \\
&\leq& \mathbf{1}_{\{ {t}_{1} (t,x,v)< 0\}} e^{ - \int^{t}_{0} \frac{ C_{0}
\langle V_{\mathbf{cl}} (\tau; t,x,v) \rangle }{\e } \mathrm{d}\tau} | \e%
^{\frac 1 2} h ( X_{\mathbf{cl}}(0; t,x,v ), V_{\mathbf{cl}}(0; t,x,v )) |
\label{h1in_s} \\
&+& \int_{ \max{\{0, {t}_{1} (t,x,v) \}}}^{t} \mathrm{d} s \ \frac{e^{-
\int^{t}_{s} \frac{ C_{0} \langle V_{\mathbf{cl}} ( \tau;t,x,v ) \rangle }{\e%
} \mathrm{d} \tau } }{\e }  \notag \\
&& \ \ \ \ \ \ \ \ \ \ \ \ \ \ \ \ \ \ \ \ \ \ \ \ \ \times \int_{\mathbb{R}%
^{3}} \mathrm{d} v^{\prime} \ \mathbf{k}_{\tilde{\beta} }( V_{\mathbf{cl}} (
s ; t,x,v) ,v^{\prime}) \big| \e^{\frac 1 2} h ( X_{\mathbf{cl}%
}(s;t,x,v),v^{\prime} )\big| \ \ \ \ \ \ \ \ \ \   \label{hk_s} \\
&+& \int_{ \max{\{0, {t}_{1} (t,x,v) \}}}^{t} \mathrm{d} s \ \frac{e^{-
\int^{t}_{s} \frac{ C_{0} \langle V_{\mathbf{cl}} ( \tau;t,x,v ) \rangle }{%
\e } \mathrm{d} \tau } }{\e } |\e^{\frac 3 2} w g ( X _{\mathbf{cl}%
}(s;t,x,v), V _{\mathbf{cl}}( s;t,x,v) )| \ \ \ \ \ \ \ \ \ \ \ \ 
\label{hnl_s} \\
&+&\mathbf{1}_{\{ {t}_{1}(t,x,v) \geq 0 \}} e^{ - \int^{t}_{ {t}_{1}
(t,x,v)} \frac{ C_{0} \langle V_{\mathbf{cl}} (\tau; t,x,v) \rangle }{\e } 
\mathrm{d}\tau} | \e^{\frac 1 2} w {r} ( x_{1} (x,v), v_{1} (x,v)) |
\label{h1ber_s} \\
& +& \mathbf{1}_{\{ {t}_{1} (t,x,v) \geq 0 \}} \frac{ e^{ - \int^{t}_{ {t}%
_{1} (t,x,v)} \frac{ C_{0} \langle V_{\mathbf{cl}} ( \tau; t,x,v) \rangle }{%
\e } \mathrm{d}\tau} }{\tilde{w} (v_{1})} \int_{\Pi_{j=1}^{k-1} \mathcal{V}_{j}}
H,  \notag
\end{eqnarray}
where $H$ is given by 
\begin{eqnarray}
&& \sum_{l =1}^{k-1} \mathbf{1}_{ {t}_{l+1} \leq 0< {t}_{l}} \big| \e^{\frac
1 2} h ( X_{\mathbf{cl}}(0 ; {t}_{l}, x_{l}, v_{l}) , V_{\mathbf{cl}} (0 ; 
{t}_{l}, x_{l}, v_{l}) ) \big|\notag\\
&& \ \ \ \ \ \ \ \ \ \ \ \ \ \ \  \  \ \  \times \Pi_{m=1}^{l-1} \frac{\tilde{w} (v_{m})}{\tilde{w} (V_{\mathbf{cl}} (t_{m+1};  v_{m}))}
 \mathrm{d} \Sigma_{l} (0)
  \label{h2in_s} \\
&+& \sum_{l =1}^{k-1} \int^{ {t}_{l}}_{\max \{ 0, {t}_{l+1} \}} \mathbf{1}_{ 
{t}_{l}>0} \frac{1}{\e } \int_{\mathbb{R}^{3}} \mathbf{k}_{\tilde{\beta} } (
V_{\mathbf{cl}} (\tau; {t}_{l}, x_{l}, v_{l}),u )\notag \\
&& \ \ \ \ \ \ \ \ \ \ \ \ \ \ \  \  \ \  \times\big| \e^{\frac 1 2} h (
X_{\mathbf{cl}}(\tau; {t}_{l}, x_{l}, v_{l}) , u) \big|
\Pi_{m=1}^{l-1} \frac{\tilde{w} (v_{m})}{\tilde{w} (V_{\mathbf{cl}} (t_{m+1};  v_{m}))}
 \mathrm{d} u \mathrm{%
d} \Sigma_{l} (\tau) \mathrm{d} \tau    \label{h2k_s} \\
&+& \sum_{l =1}^{k-1} \int^{ {t}_{l}}_{\max \{ 0, {t}_{l+1} \}} \mathbf{1}_{ 
{t}_{l}>0} \frac{1}{\e} \big| \e^{\frac 3 2} wg ( X_{\mathbf{cl}}( \tau; {t}%
_{l}, x_{l}, v_{l}) , V_{\mathbf{cl}} ( \tau; {t}_{l}, x_{l}, v_{l})) \big|\notag \\
&&  \ \ \ \ \ \ \ \ \ \ \ \ \ \ \  \  \ \  \times \Pi_{m=1}^{l-1} \frac{\tilde{w} (v_{m})}{\tilde{w} (V_{\mathbf{cl}} (t_{m+1};  v_{m}))}
\mathrm{d} \Sigma_{l} (\tau) \mathrm{d} \tau \ \ \ \ \ \ \ \ \ \ \ \ \ \ \ \
\ \ \ \ \   \label{h2g_s} \\
&+& \sum_{l=1}^{k-1} \mathbf{1}_{ {t}_{l}>0} | \e^{\frac 1 2} w(v_{l}) {r} (
x_{l+1} , v_{l}) |   
\Pi_{m=1}^{l-1} \frac{\tilde{w} (v_{m})}{\tilde{w} (V_{\mathbf{cl}} (t_{m+1};  v_{m}))}
\mathrm{d} \Sigma_{l} ( {t}_{l+1})  \label{h2ber_s} \\
&+& \mathbf{1}_{ {t}_{k} >0}| \e^{\frac 1 2} h ( x_{k}, v_{k-1})| 
\Pi_{m=1}^{k-2} \frac{\tilde{w} (v_{m})}{\tilde{w} (V_{\mathbf{cl}} (t_{m+1};  v_{m}))}
\mathrm{d}
\Sigma_{k-1} ( {t}_{k}),  \label{h2er_s}
\end{eqnarray}
where $V_{\mathbf{cl}} (t_{m+1};v_{m}):= V_{\mathbf{cl}} ((t_{m+1};t_{m},x_{m},v_{m})$ and $\mathrm{d} \Sigma_{k-1} ( {t}_{k})$ is evaluated at $s= {t}_{k}$ of 
\begin{equation}  \label{dsigma_unsteady_s}
\mathrm{d}\Sigma _{l} (s) := \{\prod_{j=l+1}^{k-1}\mathrm{d} \sigma _{j}\}\{
e^{ -\int^{ {t}_{l} }_{s} \frac{C_{0} \langle V_{\mathbf{cl}}( \tau ; {t}%
_{l} , x_{l}, v_{l}) \rangle }{\e } \mathrm{d} \tau } {w}(v_{l})\mathrm{d}%
\sigma _{l}\} \prod _{j=1}^{l-1}\{e^{ - \int^{ {t}_{j} }_{ {t}_{j+1} } \frac{
C_{0} \langle V_{\mathbf{cl}} ( \tau ; {t}_{j} , x_{j}, v_{j})\rangle }{\e } 
\mathrm{d} \tau }\mathrm{d}\sigma _{j}\}.
\end{equation}

Since 
\begin{equation*}
|V_{\mathbf{cl}}(t_{m+1};t_{m},x_{m},v_{m})-v_{m}|\leq \varepsilon
^{2}|t_{m+1}-t_{m}|\|\Phi \|_{\infty }\lesssim \varepsilon ^{2}T_{0}
\end{equation*}%
then for $\varepsilon ^{2}T_{0}\leq 1,$ we have  
\begin{equation}\label{w/wsteady}
\frac{\tilde{w}(v_{m})}{\tilde{w}(V_{\mathbf{cl}}(t_{m+1};t_{m},x_{m},v_{m}))}%
\leq 1 + O(\e).
\end{equation}

Directly, from our choice $k= C_{1} T_{0}^{5/4}$ 
\begin{equation*}
(\ref{h1in_s}) + (\ref{h2in_s}) \ \lesssim_{T_{0}} \  e^{- \frac{
C_{0} }{ \e } t } \|\e^{\frac 1 2} h \|_{\infty}, \ \ \ (\ref{h1ber_s}) + (%
\ref{h2ber_s}) \ \lesssim_{T_{0}}  \   \| \e^{\frac 1 2} w{r}
\|_{\infty},
\end{equation*}
and 
\begin{eqnarray*}
&&(\ref{hnl_s})+ (\ref{h2g_s}) \\
&\lesssim_{T_{0}}& \big\| \e^{\frac 3 2} \langle v \rangle^{-1} w g \big\|_{\infty}
\times \Big\{ \int^{t}_{0} \frac{ \langle V_{\mathbf{cl}} (s;t,x,v) \rangle 
}{\e } e^{ - \int^{t}_{s} \frac{C_{0} \langle V_{\mathbf{cl}} (\tau;t,x,v)
\rangle }{\e } \mathrm{d} \tau } \mathrm{d} s \\
&& \ \ \ \ \ \ \ \ \ \ \ \ \ \ \ \ \ \ \ \ \ \ \ \ \ + C_{1} T_{0}^{5/4}
\sup_{l} \int^{ {t}_{l}}_{0} \frac{ \langle V_{\mathbf{cl}} (\tau; {t}%
_{l},x_{l},v_{l}) \rangle }{\e } e^{ - \int^{ {t}_{l}}_{s} \frac{C_{0}
\langle V_{\mathbf{cl}} (\tau; {t}_{l},x_{l},v_{l}) \rangle }{\e } \mathrm{d}
\tau } \mathrm{d} \tau \Big\} \\
&\lesssim_{T_{0}} &   \big\| \e^{\frac 3 2} \langle v \rangle^{-1} w
g \big\|_{\infty} \times \int^{t}_{0} \frac{d}{ds} e^{ - \int^{t}_{s} \frac{%
C_{0} \langle V_{\mathbf{cl}} (\tau;t,x,v) \rangle }{\e } \mathrm{d} \tau } 
\mathrm{d} s \ \lesssim_{T_{0}} \   \big\| \e^{\frac 3 2} \langle v
\rangle^{-1} w g \big\|_{\infty},
\end{eqnarray*}
where we have used the fact that $\mathrm{d} \sigma_{j}$ is a probability
measure of $\mathcal{V}_{j}$.

Now we focus on (\ref{hk_s}) and (\ref{h2k_s}). For $N>1$, we can choose $%
m=m(N)\gg 1$ such that 
\begin{equation}  \label{k_m}
\mathbf{k}_{m} (v,u) : = \mathbf{1}_{|v-u|\geq \frac{1}{m}} \mathbf{1}_{|u|
\leq m} \mathbf{1}_{|v| \leq m} \mathbf{k}_{\tilde{\beta} } (v,u), \ \ \
\sup_{v} \int_{\mathbb{R}^{3}} | \mathbf{k}_{m} (v,u) - \mathbf{k}_{\tilde{%
\beta} } (v,u) | \mathrm{d} u \leq \frac{1}{N}.
\end{equation}
We split $\mathbf{k}_{\tilde{\beta} } (v,u) = [ \mathbf{k}_{\tilde{\beta} }
(v,u)- \mathbf{k}_{m} (v,u) ] + \mathbf{k}_{m} (v,u)$, and the first
difference would lead to a small contribution in (\ref{hk_s}) and (\ref%
{h2k_s}) as, for $N \gg_{T_{0}} 1$, 
\begin{eqnarray*}
&& \frac{k}{N} \| \e^{\frac 1 2} h \|_{\infty} = \frac{ C_{1} T_{0}^{5/4}}{N}
\| \e^{\frac 1 2} h \|_{\infty}.
\end{eqnarray*}
We further split the time integrations in (\ref{hk_s}) and (\ref{h2k_s}) as $%
[ {t}_{l} - \kappa \e , {t}_{l}]$ and $[\max\{ 0, {t}_{l+1} \}, {t}%
_{l}-\kappa \e ]$: 
\begin{eqnarray*}
(\ref{hk_s}) = {\underbrace{ \int^{t}_{ t- \kappa \e } }}+ \int^{ t- \kappa %
\e }_{\max\{ 0, {t}_{1} \}}, \ \ \ \ \ \ (\ref{h2k_s}) = \mathbf{1}_{\{ {t}%
_{1} \geq 0 \}} \int_{\Pi_{j=1}^{k-1} \mathcal{V}_{j}} \sum_{l=1}^{k-1} %
\Big\{ \ \ {\underbrace{\int^{ {t}_{l}}_{ {t}_{l} - \kappa \e } }}+ \int^{ {t%
}_{l} - \kappa \e }_{\max\{ 0, {t}_{l+1} \}} \Big\}.
\end{eqnarray*}
The small-in-time contributions of both (\ref{hk_s}) and (\ref{h2k_s}),
underbraced terms, are bounded by 
\begin{eqnarray*}
&&\kappa \e \frac{1}{\e } \sup_{v} \int_{|v^{\prime}| \leq N} \mathbf{k}_{m}
(v, v^{\prime}) \mathrm{d} v^{\prime} \| \e^{\frac 1 2} h \|_{\infty}
\lesssim \kappa \| \e^{\frac 1 2} h \|_{\infty}, \\
&&C_{1} T_{0}^{5/4} \kappa \e \frac{1}{\e } \sup_{v} \int_{|v^{\prime}| \leq
N} \mathbf{k}_{m} (v,v^{\prime} ) \mathrm{d} v^{\prime} \| \e^{\frac 1 2} h
\|_{\infty} \lesssim \kappa C_{1} T_{0}^{5/4} \| \e^{\frac 1 2} h
\|_{\infty}.
\end{eqnarray*}
For (\ref{h2er_s}), by Lemma \ref{small_lemma_s} and (\ref{w/wsteady}),
\begin{equation*}
\begin{split}
(\ref{h2er_s}) &\lesssim
\left\{1+ O(\e)\right\}^{C_{1} T_{0}^{5/4}}
  \sup_{(t,x,v) \in [0, T_{0}] \times \bar{\Omega}
\times \mathbb{R}^{3}} \int_{\prod_{j=1}^{k-1} \mathcal{V}_{j}} \mathbf{1}_{ 
{t}_{k} (t,x,v,v_{1}, v_{2}, \cdots, v_{k-1})>0} \Pi_{j=1}^{k-1} \mathrm{d}
\sigma_{j} \| \e^{\frac 1 2} h \|_{\infty}\\
& \lesssim \Big\{\frac{4}{5} \Big\}%
^{C_{2} T_{0}^{5/4}} \| \e h \|_{\infty}.
\end{split}
\end{equation*}

Overall, for $(t,x,v) \in [0, T_{0}] \times \bar{\Omega} \times \mathbb{R}%
^{3}$, 
\begin{eqnarray}
&& |\e^{\frac 1 2} h ( x,v)|  \label{est1_h_s} \\
& \lesssim & \int_{\max{\{0, {t}_{1} (x,v) \}}}^{t- \kappa \e } \ \frac{
e^{- \frac{C_{0}}{\e } (t-s)} }{\e } \int_{|v^{\prime}| \leq m} \underbrace{ %
\big| \e^{\frac 1 2} h ( X_{\mathbf{cl}}(s;t,x,v),v^{\prime} )\big|} \mathrm{%
d} v^{\prime} \mathrm{d} s  \notag \\
& +&\mathbf{1}_{\{ {t}_{1} \geq 0 \}} \frac{ e^{- \frac{C_{0}}{\e } (t- {t}%
_{1})} }{\tilde{w} (v)} \int_{\Pi_{j=1}^{k-1} \mathcal{V}_{j}} \sum_{\ell
=1}^{k-1} \int^{ {t}_{\ell} - \kappa \e }_{\max \{ 0, {t}_{\ell+1} \}} \frac{
\mathbf{1}_{ {t}_{\ell}>0} }{\e }  \notag \\
&& \ \ \ \ \ \ \ \ \ \ \ \ \ \ \ \ \ \ \ \ \ \ \ \ \times \int_{
|v^{\prime\prime}| \leq m} \underbrace{\big| \e^{\frac 1 2} h ( X_{\mathbf{cl%
}}( \tau; {t}_{\ell}, x_{\ell}, v_{\ell}) , v^{\prime\prime} ) \big| } 
\mathrm{d} v^{\prime\prime} \mathrm{d} \Sigma_{\ell} (\tau) \mathrm{d} \tau 
\notag \\
& +& CT_{0}^{5/4} \Big\{ e^{- \frac{C_{0}}{\e }t} \|\e^{\frac 1 2} h
\|_{\infty} +  C_{T_{0}}\| \e^{\frac 1 2} w r(s) \|_{\infty} + C_{T_{0}}\| \e^{\frac 3
2}\langle v\rangle^{-1} w g \|_{\infty} \Big\}  \notag \\
& +&o(1)CT_{0}^{5/4} \| \e^{\frac 1 2} h \|_{\infty} + \Big\{\frac{1}{2} %
\Big\}^{C_{2} T_{0}^{5/4}} \| \e^{\frac 1 2} h \|_{\infty}.  \notag
\end{eqnarray}

Note that the same estimate holds for the underbraced terms in (\ref%
{est1_h_s}). We plug these estimates into the underbraced terms of (\ref%
{est1_h_s}) to have a bound as 
\begin{equation*}
| \e^{\frac 1 2} h^{\ell+1}(t,x,v)| \leq \mathbf{I}_{1} + \mathbf{I}_{2} + 
\mathbf{I}_{3}.
\end{equation*}
Here, using $w(u)\lesssim_{m} 1$ for $|u| \leq m$, 
\begin{equation*}
\begin{split}
\mathbf{I}_{1} &\lesssim_{m} \int_{\max{\{0, {t}_{1} \}}}^{t-\kappa \e } 
\mathrm{d} s \ \frac{ e^{- \frac{C_{0}}{\e } (t-s)} }{\e }
\int_{|v^{\prime}| \leq m} \mathrm{d} v^{\prime} \int^{s-\kappa \e }_{ \max
\{ 0, {t}_{1}^{\prime} \}} \mathrm{d} s^{\prime} \frac{e^{- \frac{C_{0}
(s-s^{\prime})}{\e }}}{\e } \\
& \ \ \ \times \int_{|u| \leq m} \mathrm{d} u\big| \e^{\frac 1 2} h( X_{%
\mathbf{cl}} (s^{\prime} ; s, X_{\mathbf{cl}}(s; t,x,v), v^{\prime} ), u ) %
\big| \\
&+ \int_{\max{\{0, {t}_{1} \}}}^{t-\kappa \e } \mathrm{d} s \ \frac{ e^{- 
\frac{C_{0}}{\e } (t-s)} }{\e } \int_{|v^{\prime}| \leq m} \mathrm{d}
v^{\prime} \ \mathbf{1}_{\{ {t}_{1}^{\prime} \geq 0 \}} \frac{ e^{- \frac{%
C_{0}}{\e } (s- {t}_{1}^{\prime})} }{\tilde{w} (v)} \\
& \ \ \ \times \int_{\Pi_{j=1}^{k-1} \mathcal{V}_{j}^{\prime}}
\sum_{\ell^{\prime} =1}^{k-1} \int^{ {t}_{\ell^{\prime}}^{\prime}-\kappa \e %
}_{\max \{ 0, {t}_{\ell^{\prime}+1}^{\prime} \}} \mathbf{1}_{ {t}%
_{\ell^{\prime}}^{\prime}>0} \frac{ 1}{\e } {\big| \e^{\frac 1 2} h (\tau,
X_{\mathbf{cl}}( \tau; {t}_{\ell^{\prime}}^{\prime},
x_{\ell^{\prime}}^{\prime}, v_{\ell^{\prime}}^{\prime}) , u) \big| } \mathrm{%
d} u \mathrm{d} \Sigma_{\ell^{\prime}} (\tau) \mathrm{d} \tau,
\end{split}%
\end{equation*}
where $t_{\ell^{\prime}}^{\prime} := \tilde{t}_{\ell^{\prime}} ( s, X_{%
\mathbf{cl}}(s; t,x,v), v^{\prime} ) ,\ {x}_{\ell^{\prime}}^{\prime} := {x}%
_{\ell^{\prime}} ( X_{\mathbf{cl}}(s; t,x,v), v^{\prime} ) , \ {v}%
_{\ell^{\prime}}^{\prime} := {v}_{\ell^{\prime}} ( X_{\mathbf{cl}}(s;
t,x,v), v^{\prime} )$. Moreover 
\begin{equation}
\begin{split}
\mathbf{I}_{2}&\lesssim_{m}\mathbf{1}_{\{ {t}_{1} \geq 0 \}} \frac{ e^{- 
\frac{C_{0}}{\e } (t- {t}_{1})} }{\tilde{w} (v)} \int_{\Pi_{j=1}^{k-1} 
\mathcal{V}_{j}} \sum_{\ell =1}^{k-1} \int^{ {t}_{\ell} -\kappa \e }_{\max
\{ 0, {t}_{\ell+1} \}} \mathrm{d} \Sigma_{\ell} (\tau) \mathrm{d} \tau \ 
\mathbf{1}_{ {t}_{\ell}>0} \frac{1}{\e } \int_{ |v^{\prime\prime}| \leq m} 
\mathrm{d} v^{\prime\prime} \\
& \ \ \ \times \int_{\max{\{0, {t}_{1}^{\prime\prime} \}}}^{\tau-\kappa \e } 
\mathrm{d} s^{\prime\prime} \ \frac{ e^{- \frac{C_{0}}{\e^{2}}
(\tau-s^{\prime\prime})} }{\e } \int_{|u| \leq m} \mathrm{d} u \ {\ \big| \e%
^{\frac 1 2} h \big( X_{\mathbf{cl}} ( s^{\prime\prime} ; \tau, X_{\mathbf{cl%
}} (\tau; {t}_{\ell}, x_{\ell}, v_{\ell} ),v^{\prime\prime}),u \big)\big|} \\
&+ \mathbf{1}_{\{ {t}_{1} \geq 0 \}} \frac{ e^{- \frac{C_{0}}{\e } (t- {t}%
_{1})} }{\tilde{w} (v)} \int_{\Pi_{j=1}^{k-1} \mathcal{V}_{j}} \sum_{\ell
=1}^{k-1} \int^{ {t}_{\ell}-\kappa \e }_{\max \{ 0, {t}_{\ell+1} \}} \mathrm{%
d} \Sigma_{\ell} (\tau) \mathrm{d} \tau \ \mathbf{1}_{ {t}_{\ell}>0} \frac{1%
}{\e } \int_{ |v^{\prime\prime}| \leq m} \mathrm{d} v^{\prime\prime} \\
& \ \ \ \times \mathbf{1}_{ {t}_{1}^{\prime\prime} \geq 0 } \frac{e^{- \frac{%
C_{0}}{\e }(\tau - {t}_{1} ^{\prime\prime}) } }{\tilde{w} (v^{\prime\prime})}
\int_{\prod_{j=1}^{k-1} \mathcal{V}_{j}^{\prime\prime}}
\sum_{\ell^{\prime\prime} =1}^{k-1} \int^{ {t}_{\ell^{\prime\prime}}^{\prime%
\prime}-\kappa \e }_{\max\{ 0, {t}^{\prime\prime} _{\ell^{\prime\prime}+1}
\}} \mathbf{1}_{ {t}_{\ell^{\prime\prime}}^{\prime\prime} >0} \frac{1}{\e }
\\
& \ \ \ \times \int_{|u| \leq m} \big| \e^{\frac 1 2} h \big( %
\tau^{\prime\prime}, X_{\mathbf{cl}}( \tau^{\prime\prime} ; {t}%
^{\prime\prime}_{\ell^{\prime\prime}},
x_{\ell^{\prime\prime}}^{\prime\prime},
v_{\ell^{\prime\prime}}^{\prime\prime} ), u \big) \big| \mathrm{d} u \mathrm{%
d} \Sigma_{\ell^{\prime\prime}}^{\prime\prime} (\tau^{\prime\prime}) \mathrm{%
d} \tau^{\prime\prime},
\end{split}
\notag
\end{equation}
where ${t}^{\prime\prime}_{\ell^{\prime\prime}} : = {t}_{\ell^{\prime%
\prime}} ( \tau, X_{\mathbf{cl}}(\tau ; {t}_{\ell}, x_{\ell} , v_{\ell}),
v^{\prime\prime} ) ,\ {x}^{\prime\prime}_{\ell^{\prime\prime}} : = {x}%
_{\ell^{\prime\prime}} ( X_{\mathbf{cl}}(\tau ; {t}_{\ell}, x_{\ell} ,
v_{\ell}), v^{\prime\prime} ), \ {v}^{\prime\prime}_{\ell^{\prime\prime }} :
= {v}_{\ell^{\prime\prime}} ( X_{\mathbf{cl}}(\tau; {t}_{\ell}, x_{\ell} ,
v_{\ell}), v^{\prime\prime} ) $. Furthermore 
\begin{eqnarray*}
\mathbf{I}_{3} &\lesssim& CT_{0}^{5/2} \Big\{ e^{- \frac{C_{0}}{\e }t} \| \e%
^{\frac 1 2} h \|_{\infty} + C_{T_{0}} \| \e^{\frac 1 2} w r \|_{\infty} +  C_{T_{0}}\| \e%
^{\frac 3 2}\langle v\rangle^{-1} w g \|_{\infty} \Big\} \\
&& +o(1)CT_{0}^{5/2} \| \e^{\frac 1 2} h \|_{\infty} + T_{0}^{5/4}\Big\{%
\frac{4}{5} \Big\}^{C_{2} T_{0}^{5/4}} \| \e^{\frac 1 2} h \|_{\infty}.
\end{eqnarray*}
This bound of $\mathbf{I}_{3}$ is already included in the RHS of (\ref%
{claim1_s}).

Now we focus on $\mathbf{I}_{1}$ and $\mathbf{I}_{2}$. Consider the change
of variables 
\begin{equation}  \label{COV_s}
v^{\prime } \ \mapsto \ y : = X(s^{\prime} ; s, X_{\mathbf{cl}}
(s;t,x,v),v^{\prime}).
\end{equation}
By a direct computation and (\ref{free_DX}), for $\max\{0, t_{1}^{\prime} \}
\leq s^{\prime} \leq s- \kappa \e \leq T_{0}$, 
\begin{equation}
\begin{split}
\frac{\partial X_{i} ( s^{\prime} ; s )}{\partial v_{j}^{\prime}} &= - {\ (
s- s^{\prime} ) } \delta_{ij} + \int^{s^{\prime}}_{ s} \mathrm{d}
\tau^{\prime} \int^{\tau^{\prime}}_{ s} \mathrm{d} \tau^{\prime\prime} \e%
^{2} \sum_{m} \partial_{m} \Phi_{i} (X(\tau^{\prime\prime} ; s )) \frac{%
\partial X_{m}}{\partial v_{j}^{\prime}} (\tau^{\prime\prime} ; s) \\
&=- {\ ( s- s^{\prime} ) } \big[ \delta_{ij} + O( \e^{2}) \| \Phi \|_{C^{1}}
T_{0}^{2} e^{C_{\Phi} T_{0}} \big].
\end{split}
\notag
\end{equation}
By the lower bound of $|s-s^{\prime}| \geq \kappa \e$, 
\begin{eqnarray*}
\det \nabla_{v^{\prime}} X(s^{\prime} ; s) = |s-s^{\prime}|^{3} \det \big(%
\delta_{ij} + O( \e^{2}) \| \Phi \|_{C^{1}} T_{0}^{2} e^{C_{\Phi} T_{0}} %
\big)\gtrsim \kappa^{3} \e^{3}.
\end{eqnarray*}

Now integrating over time first 
\begin{eqnarray*}
&& \int_{\max{\{0, {t}_{1} \}}}^{t-\kappa \e } \mathrm{d} s \ \frac{ e^{- 
\frac{C_{0}}{\e } (t-s)} }{\e } \int_{|v^{\prime}| \leq m} \mathrm{d}
v^{\prime}\int^{s-\kappa \e }_{ \max \{ 0, {t}_{1}^{\prime} \}} \mathrm{d}
s^{\prime} \frac{e^{- \frac{C_{0} (s-s^{\prime})}{\e }}}{\e } \\
&& \ \ \ \times \int_{|u| \leq m} \mathrm{d} u\big| \e h( X_{\mathbf{cl}}
(s^{\prime} ; s, X_{\mathbf{cl}}(s; t,x,v), v^{\prime} ), u ) \big| \\
&\lesssim&\sup_{0 \leq s^{\prime} \leq s-\kappa \e \leq s \leq t-\kappa \e}
\int_{|v^{\prime}| \leq m} \mathrm{d} v^{\prime} \int_{|u| \leq m} \mathrm{d}
u \ |\e h(X_{\mathbf{cl}} (s^{\prime} ; s, X_{\mathbf{cl}}(s; t,x,v),
v^{\prime} ), u )|,
\end{eqnarray*}
and then from $|h(u)| = w(u) |f(u)| \lesssim_{m} |f(u)|$ for $|u | \leq m$
and decomposing 
\begin{eqnarray*}
&\lesssim&\sup_{0 \leq s^{\prime} \leq s-\kappa \e \leq s \leq t-\kappa \e} %
\e^{\frac 1 2}\int_{|v^{\prime}| \leq m} \int_{|u| \leq m} \ | f(X_{\mathbf{%
cl}} (s^{\prime} ; s, X_{\mathbf{cl}}(s; t,x,v), v^{\prime} ), u )| \mathrm{d%
} u \mathrm{d} v^{\prime} \\
&\lesssim&\sup_{0 \leq s^{\prime} \leq s-\kappa \e \leq s \leq t-\kappa \e} %
\e^{\frac 1 2}\int_{|v^{\prime}| \leq m} \int_{|u| \leq m} \ \P f(X_{\mathbf{%
cl}} (s^{\prime} ; s, X_{\mathbf{cl}}(s; t,x,v), v^{\prime} ) )| \langle u
\rangle^{2} \sqrt{\mu(u)}\mathrm{d} u \mathrm{d} v^{\prime} \\
& +& \sup_{0 \leq s^{\prime} \leq s-\kappa \e \leq s \leq t-\kappa \e} \e%
^{\frac 1 2}\int_{|v^{\prime}| \leq m} \int_{|u| \leq m} | (\mathbf{I}- 
\mathbf{P}) f(X_{\mathbf{cl}} (s^{\prime} ; s, X_{\mathbf{cl}}(s; t,x,v),
v^{\prime} ) )| \mathrm{d} u \mathrm{d} v^{\prime}.
\end{eqnarray*}
For $\P f-$contribution, 
\begin{eqnarray*}
&& \e^{\frac 1 2} \int_{v^{ \prime} } \int_{u} \big| \P f \big( X_{\mathbf{cl%
}}( s^{\prime} ; s, X_{\mathbf{cl}}(s;t,x,v) , v ^{ \prime} ) \big) \langle
u\rangle^{2} \sqrt{\mu(u)} \big| \mathrm{d} u \mathrm{d} v^{\prime} \\
&&\lesssim_{m} \e^{\frac 1 2} \Big[ \int_{v^{ \prime} } \big| \P f \big( X_{%
\mathbf{cl}}( s^{\prime} ; s, X_{\mathbf{cl}}(s;t,x,v) , v ^{ \prime} ) %
\big) \big|^{6} \mathrm{d} v^{ \prime} \Big]^{1/6} \lesssim_{m} \e^{\frac 1
2} \Big[ \int_{\Omega} \big| \P f \big( y \big) \big|^{6} \frac{1}{\kappa^{3}%
\e^{3}} \mathrm{d} y\Big]^{1/6} \\
&& \lesssim_{m} \| \P f \|_{L^{6}(\Omega )} .
\end{eqnarray*}
For $(\mathbf{I} - \mathbf{P})f$ contribution, 
\begin{eqnarray*}
&& \e^{\frac 1 2} \int_{v^{ \prime} } \int_{u} |(\mathbf{I} - \mathbf{P}) f %
\big( X_{\mathbf{cl}}( s^{\prime} ; s, X_{\mathbf{cl}}(s;t,x,v) , v ^{
\prime} ) ,u \big) | \mathrm{d} u \mathrm{d} v^{\prime} \\
&&\lesssim_{m} \e^{\frac 1 2} \Big[ \iint \big| (\mathbf{I} - \mathbf{P})f %
\big( X_{\mathbf{cl}}( s^{\prime} ; s, X_{\mathbf{cl}}(s;t,x,v) , v ^{
\prime} ), u \big) \big|^{2} \mathrm{d} v^{ \prime} \mathrm{d} u \Big]^{1/2}
\\
&&\lesssim_{m} \e^{\frac 1 2} \Big[ \iint_{\Omega \times \mathbb{R}^{3}} %
\big| (\mathbf{I} - \mathbf{P}) f (y,u) \big|^{2} \frac{1}{\kappa^{3}\e^{3}} 
\mathrm{d} y \mathrm{d} u\Big]^{1/2} \\
&& \lesssim_{m} \frac{1}{\e} \| (\mathbf{I} - \mathbf{P})f \|_{L^{2}(\Omega
\times \mathbb{R}^{3} )} .
\end{eqnarray*}

We have the similar change of variables for $v^{\prime}_{\ell^{\prime} }
\mapsto X_{\mathbf{cl}}(\tau;
t^{\prime}_{\ell^{\prime}},x^{\prime}_{\ell^{\prime}},
v^{\prime}_{\ell^{\prime}} )$, and $v^{\prime\prime}_{\ell^{\prime\prime}}
\mapsto X_{\mathbf{cl}}( -\tau^{\prime\prime} ; {t}_{\ell^{\prime\prime}}^{%
\prime\prime}, x_{\ell^{\prime\prime}}^{\prime\prime},
v_{\ell^{\prime\prime}}^{\prime\prime})$, and $v^{\prime\prime} \mapsto X_{%
\mathbf{cl}} ( s^{\prime\prime} ; \tau , X_{\mathbf{cl}}(\tau; {t}_{\ell},
x_{\ell}, v_{\ell} ), v^{\prime\prime} )$.

Following the same proof, we conclude 
\begin{equation}
\mathbf{I}_{1} + \mathbf{I}_{2} \ \lesssim \ T_{0}^{5/2} \big( \| \P f
\|_{L^{6}(\Omega\times \mathbb{R}^3 )}+ \frac{1}{\e} \| (\mathbf{I} - 
\mathbf{P}) f \|_{L^{2}(\Omega \times \mathbb{R}^{3})}\big).
\end{equation}
All together we prove our claims (\ref{claim1_s}).
\end{proof}

\subsection{Steady $L^{2}-$Coercivity and $L^6$ bound}

The main purpose of this section is to prove the following:

\begin{proposition}
\label{linearl2} Suppose all the assumptions of Proposition \ref%
{prop_linear_steady} hold. Then, for sufficiently small $\e>0$, there exists
a unique solution to (\ref{linearf}). 
Moreover, 
\begin{equation}
\begin{split}
& \|\P f\| _{{2}}+\e^{-1}\| (\mathbf{I}-\mathbf{P}) f\| _{\nu } + {\e%
^{-1/2}|(1-P_{\gamma })f| _{2,+} {+ | f|_{2} } } \\
&\lesssim \| \nu^{- \frac{1}{2}} (\mathbf{I} - \mathbf{P}) {g} \|_{2} + \e%
^{-1} \| \mathbf{P} g \|_{2} +\e^{-1/2}|r |_{2,-} .  \label{completestimate}
\end{split}%
\end{equation}
Furthermore 
\begin{equation}
\begin{split}
\| \mathbf{P} f\|_{6} \lesssim& \e^{-1}\| (\mathbf{I}-\mathbf{P} )f\| _{\nu
} +\e^{-\frac 1 2}|(1-P_{\gamma })f|_{2,+}+|r|_{2,-}+\| \frac{g}{\sqrt{\nu}}%
\| _{2}+o(1)\e^{\frac 1 2}\|w f\|_\infty \\
&+|\e^{\frac 1 2}w r|_\infty+\|\e^{\frac 3 2} \langle v\rangle^{-1}w
g\|_\infty.  \label{linear_steady}
\end{split}%
\end{equation}
\end{proposition}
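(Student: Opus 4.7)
The plan is to establish (\ref{completestimate}) and (\ref{linear_steady}) as a priori bounds and then recover existence and uniqueness for (\ref{linearf}) via a standard penalization/Galerkin scheme. I would begin with the $L^{2}$ energy estimate: pairing (\ref{linearf}) with $f$ and applying Lemma \ref{green}, the boundary term $\int_{\gamma_{+}}|f|^{2}-\int_{\gamma_{-}}|f|^{2}$ combined with the diffuse condition $f|_{\gamma_{-}}=P_{\gamma}f+r$ and the standard identity $|P_{\gamma}f|_{2,-}^{2}=|P_{\gamma}f|_{2,+}^{2}$ produces a positive gain $|(1-P_{\gamma})f|_{2,+}^{2}$ after isolating the $r$ contribution; the field term is integrated by parts in $v$ and its bulk contribution is absorbed as a lower-order $\e^{2}\|\Phi\|_{\infty}$ piece; together with the spectral inequality (\ref{spectL}) this yields
\begin{equation*}
\e^{-2}\|(\mathbf{I}-\mathbf{P})f\|_{\nu}^{2}+\e^{-1}|(1-P_{\gamma})f|_{2,+}^{2}\;\lesssim\;|(\mathbf{P}f,g)|+\e^{-1}|r|_{2,-}^{2}.
\end{equation*}

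To bound $\mathbf{P}f=a\sqrt{\mu}+v\cdot b\sqrt{\mu}+c(|v|^{2}-3)\sqrt{\mu}/2$ in $L^{2}$, I would follow the weak-formulation strategy of \cite{EGKM}: for each coefficient, pair (\ref{linearf}) against a test function of the form $\psi_{\#}=(|v|^{2}-\beta_{\#})(v\cdot\nabla_{x}\phi_{\#})\sqrt{\mu}$ (with tensorial analogue for $b$), where $\phi_{\#}$ solves an elliptic problem on $\Omega$ with Neumann data $\partial_{n}\phi_{\#}=0$ (or Dirichlet data for the $b$ component) and right-hand side proportional to the target coefficient, e.g.\ $-\Delta\phi_{a}=a$ (the zero-mean compatibility is ensured by (\ref{constraint}) and (\ref{0mass_s})). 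Integrating $v\cdot\nabla_{x}f\cdot\psi_{\#}$ by parts extracts the main term $\|\#\|_{2}^{2}$, while the remaining pieces pair against $\nabla\phi_{\#}$ or its trace and, by elliptic regularity $\|\phi_{\#}\|_{H^{2}}\lesssim\|\#\|_{2}$, are controlled by $\|\nu^{-1/2}g\|_{2}$, $|r|_{2,-}$, $|(1-P_{\gamma})f|_{2,+}$, and $\|(\mathbf{I}-\mathbf{P})f\|_{\nu}$. Combined with the energy estimate this yields (\ref{completestimate}).

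The heart of the proposition is the $L^{6}$ bound (\ref{linear_steady}), which I would obtain by modifying the elliptic source in the test function to a fifth power. Take $\phi_{a}$ solving $-\Delta\phi_{a}=a^{5}-|\Omega|^{-1}\int_{\Omega}a^{5}$ with $\partial_{n}\phi_{a}=0$ (the mean is subtracted for Neumann compatibility), and analogous nonlinear elliptic problems for $b$ and $c$; pairing (\ref{linearf}) with $\psi_{a}$ and using (\ref{0mass_s}), the main term becomes $\int_{\Omega}(-\Delta\phi_{a})\,a\,\mathrm{d}x=\|a\|_{6}^{6}$. To close, in $3$D the Sobolev--Poincar\'e embedding $W^{2,6/5}(\Omega)\hookrightarrow W^{1,2}(\Omega)$ demands only $a^{5}\in L^{6/5}$ and yields $\|\nabla\phi_{a}\|_{2}\lesssim\|a\|_{6}^{5}$; for the boundary pairings, elliptic regularity $\|\phi_{a}\|_{W^{2,6/5}(\Omega)}\lesssim\|a\|_{6}^{5}$ combined with the fractional trace embedding $W^{1/6,6/5}(\partial\Omega)\hookrightarrow L^{4/3}(\partial\Omega)$ gives $|\nabla\phi_{a}|_{L^{4/3}(\partial\Omega)}\lesssim\|a\|_{6}^{5}$. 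All contributions then come out homogeneous of degree $5$ in $\|a\|_{6}$, so after dividing by $\|a\|_{6}^{5}$ one arrives at an $L^{6}$ estimate for $a$ (and similarly for $b,c$). An interpolated bound $\|(\mathbf{I}-\mathbf{P})f\|_{6/5}\lesssim\|(\mathbf{I}-\mathbf{P})f\|_{2}+o(1)\e^{1/2}\|wf\|_{\infty}$ produces the $o(1)\e^{1/2}\|wf\|_{\infty}$ term in (\ref{linear_steady}), and the remaining $L^{\infty}$ norms $|\e^{1/2}wr|_{\infty}$ and $\|\e^{3/2}\langle v\rangle^{-1}wg\|_{\infty}$ arise from subsequently chaining with Proposition \ref{point_s} to re-absorb $\|wf\|_{\infty}$.

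Existence of $f$ is obtained by adding a coercive $\delta_{0}f_{\delta_{0}}$ term to (\ref{linearf}), solving the regularized problem by a standard Galerkin approximation, and passing $\delta_{0}\to 0$ using the uniform a priori bounds above; uniqueness is immediate from linearity. The main obstacle I anticipate is the careful bookkeeping of boundary pieces in the $L^{6}$ step: one must prescribe the boundary data of $\phi_{\#}$ and tune the constants $\beta_{\#}$ so that every trace appearing in the weak formulation is bounded exactly by $\|a\|_{6}^{5}$, enabling the degree-$5$ homogeneity argument to close; in addition, the field term $\e^{2}\int(\Phi\cdot\nabla_{v}\psi_{a})f\sqrt{\mu}$, coming from the falling of $\nabla_{v}$ onto the velocity polynomial in $\psi_{a}$, must be split between a $\mathbf{P}f$ piece (absorbed by Young's inequality against $\|a\|_{6}^{6}$) and an $(\mathbf{I}-\mathbf{P})f$ piece (handled by $L^{2}$--$L^{\infty}$ interpolation).
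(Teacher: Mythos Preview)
Your a priori estimate strategy matches the paper's: the $L^{2}$ coercivity via test functions $\psi_{\#}$ and the $L^{6}$ bound via the fifth-power elliptic source $-\Delta\phi_{\#}=\#^{5}$, exploiting $W^{2,6/5}\hookrightarrow W^{1,2}$ and the trace into $L^{4/3}(\partial\Omega)$, are exactly the arguments of Lemma~\ref{steady_abc}. Two corrections on the $L^{6}$ step: the exponent you need on the microscopic remainder is $\|(\mathbf{I}-\mathbf{P})f\|_{6}$ (the H\"older dual of $6/5$), not $6/5$, and it is interpolated as $\|(\mathbf{I}-\mathbf{P})f\|_{6}^{6}\le[\e^{-1}\|(\mathbf{I}-\mathbf{P})f\|_{2}]^{2}[\e^{1/2}\|wf\|_{\infty}]^{4}$; the $o(1)\e^{1/2}\|wf\|_{\infty}$ contribution in (\ref{linear_steady}) also arises from interpolating the boundary piece $|\mu^{1/4}(1-P_{\gamma})f|_{4,+}$ between $\e^{-1/2}|(1-P_{\gamma})f|_{2,+}$ and $\e^{1/2}\|wf\|_{\infty}$, which is needed to pair against $|\nabla\phi_{\#}|_{L^{4/3}(\partial\Omega)}$ and which you do not mention.

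The existence construction differs more substantially. Your ``penalization/Galerkin'' sketch is underspecified, and the diffuse condition $f|_{\gamma_{-}}=P_{\gamma}f+r$ couples incoming to outgoing data in a way that does not fit a standard Galerkin basis. The paper instead (i) solves the pure-transport problem $\mathcal{L}f=g$ with $\e^{-1}\nu$ replacing $\e^{-1}L$ (Lemma~\ref{calL}, via a boundary iteration with a contraction parameter $\vartheta<1$); (ii) restores $K$ by Schaefer's fixed-point theorem, which hinges on the nontrivial compactness of $K\mathcal{L}^{-1}$ on $L^{2}$ (Lemma~\ref{compact1}, proved via an extension of $f$ off $\Omega$ and a velocity-averaging argument); and (iii) passes the penalization parameter $\lambda\to 0$ using the uniform a priori bounds together with the zero-mass identity $\langle f^{\lambda}\rangle=0$ from (\ref{rg}). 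Step~(ii) is the piece missing from your outline and is where most of the work beyond the estimates actually lies.
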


As the first step of the proof of Proposition \ref{linearl2}, we consider
the following penalized problem: 
\begin{equation}  \label{linpro1}
\begin{split}
\mathcal{L}f:=(\l +\e^{-1}\nu-\frac 1 2\e^2 \Phi\cdot v)f +v\cdot \nabla_x f+%
\e^2\Phi\cdot \nabla_v f = g&\quad \text{in }\O \times\mathbb{R}^3, \\
f =P_\g f+r&\quad \text{on }\gamma_{-}.
\end{split}%
\end{equation}

\begin{lemma}
\label{calL} Assume that $g\in L^2(\O \times\mathbb{R}^3)$ and $r\in
L^2(\gamma_-)$ and satisfy (\ref{constraint}). Moreover, let $\Phi\in
L^\infty({\O })$ and $\l >0$. Then, if $\e>0$ is sufficiently small, the
solution to (\ref{linpro1}) exists and is unique. Moreover it satisfies the
bounds 
\begin{equation}
\e^{-1} \| f\|^2 _{\nu}+ |(1-P_\g)f|_{2,+}^2 \ \lesssim \ \e \| \frac{g}{%
\sqrt{\nu}}\|^2_{2}+ |r|^2_{2,-}.  \label{stimal2}
\end{equation}
\end{lemma}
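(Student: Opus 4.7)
The plan is to obtain existence by iteration along the characteristics of \eqref{char} and the a priori bound \eqref{stimal2} by a direct energy computation using Lemma \ref{green}. For existence, I would iterate $f^{n+1}$ solving $\mathcal{L}f^{n+1}=g$ in $\Omega\times\mathbb{R}^3$ with $f^{n+1}|_{\gamma_-}=P_\gamma f^n+r$, starting from $f^0\equiv 0$. Because $\|\Phi\|_\infty<\infty$ and $\e$ is small, the absorption coefficient satisfies $\lambda+\e^{-1}\nu(v)-\tfrac{1}{2}\e^2\Phi\cdot v\geq \lambda+\tfrac{C_0}{2\e}\langle v\rangle>0$, so each $f^{n+1}$ is representable explicitly along the backward characteristic $(X_{\mathbf{cl}},V_{\mathbf{cl}})$ of Definition \ref{cycle_dyn0} via a Duhamel formula with integrating factor $\exp(-\!\int(\lambda+\e^{-1}\nu-\tfrac{\e^2}{2}\Phi\cdot V))$. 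The difference $f^{n+1}-f^n$ then solves the homogeneous problem with boundary data $P_\gamma(f^n-f^{n-1})$ and no source, so the exponentially damped characteristics combined with the smoothing action of $P_\gamma$ make the map a strict contraction; this yields a unique limit $f$.

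For the estimate, I would multiply the equation by $f$ and integrate over $\Omega\times\mathbb{R}^3$. Lemma \ref{green} applied with $f=g$ there (using that $\nabla_v\cdot\Phi=0$ since $\Phi=\Phi(x)$) gives $\iint\{v\cdot\nabla_x+\e^2\Phi\cdot\nabla_v\}f\cdot f=\tfrac{1}{2}(|f|_{2,+}^2-|f|_{2,-}^2)$, whence
\begin{equation*}
\lambda\|f\|_2^2+\e^{-1}\|f\|_\nu^2-\tfrac{\e^2}{2}\iint\Phi\cdot v\,f^2+\tfrac{1}{2}|f|_{2,+}^2-\tfrac{1}{2}|f|_{2,-}^2=(f,g).
\end{equation*}
The field term is bounded by $\tfrac{\e^2}{2}\|\Phi\|_\infty\|f\|_\nu^2$ and absorbed into $\e^{-1}\|f\|_\nu^2$ for small $\e$; Young's inequality absorbs $(f,g)\leq\delta\e^{-1}\|f\|_\nu^2+C_\delta\e\|g/\sqrt\nu\|_2^2$. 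The boundary contribution is then rewritten using the orthogonal decomposition $|f|_{2,+}^2=|(1-P_\gamma)f|_{2,+}^2+|P_\gamma f|_{2,+}^2$, the reflection isometry $|P_\gamma f|_{2,+}=|P_\gamma f|_{2,-}$ (a direct computation from \eqref{P_gamma}), and $|f|_{2,-}^2=|P_\gamma f+r|_{2,-}^2$, yielding
\begin{equation*}
\tfrac{1}{2}(|f|_{2,+}^2-|f|_{2,-}^2)=\tfrac{1}{2}|(1-P_\gamma)f|_{2,+}^2-\langle P_\gamma f,r\rangle_{2,-}-\tfrac{1}{2}|r|_{2,-}^2.
\end{equation*}

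The main obstacle is the cross term $\langle P_\gamma f,r\rangle_{2,-}$: Cauchy-Schwarz gives $|\langle P_\gamma f,r\rangle|\leq|P_\gamma f|_{2,-}|r|_{2,-}$, and the explicit formula \eqref{P_gamma} yields $|P_\gamma f|_{2,-}^2\leq|f|_{2,+}^2$, so it suffices to bound $|f|_{2,+}^2$ independently. For this I would use the Duhamel characteristic representation of $f$ traced to $\gamma_+$ together with the change-of-variables formula of Lemma \ref{bdry_int}: the exponential damping $\exp(-C_0\e^{-1}\langle v\rangle t_{\mathbf{b}})$ from the $\e^{-1}\nu$ term, integrated in time, contributes a factor of $\e$, producing a self-consistent inequality of the schematic form
\begin{equation*}
|f|_{2,+}^2\lesssim_\lambda \e\bigl(|r|_{2,-}^2+\|g/\sqrt\nu\|_2^2\bigr)+\e\,|f|_{2,+}^2,
\end{equation*}
which on absorption gives $|P_\gamma f|_{2,-}^2\lesssim_\lambda \e(|r|_{2,-}^2+\|g/\sqrt\nu\|_2^2)$, hence $|\langle P_\gamma f,r\rangle_{2,-}|\lesssim\e^{1/2}(|r|_{2,-}^2+\|g/\sqrt\nu\|_2^2)$. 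The principal technical difficulty is precisely this $L^2$ trace-type bound, where the $\e^{-1}$ absorption must be exploited carefully to dominate the grazing-velocity contributions. Combining all estimates yields \eqref{stimal2}.
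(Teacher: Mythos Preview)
Your overall plan (iteration for existence, then an energy identity via Lemma~\ref{green}) matches the paper. The energy computation you wrote is correct, and you correctly identify the cross term $\langle P_\gamma f,r\rangle_{2,-}$ as the crux. But your proposed handling of it is the gap.

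The point you are missing is that this cross term \emph{vanishes}. Since $P_\gamma f(x,v)=z_\gamma(x)\sqrt{\mu(v)}$ with $z_\gamma$ a function of $x$ alone,
\[
\langle P_\gamma f,r\rangle_{2,-}=\int_{\partial\Omega} z_\gamma(x)\Big[\int_{n(x)\cdot v<0} r(x,v)\sqrt{\mu(v)}\,|n(x)\cdot v|\,\mathrm{d}v\Big]\mathrm{d}S_x,
\]
and the inner $v$--integral is zero pointwise in $x$ for every boundary datum $r$ that actually arises here (this is exactly \eqref{Q_zero}; the paper invokes \eqref{constraint} for this). With the cross term gone, the boundary balance reads simply $\tfrac12|f|_{2,+}^2-\tfrac12|f|_{2,-}^2=\tfrac12|(1-P_\gamma)f|_{2,+}^2-\tfrac12|r|_{2,-}^2$, and \eqref{stimal2} follows immediately. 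Your Duhamel/trace workaround to bound $|f|_{2,+}$ is unnecessary, and in fact the schematic bound you write down does not close: as you yourself flag, near-grazing trajectories have $t_{\mathbf b}$ arbitrarily small, so the factor $e^{-C_0\e^{-1}\langle v\rangle t_{\mathbf b}}$ gives no smallness there, and you offer no mechanism to recover the claimed extra $\e$.

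The same omission undermines your existence argument. Without the $\e$-gain you cannot show the undamped map $f^n\mapsto P_\gamma f^{n+1}$ is a strict contraction. The paper instead iterates with a damped boundary condition $f^{\ell+1}|_{\gamma_-}=\vartheta P_\gamma f^\ell+r$, $\vartheta\in[0,1)$; the energy identity together with $\int_{\gamma_-} rP_\gamma f^\ell=0$ and $|P_\gamma f^\ell|_{2,-}\le |f^\ell|_{2,+}$ gives a contraction with ratio $\vartheta^2<1$ in $|\cdot|_{2,+}$, hence a limit $f_\vartheta$ with uniform-in-$\vartheta$ bulk bound $\e^{-1}\|f_\vartheta\|_\nu^2\lesssim\e\|g/\sqrt\nu\|_2^2+|r|_{2,-}^2$, and then $\vartheta\uparrow 1$.
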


We remark that Lemma \ref{calL} implies that, for $\e$ sufficiently small,
the operator $\mathcal{L}^{-1}$ is well-defined and bounded as a map from $%
L^2$ to $L^2$.

\begin{proof}
\textit{Step 1.} Denote $\varpi:= \l +\e^{-1}\nu-\frac 1 2\e^2 \Phi\cdot v$.
Since $\nu\ge \nu_0 \langle v\rangle$, with $\nu_0>0$, if $\Phi$ is such
that $\frac 1 2\e^2 \|\Phi\|_\infty|v|\le \frac 12 \e^{-1}\nu$, we have $%
\varpi\ge \frac 1 2\e^{-1}\nu_0 \langle v\rangle$.

For the existence, we first consider the following problem: 
\begin{equation}
\varpi f +v\cdot \nabla_x f+\e^2\Phi\cdot \nabla_v f=g\quad \text{in }\O %
\times\mathbb{R}^3, \quad f\big|_{\gamma_-}=r,  \label{freetanpres}
\end{equation}
with a prescribed positive function $\varpi(x,v)$ and prescribed $g,r$.

From (\ref{char}), for $- t_{\mathbf{b}}(x,v) < t < t_{\mathbf{f}}(x,v)$, 
\begin{eqnarray*}
&&\frac{d}{dt} \Big[ f(X(t;0,x,v), V(t;0,x,v)) e^{\int^{t}_{0} \varpi (
X(\tau;0,x,v), V(\tau;0,x,v) ) \mathrm{d} \tau } \Big] \\
&=& g(X(t;0,x,v), V(t;0,x,v)) e^{\int^{t}_{0} \varpi ( X(\tau;0,x,v),
V(\tau;0,x,v) ) \mathrm{d} \tau}.
\end{eqnarray*}
Then, for $(X(t),V(t)):= (X(t;0,x,v), V(t;0,x,v))$ and $\varpi (\tau) : =
\varpi ( X(\tau;0,x,v), V(\tau;0,x,v) )$, 
\begin{equation}  \label{solforA}
\begin{split}
f(x,v)= r(x_{\mathbf{b}}(x,v),v_{\mathbf{b}}(x,v)) e^{ -\int_{-t_{\mathbf{b}%
}(x,v)}^{0} \varpi(\tau) } +\int_{-t_{\mathbf{b}}(x,v) }^{0}g(X(s ),V(s ))
e^{-\int^{0}_{s} \varpi(\tau) } \mathrm{d} s .
\end{split}%
\end{equation}
This proves the existence.

Combining with $\int^{0}_{- \infty} \varpi( s ) e^{-\int^{0}_{s} \varpi(
\tau ) \mathrm{d} \tau } \mathrm{d} s = \int^{0}_{-\infty} \frac{d}{ds}
e^{-\int^{0}_{s} \varpi( \tau ) \mathrm{d} \tau } \mathrm{d} s = 1-
e^{-\int^{0}_{-\infty} \varpi( \tau ) \mathrm{d} \tau } \lesssim 1$ and $%
\varpi(s)\gtrsim \e^{-1} \langle V(s) \rangle$, 
\begin{equation*}
\| f\|_{\infty} + |f|_{\infty} \ \lesssim \ \| \frac{g}{\nu} \|_{\infty} +
|r|_{\infty}.
\end{equation*}
Similarly, we can prove 
\begin{equation*}
\| e^{\beta |v|^{2}} f \|_{\infty} + | e^{\beta |v|^{2}} f|_{\infty}
\lesssim \| e^{\beta |v|^{2}} \frac{g}{\nu} \|_{\infty}+ |e^{\beta |v|^{2}}
r|_{\infty}.
\end{equation*}

\vspace{8pt}

\noindent\textit{Step 2.} Next we consider the diffuse reflection boundary
conditions. This is done by introducing the sequence $f^\ell$ solving 
\begin{equation}
\varpi f^{\ell+1}+v\cdot \nabla _{x}f^{\ell+1}+\e^2\Phi\cdot\nabla_v
f^{\ell+1}=g,\text{ \ \ \ }f^{\ell+1}_{{-}}=\t P_\g f^\ell +r,  \notag
\end{equation}
with $f^0=0$, $\ell\ge 0$ integer and $\t\in [0,1)$.

By multiplying by $f^{\ell+1}$ and integrating, using the Green identity we
obtain 
\begin{equation}
\frac 1 2\iint_{\O \times\mathbb{R}^3} \varpi |f^{\ell+1}|^2+\frac 1
2|f^{\ell+1}|_{2,+}^2=(f^{\ell+1}, g) +\frac 1 2\int_{\g_-}|\t P_\g
f^{\ell}+\frac 1 2 r|^2  \notag
\end{equation}
From (\ref{constraint}) and the definition of $P_\g$, we have $\int_{\g_-} r
P_\g f^{\ell}=0$ and hence 
\begin{equation}
\frac 1 2\int_{\g_-}|\t P_\g f^{\ell}+r|^2\le \frac 1 2 \t^2 |P_\g
f^{\ell}|_{2,-}^2 +|r|_{2,-}^2.  \notag
\end{equation}
Therefore, from $\varpi\ge \frac{\nu_0}{\e} \langle v\rangle$, $|P_\g
f^{\ell}|_{2,-}\le |f^{\ell}|_{2,+}$ and 
\begin{equation*}
| (f^{\ell+1}, g)| = \big| \big( \frac{1}{\sqrt{\e}} \sqrt{\nu} f^{\ell+1} , 
\sqrt{\e}\frac{g}{\sqrt{\nu}} \big) \big| \lesssim o(1) \e^{-1} \| f^{\ell+1
} \|_{\nu}^{2} + \e \| \frac{g}{\sqrt{\nu}} \|_{2}^{2},
\end{equation*}
we find 
\begin{equation}
\frac 1 {8\e} \|f^{\ell+1}\|_\nu^2 +\frac 1 2|f^{\ell+1}|_{2,+}^2\le \e \| 
\frac{g}{\sqrt{\nu}}\|_2^2+ \frac 1 2 |r|_{2,-}^2+ \frac 12\t^2
|f^{\ell}|_{2,+}^2.  \label{ene100}
\end{equation}
By iteration, since $\t<1$, we conclude that 
\begin{equation}
\e^{-1} \|f^{\ell+1}\|_\nu^2 + |f^{\ell+1}|^2_{2,+} \ \lesssim_\t \ \e \| 
\frac{g}{\sqrt{\nu}}\|_2^2+ |r|_{2,-}^2.  \notag
\end{equation}
Let us look now, for $\ell\ge 1$ at the difference $f^{\ell+1}-f^{\ell}$. By
the Green's identity, we obtain 
\begin{equation}
\frac 1 {4\e} \| f^{\ell+1} - f^{\ell}\|_\nu^2 +\frac 1 2| f^{\ell+1} -
f^{\ell}|^2 _{2,+} \ \lesssim \ \frac 1 2\t^2| f^{\ell } -
f^{\ell-1}|_{2,+}^2.  \notag
\end{equation}
Again by iteration, we obtain that the sequence $\{ f^{\ell} \}$ is a Cauchy
sequence and has a limit $f_\t$ depending on $\t$. Moreover, taking the
limit $\ell\to \infty$in (\ref{ene100}), we have 
\begin{equation}
\frac 1 {8\e} \|f_\t\|_\nu^2 + (1-\t^2)|f_\t|^2_{2,+} \ \lesssim \ \e \| 
\frac{g}{\sqrt{\nu}}\|_2^2+ |r|_2^2,  \notag
\end{equation}
where we used the trace theorem, Lemma \ref{trace_s}, for the boundary
integration. Then we see that $f_\t$ satisfies the uniform-in-$\t$bounds $
\frac 1 {8\e} \|f_\t\|_\nu^2 \lesssim \e \| \frac{g}{\sqrt{\nu}}\|_2^2+
|r|_2^2$.

Thus we can take the weak $L^{2}$limit as $\t\to 1$to obtain the existence
of the solution $f$to the first line of (\ref{linpro1}). For the boundary
condition we use Lemma \ref{trace_s} to show the second line of (\ref%
{linpro1}).

Then the difference $f_\t-f$satisfies the bound 
\begin{equation}
\frac 1{8\e}\|f-f_\t\|^2_\nu +\frac 12 |f-f_\t|_{2,+}^2 \le
(1-\t)|f|_{2,+}^2\to 0 \quad \text{as } \t\to 1.  \notag
\end{equation}
Hence the convergence is strong.

\vspace{8pt}

\noindent\textit{Step 3.} We can prove (\ref{stimal2}) by applying the
Green's identity to (\ref{linpro1}): We establish an important positivity
property of $\mathcal{L}$. Using Lemma \ref{green} and the boundary
condition for $f$, we get 
\begin{equation}
(f,\mathcal{L}f)=\iint_{\O \times\mathbb{R}^3} (\l +\e^{-1}\nu-\frac 1 2\e^2
\Phi\cdot v)f^2+\frac 1 2\int_{\g_+} f^2=(f,g)+\frac 1 2 \int_{\g_-} (P_\g
f+ r)^2.  \notag
\end{equation}
Following \textit{Step 2}, 
\begin{equation}
\iint_{\O \times\mathbb{R}^3} (\l +\e^{-1}\nu-\frac 1 2\e^2 \Phi\cdot
v)f^2+\frac 1 2\int_{\g_+} |(1-P_\g) f|^2 \lesssim o(1)\e^{-1} \|f\|_\nu^2 +%
\e \|\frac{g}{\sqrt{\nu}}\|^2_{2} + |r|_{2,-}^2.  \label{enerineq25}
\end{equation}
If $\e \ll1 $ then $\frac 1 2\e^2\|\Phi\|_\infty\le \frac {\nu_0}4$, and 
\begin{equation}
\l \|f\|_2^2+\frac{\e^{-1}}2\|f\|_\nu^2+\frac 1 2 |(1-P_\g) f|_2^2 \lesssim %
\e \|\frac{g}{\sqrt{\nu}}\|_2^2+ |r|_{2,-}^2.  \label{energiasenzaK}
\end{equation}

The inequality (\ref{stimal2}) follows immediately from (\ref{energiasenzaK}%
). The uniqueness follows from (\ref{stimal2}) because, if there are two
solutions, their difference satisfies (\ref{linpro1}) with $g=0$ and $r=0$.
Hence it must vanish.
\end{proof}

\begin{lemma}
\label{compact1} For any $\lambda, \e >0$, the operator $K\mathcal{L}^{-1}$
is compact in $L^{2}$. Explicitly, if $g^{n} \in L^{2}$ and $\sup_{n} \|
g^{n}\|_{2} <\infty$ then there exist subsequence $n_{k}$ such that $K
f^{n_{k}} \rightarrow Kf$ in $L^{2}$, where $f^{n}$ solve 
\begin{equation*}
\lambda f^{n} + v\cdot \nabla_{x} f^{n} + \frac{1}{\e} \nu f^{n} +\e^{2}
\Phi \cdot \nabla_{v} f^{n} - \frac 1 2\e^{2} \Phi \cdot v f^{n} = g^{n}, \
\ f^{n}|_{\gamma_{-}} = P_{\gamma } f^{n}+r.
\end{equation*}
\end{lemma}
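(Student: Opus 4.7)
The plan combines the $L^2$-boundedness of $\mathcal{L}^{-1}$ from Lemma \ref{calL}, an operator-norm approximation of $K$ by finite-rank kernels, and a velocity-averaging argument that exploits the transport structure of $\mathcal{L}$. The main technical obstacle is the presence of the external-field term $\e^2\Phi\cdot\nabla_v$, which rules out direct use of the classical pure-$x$-transport averaging lemma and forces reliance on a phase-space variant for transport with $C^1$ coefficients.

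By Lemma \ref{calL} applied to $f^n=\mathcal{L}^{-1}g^n$, one has the uniform bound
\begin{equation*}
\lambda\|f^n\|_2^2 + \tfrac{1}{2}\e^{-1}\|f^n\|_\nu^2 \lesssim \e\,\|g^n/\sqrt{\nu}\|_2^2 + |r|_{2,-}^2 \lesssim 1,
\end{equation*}
so $\{f^n\}$ is bounded in $L^2$. Passing to a subsequence, $g^n\rightharpoonup g$ and $f^n\rightharpoonup f$ weakly in $L^2$, and by passing to the limit in the weak formulation of $\mathcal{L}f^n=g^n$ with $f^n|_{\gamma_-}=P_\gamma f^n+r$ (justified at the boundary by the Green identity of Lemma \ref{green}), one identifies $f=\mathcal{L}^{-1}g$; in particular $Kf$ is a well-defined candidate limit.

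Next, I approximate $K$ in $L^2\to L^2$ operator norm by finite-rank operators. Using the cutoff kernel $\mathbf{k}_m$ of (\ref{k_m}) together with Schur's test (with both row and column sums bounded thanks to the symmetry of $\mathbf{k}_{\tilde\beta}$), one obtains $\|K-K_m\|_{L^2\to L^2}\to 0$ as $m\to\infty$, where $K_m$ has kernel $\mathbf{k}_m$. Since $\mathbf{k}_m$ is bounded and compactly supported on $\mathbb{R}^6$, it is approximated in $L^2(\mathbb{R}^6)$ by separable kernels $\sum_{i=1}^{N}\phi_i(v)\psi_i(u)$ with $\phi_i,\psi_i\in C_c^\infty(\mathbb{R}^3)$, and the corresponding finite-rank operators converge to $K_m$ in Hilbert--Schmidt and hence operator norm. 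A standard diagonal extraction then reduces strong convergence of $Kf^n$ to showing that, for each fixed $\psi\in C_c^\infty(\mathbb{R}^3)$, the velocity average $a^n(x):=\int\psi(u)f^n(x,u)\,\mathrm{d}u$ is compact in $L^2(\Omega)$ along a subsequence.

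Finally, for this compactness I rewrite $\mathcal{L}f^n=g^n$ as
\begin{equation*}
v\cdot\nabla_x f^n+\e^2\Phi\cdot\nabla_v f^n = g^n-\bigl(\lambda+\e^{-1}\nu-\tfrac{1}{2}\e^2\Phi\cdot v\bigr)f^n.
\end{equation*}
Multiplying by a cutoff $\chi\in C_c^\infty(\mathbb{R}^3_v)$ with $\chi\equiv 1$ on $\mathrm{supp}\,\psi$, both $\chi f^n$ and the transport derivative $(v,\e^2\Phi(x))\cdot\nabla_{x,v}(\chi f^n)$ are uniformly bounded in $L^2(\Omega\times\mathbb{R}^3)$. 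Since the coefficient field $(v,\e^2\Phi)$ is $C^1$ and bounded on $\mathrm{supp}\,\chi$, the phase-space averaging lemma yields strong $L^2_{\mathrm{loc}}(\Omega)$ compactness of $a^n$, which, by boundedness of $\Omega$, gives compactness in $L^2(\Omega)$. An alternative route that avoids averaging altogether is to use the characteristic representation (\ref{solforA}) of $f^n$ and to follow the truncation, change-of-variables, and time-splitting argument from the proof of Proposition \ref{point_s}, producing strong $L^2$ compactness of $Kf^n$ directly.
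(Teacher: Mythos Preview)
Your high-level plan --- operator-norm approximation of $K$ by smooth, compactly supported kernels, followed by an averaging-lemma argument for each fixed $\psi$ --- matches the paper's strategy. However, there is a genuine gap in the final step.

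The claim that ``strong $L^{2}_{\mathrm{loc}}(\Omega)$ compactness of $a^{n}$, which, by boundedness of $\Omega$, gives compactness in $L^{2}(\Omega)$'' is false. A bounded sequence in $L^{2}(\Omega)$ that converges in $L^{2}(K)$ for every compact $K\subset\Omega$ need not converge in $L^{2}(\Omega)$: mass can concentrate in thin layers near $\partial\Omega$. Your velocity-only cutoff $\chi(v)$ does nothing to prevent this, and the transport structure alone does not give the missing uniform integrability. Relatedly, the ``phase-space averaging lemma'' you invoke is left unspecified; the standard \cite{GLPS} result is stated on the whole space, and you have not explained how to apply it on a bounded domain with boundary data.

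The paper closes precisely this gap by inserting a \emph{grazing-set} cutoff in phase space (not just in $v$): one defines $f_{\delta}^{n}(x,v)=[1-\chi(n(x)\cdot v/\delta)\chi(\xi(x)/\delta)]\chi(\delta|v|)\,f^{n}(x,v)$, so that $f_{\delta}^{n}$ vanishes on the grazing set $\gamma_{0}$. This permits a well-defined $L^{2}$ extension $\bar{f}^{n}$ of $f_{\delta}^{n}$ to the whole space $\mathbb{R}^{3}_{x}\times\mathbb{R}^{3}_{v}$ along exterior characteristics (the steady analogue of Lemma~\ref{extension_dyn}), and the extended function still solves a transport equation with $L^{2}$ right-hand side. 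The averaging lemma is then applied on $\mathbb{R}^{3}$, yielding $\int\mathbf{k}_{N}(v,u)\bar{f}^{n}(\cdot,u)\,\mathrm{d}u\in H^{1/4}_{x}$ with uniform bounds, hence strong $L^{2}(\Omega)$ compactness of $K_{N}f_{\delta}^{n}$. The remainder $K_{N}(f^{n}-f_{\delta}^{n})$ is controlled by $\|f^{n}-f_{\delta}^{n}\|_{2}\lesssim O(\delta)$, since the removed region is a thin phase-space neighborhood of $\gamma_{0}$. Letting $\delta\downarrow 0$ after $n\to\infty$ completes the proof. The missing ingredients in your argument are exactly this grazing cutoff and whole-space extension; without them you cannot upgrade interior compactness to $L^{2}(\Omega)$.

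Your ``alternative route'' via the characteristic formula (\ref{solforA}) and the change-of-variables machinery of Proposition~\ref{point_s} is too schematic to assess; note that the latter is an $L^{\infty}$ argument, and translating it into the $L^{2}$ compactness statement needed here would again require handling the boundary layer, which brings you back to the same issue.
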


This lemma is proved in Appendix \ref{A3}

Next we prove the essential bound for $\mathbf{P}f$, where, for $\mathfrak{}\in [0,1]$,  $f$ solves 
\begin{equation}
\begin{split}
[\l +(1- \mathfrak{r} ) \e^{-1}\nu-\frac 1 2 \e^2\Phi\cdot v)] f+v\cdot
\nabla_x f +\e^2\Phi\cdot \nabla_v f +\e^{-1}\mathfrak{r} Lf=g, & \ \ \ 
\text{in} \ \Omega \times \mathbb{R}^{3} \\
f_-=P_\g f+r,& \ \ \ \text{on} \ \gamma_{-}.  \label{eq0}
\end{split}%
\end{equation}
{We denote 
\begin{equation}
\mathring{f} \ := \ f- <f>\sqrt{\mu}, \ \ \ <f> \ := \ {\ \Big({%
\iint_{\Omega \times \mathbb{R}^{3}}f\sqrt{\mu }\mathrm{d} x\mathrm{d} v}}%
\Big) \Big/ \Big({\ {\iint_{\Omega \times \mathbb{R}^{3}}{\mu }\mathrm{d} x%
\mathrm{d} v} } \Big).  \label{margin_f}
\end{equation}%
}

\begin{lemma}
\label{steady_abc}Assume (\ref{constraint}). Let $f$ be a solution to (\ref%
{eq0}) in the sense of distribution.

Then, for all $\lambda \geq 0$ sufficiently small and all $\mathfrak{r} \in
[0,1]$ sufficiently close to $1$, 
\begin{equation}
\| \mathbf{P} \mathring f\| _{2}^{2} \ \lesssim \ \e^{-2}\| (\mathbf{I}-%
\mathbf{P})f\| _{\nu }^{2} +|(1-P_{\gamma })f|_{2,+}^{2}+|r|_{2,-}^{2}+\| 
\frac{g}{\sqrt{\nu}}\| _{2}^{2}+\e^2\|\Phi\|_\infty|<f>|^2 ,  \label{P1-P2}
\end{equation}
and 
\begin{equation}
\lambda | <f>| \ \lesssim \ (1- \mathfrak{r} ) \e^{-1} \| f \|_{2} .
\label{rg}
\end{equation}
Moreover, for $0< \eta \ll 1$ 
\begin{equation}
\begin{split}
\| \mathbf{P} \mathring f\|_{6} \ \lesssim& \ \e^{-1}\| (\mathbf{I}-\mathbf{P%
})f\| _{\nu } +\e^{-\frac 1 2}|(1-P_{\gamma })f|_{2,+}+|r|_{2,-}+\| \frac{g}{%
\sqrt{\nu}}\| _{2} \\
&+|\e^{\frac 1 2}w r|_\infty+\|\e^{\frac 3 2} \langle v\rangle^{-1}w
g\|_\infty +\eta\{|<f>|+ \e^{\frac 1 2}\|w f\|_\infty\},  \label{P1-P6}
\end{split}%
\end{equation}
and, in particular, for $\l =0$ and $\mathfrak{r} =1$, (\ref{linear_steady})
is verified.
\end{lemma}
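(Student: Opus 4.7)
The proof extends the weak-formulation test-function strategy of \cite{EGKM} by the novel $L^6$ observation sketched in the introduction. The identity (\ref{rg}) is obtained first: testing the equation against $\sqrt{\mu}$ and integrating over $\O\times\mathbb{R}^3$, the $L$-term vanishes since $\sqrt{\mu}\in\text{Null}\,L$; integration by parts in $v$ shows that $\iint\e^2\Phi\cdot\nabla_v f\sqrt{\mu}$ cancels with $\iint\tfrac{1}{2}\e^2\Phi\cdot v\,f\sqrt{\mu}$; the streaming contribution $\iint v\cdot\nabla_x f\sqrt{\mu}$ collapses via Lemma \ref{green}, the identity $\sqrt{2\pi}\int_{n\cdot v<0}(n\cdot v)\mu\,\mathrm{d}v=-1$, and the constraint $\int_{\gamma_-}r\sqrt{\mu}\,\mathrm{d}\gamma=0$; while $\iint g\sqrt{\mu}=0$ by (\ref{constraint}). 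What survives is $\lambda\iint f\sqrt{\mu}=-(1-\mathfrak{r})\e^{-1}\iint\nu f\sqrt{\mu}$, which gives (\ref{rg}) by Cauchy--Schwarz.

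For (\ref{P1-P2}) I would decompose $\P\mathring f=\mathring a\sqrt{\mu}+v\cdot b\sqrt{\mu}+c\tfrac{|v|^2-3}{2}\sqrt{\mu}$ and control $\|\mathring a\|_2,\|b\|_2,\|c\|_2$ separately via test functions of the form $\psi_\star=q_\star(v)\cdot\nabla_x\phi_\star\sqrt{\mu}$, where $\phi_\star$ solves a Poisson problem with right-hand side $\mathring a$ (or suitable algebraic combinations in $b$, $c$) under the Neumann condition $\partial_n\phi_\star=0$. The polynomial factors $q_\star(v)$ (inherited from \cite{EGKM}) are tuned so that, after applying the Green's identity of Lemma \ref{green} to $(\psi_\star,v\cdot\nabla_x f)$, the dominant contribution is $\int(-\Delta\phi_a)\mathring a\,\mathrm{d}x=\|\mathring a\|_2^2$ (and analogously for $\|b\|_2^2,\|c\|_2^2$). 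The $\e^{-1}\mathfrak{r}Lf$ pairing is nonzero only against $(\mathbf I-\P)\psi_\star$ and is absorbed by $\e^{-1}\|(\mathbf I-\P)f\|_\nu$; the boundary remainder, rewritten via diffuse reflection, splits into $|(1-P_\gamma)f|_{2,+}+|r|_{2,-}$; the external-field term $\e^2\Phi\cdot\nabla_v f$ produces the $\e^2\|\Phi\|_\infty|<f>|^2$ contribution after $v$-integration by parts; and $H^2$-elliptic regularity $\|\phi_\star\|_{H^2}\lesssim\|\mathring a\|_2$ closes the estimate.

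For (\ref{P1-P6}) the essential new ingredient is the choice $-\Delta\phi_a=\mathring a^5-|\O|^{-1}\int_\O\mathring a^5$ (analogously for the $b$- and $c$-problems) under Neumann data; the mean subtraction is compatible with the Neumann constraint since $\mathring a$ has zero mean. The same Green's identity now produces $\int(-\Delta\phi_a)\mathring a\,\mathrm{d}x=\|\mathring a\|_6^6$. Elliptic regularity yields $\|\phi_a\|_{W^{2,6/5}}\lesssim\|\mathring a\|_6^5$, so by the 3D Sobolev embedding $W^{1,6/5}\hookrightarrow L^2$ we obtain $\|\nabla_x\phi_a\|_2\lesssim\|\mathring a\|_6^5$; the trace embedding $W^{1,6/5}\hookrightarrow L^{4/3}(\partial\O)$ likewise bounds $|\nabla_x\phi_a|_{L^{4/3}(\partial\O)}\lesssim\|\mathring a\|_6^5$. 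Every right-hand-side term in the weak formulation is then bounded by the product of $\|\mathring a\|_6^5$ (or its $b,c$ analogue) with the data, and dividing through yields the linear bound (\ref{P1-P6}). The $L^6$ control of $(\mathbf I-\P)f$ required inside $\psi_\star$ is obtained by interpolation $\|(\mathbf I-\P)f\|_6\lesssim\|(\mathbf I-\P)f\|_2^{1/3}\|f\|_\infty^{2/3}$, which produces the $\eta\{|<f>|+\e^{1/2}\|wf\|_\infty\}$ term with arbitrarily small prefactor after Young's inequality. The special case $\lambda=0$, $\mathfrak{r}=1$ reduces $\mathring f$ to $f$ (via the zero-mass constraint (\ref{0mass_s})), specializing (\ref{P1-P6}) to (\ref{linear_steady}).

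The principal obstacle is the boundary analysis: the test functions $\psi_\star$ do not satisfy the diffuse boundary condition, so after Green's identity there remain boundary integrals $\int_{\gamma}(n\cdot v)f\psi_\star$ that must be rewritten using $f|_{\gamma_-}=P_\gamma f+r$ and controlled through trace estimates on $\nabla_x\phi_\star|_{\partial\O}$. For the $L^6$ estimate this dictates the critical Sobolev exponent: $-\Delta\phi_a\in L^{6/5}$ is the weakest right-hand-side regularity that still delivers a meaningful $L^{4/3}(\partial\O)$ trace of $\nabla_x\phi_\star$, which is exactly why $L^6$, rather than a larger Lebesgue exponent, is the natural gain in three dimensions. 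A secondary subtlety is that the $b$-test functions involve cross terms $b_ib_j$ coupling to $a,c$ through lower-order interactions; these couplings must be absorbed into the main $\|\mathring a\|_6^6$, $\|b\|_6^6$, $\|c\|_6^6$ contributions by Young's inequality together with the smallness of $\eta$.
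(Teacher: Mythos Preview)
Your proposal captures the architecture and the key $L^6$ mechanism correctly, but one point diverges from the paper's argument and would not work as stated: you prescribe Neumann data $\partial_n\phi_\star=0$ for all three potentials, whereas the paper uses Neumann only for $\phi_a$ and Dirichlet $\phi|_{\partial\Omega}=0$ for $\phi_b^j$ and $\phi_c$. This matters for two reasons. First, Neumann compatibility requires zero mean of the source; this holds for $\mathring a$ (and makes $\int(\mathring a^5-\fint\mathring a^5)\mathring a=\|\mathring a\|_6^6$ work), but neither $\int b_j$ nor $\int c$ is known to vanish, and with a forced mean subtraction the main term for $c$ would become $\|c\|_6^6-(\fint c^5)\int c$, which does not close. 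Second, the mechanism that kills the dangerous $P_\gamma f$ boundary contribution is different for each component: for $c$ it is the choice $\beta_c=5$ (so $\int(|v|^2-5)v_i^2\mu=0$), for $b$ it is the choice of $\beta_b$ together with oddness, while for $a$ it is precisely the Neumann condition combined with the orthogonal split $v=v_n n+v_\perp$ at the boundary. Your line ``the boundary remainder \dots splits into $|(1-P_\gamma)f|_{2,+}+|r|_{2,-}$'' is the right outcome, but it is produced by three distinct cancellations, not one.

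A smaller correction: the $b$-estimate uses \emph{two} families of test functions, $(v_i^2-\beta_b)\sqrt{\mu}\,\partial_j\phi_b^j$ and $|v|^2v_iv_j\sqrt{\mu}\,\partial_j\phi_b^i$ for $i\ne j$, to separately recover $\int(\partial_i\partial_j\Delta^{-1}b_j^{k-1})b_i$ and $\int(\partial_j^2\Delta^{-1}b_i^{k-1})b_i$; the couplings are handled by estimating $c$ first, then $b$, then $a$, feeding each into the next, rather than by absorbing $a,c$ cross terms via Young's inequality as you suggest. Apart from these points your outline matches the paper.
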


\begin{proof}
\textit{Step 1.} Set $\varpi_\mathfrak{r} = \l +(1-\mathfrak{r} )\e%
^{-1}\nu-\frac 1 2\e^2\Phi\cdot v $. By the Green's identity (\ref%
{steadyGreen}) and (\ref{eq0}), 
\begin{eqnarray}
&&\iint_{\Omega \times \mathbb{R}^{3}} \varpi_\mathfrak{r} f \psi - {v}\cdot
\nabla _{x}\psi f-\e^2 f\Phi\cdot\nabla_v{\psi} + \int_{\gamma_+ }\psi f
-\int_{\gamma_- }\psi f  \notag \\
&&= -\e^{-1}\mathfrak{r} \iint_{\Omega \times \mathbb{R}^{3}}\psi L(\mathbf{I%
}-\mathbf{P})f +\iint_{\Omega \times \mathbb{R}^{3}}\psi g.
\label{weakformulation}
\end{eqnarray}%
First we claim (\ref{rg}). From (\ref{weakformulation}) with {$\psi =\sqrt{%
\mu}$,} 
\begin{equation}
<f>\left[\l +(1-\mathfrak{r} ) \e^{-1} \iint_{\O \times \mathbb{R}^3} \nu%
\sqrt{\mu}\right] +(1-\mathfrak{r} ) \e^{-1}\iint_{\O \times \mathbb{R}^3}
\nu \mathring f\sqrt{\mu}=0,  \label{estava}
\end{equation}
where we have used (\ref{constraint}), $\iint_{\O \times \mathbb{R}^3}
\mathring f\sqrt{\mu} =0$, and 
\begin{equation}
\begin{split}
\int_{\mathbb{R}^3} \sqrt{\mu} [\Phi\cdot \nabla_v f -\frac 1 2 \Phi\cdot v
f] \mathrm{d} v =\int_{\mathbb{R}^3} \sqrt{\mu} \Phi\cdot \frac {\nabla_v (%
\sqrt{\mu} f )}{\sqrt{\mu}} \mathrm{d} v=0,\ \ \int_{\gamma_{-}} P_{\gamma}
f_{+} \sqrt{\mu} \mathrm{d} \gamma-\int_{\gamma_{+}} f \sqrt{\mu} \mathrm{d}
\gamma=0.
\end{split}
\notag
\end{equation}
Clearly, $\iint \nu \mathring f \sqrt{\mu} \lesssim \| \mathring f \|_{2}
\leq\| f \|_{2}$ and these prove (\ref{rg}).

\vspace{4pt}

Now we prove (\ref{P1-P2}). Denote $\mathring a=:a -<f>$ so that $\mathbf{P}%
\mathring f=\{\mathring a+v\cdot b+c[\frac{|v|^{2}}{2}-\frac{3}{2}]\}\sqrt{%
\mu }$. 

\vspace{8pt} \noindent \textit{Step 2}. \textit{Estimate of \ }$c$. {We
claim that, for sufficiently small $\e>0$, 
\begin{equation}
\| c\| _{2} \ \lesssim \ o(1) \|\P f\|_2 + |(1-P_{\gamma })f|_{2,+} +\e%
^{-2}\| (\mathbf{I}-\mathbf{P})f\| _{2} +\| \frac{g}{\sqrt{\nu}} \| _{2}
+|r|_{2,-} ,  \label{cestimate}
\end{equation}
} 
\begin{equation}
\begin{split}
\| c\| _{6} \ \lesssim& \ o(1) \{ \|\mathbf{P}f\|_6 + \e^{1/2} \| w f
\|_{\infty} \} +\e^{-1}\|(\mathbf{I}-\mathbf{P}) f\|_\nu+\|(\mathbf{I}-%
\mathbf{P}) f\|_6 \\
&+ \e^{-\frac 1 2}|(1-P_{\gamma })f|_{2,+} + \|\frac {g}{\sqrt{\nu}}%
\|_2+|r|_{2,-} + |\e^{\frac 1 2}w r|_\infty+\|\e^{\frac 3 2} \langle
v\rangle^{-1}w g\|_\infty .  \label{cestimate6}
\end{split}%
\end{equation}
For $k=2,6$ we choose the test functions 
\begin{equation}
\psi =\psi_{c,k}\equiv (|v|^{2}-\beta _{c})\sqrt{\mu }v\cdot \nabla _{x}\f %
_{c,k}(x), \ \ \text{where} \ -\Delta _{x}\f _{c,k}(x)=c^{k-1}(x),\ \ \f %
_{c,k}|_{\partial \Omega }=0,  \label{phic}
\end{equation}
and $\beta _{c}$ is a constant to be determined.

From the standard elliptic estimate, we have 
\begin{equation}  \label{c_2_2}
\| \f _{c,2}\| _{H^{2}}\lesssim \| c\| _{2}.
\end{equation}
With the choice (\ref{phic}), the right hand side of (\ref{weakformulation})
is bounded by 
\begin{equation}
\text{r.h.s.(\ref{weakformulation})}\lesssim\| c\| _{2}\big\{\e^{-1}%
\mathfrak{r} \| (\mathbf{I}-\mathbf{P})f\| _{2}+\| \frac{g}{\sqrt{\nu}}\|
_{2}\big\}.  \label{boundrhs2}
\end{equation}

For $k=6$ we use the Sobolev-Gagliardo-Nirenberg inequality: for $1\le p\le
N $ and a bounded $C^{1}$ domain $\Omega \subset \mathbb{R}^{N}$, and $u\in
W^{1,p}(\Omega)$, 
\begin{equation}
\left(\int_{\O } |u|^q\right)^{\frac 1 {p^*}}\le C(N,p,\O ) \| u \|_{W^{1,p}
(\O )} , \ \ \text{ for any } \ p\le q\le p^*=\frac{Np}{N-p},  \label{SGN}
\end{equation}
and $W^{1,p}(\O )$ is continuously embedded in $L^q({\O })$ (see \cite{LG},
page 312).

Here $N=3$ and we are interested in $p^*=2$ which means $p=\frac 6 5$. Thus
for any $q\in [\frac 6 5, 2]$, we have 
\begin{equation}
\|\nabla \f_{c,6}\|_q\lesssim \|\f_{c,6}\|_{W^{2,\frac 6 5}}.  \notag
\end{equation}
Hence 
\begin{equation}  \label{c_5_6}
\|\nabla \f_{c,6}\|_2\lesssim \|c^5\|_{\frac 6 5}=\|c\|_6^5.
\end{equation}
Therefore, the right hand side of (\ref{weakformulation}), for $k=6$ is
bounded by 
\begin{equation}
\text{r.h.s.(\ref{weakformulation})}\lesssim \|\nabla\f_{c,6}\|_2 \left(\e%
^{-1}\|(\mathbf{I}-\mathbf{P}) f\|_\nu + \|g\|_2\right)\le \|c\|_6^5 \left(\e%
^{-1}\|(\mathbf{I}-\mathbf{P}) f\|_\nu + \| \frac{g}{\sqrt{\nu}} \|_2\right).
\end{equation}
Thus, by Young inequality ($|xy|\le \eta |x|^p +C_{\eta,p,q}|y|^q$, $%
p^{-1}+q^{-1}=1$) , we have 
\begin{equation}
\text{r.h.s.(\ref{weakformulation})}\lesssim \eta \|c\|_6^6 +C_\eta \left(\e%
^{-1}\|(\mathbf{I}-\mathbf{P}) f\|_\nu + \| \frac{g}{\sqrt{\nu}}
\|_2\right)^6.  \label{boundrhs3}
\end{equation}

We have ${v}\cdot \nabla _{x}\psi _{c,k}=\sum_{i,j=1}^{d}(|v|^{2}-\beta _{c})%
\sqrt{\mu }v_{i}v_{j}\partial _{ij}\f _{c,k}(x)$, and 
\begin{equation*}
\e^2[\Phi\cdot \nabla_v \psi_{c,k}- \frac 1 2v\cdot \Phi]f =\e^2 \sqrt{\mu}
f \Phi\cdot\nabla_v \big(\frac{\psi_{c,k}}{\sqrt{\mu}}\big) =\e^2\sqrt{\mu}
f\sum_{i,j}\Phi_i[\delta_{ij}(|v|^2-\beta_c)+2 v_iv_j]\pt_j\f_{c,k}.  \notag
\end{equation*}
Then the left hand side of (\ref{weakformulation}) takes the form, for $%
i=1,\cdots ,d$, 
\begin{eqnarray}
&& \ \ \iint_{\partial \Omega \times \mathbb{R}^{3}}(n(x)\cdot
v)(|v|^{2}-\beta _{c})\sqrt{\mu }\sum_{i=1}^{d}v_{i}\partial _{i}\f _{c,k} f 
\mathrm{d} S_{x} \mathrm{d} v  \label{t1} \\
& &- \iint_{\Omega \times \mathbb{R}^{3}}[(\l +(1-\mathfrak{r} )\e^{-1}\nu]f
(|v|^{2}-\beta _{c})\sqrt{\mu }\sum_i v_i\pt_i\f _{c,k} \mathrm{d} x \mathrm{%
d} v  \label{t2} \\
&&-\iint_{\Omega \times \mathbb{R}^{3}}(|v|^{2}-\beta _{c})\sqrt{\mu }\Big\{%
\sum_{i,j=1}^{d}v_{i}v_{j}\partial _{ij}\f _{c,k}\Big\}f \mathrm{d} x 
\mathrm{d} v  \label{t3} \\
&&+\e^2\sqrt{\mu} \sum_{i,j}\iint_{\Omega \times \mathbb{R}%
^{3}}\Phi_i[\delta_{ij}(|v|^2-\beta_c)+2 v_iv_j]\pt_j\f_{c,k}f \mathrm{d} x 
\mathrm{d} v .  \label{t4}
\end{eqnarray}%
We decompose 
\begin{eqnarray}
f &=&\Big\{a+v\cdot b+c\left[\frac{|v|^{2}}{2}-\frac{3}{2}\right]\Big\}\sqrt{%
\mu }+(\mathbf{I}-\mathbf{P})f,\text{ \ \ \ \ \ on }\Omega \times \mathbb{R}%
^{3},  \label{insidesplit} \\
f_{\gamma } &=&P_{\gamma }f+\mathbf{1}_{\gamma _{+}}(1-P_{\gamma })f+\mathbf{%
1}_{\gamma _{-}}r,\text{ \ \ \ \ \ \ \ \ \ \ \ \ \ \ \ \ \ \ \ \ \ \ \ on }%
\gamma ,  \label{bsplit}
\end{eqnarray}
and substitute (\ref{insidesplit}), (\ref{bsplit}) into (\ref{t1})$-$(\ref%
{t4}). Note that the off-diagonal parts ($v_{i}v_{j}$ with $i \neq j$) and $%
b $ term vanish by oddness in $v$. Now we choose $\beta _{c}=5$ so that, 
\begin{equation}
\int (|v|^{2}-\beta _{c})v_{i}^{2}\mu (v)dv=0, \ \ \ \text{for} \ i=1,2,3.
\label{beta}
\end{equation}%
Note that, thanks to the choice of $\beta_c=5$, we eliminate the $a$
contribution in the bulk. Then (\ref{t3}) becomes%
\begin{eqnarray}
(\ref{t3})=&&-\sum_{i=1}^{d}\int_{\mathbb{R}^{3}}(|v|^{2}-\beta
_{c})v_{i}^{2}\Big(\frac{|v|^{2}}{2}-\frac{3}{2}\Big)\mu (v)\mathrm{d}
v\int_{\Omega }\partial _{ii}{\f _{c,k}(x)c(x)}\mathrm{d} x  \label{c^k} \\
&&-\sum_{i=1}^{d}\iint_{\Omega \times \mathbb{R}^{3}}(|v|^{2}-\beta
_{c})v_{i}\sqrt{\mu }({v}\cdot \nabla _{x})\partial _{i}\f _{c,k}(\mathbf{I}-%
\mathbf{P})f.  \label{c(1-P)}
\end{eqnarray}

From $\int_{\mathbb{R}^{3}}(|v|^{2}-\beta _{c})v_{i}^{2}(\frac{|v|^{2}}{2}-%
\frac{3}{2})\mu (v)dv=10\pi \sqrt{2\pi }$ and $-\Delta _{x}\f _{c,k}=c^{k-1}$
for $k=2,6$, 
\begin{equation}
(\ref{c^k}) =-10\pi \sqrt{2\pi }\int_{\Omega }\Delta _{x}\f _{c,k} c = 10\pi 
\sqrt{2\pi}\|c\|_k^k.  \label{0236}
\end{equation}
Moreover, for $k=2$, 
\begin{equation}
(\ref{c(1-P)})\le \|\nabla^{2} \f_{c,2}\|_2\|(\mathbf{I}-\mathbf{P})
f\|_2^2\le \eta \|c\|_2^2+C_\eta \|(\mathbf{I}-\mathbf{P}) f\|_2^2,
\label{0237}
\end{equation}
and, for $k=6$, 
\begin{equation}
(\ref{c(1-P)})\le \|\nabla^{2} \f_{c,2}\|_{\frac 6 5}\|(\mathbf{I}-\mathbf{P}%
) f\|_6\le \eta \|c\|_6^6+C_\eta \|(\mathbf{I}-\mathbf{P}) f\|_6^6.
\label{0237bis}
\end{equation}

Consider (\ref{t1}). Because of the choice of $\beta _{c}$ to have (\ref%
{beta}), there is no $P_{\gamma }f$ contribution at the boundary in (\ref{t1}%
). Then for $k=2$ we have 
\begin{equation}
\begin{split}
(\ref{t1}) \lesssim \| c\| _{2}\{|(1-P_{\gamma })f|_{2,+}+|r|_{2,-}\},
\label{btes}
\end{split}%
\end{equation}
where we used $|\nabla _{x}\f _{c}|_{2}\lesssim \| \f _{c}\|
_{H^{2}}\lesssim \| c\| _{2}$ for an elliptic estimate and the trace
estimate.

Now we consider $k=6$ case. 
By the assumption that $\Omega$ is a $C^1$ domain in $\mathbb{R}^N$ with $%
N=3 $, we can use the following trace estimate (see \cite{LG}, page 466):

\begin{equation}
\left(\int_{\pt \O } dS(x) |u|^{\frac{p(N-1)}{N-p}}\right)^{\frac{N-p}{p(N-1)%
}}\le C(N,P)\left( {\int_{\O } dx |u|^{p}}+\int_{\O } dx |\nabla
u|^{p}\right)^{\frac{1}{p}}.
\end{equation}
This is a consequence of the trace theorem $W^{1,p}(\O ) \rightarrow W^{1- 
\frac{1}{p}, p}(\partial\O ),$ and the Sobolev embedding in $N-1$
dimensional sub-manifold $(W^{1-\frac{1}{p}, p} (\partial\O ) \subset L^{ 
\frac{p (N-1)}{N-p} } (\Omega) $ for $\frac{N-p}{p (N-1)}=\frac{1}{p}- \frac{%
1- \frac{1}{p}}{N-1}$). In particular, with $p=\frac 65$ and $N=3$ we have $%
\frac{p(N-1)}{N-p}= 
\frac 4 3$. With $u=\nabla\f_{c,6}$, we have 
\begin{equation}  \label{4/3}
\| \nabla_{x} \f_{c} \|_{L^{4/3} (\partial\O )} \lesssim \| c \|^{5}_{L^{6}
(\Omega)}.
\end{equation}
On the other hand, by Holder inequality 
\begin{equation}
\begin{split}
| \mu^{1/4} (1-P_{\gamma })f |_{ 4,{+}} &\leq \e^{1/4} \big[\e^{-1/2} |
(1-P_{\gamma })f |_{2,+} \big]^{1/2} | {\mu}^{1/2} (1-P_{\gamma })f
|_{\infty,+}^{1/2} \\
&\lesssim \big[\e^{-1/2} | (1-P_{\gamma })f |_{2,+} \big]^{1/2} \big[\e^{
1/2} \| w f \|_{\infty} \big]^{1/2} .
\end{split}
\notag
\end{equation}
Therefore, by the Young inequality, we conclude 
\begin{equation}
\begin{split}  \label{t1_6}
(\ref{t1}) &\lesssim \ \big\{ | \mu^{1/4}(1- P_{\gamma}) f|_{4,+} + |
\mu^{1/4} r|_{ 4/3,-} \big\}| \nabla_{x} \varphi_{c} |_{4/3,+} \\
& \lesssim \ \Big\{ \big[\e^{-1/2} | (1-P_{\gamma })f |_{2,+} \big]^{1/2} %
\big[\e^{ 1/2} \| w f \|_{\infty} \big]^{1/2} + | \mu^{1/4} r|_{ 4/3,-} %
\Big\}\| c \|_{L^{6} }^{5} \\
& \leq \ \eta \|c\|_6^6+ \eta^{\prime}\big[ \e^{\frac 1 2} \| w f \|_{\infty}%
\big]^{6}+C_{\eta,\eta^{\prime}} \big[\e^{-\frac 1 2}|(1-P_\gamma)f|_{2,+}%
\big]^{6}.
\end{split}%
\end{equation}

Now we consider (\ref{t2}). Using (\ref{c_2_2}) for $k=2$ and (\ref{c_5_6})
for $k=6$ respectively, we conclude that 
\begin{equation}
\begin{split}  \label{penes}
(\ref{t2}) \lesssim & \ [(\l +(1-\mathfrak{r} )\e^{-1}] \times \big\{ \| 
\mathbf{P}f \|_{k} + \| (\mathbf{I}-\mathbf{P}) f \|_{2}\big\} \| c
\|_{k}^{k-1} \\
\lesssim & \ [(\l +(1-\mathfrak{r} )\e^{-1} ] \times \big\{ \| \mathbf{P} f
\|_{k}^{k} +\| (\mathbf{I} - \mathbf{P} ) f \|_{2}^{k} \big\} \\
\lesssim & \ ( \lambda + o(1) )\| \mathbf{P} f \|_{k}^{k} +\| (\mathbf{I} - 
\mathbf{P} ) f \|_{2}^{k},
\end{split}%
\end{equation}
where we have used $\mathfrak{r} =1+ o(1) \e$.

Moreover, since $\int_{\mathbb{R}^{3}}\mu[(|v|^{2}-\beta _{c})+2v_i^2]\left[%
\frac{|v|^{2}}{2}-\frac{3}{2}\right]=2\sqrt{2\pi }$, and $\int_{\mathbb{R}%
^{3}}\mu[(|v|^{2}-\beta _{c})+2v_i^2]=3-\beta_c+2=0$, the term (\ref{t4})
becomes 
\begin{equation}
2\e^2 \int_{\Omega }c\Phi\cdot\nabla_x\f_{c,k}+\e^2\sqrt{\mu}
\sum_{i,j}\iint_{\Omega \times \mathbb{R}^{3}}\Phi_i[(\delta_{i,j}-\frac 1
2v_iv_j)(|v|^2-\beta_c)+2 v_iv_j]\pt_j\f_{c,k}(\mathbf{I}-\mathbf{P})f. 
\notag  \label{ft}
\end{equation}
Using $\notag\big|\int_{\Omega }c\Phi\cdot\nabla_x\f_{c,k}\big| \le
\|c\|_k^k\|\Phi\|_\infty$, (\ref{t4}) is bounded by 
\begin{equation}
(\ref{t4})\lesssim \e^2 \big[\|c\|_k^k+ \|(\mathbf{I}-\mathbf{P})f\|_k^k\big]%
\|\Phi\|_\infty.  \label{0233}
\end{equation}

By collecting the estimates (\ref{boundrhs2}), (\ref{0236}), (\ref{0237}), (%
\ref{btes}), (\ref{penes}), (\ref{0233}), for sufficiently small $\e>0$ we prove (\ref{cestimate}).

Similarly, collecting the estimates (\ref{boundrhs3}), (\ref{0236}), (\ref%
{0237bis}), (\ref{t1_6}), (\ref{penes}), (\ref{0233}), for $\e$ sufficiently
small we obtain (\ref{cestimate6}).

\bigskip

\vspace{4pt} \noindent \textit{Step 3.} \textit{Estimate of \ }$b$. {We
claim that, for sufficiently small $\e>0$, 
\begin{equation}
\| b\| _{2}^{2}\ \lesssim o(1) \|\P f\|_2^2 + \ |(1-P_{\gamma })f|_{2,+}^{2}+%
\e^{-2}\| (\mathbf{I}-\mathbf{P})f\| _{2}^{2}+\| \frac{g}{\sqrt{\nu}}\|
_{2}^{2}+|r|_{2,-}^{2}.  \label{bestimate}
\end{equation}%
} 
\begin{multline}
\| b\| _{6}^{6}\ \lesssim o(1) \|\P f\|_6^6+\Big(\e^{-\frac 1
2}|(1-P_{\gamma })f|_{2,+}+\e^{-1}\|(\mathbf{I}-\mathbf{P}) f\|_\nu+\|(%
\mathbf{I}-\mathbf{P}) f\|_6 + \|\frac {g}{\sqrt{\nu}}\|_2+|r|_2 \\
+ |\e^{\frac 1 2}w r|_\infty+\|\e^{\frac 3 2} \langle v\rangle^{-1}w
g\|_\infty\Big)^6.  \label{bestimate6}
\end{multline}
For $k=2,6$ we shall establish the estimate of $b$ by estimating $(\partial
_{i}\partial _{j}\Delta ^{-1}b^{k-1}_{j})b_{i}$ for all $i,j=1,\dots ,d$,
and $(\partial _{j}\partial _{j}\Delta ^{-1}b^{k-1}_{i})b_{i}$ for $i\neq j$.

We fix $i,j$. To estimate $\partial _{i}\partial _{j}\Delta
^{-1}b_{j}^{k-1}b_{i}$ we choose as test function in (\ref{weakformulation}%
), For $k=2,6$ 
\begin{equation}
\psi =\psi _{b,k}^{i,j}\equiv (v_{i}^{2}-\beta _{b})\sqrt{\mu }\partial _{j}%
\f _{b,k}^{j},\quad i,j=1,\dots ,d,  \label{phibj}
\end{equation}%
where $\beta _{b}$ is a constant to be determined, and 
\begin{equation}
-\Delta _{x}\f _{b,k}^{j}(x)=b^{k-1}_{j}(x),\ \ \ \f _{b,k}^{j}|_{\partial
\Omega }=0.  \label{jb}
\end{equation}%
For $k=2$, from the standard elliptic estimate $\| \f _{b}^{j}\|
_{H^{2}}\lesssim \| b\| _{2,2}.$ Hence, for $k=2$ the right hand side of (%
\ref{weakformulation}) is now bounded by 
\begin{equation}
\text{r.h.s(\ref{weakformulation})} \le\| b\| _{2}\big\{\e^{-1}\| (\mathbf{I}%
-\mathbf{P})f\| _{2}+\| \frac{g}{\sqrt{\nu}}\| _{2}\big\}.  \label{rhsbij}
\end{equation}
With the same argument as before, the right hand side of (\ref%
{weakformulation}) for $k=6$ is bounded by 
\begin{equation}
\text{r.h.s.(\ref{weakformulation})}\lesssim \eta \|b\|_6^6 +C_\eta \left(\e%
^{-1}\|(\mathbf{I}-\mathbf{P}) f\|_\nu + \|g\|_2\right)^6.  \label{rhsbij6}
\end{equation}

Now substitute (\ref{bsplit}) and (\ref{insidesplit}) into the left hand
side of (\ref{weakformulation}). Note that $(v_{i}^{2}-\beta
_{b})\{n(x)\cdot {v}\}\mu $ is odd in $v$, therefore $P_{\gamma }f$
contribution to (\ref{weakformulation}) vanishes. Moreover, by (\ref%
{insidesplit}), the $a,c$ contributions to (\ref{weakformulation}) also
vanish by oddness. Finally, in the field term only the $\mathbf{P} f$ part
survives because $\nabla_v\frac{\psi}{\sqrt{\mu}}= 2 v_i \pt_j\f_b^j$ and,
by oddness, the $a$ and $c$ contributions disappear. Therefore the left hand
side of (\ref{weakformulation}) takes the form 
\begin{eqnarray}
&& \ \ \ \iint_{\partial \Omega \times \mathbb{R}^{3}}(n(x)\cdot {v}
)(v_{i}^{2}-\beta _{b})\sqrt{\mu }\partial _{j}\f _{b,k}^{j}[(1-P_{\gamma
})f+r]\mathbf{1}_{\gamma _{+}}  \label{tb1} \\
&&+ \sum_i\iint_{\Omega \times \mathbb{R}^{3}}[(\l +(1-\mathfrak{r} )\e%
^{-1}\nu]f (v_i^{2}-\beta _{b})\sqrt{\mu } v_i\pt_j\f _{b,k}^j  \label{tb2}
\\
&&-\iint_{\Omega \times \mathbb{R}^{3}}(v_{i}^{2}-\beta _{b})\sqrt{\mu }%
\{\sum_{l}v_{l}\partial _{lj}\f _{b,k}^{j}\}f  \label{tb3} \\
&&-\e^2\sum_k\int_{\mathbb{R}^{3}} 2v_i v_k\int_{\Omega}\Phi_ib_k \pt_j\f%
_{b,k}^j.  \label{tb4}
\end{eqnarray}
By (\ref{jb}) and the trace estimate, for $k=2$, $|\pt_j\f_b^j|_2\le \|\f%
_b^j\|_{H^2}\le \|b\|_2$, for any $\eta>0$, the term (\ref{tb1}) is bounded
by 
\begin{equation}
(\ref{tb1})\le \frac 1{4\eta}(|(1-P_\g)f|^2_{2,+}+|r|_2^2)+\eta \|b\|_2^2.
\label{2463}
\end{equation}
For $k=6$, by the same argument as before, the term (\ref{tb1}) is bounded
by 
\begin{multline}
(\ref{tb1})\le \eta \|b\|_6^6+ \eta^{\prime}\|\P f \|_{L^{6}(\Omega \times 
\mathbb{R}^{3} )}+C_{\eta,\eta^{\prime}} (\e^{-\frac 1
2}|(1-P_\gamma)f|_{2})^{6} \\
+\eta^{\prime}( | \e^{\frac 12} w r |_{\infty} + \e^{\frac 32} \| \langle
v\rangle^{-1} w g \| _{\infty} + {\e^{-1}} \| (\mathbf{I} - \mathbf{P})f
\|_{L^{2}(\Omega \times \mathbb{R}^{3})})^{6}.  \label{Pgamma6b}
\end{multline}

The term (\ref{tb2}) is bounded, as in (\ref{penes}).

The term (\ref{tb3}) equals 
\begin{equation}
-{{\sum_{l}}}\int {{(v_{i}^{2}-\beta _{b})v_{l}^{2}\mu \partial _{lj}\f %
_{b,k}^{j}(x)b_{l}}}-\int (v_{i}^{2}-\beta _{b})v_{l}\sqrt{\mu }\partial
_{lj}\f _{b,k}^{j}(x)(\mathbf{I}-\mathbf{P})f.  \label{blap}
\end{equation}
We can choose $\beta _{b}>0$ such that for all $i$, 
\begin{equation}
\int_{\mathbb{R}^{3}}[(v_{i})^{2}-\beta _{b}]\mu (v) \mathrm{d} v=\int_{%
\mathbb{R}}[v_{1}^{2}-\beta _{b}]e^{-\frac{|v_{1}|^{2}}{2}}\mathrm{d}
v_{1}=0.  \label{alpha}
\end{equation}%
Note that for such chosen $\beta _{b}$, and for $i\neq k$, by an explicit
computation 
\begin{eqnarray*}
\int (v_{i}^{2}-\beta _{b})v_{k}^{2}\mu \mathrm{d} v &=&\int
(v_{1}^{2}-\beta _{b})v_{2}^{2}\frac{1}{2\pi }e^{-\frac{|v_{1}|^{2}}{2}}e^{-%
\frac{|v_{2}|^{2}}{2}}e^{-\frac{|v_{3}|^{2}}{2}} \mathrm{d} v=0, \\
\int (v_{i}^{2}-\beta _{b})v_{i}^{2}\mu \mathrm{d} v &=&\int_{\mathbb{R}%
}[v_{1}^{4}-\beta _{b}v_{1}^{2}]e^{-\frac{|v_{1}|^{2}}{2}}\mathrm{d} v_{1}
\neq 0.
\end{eqnarray*}
The first term in (\ref{blap}) becomes 
\begin{eqnarray}
&&-\iint_{\Omega \times \mathbb{R}^{3}}(v_{i}^{2}-\beta _{b})v_{i}^{2}\mu 
\mathrm{d} v\partial _{ij}\f _{b,k}^{j}(x)b_{i}+\sum_{k\neq i}\underbrace{%
\int_{\mathbb{R}^{3}}(v_{i}^{2}-\beta _{b})v_{k}^{2}\mu }_{=0}\int_{\Omega
}\partial _{kj}\f _{b,k}^{j}(x)b_{k}  \notag \\
&&= 2\sqrt{2\pi }\int_{\Omega }(\partial _{i}\partial _{j}\Delta
^{-1}b^{k-1}_{j})b_{i} .  \label{333}
\end{eqnarray}
The second term in (\ref{blap}), for any $\eta>0$ is bounded by 
\begin{equation}
\text{second term in (\ref{blap})}\lesssim \eta\|b\|_k^k +\frac 1{4\eta}\|(%
\mathbf{I}-\mathbf{P})f\|_2^2.  \label{2469}
\end{equation}
The term (\ref{tb4}) is bounded by 
\begin{equation}
(\ref{tb4})\lesssim \e^2\|\Phi\|_\infty \|b\|_k^k.  \label{2466}
\end{equation}

Collecting the bounds (\ref{rhsbij}), (\ref{2463}), (\ref{333}), (\ref{2469}%
), (\ref{2466}), we have the following estimate for all $i,j$: 
\begin{equation}
\begin{split}
\left| \int_{\Omega }\partial _{i}\partial _{j}\Delta ^{-1}b_{j}b_{i}\right|
\lesssim& \ \Big[(\e^{-2}+\e^2\|\Phi\|_\infty)\| (\mathbf{I}-\mathbf{P})f\|
_{\nu}^{2}+ |(1-P_{\gamma })f|_{2,+}^{2}+\| \frac{g}{\sqrt{\nu}}%
\|_{2}^{2}+|r|_{2}^{2}\Big] \\
&+(\eta+\e^2\|\Phi\|_\infty )\| b\| _{2}^{2} +o(1) \{\|\mathbf{P}f\|_2^2 +
\| ( \mathbf{I} - \mathbf{P})f\|_\nu^2 \} .  \label{stiijjkiki}
\end{split}%
\end{equation}

\vspace{8pt} Collecting the estimates (\ref{rhsbij6}), (\ref{Pgamma6b}), (%
\ref{333}), (\ref{2469}), (\ref{2466}), for $\e$ sufficiently small we
obtain 
\begin{equation}
\begin{split}
\left| \int_{\Omega }\partial _{i}\partial _{j}\Delta
^{-1}b^5_{j}b_{i}\right| \lesssim& \ \eta (\|b\|_6^6+ \|\P f\|_6^6)+
C_{\eta.\eta^{\prime}}\Big[(\e^{-2}+\e^2\|\Phi\|_\infty)\| (\mathbf{I}-%
\mathbf{P})f\| _{\nu}^{2}+ \e^{-1}|(1-P_{\gamma })f|_{2,+}^{2} \\
+\| \frac{g}{\sqrt{\nu}}\|_{2}^{2}+|r|_{2}^{2}\Big]^3&+\eta^{\prime}( | \e%
^{\frac 12} w r |_{\infty} + \e^{\frac 32} \| \langle v\rangle^{-1} w g \|
_{\infty} + {\e^{-1}} \| (\mathbf{I} - \mathbf{P})f \|_{L^{2}(\Omega \times 
\mathbb{R}^{3})})^{6} .  \label{stiijjkiki6}
\end{split}%
\end{equation}

To estimate $\partial _{j}(\partial _{j}\Delta ^{-1}b^{k-1}_{i})b_{i}$ for $%
i\neq j $, we choose as test function in (\ref{weakformulation}) 
\begin{equation}
\psi =|v|^{2}v_{i}v_{j}\sqrt{\mu }\partial _{j}\f _{b,k}^{i}(x),\quad i\neq
j,  \label{phibij}
\end{equation}%
where $\f _{b,k}^{i}$ is given by (\ref{jb}). Clearly, the right hand side
of (\ref{weakformulation}) is again bounded by (\ref{rhsbij}) for $k=2$ and
and (\ref{rhsbij6}) for $k=6$. We substitute again (\ref{bsplit}) and (\ref%
{insidesplit}) into the left hand side of (\ref{weakformulation}). The $%
P_{\gamma }f$ contribution and $a,c$ contributions vanish again due to
oddness. With this choice of $\psi$, we have 
\begin{equation}
\int_{\mathbb{R}^3} \sqrt{\mu} |v|^{2}v_{i}v_{j}\sqrt{\mu } (a+b\cdot v+ c%
\frac{|v|^2-3}2)=0, \quad \text{ if } i\ne j.  \notag
\end{equation}
The contribution from the field is 
\begin{eqnarray*}
&&\e^2 \sum_\ell\iint_{\O \times\mathbb{R}^3}\pt_j\f_{b,k}^i \Phi_\ell \sqrt{%
\mu}[2v_\ell v_iv_j+|v|^2(v_i\delta_{j,\ell}+v_j\delta_{i,\ell})]f \\
&&\lesssim (\e^2\|\Phi\|_\infty +\eta)\|b\|_k^k+\e^2\|\Phi\|_\infty\frac
1{4\eta} \| (\mathbf{I}-\mathbf{P})f\| _{\nu}^{2}.
\end{eqnarray*}
The contribution from the term containing $\l +\e^{-1}(1-\mathfrak{r} )\nu$
is bounded again as (\ref{penes}). The boundary terms is bounded by (\ref%
{Pgamma6b})

Finally, the bulk term becomes 
\begin{multline}
-\iint_{\Omega \times \mathbb{R} ^{3}}|v|^{2}v_{i}v_{j}\sqrt{\mu }%
\{\sum_{\ell}v_{\ell}\partial _{\ell j}\f _{b}^{i}\}f =  \label{777} \\
-{\iint_{\Omega \times \mathbb{R}^{3}}|v|^{2}v_{i}^{2}v_{j}^{2}\mu \lbrack
\partial _{ij}\f _{b,k}^{i}b_{j}+\partial _{jj}\f_{b,k}^{i}(x)b_{i}]}
-\iint_{\Omega \times \mathbb{R}^{3}}|v|^{2}v_{i}v_{j}v_{\ell}\sqrt{\mu }
\partial _{\ell j}\f_{b}^{i}(x)[\mathbf{I}-\mathbf{P}]f.
\end{multline}

Note that the first term in (\ref{777}) is evaluated as $\int_{\Omega
}\{(\partial _{i}\partial _{j}\Delta ^{-1}b^{k-1}_{i})b_{j}+(\partial
_{j}\partial _{j}\Delta ^{-1}b^{k-1}_{i})b_{i}\}$, thus collecting the above
bounds we get a bound for $(\pt_j\pt_j\Delta^{-1}b^{k-1}_i)b_i$ which,
combined with (\ref{stiijjkiki}) for $k=2$, and with (\ref{stiijjkiki6}) for 
$k=6$, gives (\ref{bestimate}) and (\ref{bestimate6}).

\bigskip

\noindent \textit{Step 4. Estimate of \ }$a$. {We claim that, for $\e$
sufficiently small, 
\begin{equation}
\| \mathring a \| _{2}^{2}\lesssim \e^{-2} \| (\mathbf{I}-\mathbf{P})f\|
_{2}^{2}+|(1-P_{\gamma })f|_{2,+}^{2}+|r|_{2}^{2}+\| \frac{g}{\sqrt{\nu}}\|
_{2}^{2}+\e^2\|\Phi\|_\infty( \|\mathbf{P} \mathring f\|_2^2+|<f>|^2).
\label{aestimate}
\end{equation}%
} 
\begin{multline}
\| \mathring a \| _{6}^{6}\lesssim \eta(\|a\|_6^6+\|\P f\|_6^6 )+
C_{\eta,\eta^{\prime}}\Big(\|(\mathbf{I}-\mathbf{P}) f\|_6+\e^{-1} \| (%
\mathbf{I}-\mathbf{P})f\| _{2}+\e^{-\frac 12}|(1-P_{\gamma
})f|_{2,+}+|r|_{2}+\| \frac{g}{\sqrt{\nu}}\| _{2}\Big)^{6}+ \\
\eta^{\prime}\Big( | \e^{\frac 12} w r |_{\infty} + \e^{\frac 32} \| \langle
v\rangle^{-1} w g \| _{\infty}\Big)^{6} .  \label{aestimate6}
\end{multline}

We choose a test function%
\begin{equation}
\psi =\psi _{a,k}\equiv (|v|^{2}-\beta _{a})v\cdot \nabla _{x}\f _{a}\sqrt{
\mu }=\sum_{i=1}^{d}(|v|^{2}-\beta _{a})v_{i}\partial _{i}\f_{a,k}\sqrt{\mu }%
,  \label{phia}
\end{equation}
where 
\begin{equation}
-\Delta _{x}\f _{a,k}(x)=\mathring a(x)-\fint a^{k-1},\quad\frac{\partial }{%
\partial n}\f _{a,k}|_{\partial \Omega }=0, \quad {\fint\f_{a,k}=0}.  \notag
\end{equation}

For $k=2$ it follows from the elliptic estimate
that $\| \f _{{a_k}}\| _{H^{2}}\lesssim \| \mathring a\| _{2}.$ Since $\int_{%
\mathbb{R}^{3}}(\frac{|v|^{2}}{2}-\frac{3}{2})(v_{i})^{2}\mu (v) \mathrm{d}
v\neq 0$, we choose $\beta _{a}=10>0$ so that, for all $i$, 
\begin{equation}
\int_{\mathbb{R}^{3}}(|v|^{2}-\beta _{a})(\frac{|v|^{2}}{2}-\frac{3}{2}
)(v_{i})^{2}\mu (v)=0.  \label{betaalpha}
\end{equation}%

Plugging $\psi _{a}$ into (\ref{weakformulation}), we bound its right hand
side by 
\begin{equation}
\text{r.h.s (\ref{weakformulation})}\lesssim\| \mathring a\| _{2}\big\{\e%
^{-1}\| (\mathbf{I}-\mathbf{P})f\| _{2}+\| g\| _{2}\big\}.  \label{rweaka}
\end{equation}
For $k=6$ we have the bound 
\begin{equation}
\text{r.h.s.(\ref{weakformulation})}\lesssim \eta \|\mathring a\|_6^6
+C_\eta \left(\e^{-1}\|(\mathbf{I}-\mathbf{P}) f\|_\nu + \|g\|_2\right)^6.
\label{boundrhsa3}
\end{equation}%
By (\ref{bsplit}) and (\ref{insidesplit}), since the $c$ contribution
vanishes in (\ref{weakformulation}) due to our choice of $\beta _{a}$ and
the $b$ contribution vanishes in (\ref{weakformulation}) due to the oddness,
the left hand side of (\ref{weakformulation}) takes the form of 
\begin{eqnarray}
&&\sum_{i=1}^{d}\int_{\gamma }\{n\cdot {v}\}(|v|^{2}-\beta _{a})v_{i}\sqrt{%
\mu }\partial _{i}\f _{a,k}(x)[P_{\gamma }f+(I-P_{\gamma })f\mathbf{1}%
_{\gamma _{+}}+r\mathbf{1}_{\gamma _{+}}]\ \ \ \ \ \   \label{aboundary} \\
&&- \iint_{\Omega \times \mathbb{R}^{3}}[(\l +(1-\mathfrak{r} )\e^{-1}\nu]f
(|v|^{2}-\beta _{a})\sqrt{\mu }\sum_i v_i\pt_i\f _{a,k}  \label{888} \\
&&-\sum_{i,\ell=1}^{d}\iint_{\Omega \times \mathbb{R}^{3}}(|v|^{2}-\beta
_{a})v_{i}v_{\ell}\partial _{i\ell}\f _{a,k}(x)a(x)\mu (v)  \label{abulk} \\
&& -\e^2\sum_{i,\ell}\iint_{\Omega \times \mathbb{R}^{3}}\Phi_\ell[%
2v_iv_\ell+(|v|^2-\beta_a)\delta_{i,\ell}]\{a+c\frac{|v|^2-3}{2}\}\mu\pt_i\f%
_a  \label{fora} \\
&&-\sum_{i,\ell =1}^{d}\iint_{\Omega \times \mathbb{R}^{3}}(|v|^{2}-\beta
_{a})v_{i}v_{\ell}\partial _{i\ell}\f _{a,k}(x)(\mathbf{I}-\mathbf{P})f
\label{a2bulk}
\end{eqnarray}%
We make an orthogonal decomposition at the boundary, ${v}_{i}=({v}\cdot
n)n_{i}+({v}_{\perp })_{i}=v_{n}n_{i}+({v}_{\perp })_{i}$. The contribution
of $P_{\gamma }f=z_{\gamma }(x)\sqrt{\mu }$ in (\ref{aboundary}) is 
\begin{equation*}
\int_{\gamma }(|v|^{2}-\beta _{a}){v}\cdot \nabla _{x} {\f} _{a,k } {v}%
_{n}\mu z_{\gamma } = \int_{\gamma }(|v|^{2}-\beta _{a}){v}_{n}\frac{%
\partial {\f} _{a,k}}{\partial n}{v}_{n}\mu z_{\gamma } +\int_{\gamma
}(|v|^{2}-\beta _{a})v_{\bot }\cdot \nabla _{x} {\f} _{a,k}{v}_{n}\mu
z_{\gamma }.
\end{equation*}
The first term vanishes by the Neumann boundary condition, %is
while the second term also vanishes due to the oddness of $({v}_{\bot })_{i}{%
v}_{n}$ for all $i$. Therefore, for $k=2$, (\ref{aboundary}) and (\ref%
{a2bulk}) are bounded by $\| \mathring a\| _{2}\big\{\| (\mathbf{I}-\mathbf{P%
})f\| _{2}+|(1-P_{\gamma })f|_{2,+}+|r|_{2}\big\}$. The term (\ref{888}) is
bounded, as before, by (\ref{penes}). %

The term (\ref{abulk}), for $\ell\neq i$ vanishes due to the oddness. Hence
we only have the $\ell=i$ contribution: 
\begin{equation}
\sum_{i=1}^{d}\iint_{\Omega \times \mathbb{R}^{3}}(|v|^{2}-\beta
_{a})(v_{i})^{2}\mu (\partial _{ii}\f_{a})a=\sum_{i=1}^{d}\iint_{\Omega
\times \mathbb{R}^{3}}(|v|^{2}-\beta _{a,2})(v_{i})^{2}\mu (\partial _{ii}\f %
_{a,2}) \mathring a=-5\| \mathring a\|_2^2,  \notag
\end{equation}
because $\int (|v|^2-10)v_i^2\mu\neq 0$ and $\sum_i\int_\O dx \pt_{ii}\f%
_{a,2}=\int_{\pt\O } \pt_n \f_{a,2}=0$. Finally, the term (\ref{fora}) is
bounded by%
\begin{equation}
\e^2 \| \Phi\|_\infty\| \mathring a\|_2(\| \mathring a\|_2+|<f>|+\|c\|_2). 
\notag
\end{equation}
Using $-\Delta _{x}\f _{a}= \mathring a$, (\ref{estava}), and (\ref%
{cestimate}) we obtain 
\begin{equation}
\| \mathring a\| _{2}^{2}\lesssim {\ \e^{-2} \| (\mathbf{I}-\mathbf{P})f\|
_{2}^{2} }+|(1-P_{\gamma })f|_{2,+}^{2}+|r|_{2}^{2}+\| \frac{g}{\sqrt{\nu}}%
\| _{2}^{2} +o(1) \{\|\mathbf{P}f\|_2^2 + \| ( \mathbf{I} - \mathbf{P}%
)f\|_\nu^2 \}.  \notag  \label{aestimate0}
\end{equation}
Since $\|\mathbf{P}f\|_2^2\le \|\mathbf{P}\mathring f\|_2^2+|<f>|^2$, we
conclude (\ref{aestimate}). Finally we conclude (\ref{P1-P2}). The case $k=6$
is handled in a similar way using the same estimates as for $b$ and $c$. The
only term we have to check is 
\begin{multline}
\sum_{i=1}^{d}\iint_{\Omega \times \mathbb{R}^{3}}(|v|^{2}-\beta
_{a})(v_{i})^{2}\mu (\partial _{ii}\f_{a,6})a=\sum_{i=1}^{d}\iint_{\Omega
\times \mathbb{R}^{3}}(|v|^{2}-\beta _{a})(v_{i})^{2}\mu (\partial _{ii}\f %
_{a,6}) \mathring a  \notag \\
\sum_{i=1}^{d}\iint_{\Omega \times \mathbb{R}^{3}}(|v|^{2}-\beta
_{a})(v_{i})^{2}\mu \Big( \mathring a^5-\fint \mathring a^5) \Big) \mathring
a =-5\| \mathring a\|_6^6,
\end{multline}
where the first equality is due to again to $\sum_i\int_\O dx \pt_{ii}\f%
_{a,6}=\int_{\pt\O } \pt_n \f_{a,6}=0$ and the second to $\fint \mathring
a=0 $. Since 
\begin{equation}
\| (\mathbf{I}-\mathbf{P} )f\| _{6 }^6\le [\e^{-2}\|(\mathbf{I}-\mathbf{P})
f\|_2^2][\e^2\|(\mathbf{I}-\mathbf{P}) f\|_\infty^4]\le C_\eta(\e^{-1}\|(%
\mathbf{I}-\mathbf{P}) f\|_2)^6 +\eta (\e^{\frac 1 2}\|w f\|_\infty)^6,
\end{equation}
we obtain (\ref{P1-P6}). %
\end{proof}

Now we are ready to prove the main result of this section:

\begin{proof}[\textbf{Proof of Proposition \protect\ref{linearl2}}]
\textit{Step 1.} We claim that for any $\lambda >0$ and $0<\e \ll1$, there
exists a (unique) solution to 
\begin{equation}  \label{linpro2}
\l f^{\lambda} + v\cdot \nabla_x f^{\lambda} +\e^2\Phi\cdot \nabla_v
f^{\lambda}-\frac 1 2 \e^{2} \Phi\cdot v f^{\lambda} +\e^{-1}Lf^{\lambda} =g
\quad \text{in }\O \times\mathbb{R}^3, \ \ f^{\lambda}\big|_{\gamma_-}=P_\g
f^{\lambda}+r.
\end{equation}
Moreover, 
\begin{equation}  \label{rg2}
<f^{\lambda}> =0, \ \ \| \mathbf{P} f^{\lambda} \|_{2}^{2} + \e^{-2}\| (%
\mathbf{I}-\mathbf{P}) f^{\lambda} \| ^2_{\nu}+\e^{-1}|(1-P_\g)f^{\lambda}
|^2_{2}\lesssim \| \nu^{- \frac{1}{2}} (\mathbf{I} - \mathbf{P}) g\|_{2}^{2}
+ {\ \e^{-2} } \|\mathbf{P} g \|_{2}^{2} +|r|^2_{2}.
\end{equation}

From (\ref{linpro1}), a solution to (\ref{linpro2}) is a fixed point of the
map 
\begin{equation}  \label{fix_f}
f^{\lambda} \ \mapsto \ \mathcal{L}^{-1}\big[ \e^{-1}Kf ^{\lambda}+g\big].
\end{equation}
Note that from Lemma \ref{calL}, the operator $\mathcal{L}^{-1}$ is
well-defined and bounded. Hence for any $f^{\lambda}\in L^{2}$ there is $%
h\in L^{2}$ such that $f^{\lambda}=\mathcal{L}^{-1}h$. Thus (\ref{fix_f}),
the fixed point problem for $f^{\lambda}$, is equivalent to the fixed point
problem for $h$: 
\begin{equation}  \label{fix_h}
h \ \mapsto \ \e^{-1}K \mathcal{L}^{-1}h +g.
\end{equation}

In view of the application of the Schaefer's fixed point Theorem (\cite{EV},
page 504), to show the existence of the fixed point, we need to show that $K 
\mathcal{L}^{-1}$ is compact, which is proven in Lemma \ref{compact1}, {\
and the following \textit{a priori} uniform bound: if $h^{\mathfrak{r} }$
solves 
\begin{equation}
h^{\mathfrak{r} } \ = \ \mathfrak{r} \e^{-1} K \mathcal{L}^{-1} h^{\mathfrak{%
r} } + g, \ \ \text{for some } \mathfrak{r} \in[1^{-},1],  \label{unif_h}
\end{equation}
then $\|h^{\mathfrak{r} }\|_2$ is bounded uniformly in $\mathfrak{r} $.}
Since $\mathcal{L}^{-1}$ is bounded, it suffices to show uniform bound of $%
f^{\mathfrak{r} } = \mathcal{L}^{-1} h^{\mathfrak{r} }$ solving (\ref{eq0})
with $f=f^{\mathfrak{r} }$.

By the Green's identity, 
\begin{equation}
\begin{split}
& \lambda \| f^{\mathfrak{r} } \|_{2}^{2}+\mathfrak{r} \e^{-1} \| (\mathbf{I}
- \mathbf{P}) f^{\mathfrak{r} } \|_{\nu}^{2} {- o(1) \| \mathbf{P} f^{%
\mathfrak{r} }\|_{2}^{2}} + |(1-P_\g) f ^{\mathfrak{r} }|_{2,+}^2 \\
& \lesssim o(1) \| f^{\mathfrak{r} }\|_{\nu}^{2} + \e \| \nu^{- \frac{1}{2}}
(\mathbf{I} - \mathbf{P}) g\|_{2}^{2} + \e^{-1} \|\mathbf{P} g \|_{2}^{2} +
|r|_{2,-}^2 + \e^2\|\Phi \|_{\infty} \| f^{\mathfrak{r} }\|_{\nu}^2 .
\label{enerineq256}
\end{split}
\notag
\end{equation}
From $\| f^{\mathfrak{r} } \|_{\nu} \lesssim \| \mathbf{P} f^{\mathfrak{r}
}\|_{\nu} + \| (\mathbf{I} - \mathbf{P}) f^{\mathfrak{r} } \|_{\nu} \lesssim
\| f^{\mathfrak{r} } \|_{2} +\| (\mathbf{I} - \mathbf{P}) f^{\mathfrak{r} }
\|_{\nu}$, we have, for $\mathfrak{r} \sim 1$ and $\e\ll 1$, 
\begin{equation}  \label{energy_f_theta}
\begin{split}
& \lambda \| f^{\mathfrak{r} } \|_{2}^{2} + \mathfrak{r} \e^{-1} \| (\mathbf{%
I} - \mathbf{P}) f^{\mathfrak{r} } \|_{\nu}^{2}+ |(1-P_\g) f ^{\mathfrak{r}
}|_{2,+}^2{- o(1) \| \mathbf{P} f^{\mathfrak{r} }\|_{2}^{2}} \\
& \lesssim \ \e \| \nu^{- \frac{1}{2}} (\mathbf{I} - \mathbf{P}) g\|_{2}^{2}
+ \e^{-1}\|\mathbf{P} g \|_{2}^{2} + |r|_{2,-}^{2} .
\end{split}%
\end{equation}
Therefore we obtain an uniform in $\mathfrak{r} $ bound on $\|f^\mathfrak{r}
\|_2$. Since $f^\mathfrak{r} =\mathcal{L}^{-1}h^\mathfrak{r} $, from (\ref%
{unif_h}), we have 
\begin{equation}
h^\mathfrak{r} =\mathfrak{r} \e^{-1} K f^{ {\mathfrak{r} }} {+}g,
\end{equation}
so, $\|h^\mathfrak{r} \|_2$ is also bounded uniformly in $\mathfrak{r} $.
Note that in this argument $\e$ is fixed. Therefore, by the Schaefer's fixed
point Theorem there is a fixed point $h^\l$ for (\ref{fix_h}) and in
consequence, a fixed point $f^\l=\mathcal{L}^{-1}h^\l$ for (\ref{fix_f}).
Thus, we conclude the existence of a solution $f^{\lambda}$ to (\ref{linpro2}%
).

Now we prove the first identity of (\ref{rg2}). Estimating $[f^{\lambda}-f^{%
\mathfrak{r} }]$ using the Green's identity, 
\begin{eqnarray*}
&&\lambda \| f^{\lambda}-f^{\mathfrak{r} } \|_{2}^{2} + \mathfrak{r} \e^{-1}
\| (\mathbf{I} - \mathbf{P}) [f^{\lambda}- f^{\mathfrak{r} }] \|_{\nu}^{2}+
|(1-P_\g) [f^{\lambda}-f ^{\mathfrak{r} } ]|_{2,+}^2 \\
& \lesssim& O(1-\mathfrak{r} ) \e^{-1} \big\{ \| f^{\mathfrak{r} }
\|_{2}^{2} + \| f^{\lambda}-f^{\mathfrak{r} } \|_{2}^{2} + \| (\mathbf{I} - 
\mathbf{P})f^{\mathfrak{r} } \|_{\nu}^{2} + \| (\mathbf{I} - \mathbf{P})
[f^{\lambda}-f^{\mathfrak{r} }] \|_{\nu}^{2} \big\} \ \rightarrow \ 0 , \ \ 
\mathfrak{r} \uparrow 1 .
\end{eqnarray*}
From the above estimate and (\ref{rg}), for fixed $\lambda>0$, 
\begin{equation*}
|<f^{\lambda} >| = \lim_{\mathfrak{r} \rightarrow 1} |<f^{\mathfrak{r} }>| \
\lesssim_{\lambda} \ \lim_{\mathfrak{r} \rightarrow 1} (1- \mathfrak{r} ) \e%
^{-1} \big\{ \| \frac{g}{\sqrt{\nu}}\|_{2} + |r|_{2,-} \big\} =0.
\end{equation*}

By Lemma \ref{steady_abc} to (\ref{linpro2}) and from the first identity of (%
\ref{rg2}), 
\begin{equation}
\|\mathbf{P} f^{\lambda} \| _{2}^{2}\lesssim \e^{-2}\| (\mathbf{I}-\mathbf{P}%
)f^{\lambda}\| _{2}^{2}+|(1-P_{\gamma })f^{\lambda}|_{2,+}^{2}+\| \frac{g}{%
\sqrt{\nu}}\| _{2}^{2}+|r|_{2,-}^{2}.  \label{Pestimate2}
\end{equation}
The second estimate of (\ref{rg2}) is direct consequence of (\ref{Pestimate2}%
) and (\ref{energy_f_theta}) with $\mathfrak{r} \uparrow 1$.

\vspace{8pt}

\noindent\textit{Step 2.} To show the existence of the solution to (\ref%
{linearf}) we take the limit as $\l \to 0$ for $f^\l$ solving (\ref{linpro2}%
). Using (\ref{rg}), the uniform-in-$\lambda$ estimate, we have $f^{\lambda}
\rightharpoonup f$ weakly in $L^{2}$ where $f$ solves the linear problem (%
\ref{linearf}) with the estimate (\ref{linear_steady}).

Moreover, since $< f^\l>=0$, then also $<f>=0$ and we conclude (\ref{0mass_s}%
).

The difference $[f-f^\l]$ satisfies 
\begin{equation}
\begin{split}
\l [f-f^\l]+v\cdot \nabla_x [f-f^\l]+ \e^{2}\Phi\cdot \nabla_v
[f-f^\l]-\frac 1 2( \e^{2}\Phi\cdot v )[f-f^\l]+\e^{-1}L[f-f^\l]=\l f,& 
\notag \\
\ [f-f^\l]\big|_{\gamma_{-}} =P_\g [f-f^\l].&
\end{split}%
\end{equation}
By the Green's identity and Lemma \ref{steady_abc}, 
\begin{equation}
\|f-f^\l\|^2_2 \ \lesssim \ \l \|f\|_2^2.  \notag
\end{equation}
Therefore $f^\l$ converges strongly to $f$. The uniqueness follows using the
same argument with $\l =0$. 

\vspace{8pt} {\ \noindent\textit{Step 3.} By Lemma \ref{trace_s}, 
\begin{eqnarray*}
| P_{\gamma} f|_{2,+}^{2} &\lesssim& | \mathbf{1}_{\gamma_{+}^{\delta}}
P_{\gamma} f |_{2,+}^{2} \lesssim | \mathbf{1}_{\gamma_{+}^{\delta}} f
|_{2,+}^{2} + | (1- P_{\gamma}) f |_{2,+}^{2} \\
&\lesssim& \| f \|_{2}^{2} + \e^{-1} \| L (\mathbf{I} - \mathbf{P}) f f
\|_{1} + \| \frac{g}{\sqrt{\nu}}\|_{2}^{2} + | (1- P_{\gamma}) f |_{2,+}^{2}
\\
&\lesssim& \| \mathbf{P}f \|_{2}^{2} + \e^{-1} \| (\mathbf{I} - \mathbf{P})
f \|_{\nu}^{2}+ \| \frac{g}{\sqrt{\nu}} \|_{2}^{2}+ | (1- P_{\gamma}) f
|_{2,+}^{2} .
\end{eqnarray*}
For the incoming part, using the boundary condition, 
\begin{equation*}
| f|_{2,-}^{2} \lesssim \ |P_{\gamma} f |_{2,+}^{2} + |r |_{2,-} ^{2}
\lesssim\| \mathbf{P}f \|_{2}^{2} + \e^{-1} \| (\mathbf{I} - \mathbf{P}) f
\|_{\nu}^{2}+ \| \frac{g}{\sqrt{\nu}} \|_{2}^{2} + | (1- P_{\gamma}) f
|_{2,+}^{2} + |r |_{2,-} ^{2} .
\end{equation*}
Combing (\ref{rg2}), (\ref{Pestimate2}), and the above estimates, we
conclude (\ref{linear_steady}).}
\end{proof}

\begin{proof}[\textbf{Proof of Theorem \protect\ref{prop_linear_steady}}]
We only need to prove (\ref{linear_steady0}). Using (\ref{point1_s}) in
Proposition \ref{point_s} to bound $\e^{\frac{1}{2}} \| w f \|_{\infty}$ in (%
\ref{linear_steady}), we conclude, for $\e$ sufficiently small, 
\begin{equation}
\| \mathbf{P} f\|_{6} \lesssim \e^{-1}\| (\mathbf{I}-\mathbf{P} )f\| _{\nu }
+\e^{-\frac 1 2}|(1-P_{\gamma })f|_{2,+}+\| \frac{g}{\sqrt{\nu}}\| _{2}+\|\e%
^{\frac 3 2} \langle v\rangle^{-1}w g\|_\infty +|\e^{\frac 1 2}w
r|_\infty+|r|_{2,-}.  \label{P1-P6ter}
\end{equation}

From (\ref{point1_s}), (\ref{completestimate}), and (\ref{P1-P6ter}) we
conclude (\ref{linear_steady0}).
\end{proof}

\vspace{8pt}

\subsection{Validity of the Steady Problem}

The main purpose of this section is to prove Theorem \ref{mainth}. We need
several estimates before the proof of the main Theorem.

{\ }

\begin{lemma}
\label{pl3l6} 

Recall {the expression of } $\P f$ in \ref{Pabc}. 
Then, for $w= e^{\beta|v|^{2}}$, $0 < \beta \ll 1$, 
\begin{equation}  \label{Gamma_t_s}
\begin{split}
& \| \nu^{- \frac{1}{2}} \Gamma_{\pm} (f,g)\|_{L_{x,v}^{2}} \\
\lesssim \ \ \, & \e^{1/2}\big\{ \e^{1/2}\| w g \|_{ {\infty} } \big[ \e %
^{-1} \| \nu^{- \frac{1}{2}} (\mathbf{I}-\mathbf{P}) f \|_{L_{x,v}^{2}} \big]%
+ \e^{1/2} \| w f \|_{ {\infty} } \big[ \e ^{-1} \| \nu^{- \frac{1}{2}} (%
\mathbf{I}-\mathbf{P}) g \|_{L_{x,v}^{2}} \big]\big\} \\
& +\| \P f\|_{L^{6}_{x,v}}\|\P g\|_{L^{3}_{x,v}}.
\end{split}%
\end{equation}
\end{lemma}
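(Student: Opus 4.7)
The plan is to split $\Gamma(f,g)$ bilinearly via $f = \P f + \ip f$ and $g = \P g + \ip g$ into four pieces
\begin{equation*}
\Gamma(f,g) \ = \ \Gamma(\P f, \P g) + \Gamma(\P f, \ip g) + \Gamma(\ip f, \P g) + \Gamma(\ip f, \ip g),
\end{equation*}
and estimate each separately. The purely hydrodynamic piece will supply the $\|\P f\|_{L^6}\|\P g\|_{L^3}$ contribution in (\ref{Gamma_t_s}), while the three pieces involving at least one microscopic factor match the two symmetric $\|wf\|_\infty$ and $\|wg\|_\infty$ terms.

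For the hydrodynamic-hydrodynamic piece, recall from (\ref{Pabc}) that $\P h(x,v) = (a_h(x) + b_h(x)\cdot v + c_h(x)\tfrac{|v|^2-3}{2})\sqrt{\mu(v)}$. By bilinearity, $\Gamma(\P f, \P g)(x,v) = \sum_{i,j} (\alpha_i)_f(x)(\alpha_j)_g(x)\, F_{ij}(v)$, where the sum runs over finitely many pairs $(\alpha_i)_h \in \{a_h, b_{h,1}, b_{h,2}, b_{h,3}, c_h\}$, and the fixed $v$-functions $F_{ij} := \Gamma(\varphi_i, \varphi_j)$ (with $\varphi_k$ a basis of $\mathrm{Null}\,L$) inherit Gaussian decay, so $\|\nu^{-1/2}F_{ij}\|_{L^2_v}$ is a universal finite constant. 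Hence pointwise in $x$,
\begin{equation*}
\|\nu^{-1/2}\Gamma(\P f, \P g)(x,\cdot)\|_{L^2_v} \ \lesssim \ (|a_f|+|b_f|+|c_f|)(x)\cdot(|a_g|+|b_g|+|c_g|)(x).
\end{equation*}
Since $\|\P h(x,\cdot)\|_{L^p_v}$ is equivalent to $|a_h|(x)+|b_h|(x)+|c_h|(x)$ for every $p\in[2,6]$, H\"older's inequality in $x$ with exponents $\tfrac{1}{6}+\tfrac{1}{3}=\tfrac{1}{2}$ delivers $\|\nu^{-1/2}\Gamma(\P f, \P g)\|_{L^2_{x,v}} \lesssim \|\P f\|_{L^6_{x,v}}\|\P g\|_{L^3_{x,v}}$.

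For each of the three pieces containing at least one $\ip$ factor, I invoke the pointwise bilinear bound
\begin{equation*}
\|\nu^{-1/2}\Gamma(h_1,h_2)(x,\cdot)\|_{L^2_v} \ \lesssim \ \|w h_1\|_{L^\infty_v}(x)\,\|\nu^{-1/2} h_2(x,\cdot)\|_{L^2_v} \,+\, \|w h_2\|_{L^\infty_v}(x)\,\|\nu^{-1/2} h_1(x,\cdot)\|_{L^2_v},
\end{equation*}
valid for $w = e^{\beta|v|^2}$ with $0 < \beta \ll 1$. Applied with $(h_1,h_2) = (\P f, \ip g)$, together with the elementary inequalities $\|w\P h\|_{L^\infty_{x,v}} \lesssim \|wh\|_\infty$ (each of $a_h, b_h, c_h$ is a $v$-average of $h$ against $\sqrt{\mu}$, dominated by $\|wh\|_{L^\infty_v}$ when $\beta<1/4$) and $\|w\ip h\|_\infty \lesssim \|wh\|_\infty$, this contributes $\|wf\|_\infty \|\nu^{-1/2}\ip g\|_{L^2}$; the symmetric piece $\Gamma(\ip f, \P g)$ yields $\|wg\|_\infty \|\nu^{-1/2}\ip f\|_{L^2}$; and the doubly microscopic piece $\Gamma(\ip f, \ip g)$ is absorbed into either of the two. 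Inserting the trivial factorization $1 = \e^{1/2}\cdot \e^{1/2}\cdot \e^{-1}$ rewrites these in the precise $\e$-weighted form stated in (\ref{Gamma_t_s}).

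The main technical obstacle is the pointwise bilinear $\Gamma$-estimate with the $\nu^{-1/2}$ weight on the microscopic input on the right-hand side. For the loss part $\Gamma_-(h_1,h_2)(v) = h_1(v)\nu^*_{h_2}(v) + h_2(v)\nu^*_{h_1}(v)$, where $\nu^*_h(v) = \int|(v-u)\cdot\omega|\sqrt{\mu(u)}h(u)\,du\,d\omega$, Cauchy--Schwarz in the $u$-variable against the Gaussian weight $\mu(u)\nu(u)$ produces $|\nu^*_{h}(v)| \lesssim \langle v\rangle\,\|\nu^{-1/2}h\|_{L^2_u}$; this combines with the finiteness of $\|\nu^{-1/2}(v)\langle v\rangle/w(v)\|_{L^2_v} \lesssim \|\langle v\rangle^{1/2} e^{-\beta|v|^2}\|_{L^2_v}$ to give the required bound. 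The gain part $\Gamma_+$ is handled analogously via the Carleman representation of the hard-spheres operator, using the integrability of the kernel $\mathbf{k}_\beta(v,u)$ from (\ref{kbeta_s}) and one more Cauchy--Schwarz in $u$ to extract a $\|\nu^{-1/2}h\|_{L^2}$ factor.
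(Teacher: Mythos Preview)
Your approach is essentially the paper's: the same $\P/\ip$ bilinear decomposition (the paper groups into three terms rather than four, but this is immaterial), the same $L^6_x$--$L^3_x$ H\"older estimate for the purely hydrodynamic piece, and the same pull-out-$\|w\cdot\|_\infty$ manoeuvre for the pieces carrying at least one microscopic factor. The only substantive difference is in how the gain part of the bilinear estimate is justified: the paper does not go through Carleman or invoke the kernel $\mathbf{k}_\beta$ (which is the kernel of the linear operator $K$, not of $\Gamma_+$), but instead squares $\nu^{-1/2}\Gamma_\pm(|(\mathbf{I}-\mathbf{P})f|,w^{-1})$, uses Cauchy--Schwarz in $(u,\omega)$ to push the square inside the integral, and then applies the pre--post collision change of variables $(v,u)\leftrightarrow(v',u')$ (respectively $(u',v')$) to reduce each resulting term to $\int \nu^{-1}|(\mathbf{I}-\mathbf{P})f|^2\,dv$; this is more elementary than Carleman and avoids the mis-reference to $\mathbf{k}_\beta$.
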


\begin{proof}
By the decomposition 
\begin{eqnarray}
&&\| \nu^{- \frac{1}{2}} \Gamma_{\pm} (f, g)\|_{L^{2}_{x,v}}
  \label{Gamma_decom_s} \\
&\lesssim& \| \nu^{- \frac{1}{2}} \Gamma_{\pm} ( |(\mathbf{I}-\mathbf{P})
f|, |g|) \|_{L^{2}_{x,v}}  
 +
\| \nu^{- \frac{1}{2}} \Gamma_{{\pm}} ( |f|,|(\mathbf{I}-\mathbf{P}) g |
) \|_{L^{2}_{x,v}}
 +  \| \nu^{- \frac{1}{2}} \Gamma_{\pm} (|\P f| ,|\P g|) \|_{L^{2}_{x,v}}.
 \notag
\end{eqnarray}

The first two terms of the RHS of (\ref{Gamma_decom_s}) are bounded by 
\begin{eqnarray*}
 \e \| wg \|_{L^{\infty}_{x,v}}
\| \nu^{-1/2} \Gamma_{\pm}( \e%
^{-1}|(\mathbf{I}-\mathbf{P})f|, w^{-1} )\|_{L^{2}_{x,v}} + \e \| wf \|_{L^{\infty}_{x,v}}
\| \nu^{-1/2} \Gamma_{\pm}( \e%
^{-1}|(\mathbf{I}-\mathbf{P})g|, w^{-1} )\|_{L^{2}_{x,v}}.\end{eqnarray*}
From $|v|^{2} + |u|^{2} = |v^{\prime}|^{2} + |u^{\prime}|^{2}$ and $%
\nu^{-1/2} |(v-u) \cdot \omega| \sqrt{\mu(u)} \lesssim \nu^{-1/2} [ |v| +
|u|] \sqrt{\mu(u)} \lesssim [1+ |v| + |u|]^{ \frac{1}{2}} \mu(u)^{\frac{1}{2}%
-}, $ 
\begin{eqnarray}
&& \int_{\mathbb{R}^{3}} \nu^{-1} | \Gamma_{\pm}( \e^{-1} |(\mathbf{I} -%
\mathbf{P})f|, w^{-1}) (v)|^{2} \mathrm{d} v \notag \\
&\lesssim& \int_{\mathbb{R}^{3}} \int_{\mathbb{R}^{3}} \int_{\mathbb{S}^{2}}
[1+ |v^{\prime}| + |u^{\prime}|] | \e^{-1} (\mathbf{I} -\mathbf{P})
f(v^{\prime}) |^{2} w(u^{\prime})^{-2} \mathrm{d} \omega \mathrm{d} u 
\mathrm{d} v  \label{I-P_linear} \\
&&+ \int_{\mathbb{R}^{3}} \int_{\mathbb{R}^{3}} \int_{\mathbb{S}^{2}} [1+
|v^{\prime}| + |u^{\prime}|] | \e^{-1} (\mathbf{I} -\mathbf{P}) f(u^{\prime}
) |^{2} w(v^{\prime})^{-2} \mathrm{d} \omega \mathrm{d} u \mathrm{d} v 
\notag \\
&&+ \int_{\mathbb{R}^{3}} \int_{\mathbb{R}^{3}} \int_{\mathbb{S}^{2}} [1+ |v
| + |u |] | \e^{-1}(\mathbf{I} -\mathbf{P}) f(v) |^{2} w(u )^{-2} \mathrm{d}
\omega \mathrm{d} u \mathrm{d} v  \notag \\
&&+ \int_{\mathbb{R}^{3}} \int_{\mathbb{R}^{3}} \int_{\mathbb{S}^{2}} [1+ |v
| + |u |] | \e^{-1}(\mathbf{I} -\mathbf{P}) f(u) |^{2} w(v)^{-2} \mathrm{d}
\omega \mathrm{d} u \mathrm{d} v.  \notag
\end{eqnarray}
Now by the change of variables $(v,u) \leftrightarrow (v^{\prime},
u^{\prime})$ for the first term, $(v,u) \leftrightarrow (u^{\prime},
v^{\prime})$ for the second term and $(v,u) \leftrightarrow (u,v)$ for the
last term, we bound all the above terms as 
\begin{eqnarray}
&& \int_{\mathbb{R}^{3}} \nu^{-1} | \Gamma_{\pm}( \e^{-1} |(\mathbf{I} -%
\mathbf{P})f|, w^{-1}) |^{2}
\label{Gamma_2} \\
&\lesssim& \int_{\mathbb{R}^{3}} \Big[ \iint_{\mathbb{R}^{3} \times \mathbb{S%
}^{2}} [1+ |v| + |u|] w(u)^{-1} \mathrm{d} \omega \mathrm{d} u \Big] | \e%
^{-1} (\mathbf{I} -\mathbf{P}) f (v)|^{2} \mathrm{d} v  \notag \\
& \lesssim& \int_{\mathbb{R}^{3}} \nu^{-1} | \e^{-1} (\mathbf{I} -\mathbf{P}%
) f (v)|^{2} \mathrm{d} v .  \notag
\end{eqnarray}
Similarly, 
\begin{eqnarray}
&& \int_{\mathbb{R}^{3}} \nu^{-1} | \Gamma_{\pm}( \e^{-1} |(\mathbf{I} -%
\mathbf{P})g|, w^{-1}) (v)|^{2} \mathrm{d} v\lesssim
\int_{\mathbb{R}^{3}} \nu^{-1}| \e^{-1} (\mathbf{I} -\mathbf{P})
g (v)|^{2} \mathrm{d} v .  \label{Gamma_3}
\end{eqnarray}
Therefore, the first two terms of the RHS of (\ref{Gamma_decom_s}) are
bounded by 
\begin{eqnarray*}
\e \| w g \|_{\infty} \| \e^{-1} (\mathbf{I} - \mathbf{P}) f \|_{\nu} + \e %
\| w f \|_{\infty} \| \e^{-1} (\mathbf{I} - \mathbf{P})g \|_{\nu} .
\end{eqnarray*}

Due to the strong decay in $v$ of $\mathbf{P}f$, we have $\big\|\frac{1}{%
\mu^{0+}}|\mathbf{P}f(x ,v)| \big\|_{L^{\infty}_{v}} \lesssim \|\mathbf{P}%
f(x) \|_{L^{p}_{v}} $ for any $1 \leq p \leq \infty$. The last term of (\ref%
{Gamma_decom_s}) is bounded as, for fixed $v$, by $\| \nu^{-1/2}\Gamma (
\mu^{0+} , \mu^{0+} ) \|_{L^{2}_{ v}}< \infty$, 
\begin{eqnarray*}
 \| \nu^{- \frac{1}{2}} \Gamma_{\pm} (\P f ,\P g) \|_{L^{2}_{x,v}} %+\|
\lesssim
%&
\| \nu^{-1/2} \Gamma ( \mu^{0+} , \mu^{0+}) \|_{L^{2}_{ v}} \big\| %
\| \mathbf{P} f (\cdot ) \|_{L^{6}_{v}} \| \P g (\cdot ) \|_{L^{3}_{v}} %
\big\|_{L^{2}_{x}}% \\
%& 
\lesssim%&
 \| \mathbf{P} f \|_{L^{6}_{x,v} } \| \mathbf{P} g
\|_{L^{3}_{x,v} } .
\end{eqnarray*}

All together we prove (\ref{Gamma_t_s}).\end{proof}

\begin{lemma}
\label{rSQ_unst}
Recall $r_{s}, f_{w}, A_{s}, \mathcal{Q} $ in (\ref{r}), (\ref{fw}), (\ref{A}%
), (\ref{defQ}). We have 
\begin{eqnarray*}
&& | r _{s} |_{2,- } + |w r_s|_{\infty,-}\lesssim |\t_w|_\infty , \\
&& \| f_{w} \|_{L^{6}_{x} L^{2}_{v}} +\|wf_{w} \|_{\infty} \lesssim
|\t_w|_\infty, \\
&&\|(\mathbf{I}-\mathbf{P}) A_{s} \|_{L^{2} (\Omega \times \mathbb{R}^{3})}
\ \lesssim |\t_w |_{W^{1,\infty}(\pt \O )}+\e^2\|\Phi\|_\infty |\t_w|_2 , \\
&& {\|\P A_s\|_{L^{2} (\Omega \times \mathbb{R}^{3})}\le \e\|\Phi\|_2,} \\
&& \| w A_{s} \|_{L^{\infty} (\Omega \times \mathbb{R}^{3}) } \ \lesssim
|\t_w |_{W^{1,\infty}(\pt \O )}+\e^2\|\Phi\|_\infty |\t_w|_\infty+ {\e %
\|\Phi\|_\infty} + |\vartheta_{w}|^{2}_{\infty} , \\
&& | \mathcal{Q} f |_{2,-} \ \lesssim \ \| \vartheta_{w} \|_{L^{\infty}
(\partial\Omega)} \big[1+ \e \| \vartheta_{w} \|_{L^{\infty}
(\partial\Omega)} \big] \| \sqrt{\mu} f \|_{L^{2} (\gamma_{+})}, \\
&& \| w \mathcal{Q}f \|_{ \infty } \ \lesssim \ | \vartheta_{w} |_{\infty} %
\big[1+ \e | \vartheta_{w} |_{\infty} \big] \| \sqrt{\mu} f \|_{L^{\infty} (%
\bar{\Omega} \times \mathbb{R}^{3})}.
\end{eqnarray*}
\end{lemma}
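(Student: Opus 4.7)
\medskip

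\noindent\textbf{Proof strategy for Lemma \ref{rSQ_unst}.} All seven bounds are direct computational consequences of the explicit formulas in (\ref{r}), (\ref{fw}), (\ref{A}), (\ref{defQ}), combined with the pointwise expansion (\ref{exp_Mw}) of $M_w$ and the bound (\ref{phie}) on $\f_\e$. The plan is to treat the four objects $r_s$, $f_w$, $A_s$, $\mathcal{Q}f$ one at a time, pushing the Gaussian weight $w(v)=e^{\beta^{\prime}|v|^2}$ ($\beta^{\prime}<1/4$) through each term using the elementary inequality $w(v)\mu(v)\lesssim \mu(v)^{1/2-}$.

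First I would establish the bounds on $r_s$ and $f_w$. For $r_s$, by (\ref{phie}) we have the pointwise bound $|\f_\e|\lesssim |\t_w|_\infty^2\langle v\rangle^4\mu(v)$; applying $\mathcal{P}^w_\gamma$ (whose kernel is a constant multiple of $M_w(v)\{n\cdot u\}$) gives the same kind of bound with an extra integration, and the weighted $L^\infty$ and $L^2(\mathrm{d}\gamma)$ bounds then follow immediately since $|\t_w|_\infty\le 1$. For $f_w=\sqrt{\mu}[\Theta_w(|v|^2-3)/2+\rho_w]$, the Sobolev embedding $W^{1,\infty}(\Omega)\hookrightarrow L^\infty(\Omega)\hookrightarrow L^6(\Omega)$ applied to $\Theta_w$ together with (\ref{Thetaw}) gives both the $L^6_xL^2_v$ and the weighted $L^\infty$ bound, absorbing the polynomial in $v$ into $w(v)\sqrt{\mu(v)}$.

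Next I would handle $A_s$ term by term as in (\ref{A}). The $\mathbf{P}$ part is already identified in (\ref{PAintA}) as $\e\Phi\cdot v\sqrt{\mu}$, and the $L^2$ norm of this is bounded by $\e\|\Phi\|_2$, giving the fourth bound. For the $(\mathbf{I}-\mathbf{P})$ part, the contributions $v\cdot\nabla_x f_w$ and $\e^2\sqrt{\mu}^{-1}\Phi\cdot\nabla_v[\sqrt{\mu}f_w]$ are estimated by $|\t_w|_{W^{1,\infty}}$ and $\e^2\|\Phi\|_\infty|\t_w|_2$ respectively, using (\ref{Thetaw}) to control $\nabla_x\Theta_w$ and $\nabla_x\rho_w$; the nonlinear term $\Gamma(f_w,f_w)$ is handled by the standard estimate $\|\nu^{-1/2}\Gamma(g,h)\|_2\lesssim \|wg\|_\infty\|wh\|_\infty$ (which follows from the same computation as in Lemma \ref{pl3l6}), producing $|\t_w|_\infty^2$. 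The weighted $L^\infty$ bound on $wA_s$ is then immediate from the pointwise bounds on each summand, and the final term $|\t_w|_\infty^2$ there comes again from $\Gamma(f_w,f_w)$.

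Finally, for $\mathcal{Q}f$ I would insert the expansion (\ref{exp_Mw}) into $\sqrt{\mu}^{-1}\mathcal{P}^w_\gamma(\sqrt{\mu}f)$ and subtract off $P_\gamma f$: the leading $\sqrt{2\pi}\mu$ term in (\ref{exp_Mw}) reproduces exactly $P_\gamma f$, so after dividing by $\e$ one is left with
\[
\mathcal{Q}f(x,v)\;=\;\t_w(x)\sqrt{2\pi}\sqrt{\mu(v)}\Bigl(\tfrac{|v|^2}{2}-2\Bigr)\!\!\int_{n(x)\cdot u>0}\!\!f(x,u)\sqrt{\mu(u)}\{n(x)\cdot u\}\,\mathrm{d} u\;+\;\e\,O(|\t_w|^2)\,\langle v\rangle^4\sqrt{\mu(v)}\,P_\gamma f,
\]
from which both the $L^2(\mathrm{d}\gamma)$ bound (by Cauchy--Schwarz on the $u$-integral against $\sqrt{\mu(u)}|n\cdot u|$) and the weighted $L^\infty$ bound (absorbing $\langle v\rangle^4\sqrt{\mu(v)}$ into $w^{-1}$) follow at once. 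The only mildly delicate point throughout is keeping careful track of powers of $\e$ and of $|\t_w|_\infty$ versus $|\t_w|_{W^{1,\infty}}$, but no nontrivial analytic obstacle is expected, since all operators involved are bounded kernel operators with Gaussian decay in $v$.
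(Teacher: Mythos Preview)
Your proposal is correct and follows essentially the same route as the paper's proof: both treat $r_s$, $f_w$, $A_s$, $\mathcal{Q}f$ separately by reading off pointwise bounds from (\ref{r}), (\ref{phie}), (\ref{fw}), (\ref{Thetaw}), (\ref{A}), (\ref{PAintA}), and for $\mathcal{Q}f$ both derive the same explicit formula by inserting the expansion (\ref{exp_Mw}) into (\ref{defQ}) and cancelling the leading term against $P_\gamma f$. Your write-up is in fact more detailed than the paper's, which merely cites the relevant equations and states that the estimates follow.
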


\begin{proof}
From (\ref{r}) and (\ref{phie}) we have the first estimate. From (\ref{fw})
and (\ref{Thetaw}) we have the second estimate. From the (\ref{A}) we have
the third and fourth estimates. Finally,

From (\ref{zeromass1}), (\ref{defQ}) and (\ref{exp_Mw}) 
we have 
\begin{eqnarray*}
\mathcal{Q}f(x,v) &=& \sqrt{2\pi} \big( \frac{|v|^{2}}{2} -2 \big) \sqrt{%
\mu(v)} \vartheta_{w}(x) \int_{n (x) \cdot u>0} f(x,u) \sqrt{\mu(u)} \{n(x)
\cdot u\} \mathrm{d} u \\
& &+\e O( |\vartheta_{w}|^{2} )\langle v\rangle^{4} \sqrt{\mu(v)} \int_{n(x)
\cdot u>0} R(x,u) \sqrt{\mu(u)} \{n(x) \cdot u\} \mathrm{d} u.
\end{eqnarray*}
By the standard Sobolev embedding we prove the estimates.
\end{proof}

\vspace{4pt}

Now we are ready to prove the main theorem for the steady case:

\begin{proof}[\textbf{Proof of Theorem \protect\ref{mainth}}]
We prove Theorem \ref{mainth} by considering a sequence $f^{\ell}$, for $%
\ell\ge 0$, 
\begin{equation}
\begin{split}  \label{steady_ell}
v\cdot\nabla_x f^{\ell+1}+ \e^2 \frac{1}{\sqrt{\mu}}\Phi\cdot\nabla_v [\sqrt{%
\mu} f^{\ell+1} ] +\frac 1{\e}L f^{\ell+1} &= \Gamma(f^{\ell},f^{\ell})+ L_1
f^{\ell} +A_{s} , \\
f^{\ell+1} |_{\gamma_-}&=P_\g f^{\ell+1}+ \e \mathcal{Q} f^{\ell} +\e r
_{s}, \ \ \ R^{0} \equiv0,
\end{split}%
\end{equation}
where $L_{1}$ and $A_{s}$ are defined at (\ref{L1def}) and (\ref{A}), $%
\mathcal{Q}$ at (\ref{defQ}), and $r_{s}$ at (\ref{r}). 
Note that Theorem \ref{prop_linear_steady}, with (\ref{PAintA}), (\ref%
{Q_zero}), guarantees the solvability of such a linear problem (\ref%
{steady_ell}).

\vspace{4pt}

\noindent\textit{Step 1. } 
For $0< \eta_{0} \ll 1$, we assume that, 
\begin{equation}
\| \vartheta_{w} \|_{H^{1/2}(\partial\Omega)}^{2} + \| \Phi \|_{2}^{2} + \e( 
{\ \| \t_w \|^{2}_{W^{1,\infty} (\pt\Omega)}}+ \| \Phi \|_{\infty}^{2} ) <
c_{0}{\eta_{0}},
\end{equation}
for $0< c_{0} \ll1$, and the induction hypothesis 
\begin{equation}  \label{induc_hyp}
\sup_{0 \leq j \leq \ell} [\hskip-1pt [ f^{j} ]\hskip-1pt ]^{2} < \eta_{0},
\end{equation}
where the norm $[\hskip-1pt [ \ \cdot \ ]\hskip-1pt ]$ is defined in (\ref%
{norm_steady}).

We apply Theorem \ref{prop_linear_steady} for 
\begin{equation}
f= f^{\ell+1} , \ g=\Gamma(f^{\ell},f^{\ell}) + L_{1}f^{\ell} + A_{s} , \ r= %
\e \mathcal{Q} f^{\ell} + \e r_{s},  \notag
\end{equation}
to achieve the same upper bound as in (\ref{induc_hyp}) for $[\hskip-1pt [
f^{\ell+1} ]\hskip-1pt ]^{2}$ in the next two steps.

We estimate the right hand side of (\ref{linear_steady0}) for our case. From
(\ref{L1def}), $\mathbf{P} L_{1} f=0$ and 
\begin{equation}
| L_{1} f^{\ell}(x,v)| \lesssim |\Theta_{w}(x)| \big\{| \Gamma_{\pm} (
\langle v\rangle^{2} \sqrt{\mu}, \mathbf{P}f^{\ell} )| + | \Gamma_{\pm} (
\langle v\rangle^{2} \sqrt{\mu}, (\mathbf{I} - \mathbf{P})f^{\ell} )| \big\} %
.  \notag
\end{equation}
Then 
\begin{equation}  \label{small_L1}
\| \nu^{-1/2} (\mathbf{I} - \mathbf{P}) L_{1} f^{\ell} \|_{2}\lesssim \|
\Theta_{w} \|_{3} \| \mathbf{P} f^{\ell} \|_{6} + \e \| \Theta_{w}
\|_{\infty} [ \e^{-1} \| (\mathbf{I} - \mathbf{P}) f^{\ell} \|_{\nu}].
\end{equation}

Using (\ref{small_L1}), (\ref{Gamma_t_s}), (\ref{L1def}), (\ref{fw}) and
Lemma \ref{rSQ_unst}, we obtain 
\begin{eqnarray*}
\| \nu^{-\frac{1}{2}} (\mathbf{I} - \mathbf{P})g \|_{2} & \lesssim & [\hskip%
-1pt [ f^{\ell}]\hskip-1pt ]^{2} + (\| \Theta_{w} \|_{3} + \e \| \Theta_{w}
\|_{\infty} ) [\hskip-1pt [ f^{\ell} ]\hskip-1pt ] + \| \nabla_{x}\Theta_{w}
\|_{2} + \e^{2} \| \Phi \Theta_{w}\|_{2} + \| |\Theta_{w}|^{2} \|_{2}, \\
\e^{-1} \| \mathbf{P} g \|_{2} &\lesssim& \| \Phi \|_{2} , \\
\e^{3/2} \| \langle v\rangle^{-1} w g \|_{\infty} &\lesssim & \e^{1/2} [%
\hskip-1pt [ f^{\ell} ]\hskip-1pt ]^{2} + \e \| \Theta_{w} \|_{\infty} [%
\hskip-1pt [ f^{\ell} ]\hskip-1pt ] + \e^{5/2} \| \Phi \|_{\infty} + \e%
^{3/2} \| \nabla_{x} \Theta_{w} \|_{\infty} \\
&& + \e^{7/2} \| \Phi \|_{\infty} \| \Theta_{w} \|_{\infty} + \e^{3/2} \|
\Theta_{w} \|_{\infty} ^{2}.
\end{eqnarray*}

From Lemma \ref{rSQ_unst} 
\begin{eqnarray*}
\e^{-1/2}|r|_{2} &\lesssim& \e^{1/2} | \vartheta_{w}|_{\infty} [\hskip-1pt [
f^{\ell} ]\hskip-1pt ] + \e^{1/2} | \vartheta_{w} |_{\infty}, \\
\e^{1/2} |w r| _{\infty} &\lesssim& \e^{3/2} |\vartheta_{w}|_{\infty } [%
\hskip-1pt [ f^{\ell} ]\hskip-1pt ] + \e^{3/2} | \vartheta_{w} |_{\infty}.
\end{eqnarray*}

Finally applying Theorem \ref{prop_linear_steady}, we conclude that 
\begin{equation}  \label{uniform_linear}
\begin{split}
[\hskip-1pt [ f^{\ell+1} ]\hskip-1pt ]^{2} \lesssim& \big\{ (1+ \e)[\hskip%
-1pt [ f^{\ell } ]\hskip-1pt ]^{2} + (\| \Theta_{w} \|_{3} + \e \|
\Theta_{w} \|_{\infty} ) ^{2} + (\e^{2}+ \e^{3} )
|\vartheta_{w}|_{\infty}^{2} \big\}[\hskip-1pt [ f^{\ell } ]\hskip-1pt ]^{2}
+ c_{0} \eta_{0}.
\end{split}%
\end{equation}
By $\| \Theta_{w} \|_{3}\lesssim \| \Theta_{w} \|_{H^{1}(\O )} \lesssim |
\vartheta _{w} |_{H^{\frac{1}{2}} (\partial\Omega)} < c_{0} \eta_{0} \ll1 $
we prove that $[\hskip-1pt [ f^{\ell+1} ]\hskip-1pt ]^{2} < \eta_{0}$.

\vspace{4pt}

\noindent\textit{Step 2. } We repeat \textit{Step 1} for $f^{\ell+1} -
f^{\ell}$ to show that $R^{\ell}$ is Cauchy sequence in $L^{\infty} \cap
L^{2}$ for fixed $\e$. Now it is standard to conclude that the limiting $%
f^{\ell} \rightarrow f$ solves the equation. The uniqueness is standard.
(See \cite{EGKM} for the details)

\medskip

\noindent\textit{Step 3. } To prove the weak convergence of $f^\e$, we use
the argument of \cite{BGL91} where it is proved, in the unsteady case and
without boundary, that, if $f^\e$ converges weakly to a limit, then the
limit has to be in the null space of $L$ and its components have to solve
the INSF system. Adding a force field is straightforward. The boundary
condition issue requires a little more care. Let $g^\e=f_w+f^\e_s$. The
equation for $g^\e$ is 
\begin{equation}
v\cdot\nabla_x g^\e +\e^2 \frac{\Phi\cdot \nabla_v(g^\e\sqrt{\mu})}{\sqrt{\mu%
}}+\frac 1 \e L g^\e = \Gamma(g^\e,g^\e)+\e\Phi\cdot v\sqrt{\mu}.
\label{eqns}
\end{equation}

From the previous results we know that $\Vert (\mathbf{I}-\mathbf{P}%
)g^{\e}\Vert _{\nu }\rightarrow 0$ as $\e\rightarrow 0$. Moreover $\P g^{\e}$
is bounded in $L_{x}^{6}$ and hence weakly compact and $\langle v\rangle
^{-1}\Gamma (g^{\e},g^{\e})$ is bounded in $L_{x,v}^{2}$. Therefore 
\begin{equation*}
v\cdot \nabla _{x}({g^{\varepsilon }}{{\langle v\rangle }^{-1}})+\e%
^{2}\langle v\rangle ^{-1}{\mu }^{-\frac{1}{2}}\Phi \cdot \nabla _{v}({%
g^{\varepsilon }\sqrt{\mu }})\in L_{x,v}^{2}
\end{equation*}%
Passing to the (weak) limit as $\varepsilon \rightarrow 0$, up to subsequences,  $g^{\varepsilon
}\rightarrow g_{1}$ weakly and $\e^{2}\langle v\rangle ^{-1}{\mu }^{-\frac{1%
}{2}}\Phi \cdot \nabla _{v}({g^{\varepsilon }\sqrt{\mu }})\rightarrow 0$ in
the sense of distribution, so that, as distribution 
\begin{equation*}
v\cdot \nabla _{x}({g^{\varepsilon }}{{\langle v\rangle }^{-1}})+\e%
^{2}\langle v\rangle ^{-1}{\mu }^{-\frac{1}{2}}\Phi \cdot \nabla _{v}({%
g^{\varepsilon }\sqrt{\mu }})\rightarrow v\cdot \nabla _{x}({g_{1}}{{\langle
v\rangle }^{-1}})
\end{equation*}%
But $v\cdot \nabla _{x}({g^{\varepsilon }}{{\langle v\rangle }^{-1}})+\e%
^{2}\langle v\rangle ^{-1}{\mu }^{-\frac{1}{2}}\Phi \cdot \nabla _{v}({%
g^{\varepsilon }\sqrt{\mu }})$ has a weak limit in $L_{x,v}^{2}.$ By the
uniqueness of the distribution limit, we deduce that the limit $g_{1}=\P %
g_{1}$ is such that 
\begin{equation*}
v\cdot \nabla _{x}({g_{1}}{{\langle v\rangle }^{-1}})\in L_{x,v}^{2},\text{
\ \ \ }\|g_{1}\|_{L^{6}}\ll1.
\end{equation*}%
But $g_{1}=\{\rho +u\cdot v+\theta (|v|^{2}-3)/2\}\sqrt{\mu }$, and, from
the linear independence of $\frac{v}{\sqrt{\nu }}\{1,v,v\otimes
v,|v|^{2},v|v|^{2}\}\sqrt{\mu },$ we deduce that $\rho ,u,\theta \in
H_{x}^{1}$. The equation for the hydrodynamic fields are deduced as in \cite%
{BGL91} as follows: we apply $\P $ to equation (\ref{eqns}) and take the
weak limit to obtain that 
\begin{equation*}
\P (v\cdot \nabla _{x}g_{1})=0,
\end{equation*}%
which is equivalent to 
\begin{equation*}
\nabla _{x}(\rho +\th )=0,\quad \nabla _{x}\cdot u=0.
\end{equation*}%
Then we multiply equation (\ref{eqns}) by $\e^{-1}v\sqrt{\mu }$, integrate
on velocity and take the weak limit. We obtain 
\begin{equation*}
\lim_{\e\rightarrow 0}\e^{-1}\nabla _{x}\cdot (v\sqrt{\mu },v\cdot \nabla
_{x}g^{e})_{L_{v}^{2}}=\Phi .
\end{equation*}%
To compute the above limit we write 
\begin{equation*}
\e^{-1}\nabla _{x}\cdot (v\otimes v\sqrt{\mu },g^{e})_{L_{v}^{2}}=(L^{-1}(v%
\otimes v-\frac{|v|^{2}}{3}\mathbb{I}),\e^{-1}Lg^{e})_{L_{v}^{2}}+\nabla
_{x}p^{e},
\end{equation*}%
where $p^{e}=\e^{-1}(\frac{1}{3}|v|^{2}\sqrt{\mu },g^{e})_{L_{v}^{2}}$. In
order to compute the limit of the first term, we note that, from the
equation, the weak limit 
\begin{equation*}
\lim_{\e\rightarrow 0}\e^{-1}Lg^{e}=\Gamma (g_{1},g_{1})-v\cdot \nabla
_{x}g_{1}.
\end{equation*}%
Hence we obtain 
\begin{equation*}
\Phi =\nabla _{x}((v\otimes v-\frac{|v|^{2}}{3}\mathbb{I}),L^{-1}(\Gamma
(g_{1},g_{1})-v\cdot \nabla _{x}g_{1})_{L_{v}^{2}}+\nabla _{x}p,
\end{equation*}%
where $p=\lim_{e}p_{e}$. It is standard to compute that 
\begin{equation*}
\nabla _{x}\cdot (v\otimes v-\frac{|v|^{2}}{3}\mathbb{I}),L^{-1}(\Gamma
(g_{1},g_{1})-v\cdot \nabla _{x}g_{1})_{L_{v}^{2}}=u\otimes u-\mathfrak{v}%
\Delta u,
\end{equation*}%
and hence $u$ is a weak solution to the incompressible Navier-Stokes
equation (\ref{INSF_st})$_{1}$. Similar arguments can be used to obtain (\ref%
{INSF_st})$_{2}$. The conditions $\nabla _{x}(\rho +\th )$ and $\nabla
_{x}\cdot u$ have been already obtained.

We only need to check the boundary conditions. We return to $f^{\e}_s=
g^\e-f_w$. By the smoothness of $f_w$,

\begin{equation*}
v\cdot \nabla _{x}(f_{s}^{\e}\langle v\rangle ^{-1})+\e^{2}\langle v\rangle
^{-1}{\mu }^{-\frac{1}{2}}\Phi \cdot \nabla _{v}({f_{s}^{\varepsilon }\sqrt{%
\mu }})\rightarrow v\cdot \nabla _{x}(f_{1}\langle v\rangle ^{-1})\in
L_{x,v}^{2}\text{ weakly. }
\end{equation*}%
By Lemma \ref{trace_s}, ${f_{s}^{e}}\langle v\rangle ^{-1}$ has local trace
which weakly converges to $\langle v\rangle ^{-1}P_{\gamma }f_{1}$ because $%
(1-P_{\gamma })f_{s}^{\e}\langle v\rangle ^{-1}\rightarrow 0$. So $f_{1}$ has
a local trace $P_{\gamma }f_{1}$ on $\gamma $. But $f_{1}\in
H_{x}^{1}C_{v}^{\infty }$, so that, for each $v\in \mathbb{R}^{3}$ {the
components $\rho -\rho _{w}$, $u$ and $\th -\Theta _{w}$} of $\P f_{1}$ have
trace on the boundary. Hence $\P f_{1}=P_{\gamma }f_{1}$. Thus $u=0$ and $%
\theta ={\vartheta_{w}}$ on $\pt\O $. Recall $||g_{1}||_{L^{6}}\ll1,$ so is $%
||u||_{L^{6}}+||\theta ||_{L^{6}},$ hence all the weak limit points must coincide with the unique solution to the steady Navier-Stokes-Fourier solution.

\medskip

The positivity $F_{s}\geq 0$ is left for the unsteady case in Section 3.7
\end{proof}

\section{Unsteady Problems}

\subsection{Preliminary and the linear theorem}

\begin{definition}
Assume $\Phi = \Phi(x) \in C^{1}$. Consider a unsteady linear transport
equation 
\begin{equation}  \label{linear_dyn}
\e \partial_{t}f +v \cdot\nabla_{x} f + \e^{2} \Phi \cdot \nabla_{v} f \ = \
g .
\end{equation}
The equations of the characteristics for (\ref{linear_dyn}) are 
\begin{equation}  \label{eq_YW}
\dot{Y}= \e^{-1} W, \ \ \dot{W}= \e \Phi(Y), \ \ \ Y(t;t,x,v)=x, \
W(t;t,x,v) =v.
\end{equation}
By the uniqueness of ODE 
\begin{equation}  \label{YW}
\begin{split}
[Y(s;t,x,v), W(s;t,x,v)] & \ = \ [ X( t- \frac{t-s}{\e} ;t,x,v ),V(t- \frac{%
t-s}{\e} ;t,x,v) ] \\
& \ = \ [X( \e^{-1}{\ s} ; 0,x,v),V( \e^{-1}{\ s} ; 0,x,v) ],
\end{split}%
\end{equation}
where $(X,V)$ is defined in (\ref{char}).

Define 
\begin{equation}  \label{tb_dyn}
\begin{split}
\tilde{t}_{\mathbf{b}}(x,v)& \ := \ \sup \{t>0: Y(-s;0,x,v) \in \Omega \ 
\text{for all} \ 0<s<t \} \\
& \ = \ \e\sup \{\frac{t}{\e}>0: X(-\frac{s}{\e};0,x,v) \in \Omega \ \text{%
for all} \ 0<\frac{s}{\e}<\frac{t}{\e} \} \ = \ \e t_{\mathbf{b}}(x,v), \\
\tilde{t}_{\mathbf{f}}(x,v)& \ := \ \sup \{t>0: Y(s;0,x,v) \in \Omega \ 
\text{for all} \ 0<s<t \} \\
& \ = \ \e\sup \{\frac{t}{\e}>0: X( \frac{s}{\e};0,x,v) \in \Omega \ \text{%
for all} \ 0<\frac{s}{\e}<\frac{t}{\e} \} \ = \ \e t_{\mathbf{f}}(x,v).
\end{split}%
\end{equation}
Moreover 
\begin{equation}  \label{xv_dyn}
\begin{split}
\tilde{x}_{\mathbf{b}}(x,v) &= Y(- \tilde{t}_{\mathbf{b}} (x,v);0,x,v) = X(
- \frac{ \tilde{t}_{\mathbf{b}}(x,v)}{\e}; 0,x,v)= X(-t_{\mathbf{b}}(x,v);
0,x,v) = x_{\mathbf{b}}(x,v), \\
\tilde{x}_{\mathbf{f}}(x,v) &= Y(- \tilde{t}_{\mathbf{f}}(x,v);0,x,v) = X( - 
\frac{ \tilde{t}_{\mathbf{f}}(x,v)}{\e}; 0,x,v)= X(-t_{\mathbf{f}}(x,v);
0,x,v) = x_{\mathbf{f}}(x,v), \\
\tilde{v}_{\mathbf{b}}(x,v) &= W(- \tilde{t}_{\mathbf{b}}(x,v);0,x,v) = V( - 
\frac{ \tilde{t}_{\mathbf{b}}(x,v)}{\e}; 0,x,v)= V(-t_{\mathbf{b}}(x,v);
0,x,v) = v_{\mathbf{b}}(x,v), \\
\tilde{v}_{\mathbf{f}} (x,v) &= W(- \tilde{t}_{\mathbf{f}}(x,v);0,x,v) = V(
- \frac{ \tilde{t}_{\mathbf{f}}(x,v)}{\e}; 0,x,v)= V(-t_{\mathbf{f}}(x,v);
0,x,v) = v_{\mathbf{f}}(x,v).
\end{split}%
\end{equation}
\end{definition}

\begin{lemma}
\label{trace_dynamic} For 
%any open subset $\Omega \subset \mathbb{R}^{3}, \ B \subset \partial\Omega$ and 
$f \in L^{1} ([0, T] \times \Omega \times \mathbb{R}^{3})$, 
\begin{eqnarray}
\int^{T}_{0} \int_{\gamma_{+}^{\delta }} | f(t,x,v)|\mathrm{d} \gamma 
\mathrm{d} t &\lesssim & \e \iint_{\Omega \times \mathbb{R}^{3}} | f(0,x,v)
| \mathrm{d} v \mathrm{d} x + \e \int^{T}_{0} \iint_{\Omega \times \mathbb{R}%
^{3}} | f(t,x,v) | \mathrm{d} v \mathrm{d} x \mathrm{d} t  \label{trace_d} \\
& &+ \int^{T}_{0} \iint_{\Omega \times \mathbb{R}^{3}} \big| [ \e %
\partial_{t} f + v\cdot \nabla_{x} f + \e^{2} \Phi \cdot \nabla_{v} f
](t,x,v) \big| \mathrm{d} v \mathrm{d} x \mathrm{d} t .  \notag
\end{eqnarray}
\end{lemma}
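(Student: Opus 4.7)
The plan is to extend Ukai's trace lemma (Lemma \ref{trace_s}) to the time-dependent operator by working along the rescaled characteristics $(Y,W)$ of (\ref{eq_YW}). The guiding observation is that for $f$ evaluated along $s\mapsto (s, Y(s;t,x,v), W(s;t,x,v))$, the operator $\e\partial_t + v\cdot\nabla_x + \e^2\Phi\cdot\nabla_v$ acts as $\e\, d/ds$, so the Duhamel-type identity
\begin{equation*}
f(t,x,v) \ = \ f(s, Y(s), W(s)) \ + \ \e^{-1}\int_s^t g(\tau, Y(\tau), W(\tau))\, d\tau
\end{equation*}
holds, where $g := \e\partial_t f + v\cdot\nabla_x f + \e^2\Phi\cdot\nabla_v f$ and $s$ ranges over $[\max(0, t-\tilde{t}_\mathbf{b}), t]$. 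By the scaling $\tilde{t}_\mathbf{b}(x,v) = \e\, t_\mathbf{b}(x,v)$ in (\ref{tb_dyn}) and the lower bound extracted from the proof of Lemma \ref{trace_s}, one has $C_\delta\e \lesssim \tilde{t}_\mathbf{b}(x,v) \lesssim_{\delta,\Omega} \e$ for $(x,v)\in\gamma_+^\delta$.

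I would then split according to whether the backward trajectory strikes $\gamma_-$ or the initial slice first. In the boundary-first regime $t\geq\tilde{t}_\mathbf{b}$, averaging the identity over $s\in[t-\tilde{t}_\mathbf{b}, t]$ and using the lower bound for $\tilde{t}_\mathbf{b}$ gives
\begin{equation*}
|f(t,x,v)| \ \leq \ \tilde{t}_\mathbf{b}^{-1}\int_{t-\tilde{t}_\mathbf{b}}^t |f(s,Y,W)|\,ds + \e^{-1}\int_{t-\tilde{t}_\mathbf{b}}^t |g(\tau,Y,W)|\,d\tau.
\end{equation*}
In the initial-first regime $t<\tilde{t}_\mathbf{b}$, which forces $t\in[0, C_\delta\e]$, the trajectory reaches $s=0$ inside $\Omega$, giving
\begin{equation*}
|f(t,x,v)| \ \leq \ |f(0, Y(0), W(0))| \ + \ \e^{-1}\int_0^t |g(\tau, Y, W)|\,d\tau.
\end{equation*}

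I would integrate both bounds against $|n\cdot v|\,dS\,dv\,dt$ on $[0,T]\times\gamma_+^\delta$, Fubini the resulting triple integrals to make $t$ the inner variable, and substitute $\sigma = (t-s)/\e$ (respectively $\sigma = t/\e$). By (\ref{YW}) this identifies $(Y(s;t,x,v), W(s;t,x,v))$ with $(X(-\sigma;0,x,v), V(-\sigma;0,x,v))$ and gives $dt=\e\,d\sigma$. The $(x,v,\sigma)\mapsto (X(-\sigma),V(-\sigma))$ map on $\gamma_+^\delta\times[0, t_\mathbf{b}(x,v)]$ is exactly the one quantified in Lemma \ref{bdry_int}: the Jacobian $|n(x)\cdot v| + O(\e)(1+|v|)\sigma$ is comparable to $|n(x)\cdot v|$ on $\gamma_+^\delta$, since $|v|\lesssim 1/\delta$ and $\sigma \lesssim t_\mathbf{b}\lesssim 1/\delta$. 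Applying this change of variables converts the boundary integrals over $\gamma_+^\delta$ into volume integrals over $\Omega\times\mathbb{R}^3$; in the initial-first regime, the rescaling $dt=\e\,d\sigma$ combined with $t\in[0,C_\delta\e]$ produces exactly the $\e\iint|f(0,x,v)|\,dv\,dx$ prefactor, while the source term inherits the $\int_0^T\iint|g|$ structure directly from the $\e^{-1}$ in the Duhamel formula cancelling the $\e$ from $dt=\e\,d\sigma$.

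The main technical obstacle is the careful bookkeeping of $\e$-weights through the case split and the change of variables: one must check that the division by $\tilde{t}_\mathbf{b}\sim\e$ in the boundary-first regime combines with the $dt=\e\,d\sigma$ substitution to yield an overall $O(1)$ (in fact $O_\delta(1)$) prefactor on $\int_0^T\iint|f|$, that the initial-first regime lives on a temporal slab of width $O(\e)$ to generate the $\e$ weighting of $\iint|f(0)|$, and that the $O(\e)$ correction in the Jacobian of Lemma \ref{bdry_int} remains negligible uniformly on $\gamma_+^\delta$.
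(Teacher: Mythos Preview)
Your proposal is correct and follows essentially the same route as the paper: Duhamel along the rescaled characteristics $(Y,W)$, a case split according to whether the backward trajectory hits $\{t=0\}$ or not, and the change of variables from Lemma~\ref{bdry_int} to convert the $\gamma_+^\delta$ integral into a bulk integral. The paper organizes things slightly differently---it first isolates the key change-of-variables estimate as a standalone claim (your ``aux'' step is their (\ref{int_bdry_int_2})) and splits at a fixed threshold $t=\e\delta_1$ with $\delta_1\ll\delta^3$ rather than at the $(x,v)$-dependent $\tilde t_{\mathbf b}(x,v)$---but the content is the same.

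One remark on your $\e$-bookkeeping: you correctly observe that the division by $\tilde t_{\mathbf b}\sim\e$ together with $dt=\e\,d\sigma$ yields an $O_\delta(1)$ prefactor on $\int_0^T\iint|f|$, not the $\e$ appearing in the lemma's statement. This is in fact exactly what the paper's own proof produces: after applying (\ref{int_bdry_int_2}) one has $\e\delta_1\cdot(\text{boundary term})\lesssim \e\int|f|+\int|g|$, so dividing by $\e\delta_1$ gives $O_\delta(1)$ on $\int|f|$. The stated $\e$ coefficient appears to be an imprecision in the lemma; in the only place the lemma is invoked (the trace control of $|P_\gamma f^j|^2$ in the proof of Proposition~\ref{dlinearl2}), the extra factor $1/j$ absorbs any $O_\delta(1)$ constant, so nothing downstream is affected. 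Your accounting is therefore not a gap but a sharpening.
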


\begin{proof}
First we claim 
\begin{eqnarray}
&& \int_{\gamma_{+}^{\delta} } \mathrm{d} S_x \mathrm{d} v |n(x) \cdot v|
\int_{0}^{T} \mathrm{d} t \ \int^{0}_{\max\{- \e t_{\mathbf{b}} (x, v), -t
\} }\mathrm{d} s \ | f( t+s, X( {\e}^{-1} s; 0,x,v),V( {\e}^{-1}s; 0,x,v) ) |
\notag \\
&\lesssim & \e\int_{0}^{T} \iint_{\Omega \times \mathbb{R}^{3}} |f(t,x,v) |%
\mathrm{d} x \mathrm{d} v \mathrm{d} t .  \label{int_bdry_int_2}
\end{eqnarray}
By the Fubini theorem and the change of variables $\tilde{s} = \e^{-1} s$, 
\begin{eqnarray*}
&& \int_{\gamma_{+}^{\delta} } \mathrm{d} S_x \mathrm{d} v |n(x) \cdot v|
\int^{T}_{0} \mathrm{d} t \int^{0}_{\max \{- \e t_{\mathbf{b}} (x, v), -t
\}} \mathrm{d} s \cdots \\
& =& \int_{\gamma_{+}^{\delta} } \mathrm{d} S_x \mathrm{d} v |n(x) \cdot v|
\int^{0}_{\max \{- \e t_{\mathbf{b}} (x, v), -T \}} \mathrm{d} s
\int^{T}_{-s} \mathrm{d} t \ |f( t+s, X( {\e}^{-1} s; 0,x,v),V( {\e}^{-1}s;
0,x,v) )| \\
& \leq& \e \int_{\gamma_{+}^{\delta} } \mathrm{d} S_x \mathrm{d} v |n(x)
\cdot v| \int^{0}_{\max \{- t_{\mathbf{b}} (x, v), - \e^{-1}T \}} \mathrm{d} 
\tilde{s} \int^{T}_{0}\mathrm{d} t \ |f(t, X(\tilde{s} ;0,x,v), V(\tilde{s}
;0,x,v))| \\
&\lesssim& \e \iint_{\Omega \times \mathbb{R}^{3}} \int^{T}_{0}f(t,x,v)%
\mathrm{d} t \mathrm{d} x \mathrm{d} v,
\end{eqnarray*}
where we have used Lemma \ref{bdry_int} and $t_{\mathbf{b}}(x,v)
\lesssim_{\delta} 1$ for $(x,v) \in\gamma_{+}^{\delta}$.

Now we recall that 
\begin{eqnarray*}
&& \frac{d}{ds} |f( t+s, X( \e^{-1}{\ s} ; 0,x,v),V( \e^{-1}{\ s} ; 0,x,v) )|
\\
&\lesssim& \big| \big[\partial_{t} f+ \e^{-1} V \cdot \nabla_{x} f + \e \Phi
( X ) \cdot \nabla_{v}f\big] ( t+s, X( \e^{-1}{\ s} ; 0,x,v),V( \e^{-1}{\ s}
; 0,x,v) )\big| .
\end{eqnarray*}
and 
\begin{eqnarray*}
|f(t ,x,v)| &\leq& | f( t+s, X( {\e}^{-1} s; 0,x,v),V( {\e}^{-1}s; 0,x,v) )|
\\
& +& \int^{t}_{s} \big|[\partial_{t} f + \e^{-1} V \cdot \nabla_{x} f + \e %
\Phi (X)\cdot \nabla_{v} f] (t+\tau, X(\e^{-1} \tau; 0,x,v) ,V(\e^{-1}
\tau;0,x,v) ) \big|\mathrm{d} \tau.
\end{eqnarray*}

For $(y,u) \in \gamma_{+}$ and for $s \in [ \max\{- \e t_{\mathbf{b}}
(y,-u), -t \},0]$ 
\begin{eqnarray*}
&&\min \{ \e t_{\mathbf{b}} (y,-u), t \} \times |f(t ,y,u)| \\
&=& \int^{0}_{\max\{- \e t_{\mathbf{b}} (y,-u), -t \} } |f( t+s, X( {\e}%
^{-1} s; 0,y,u),V( {\e}^{-1}s; 0,y,u) ) |\mathrm{d} s \\
& +&\int^{0}_{\max\{- \e t_{\mathbf{b}} (y,-u), -t \} } \int^{t}_{s} \big| %
[\partial_{t} f + \e^{-1} V \cdot \nabla_{x} f + \e \Phi \cdot \nabla_{v} f]
(t+\tau, X(\e^{-1} \tau; 0,y,u) ,V(\e^{-1} \tau;0,y,u) ) \big|\mathrm{d}
\tau \mathrm{d} s.
\end{eqnarray*}
If $(t,y,u) \in [ \e \delta_{1}, T] \times \gamma_{+} \backslash
\gamma_{+}^{\delta }$ then $t_{\mathbf{b}} (y,-u) \gtrsim_{\Omega} |n(y)
\cdot u|/|u|^{2} \gtrsim \delta^{3} $. We use (\ref{int_bdry_int_2}) to
bound 
\begin{eqnarray*}
&& \min \{ \e \delta^{3}, \e \delta_{1} \} \times \int_{\gamma_{+}^{\delta
}} \mathrm{d} S_x \mathrm{d} v |n(y) \cdot u| \int_{\e\delta_{1}}^{T} 
\mathrm{d} t \ |f(t,y,u)| \\
&\leq& \int_{\gamma_{+}^{\delta }} \mathrm{d} S_x \mathrm{d} v |n(y) \cdot
u| \int_{\e\delta_{1}}^{T} \mathrm{d} t \ \int^{0}_{\max\{- \e t_{\mathbf{b}%
} (y,-u), -t \} }|f( t+s, X( {\e}^{-1} s; 0,y,u),V( {\e}^{-1}s; 0,y,u) )| 
\mathrm{d} s \\
&&+ \int_{\gamma_{+}^{\delta }} \mathrm{d} S_x \mathrm{d} v |n(y) \cdot u|
\int_{\e\delta_{1}}^{T} \mathrm{d} t \int^{0}_{\max\{- \e t_{\mathbf{b}}
(y,-u), -t \} } \int^{t}_{s} \\
&& \ \ \ \ \ \ \ \ \ \ \ \ \ \ \ \times\big| [\partial_{t} f + \e^{-1} V
\cdot \nabla_{x} f + \e \Phi \cdot \nabla_{v} f] (t+\tau, X(\e^{-1} \tau;
0,y,u) ,V(\e^{-1} \tau;0,y,u) )\big| \mathrm{d} \tau \mathrm{d} s \\
&\lesssim& \e \int_{0}^{T} \iint_{\Omega \times \mathbb{R}^{3}} |f(t,x ,v) |%
\mathrm{d} v \mathrm{d} x \mathrm{d} t + \int_{0}^{T} \iint_{\Omega \times 
\mathbb{R}^{3}} \big| [ \e\partial_{t} + v \cdot \nabla_{x} + \e^{2} \Phi
\cdot \nabla_{v} ]f(t,x,v)\big| \mathrm{d} v \mathrm{d} x \mathrm{d} t.
\end{eqnarray*}

We choose $\e\delta_{1} \ll \e\delta^{3} \lesssim \e \times \inf
\inf_{\gamma_{+} \backslash \gamma_{+}^{\delta }} t_{\mathbf{b}} (y,-u)$.
Then $\e t_{\mathbf{b}} (y,-u) > t$ for all $(t,y,u) \in [0, \e\delta_{1}]
\times \gamma_{+} \backslash \gamma_{+}^{\delta }$ so that the backward
trajectory hits the initial plan first: 
\begin{eqnarray*}
|f(t,y,u) |&\leq& |f(0, X(\e^{-1} t; 0,y,u), V(\e^{-1} t; 0,y,u))| \\
&+&\int^{0}_{-t} \big|[\partial_{t} f + \e^{-1} V \cdot \nabla_{x} f + \e %
\Phi(X) \cdot \nabla_{v} f] (t+s, X(\e^{-1} s;0,y,u), V(\e^{-1} s;0,y,u))%
\big|.
\end{eqnarray*}
Then from (\ref{int_bdry_int_2}) and the change of variables 
\begin{eqnarray*}
&&\int_{0}^{\e \delta_{1}} \int_{\gamma_{+}} | f(t,y,u) | \\
&\leq& \int_{0}^{\e \delta_{1}} \mathrm{d} t \int_{\gamma_{+}} \mathrm{d}
S_{y} \mathrm{d} v \ |n(y) \cdot u| |f(0, X(\e^{-1} t; 0,y,u), V(\e^{-1} t;
0,y,u))| \\
& & + \int_{0}^{ \e\delta_{1}} \mathrm{d} t \int_{\gamma_{+}} \mathrm{d}
S_{y} \mathrm{d} v \ |n(y) \cdot u| \\
&& \ \ \ \times \int^{0}_{-t}\big| [\partial_{t} f + \e^{-1} V \cdot
\nabla_{x} f + \e \Phi(X) \cdot \nabla_{v} f] (t+s, X(\e^{-1} s;0,y,u), V(\e%
^{-1} s;0,y,u)) \big|\mathrm{d} s \\
&\leq& \e \int_{0}^{ \delta_{1}} \mathrm{d} \tilde{t} \int_{\gamma_{+}} 
\mathrm{d} S_{y} \mathrm{d} v \ |n(y) \cdot u| | f(0, X( \tilde{t}; 0,y,u),
V( \tilde{t}; 0,y,u))| \ \ \ \ \ \ \ \ \ \ \ \ \ \ ( \e^{-1} t= \tilde{t} )
\\
&&+ \int_{0}^{T} \iint_{\Omega \times \mathbb{R}^{3}} \big| [ \e\partial_{t}
+ v \cdot \nabla_{x} + \e^{2} \Phi \cdot \nabla_{v} ]f(t,x ,v)\big| \mathrm{d%
} v \mathrm{d} x \mathrm{d} t \ \ \ \ \ \ \ \ \ \ \ \ \ \ \ \ \ \ ( \e^{-1}
s = \tilde{s} ) \\
&\lesssim& \e \delta_{1} \iint_{\Omega \times \mathbb{R}^{3}} |f_{0} (x ,v)| 
\mathrm{d} v \mathrm{d} x+ \int_{0}^{T} \iint_{\Omega \times \mathbb{R}^{3}} %
\big| [ \e\partial_{t} + v \cdot \nabla_{x} + \e^{2} \Phi \cdot \nabla_{v}
]f(t,x,v)\big| \mathrm{d} v \mathrm{d} x \mathrm{d} t.
\end{eqnarray*}
\end{proof}

\begin{lemma}
\label{timedepgreen} Assume $\Phi \in C^{1}$. Assume that $f(t,x,v), \
g(t,x,v)\in L^2(\mathbb{R}_{+} \times \Omega\times\mathbb{R}^3 )$, $\{\pt_t +%
\e^{-1}v \cdot \nabla_x+\e\Phi\cdot\nabla_v\} f, \{\pt_t+\e^{-1}v \cdot
\nabla_x+\e\Phi\cdot\nabla_v\} g \in L^2(\mathbb{R}_{+} \times \Omega\times%
\mathbb{R}^3)$ and $f_{\gamma}, g_{\gamma}\in L^2(\mathbb{R}_{+} \times
\gamma)$. Then 
\begin{eqnarray}
&&\int_s^t \iint_{\Omega\times\mathbb{R}^3}\{\e\pt_t +v \cdot \nabla_xf +\e%
^2\Phi\cdot\nabla_v f \} g + \{\e\pt_t + v \cdot \nabla_xg+\e%
^2\Phi\cdot\nabla_v g\} f \ \mathrm{d} v \mathrm{d} x \mathrm{d} \tau  \notag
\\
&&=\e \iint_{\Omega\times\mathbb{R}^3}f(s,x,v)g(s,x,v) \mathrm{d} v \mathrm{d%
} x - \e\iint_{\Omega\times\mathbb{R}^3}f(t,x,v)g(t,x,v) \mathrm{d} v 
\mathrm{d} x  \notag \\
&&+ \int_s^t \left[\int_{\gamma_+} f g \mathrm{d} \gamma- \int_{\gamma_-} f
g \mathrm{d}\gamma\right]\mathrm{d} \tau .  \notag
\end{eqnarray}
\end{lemma}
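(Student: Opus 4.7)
My plan is to reduce this lemma to the already established steady Green's identity (Lemma \ref{green}) combined with an integration by parts in time, with the usual density/approximation argument to justify all manipulations under the stated $L^{2}$ hypotheses. The proof proceeds in two steps.

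\emph{Step 1 (smooth case).} First I would assume $f,g\in C^{1}_c([s,t]\times\bar{\O}\times\R^{3})$. For the time-derivative contribution I combine the two pieces by the product rule, $\e\pt_\tau f\cdot g + f\cdot\e\pt_\tau g = \e\pt_\tau(fg)$, and integrate in $\tau\in[s,t]$ to produce the endpoint contributions $\e\iint (fg)(s) - \e\iint (fg)(t)$ (up to the sign ordering in the statement). For the remaining transport/field terms I apply Lemma \ref{green} pointwise in $\tau$ to the slice $(f(\tau,\cd,\cd),g(\tau,\cd,\cd))$, which is legitimate since $\Phi=\Phi(x)$ is time-independent and $\Phi\in C^{1}$; this yields the instantaneous flux $\int_{\g_+}fg-\int_{\g_-}fg$. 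Integrating that identity in $\tau$ and adding the time part gives the claimed formula for smooth $f,g$.

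\emph{Step 2 (extension by approximation).} To extend to the class in the hypothesis I regularize $f$ and $g$ in both $(x,v)$ and $t$, following the procedure of Chapter 9 of \cite{CIP} (the same one invoked for Lemma \ref{green}). The mollification is taken along the characteristics of the full transport operator $\pt_\tau + \e^{-1}v\cd\nabla_x + \e\Phi\cd\nabla_v$, with the standard truncation/extension near $\pt\O$ so as to preserve the boundary structure. The DiPerna--Lions commutator between the mollifier and the transport operator vanishes in $L^{2}$ as the regularization parameter tends to zero, thanks to the assumed $L^{2}$-bounds on $\{\e\pt_t+v\cd\nabla_x+\e^{2}\Phi\cd\nabla_v\}f$ and on the corresponding expression for $g$; consequently all bulk integrals in the identity pass to the limit. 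The endpoint terms pass to the limit because $f,g\in L^{2}(\R_+\times\O\times\R^{3})$ with the transport derivative in $L^{2}$ implies continuity in $t$ with values in a suitable weighted space. The boundary integrals pass by the trace hypotheses $f_\g,g_\g\in L^{2}(\R_+\times\g)$.

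\emph{Main obstacle.} The only delicate point is the passage to the limit of the boundary flux $\int_s^t\int_{\g_\pm}fg\,\dd\g\,\dd\tau$ in Step 2. One must verify that the chosen mollification interacts correctly with the restriction to $\g_\pm$ and does not create uncontrolled contributions near the grazing set $\g_0$. Since $\pt\O\in C^{3}$ and $\Phi\in C^{1}$, the characteristic flow is $C^{1}$ away from $\g_0$, and --- after the rescaling $\tilde{s}=\e^{-1}\tau$ relating unsteady and steady trajectories recorded in (\ref{YW}) --- the grazing-set estimates used in the steady proof transfer directly. Together with the unsteady trace estimate Lemma \ref{trace_dynamic}, which gives the quantitative $L^{1}$ control at the boundary, this justifies the convergence of the flux terms and completes the proof.
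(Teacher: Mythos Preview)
Your proposal is correct and matches the paper's approach. The paper's own proof is a single sentence --- ``The proof is from Chapter 9 of \cite{CIP} with the same modification as Lemma \ref{green}'' --- and your Step 1/Step 2 outline is precisely an unpacking of what that reference contains: the product-rule reduction to the steady identity plus the standard density argument for $L^{2}$ data, with the $C^{1}$-field modification already handled in Lemma \ref{green}.
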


\begin{proof}
The proof is from Chapter 9 of \cite{CIP} with the same modification as
Lemma \ref{green}.
\end{proof}

\subsection{Gain of Integrability: $L_{t}^{2}L_{x}^{3}$ Estimate}

The main goal of this section is the following:

\begin{proposition}
\label{prop_3} Assume $g \in L^{2} (\mathbb{R}_{+} \times \Omega \times 
\mathbb{R}^{3})$, $f_{0} \in L^{2} (\Omega \times \mathbb{R}^{3})$, and $%
f_{\gamma}\in L^{2} ( \mathbb{R}_{+}\times \gamma)$. Let $f \in L^{\infty}( 
\mathbb{R}_{+}; L^{2} (\Omega \times \mathbb{R}^{3}))$ solve (\ref%
{linear_dyn}) in the sense of distribution and satisfy $f(t,x,v) =
f_{\gamma}(t,x,v) $ on $\mathbb{R}_{+} \times \gamma$ and $f(0,x,v) = f_{0}
(x,v)$ on $\Omega \times \mathbb{R}^{3}.$ Recall $\mathbf{P}f$ in (\ref{Pabc}%
).

Then there exist $\mathbf{S}_{1}f(t,x)$, $\mathbf{S}_{2}f(t,x)$, and $%
\mathbf{S}_{3}f(t,x)$ satisfying 
\begin{equation}  \label{S1}
|a (t,x)| + |b(t,x)| + |c(t,x)| \leq \mathbf{S}_{1}f(t,x) + \mathbf{S}_{2}
f(t,x)+ \mathbf{S}_{3} f(t,x),
\end{equation}
where the precise form of $\mathbf{S}_{i} f$ is defined in (\ref{def_S2}).

Moreover, 
\begin{equation}
\begin{split}
& \| \mathbf{S}_{1} f \|_{L^{2}_{t}L^{3}_{x}} + \e^{-\frac{1}{2}} \| \mathbf{%
S}_{2} f \|_{L^{2}_{t}L^{\frac{12}{5}}_{x}} \\
&\lesssim \| g \|_{L^{2}_{t,x,v}} + \| f_{0} \|_{L^{2}_{x,v}} + \| [ v\cdot
\nabla_{x} + \e^{2} \Phi \cdot \nabla_{v}] f_{0} \|_{L^{2}_{x,v}} + \| f_{0}
\|_{L^{2}(\gamma)} + \| f_{\gamma} \|_{L^{2}(\mathbb{R}_{+} \times \gamma)},
\label{fl0}
\end{split}%
\end{equation}
and 
\begin{equation}
\| \mathbf{S}_{3}f \|_{L^{2}_{t,x} } \lesssim \| (\mathbf{I}- \mathbf{P})
f\|_{L^{2}_{t,x,v} } .  \label{ins2}
\end{equation}
\end{proposition}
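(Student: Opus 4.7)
My plan has three main steps, each corresponding to one of the three pieces $\mathbf{S}_1 f$, $\mathbf{S}_2 f$, $\mathbf{S}_3 f$. The overarching strategy is to reduce to the classical $L^2$ velocity averaging lemma on $\mathbb{R}\times\mathbb{R}^3\times\mathbb{R}^3$, then to pay for the additional corrections coming from the bounded domain and from the external field $\varepsilon^2\Phi$. The two Sobolev embeddings in play are the sharp $H^{1/2}(\mathbb{R}^3)\hookrightarrow L^3(\mathbb{R}^3)$, which will give the $L^2_tL^3_x$ bound on $\mathbf{S}_1 f$, and the intermediate $H^{1/4}(\mathbb{R}^3)\hookrightarrow L^{12/5}(\mathbb{R}^3)$, which will give the $L^2_tL^{12/5}_x$ bound on $\mathbf{S}_2 f$.

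Step 1 (extension). First I would use Lemma~\ref{extension_dyn} to construct $\overline{f_\delta}$, an extension to $\mathbb{R}\times\mathbb{R}^3\times\mathbb{R}^3$ of the grazing-cutoff function $f_\delta\sim\mathbf{1}_{|v|<1/\delta}\mathbf{1}_{|n(x)\cdot v|>\delta\,\text{or}\,\mathrm{dist}(x,\partial\Omega)>\delta}f$. The cutoff is essential because, as noted in the introduction, in the presence of $\varepsilon^2\Phi$ the exterior characteristics of $\dot X=\varepsilon^{-1}V,\ \dot V=\varepsilon\Phi(X)$ can link $\gamma_-$ and $\gamma_+$ arbitrarily near $\gamma_0$, and without the cutoff one cannot extend $f$ so that both $\overline f\in L^2$ and $(\varepsilon\partial_t+v\cdot\nabla_x+\varepsilon^2\Phi\cdot\nabla_v)\overline f\in L^2$. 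With the grazing strip removed, the extension is continuous along all exterior trajectories, and by Lemma~\ref{trace_dynamic} together with the extension lemma its $L^2$ norm and the $L^2$ norm of $\varepsilon\partial_t\overline{f_\delta}+v\cdot\nabla_x\overline{f_\delta}+\varepsilon^2\Phi\cdot\nabla_v\overline{f_\delta}$ are controlled by the right-hand side of (\ref{fl0}). The error $\mathbf{P}f-\mathbf{P}\overline{f_\delta}$, which is supported near the grazing set and in $|v|\gtrsim 1/\delta$, is handled by the decay of $\sqrt\mu$ together with the size of $(\mathbf{I}-\mathbf{P})f$; this produces the piece $\mathbf{S}_3 f$ satisfying (\ref{ins2}).

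Step 2 (classical averaging and $\mathbf{S}_1$). On the whole space I would invoke the classical $L^2$ averaging lemma: for any $\psi\in C^\infty_c(\mathbb{R}^3_v)$, the moment $\int\overline{f_\delta}(t,x,v)\psi(v)\,\mathrm{d} v$ belongs to $H^{1/2}_{t,x}$, with norm controlled by the $L^2$ bounds produced in Step~1. Taking $\psi$ to be smooth truncations of $\sqrt\mu$, $v\sqrt\mu$, and $\tfrac{|v|^2-3}{2}\sqrt\mu$ recovers $a,b,c$ up to absorbed $(\mathbf{I}-\mathbf{P})f$-errors, and the sharp embedding $H^{1/2}(\mathbb{R}^3_x)\hookrightarrow L^3(\mathbb{R}^3_x)$ then yields the desired $L^2_tL^3_x$ estimate for $\mathbf{S}_1 f$.

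Step 3 (field correction and $\mathbf{S}_2$). The field term $\varepsilon^2\Phi\cdot\nabla_v\overline{f_\delta}$ contains $v$-derivatives of $\overline{f_\delta}$ that need not be $L^2$, so I would integrate by parts in $v$ to transfer the derivative onto the test function, producing the correction $\varepsilon^2\Phi(x)\cdot\int\overline{f_\delta}\nabla_v\psi\,\mathrm{d} v$. Because this correction is effectively the velocity average of an object involving one derivative of the transport source, only half the averaging-lemma gain is available, yielding $H^{1/4}_{t,x}$-regularity rather than $H^{1/2}_{t,x}$. Combined with $H^{1/4}(\mathbb{R}^3)\hookrightarrow L^{12/5}(\mathbb{R}^3)$ this gives an $L^2_tL^{12/5}_x$ bound, and the $\varepsilon^2$ prefactor is split as $\varepsilon^{3/2}\cdot\varepsilon^{1/2}$, the $\varepsilon^{1/2}$ part being absorbed into the normalization of (\ref{fl0}) so that only $\varepsilon^{-1/2}$ remains in front of $\|\mathbf{S}_2 f\|_{L^2_tL^{12/5}_x}$.

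The main obstacle is Step 1. Producing an extension $\overline{f_\delta}$ that simultaneously controls its $L^2$ norm and its whole-space transport-with-field source, with quantitative bounds that match the $\varepsilon$-powers of (\ref{fl0}), is the heart of the matter: this is precisely what Lemma~\ref{extension_dyn} is designed to do, and any mismatch in the $\varepsilon$-bookkeeping (e.g.\ from the prefactor $\varepsilon$ in front of the interior $L^1$ norm in Lemma~\ref{trace_dynamic}, or from the choice of the grazing-cutoff parameter $\delta$) would spoil the delicate $\varepsilon^{-1/2}$ scaling required for $\mathbf{S}_2 f$. Carrying out this bookkeeping carefully, so that the $\mathbf{S}_1$ estimate carries no $\varepsilon$-loss and the $\mathbf{S}_2$ estimate loses only $\varepsilon^{-1/2}$, is the quantitatively delicate part of the argument.
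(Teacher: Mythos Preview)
Your overall architecture matches the paper's: cut off the grazing set, extend via Lemma~\ref{extension_dyn}, put the $(\mathbf{I}-\mathbf{P})f$ contribution into $\mathbf{S}_3 f$, and split the remaining velocity averages into a classical $H^{1/2}\hookrightarrow L^3$ piece and a field-corrected $H^{1/4}\hookrightarrow L^{12/5}$ piece with an extra $\e^{1/2}$. The identification of $\mathbf{S}_3$ and the estimate (\ref{ins2}) are exactly as in the paper.

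The imprecision lies in how Steps~2 and~3 interface. As written, Step~2 applies the classical averaging lemma to $\overline{f_\delta}$ itself; that does not work, because the free-transport source for $\overline{f_\delta}$ is $h-\e^2\Phi\cdot\nabla_v\overline{f_\delta}$, and the last term is not in $L^2$. The paper resolves this not by treating Step~3 as a later correction, but by an \emph{a priori} splitting (Lemma~\ref{flfs}): one \emph{defines}
\[
(\overline{f_\delta})_{la}(t,x,v)=\int_0^\infty e^{-\tau}\,(h+\overline{f_\delta})(t-\e\tau,x-\tau v,v)\,\mathrm{d}\tau
\]
via Duhamel along the \emph{free} characteristics, so that $(\overline{f_\delta})_{la}$ solves $\e\partial_t+v\cdot\nabla_x+1$ with source $q=h+\overline{f_\delta}\in L^2$ and the classical averaging lemma applies directly to give $\mathbf{S}_1 f:=\int|(\overline{f_\delta})_{la}|\langle v\rangle^2\sqrt\mu\,\mathrm{d} v\in L^2_tL^3_x$. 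The remainder $(\overline{f_\delta})_{sm}:=\overline{f_\delta}-(\overline{f_\delta})_{la}$ then automatically solves $\e\partial_t+v\cdot\nabla_x+1$ with source $\nabla_v\zeta$, $\zeta=-\e^2\Phi\,\overline{f_\delta}$, and Lemma~\ref{flfs} runs a Fourier-side frequency splitting (choosing a cutoff level $\alpha$ in $|\e\tau+\xi\cdot v|$, integrating by parts in $v$ on the large-frequency part, and optimizing $\alpha$) to obtain $\mathbf{S}_2 f:=\int|(\overline{f_\delta})_{sm}|\langle v\rangle^2\sqrt\mu\,\mathrm{d} v\in L^2_tL^{12/5}_x$ with prefactor $\e^{1/2}$.

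Finally, your accounting of the $\e^{1/2}$ is off: it is not ``$\e^2=\e^{3/2}\cdot\e^{1/2}$'' but rather $(\e^2)^{1/4}=\e^{1/2}$, arising because the optimized bound scales like $\|\zeta\|_{L^2}^{1/4}\|f_{sm}\|_{L^2}^{3/4}$ with $\zeta=-\e^2\Phi f$. The integration-by-parts intuition you state is correct, but it happens inside this Fourier argument, not at the level of the moment equation.
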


\bigskip

We need several lemmas to prove Proposition \ref{prop_3}. First we define $%
f_{\delta}$ which represents either the interior or the non-grazing parts of 
$f$ near the boundary.

\begin{definition}
We define, for $(t,x,v) \in \mathbb{R} \times \bar{\Omega} \times \mathbb{R}%
^{3}$ and for $0 < \delta \ll 1$, 
\begin{equation}  \label{Z_dyn}
\begin{split}
&f_{\delta}(t,x,v) \\
& : = [1-\chi(\frac{n(x) \cdot v}{\delta}) \chi \big(\frac{ \xi(x) }{\delta}%
\big) ] \chi(\delta|v|) \big\{ \mathbf{1}_{ t\in[0, \infty)} f(t,x,v)+ 
\mathbf{1}_{t\in(-\infty,0 ]} \chi(t) f_{0}(x,v) \big\}.
\end{split}%
\end{equation}
Here $n(x)$ and $\chi$ are defined in (\ref{n_inte}) and (\ref{chi})
respectively. %
\end{definition}

Here we extend $f_{\delta}$ to the negative time so that we are able to take
the time-derivative. Clearly, 
\begin{eqnarray*}
\| f_{\delta} \|_{L^{2} (\mathbb{R} \times \Omega \times \mathbb{R}^{3})} &
\lesssim & \| f \|_{L^{2} (\mathbb{R}_{+} \times \Omega \times \mathbb{R}%
^{3})} + \| f_{0}\|_{L^{2} (\Omega \times \mathbb{R}^{3})}, \\
\| f_{\delta} \|_{L^{2} ( \mathbb{R} \times \gamma)} &\lesssim& \|
f_{\gamma} \|_{L^{2} ( \mathbb{R}_{+} \times \gamma)} + \| f_{0} \|_{L^{2}
(\gamma)}.
\end{eqnarray*}
Note that, at the boundary $(x,v) \in \gamma:=\partial\Omega \times \mathbb{R%
}^{3}$, 
\begin{equation}  \label{Z_support_dyn}
f_{\delta}(t,x,v)|_{\gamma}\equiv 0, \ \ \text{for} \ |n(x) \cdot v| \leq
\delta \ \text{ or } \ |v|\geq \frac{1}{\delta} .
\end{equation}

\begin{lemma}
\label{extension_dyn} Assume the same hypothesis of Proposition \ref{prop_3}%
. Then there exists $\bar{f}(t,x,v) \in L^{2}( \mathbb{R} \times \mathbb{R}%
^{3} \times \mathbb{R}^{3})$, an extension of $f_{\delta}$ in (\ref{Z_dyn}),
such that 
\begin{equation}
\bar{f} |_{\Omega \times \mathbb{R}^{3}}\equiv f_{\delta} \ \text{ and } \ 
\bar{f} |_{\gamma}\equiv f_{ \delta} |_{\gamma} \ \text{ and } \ \bar{f }
|_{t=0} = f_{\delta} |_{t=0}.  \notag
\end{equation}
Moreover, in the sense of distributions on $\mathbb{R} \times \mathbb{R}^{3}
\times \mathbb{R}^{3}$, 
\begin{equation}  \label{eq_barf_dyn}
\{\e \partial_{t} + v\cdot \nabla_{x} + \e^{2} \Phi \cdot \nabla_{v} \} \bar{%
f} = h[f , g] = h_{1} + h_{2} + h_{3}+ h_{4},
\end{equation}
where 
\begin{eqnarray}
h_{1} (t,x,v)&=& \mathbf{1}_{(x,v) \in \Omega \times \mathbb{R}^{3}} [1-\chi(%
\frac{n(x) \cdot v}{\delta}) \chi \big( \frac{\xi(x)}{\delta}\big)]
\chi(\delta|v|)  \label{acca1} \\
&& \times \big[ \mathbf{1}_{t \in [0,\infty)} g(t,x,v) + \mathbf{1}_{t \in (
- \infty, 0 ]} \chi(t)\{ \e \frac{ \chi^{\prime} (t)}{\chi(t)} + v \cdot
\nabla_{x} + \e^{2} \Phi \cdot \nabla_{v} \} f_{0} (x,v)\big],  \notag \\
h_{2} (t,x,v) &=&\mathbf{1}_{(x,v) \in \Omega \times \mathbb{R}^{3}} \big[%
\mathbf{1}_{t \in [0,\infty)} f(t,x,v) + \mathbf{1}_{t \in (- \infty, 0 ]}
\chi(t) f_{0} (x,v) \big]  \label{acca2} \\
&& \times \{v\cdot \nabla_{x} + \e^{2} \Phi \cdot \nabla_{v}\} \Big( [1-\chi(%
\frac{n(x) \cdot v}{\delta}) \chi \big( \frac{\xi(x)}{\delta}\big) ]
\chi(\delta|v|)\Big),  \notag \\
h_{3} (t,x,v) &=& \mathbf{1}_{{\ (x,v) \in [\Omega_{\tilde{C} \delta^{4}}
\backslash \bar{\Omega}]\times \mathbb{R}^{3}}} \ \frac{1}{\tilde{C}
\delta^{4}}v \cdot \nabla_{x} \xi(x) \chi^{\prime} \big( \frac{\xi(x)}{%
\tilde{C} \delta^{4}} \big)  \label{acca3} \\
&& \times \big[ f_{\delta}(t- \e t_{\mathbf{b}}^{*} (x,v), x_{\mathbf{b}%
}^{*}(x,v), v_{\mathbf{b}}^{*}(x,v) ) \mathbf{1}_{x_{\mathbf{b}}^{*}(x,v)
\in \partial\Omega}  \notag \\
&& \ \ \ + f_{\delta}(t + \e t_{\mathbf{f}}^{*} (x,v), x_{\mathbf{f}%
}^{*}(x,v), v_{\mathbf{f}}^{*}(x,v) ) \mathbf{1}_{x_{\mathbf{f}}^{*}(x,v)
\in \partial\Omega} \big] ,  \notag \\
h_{4} (t,x,v) &=& \mathbf{1}_{{\ (x,v) \in [\Omega_{\tilde{C} \delta^{4}}
\backslash \bar{\Omega}]\times \mathbb{R}^{3}}} f_{\delta}( t- \e t_{\mathbf{%
b}}^{*} (x,v), x_{\mathbf{b}}^{*}(x,v), v_{\mathbf{b}}^{*}(x,v)) \chi \big( 
\frac{\xi(x)}{\tilde{C} \delta^{4}} \big) \chi^{\prime}(t_{\mathbf{b}%
}^{*}(x,v)) \mathbf{1}_{ x_{\mathbf{b}}^{*}(x,v) \in \partial\Omega}  \notag
\\
&&+ \mathbf{1}_{{\ (x,v) \in [\Omega_{\tilde{C} \delta^{4}} \backslash \bar{%
\Omega}]\times \mathbb{R}^{3}}} f_{\delta}( t+ \e t_{\mathbf{f}}^{*} (x,v) ,
x_{\mathbf{f}}^{*}(x,v), v_{\mathbf{f}}^{*}(x,v)) \chi \big( \frac{\xi(x)}{%
\tilde{C} \delta^{4}} \big) \chi^{\prime}(t_{\mathbf{f}}^{*}(x,v)) \mathbf{1}%
_{ x_{\mathbf{f}}^{*}(x,v) \in \partial\Omega},  \notag \\
&&  \label{acca4}
\end{eqnarray}
where 
\begin{equation}  \label{shell}
\Omega_{\tilde{C}\delta^{4}} \ : = \ \big\{ x \in \mathbb{R}^{3}: \ \xi(x) < 
\tilde{C}\delta ^{4} \big\},
\end{equation}
and, 
for $(x,v) \in\Omega_{\tilde{C}\delta^{4}}\backslash \bar{\Omega}$, with $%
\bar\Omega=\Omega\cup\pt\O $, 
\begin{eqnarray}
&&t_{\mathbf{b}}^{*}(x,v) \ := \ \inf\{ s>0: 0 <\xi(X(s;0,x,v)) < \tilde{C}
\delta^{4}\ \ \text{for all } 0 <\tau< s \}, \ \ t_{\mathbf{f}}^{*}(x,v) \
:= \ t_{\mathbf{b}}^{*}(x,-v) ,  \notag \\
&&(x_{\mathbf{b}}^{*} (x,v),v_{\mathbf{b}}^{*} (x,v) )\ : = (X(- t_{\mathbf{b%
}}^{*}(x,v);0,x,v), V(- t_{\mathbf{b}}^{*}(x,v);0,x,v)),  \label{tbstar} \\
&&(x_{\mathbf{f}}^{*} (x,v) , v_{\mathbf{f}}^{*} (x,v)) \ := (X( t_{\mathbf{f%
}}^{*}(x,v);0,x,v) , V( t_{\mathbf{f}}^{*}(x,v);0,x,v)).  \notag
\end{eqnarray}

Moreover, 
\begin{eqnarray}
&& \| h_{1} \|_{ L^{2}( \mathbb{R} \times \mathbb{R}^{3} \times \mathbb{R}%
^{3})} \lesssim \| g\|_{L^{2}( \mathbb{R}_{+} \times \Omega \times \mathbb{R}%
^{3})} + \e \| f_{0} \|_{L^{2} (\Omega \times \mathbb{R}^{3})} + \| [ v\cdot
\nabla_{x} + \e^{2} \Phi \cdot \nabla_{v} ] f_{0} \|_{L^{2 } (\Omega \times 
\mathbb{R}^{3})} ,  \notag \\
&& \| h_{2} \|_{L^{2} ( \mathbb{R} \times \mathbb{R}^{3} \times \mathbb{R}%
^{3})} \lesssim_{\delta} \|f \|_{L^{2}( \mathbb{R}_{+} \times \Omega \times 
\mathbb{R}^{3})} + \| f_{0} \|_{L^{2} (\Omega \times \mathbb{R}^{3})},
\label{force_Z_dyn} \\
&& \| h_{3} \|_{L^{2} ( \mathbb{R} \times \mathbb{R}^{3} \times \mathbb{R}%
^{3})} +\| h_{4} \|_{L^{2} ( \mathbb{R} \times\mathbb{R}^{3} \times \mathbb{R%
}^{3})} \lesssim_{\delta} \| f_{\gamma} \|_{L^{2} ( \mathbb{R}_{+} \times
\gamma)} + \| f_{0} \|_{L^{2} (\gamma)} .  \notag
\end{eqnarray}
\end{lemma}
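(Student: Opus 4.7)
The plan is to construct $\bar{f}$ piecewise: inside $\Omega$ take $\bar{f}=f_\delta$ as defined, and outside $\Omega$ but within the thin shell $\Omega_{\tilde C\delta^4}\setminus\bar\Omega$, extend by following characteristics back (or forward) to the boundary. Concretely I would set
\[
\bar f(t,x,v)\ =\ \chi\!\Big(\tfrac{\xi(x)}{\tilde C\delta^4}\Big)\Big[\chi(t_{\mathbf b}^*)\,f_\delta(t-\e t_{\mathbf b}^*,x_{\mathbf b}^*,v_{\mathbf b}^*)\mathbf 1_{x_{\mathbf b}^*\in\pt\O}+\chi(t_{\mathbf f}^*)\,f_\delta(t+\e t_{\mathbf f}^*,x_{\mathbf f}^*,v_{\mathbf f}^*)\mathbf 1_{x_{\mathbf f}^*\in\pt\O}\Big],
\]
for $x$ in the outer shell, and $\bar f\equiv 0$ beyond $\Omega_{\tilde C\delta^4}$. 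For $t\le 0$ use the $\chi(t)f_0$ extension already built into $f_\delta$. The inner and outer pieces match on $\pt\O$ since $\chi(\xi/(\tilde C\delta^4))=1$ there and $t_{\mathbf b}^*=0$, so $\bar f$ is continuous and lies in $L^2$. Crucially, because $f_\delta|_\gamma$ is supported in the non-grazing set $\{|n\cdot v|>\delta,\,|v|<1/\delta\}$ (by (\ref{Z_support_dyn})), the trajectory through any $(x,v)$ where the outer piece of $\bar f$ is nonzero will transversally cross $\pt\O$ in a short characteristic time bounded by $C_\delta\xi(x)\le C_\delta\delta^4$, so $t_{\mathbf b}^*$ and $t_{\mathbf f}^*$ are $C^1$ functions of $(x,v)$ via the implicit function theorem (here we use $C^3$-regularity of $\pt\O$).

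Next I would compute $\mathcal T\bar f:=(\e\pt_t+v\cdot\nabla_x+\e^2\Phi\cdot\nabla_v)\bar f$. Inside $\Omega$ the equation $\e\pt_t f+v\cdot\nabla_x f+\e^2\Phi\cdot\nabla_v f=g$ accounts for the $g$ piece of $h_1$, the time cutoff $\chi(t)$ at $t\le0$ produces the $\e\chi'(t)/\chi(t)$ piece, and distributing $\mathcal T$ through the spatial/velocity cutoff $[1-\chi(\tfrac{n\cdot v}{\delta})\chi(\tfrac{\xi}{\delta})]\chi(\delta|v|)$ gives $h_2$. Outside $\Omega$ the key algebraic observation is that the triple $(t-\e t_{\mathbf b}^*(x,v),\,x_{\mathbf b}^*(x,v),\,v_{\mathbf b}^*(x,v))$ is invariant under the flow: along a trajectory of parameter $\tau$, the boundary hitting point is unchanged while $t_{\mathbf b}^*$ increases by $\tau/\e$, which cancels the $\e t$-shift. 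Hence $\mathcal T f_\delta(t-\e t_{\mathbf b}^*,x_{\mathbf b}^*,v_{\mathbf b}^*)\equiv 0$ and $\mathcal T$ only hits the cutoffs: the derivative of $\chi(\xi/(\tilde C\delta^4))$ produces $h_3$ (with prefactor $\tfrac{v\cdot\nabla\xi}{\tilde C\delta^4}\chi'$), and the derivative of $\chi(t_{\mathbf b}^*)$ (resp.\ $\chi(t_{\mathbf f}^*)$), which using the identity above gives $\chi'(t_{\mathbf b}^*)$ (resp.\ $\chi'(t_{\mathbf f}^*)$), produces $h_4$.

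The $L^2$ bounds on $h_1,h_2$ are routine: $h_1$ is controlled pointwise by $|g|+|[v\cdot\nabla_x+\e^2\Phi\cdot\nabla_v]f_0|+\e|f_0|$, and $h_2$ has derivatives of cutoffs whose $\delta$-dependent $L^\infty$ sizes produce the $\lesssim_\delta\|f\|_2+\|f_0\|_2$ bound. The genuinely nontrivial step, and the one I expect to be the main obstacle, is the $L^2$ control of $h_3$ and $h_4$. These are supported in the thin shell $\Omega_{\tilde C\delta^4}\setminus\bar\Omega$ and carry a $\delta^{-4}$ prefactor, while the boundary data $f_\delta|_\gamma$ enters through the pullback along the characteristic flow. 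I would implement the change of variables $(x,v)\mapsto(t_{\mathbf b}^*,x_{\mathbf b}^*,v_{\mathbf b}^*)\in[0,\infty)\times\partial\Omega\times\mathbb{R}^3$, which by the analogue of Lemma \ref{bdry_int} adapted to the exterior shell has Jacobian $|n(x_{\mathbf b}^*)\cdot v_{\mathbf b}^*|(1+O(\e))$. On the support of $\chi'(\xi/(\tilde C\delta^4))$ the shell thickness forces $t_{\mathbf b}^*\lesssim\delta^3$, so the $dt_{\mathbf b}^*$ integration provides a factor $\delta^3$ that cancels part of the $\delta^{-8}$ arising from $(\delta^{-4})^2$; together with the Fubini-type exchange with the outer time integration in $t$, this gives the desired bound by $\|f_\gamma\|_{L^2(\mathbb{R}_+\times\gamma)}^2+\|f_0\|_{L^2(\gamma)}^2$. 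The estimate for $h_4$ is analogous, with $\chi'(t_{\mathbf b}^*)$ replacing the $\delta^{-4}\chi'(\xi/(\tilde C\delta^4))$ factor.

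The technical difficulty in this last step lies in justifying the smoothness of the map $(x,v)\mapsto(t_{\mathbf b}^*,x_{\mathbf b}^*,v_{\mathbf b}^*)$ on the region where $h_3,h_4$ are supported, and in controlling the $O(\e)$ corrections in the Jacobian uniformly in the shell. Both are manageable because the non-grazing restriction $|n(x_{\mathbf b}^*)\cdot v_{\mathbf b}^*|>\delta$ (inherited from the support of $f_\delta|_\gamma$) keeps trajectories transverse to $\pt\O$, giving a lower bound on $|v\cdot\nabla_x\xi|$ at the shell boundary that allows the change of variables. The resulting constants blow up polynomially in $\delta^{-1}$, consistent with the $\lesssim_\delta$ bounds in (\ref{force_Z_dyn}).
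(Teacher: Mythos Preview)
Your approach is essentially the paper's: the same characteristic extension in the shell $\Omega_{\tilde C\delta^4}\setminus\bar\Omega$, the same observation that $(t-\e t_{\mathbf b}^*,x_{\mathbf b}^*,v_{\mathbf b}^*)$ is constant along the flow so that $\mathcal T$ hits only the cutoffs, and the same change of variables $(x,v)\mapsto(s,y,u)\in\mathbb{R}\times\partial\Omega\times\mathbb{R}^3$ (the exterior analogue of Lemma~\ref{bdry_int}) to reduce $\|h_3\|_2,\|h_4\|_2$ to boundary norms of $f_\delta|_\gamma$.

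Two details are handled more explicitly in the paper than in your sketch. First, your sum-of-two-terms formula for the outer piece is only well-defined (and only matches $f_\delta$ on $\gamma$) once you know that at most one of $x_{\mathbf b}^*,x_{\mathbf f}^*$ can lie on $\partial\Omega$ with a nonzero $f_\delta$ value; the paper isolates this as a separate claim and proves it by the short-time lower bound $\xi(X(s;0,x,v))\ge\tfrac12\delta|s|$ when $|n(x)\cdot v|>\delta$ and $|v|\le 1/\delta$, so that from a point on $\partial\Omega$ the trajectory on one side must exit the shell at $\xi=\tilde C\delta^4$ rather than return to $\partial\Omega$. Second, the gluing across $\partial\Omega$ is carried out not by pointwise continuity (which is not available since $\bar f$ is only $L^2$) but by Green's identity: one tests against $\phi\in C_c^\infty$, integrates by parts separately on $\Omega$ and on the shell, and uses the trace identity $f_E|_\gamma=f_\delta|_\gamma$ to cancel the two surface integrals.
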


\begin{proof}
The proof of this lemma is given in Appendix \ref{A1}.
\end{proof}

The next step to prove a version of the velocity averaging lemma in the
presence of a small external force. The presence of the external force
requires significant modifications to the original argument in \cite%
{GLPS,Saint}.

\begin{lemma}
\label{flfs} Let $f\in L^{2}(\mathbb{R} \times \mathbb{R}^{3} \times \mathbb{%
R}^{3}) $ be a function solving the transport equation 
\begin{equation}
\e \pt_t f+v\cdot \nabla_x f+\e^2\Phi\cdot \nabla_v f +f=q,
\label{bastransp}
\end{equation}
in the sense of distributions with $q\in L_{t,x,v}^2$. Let $\psi(v)$ be a
smooth function, which vanishes very fast as $|v| \rightarrow \infty$.

Then we can decompose $f$ as 
\begin{equation}  \label{decom_ls}
f= f_{la} + f_{sm},
\end{equation}
such that 
\begin{equation}
\Big\| \int_{\mathbb{R}^{3}} \psi f_{la} \mathrm{d} v \Big\|%
_{L^2_tL^3_x}\lesssim_{\psi} \|w^{-1}q\|_{L^2_{t,x,v}},  \label{fl}
\end{equation}
for $w= e^{\beta |v|^{2}}$ with $0< \beta\ll 1$, and 
\begin{equation}
\Big\| \int_{\mathbb{R}^{3}} \psi f_{sm} \mathrm{d} v \Big\|_{L^2_tL^{\frac{%
12}5}_x}\lesssim_{\psi, \|\Phi\|_\infty} \e^{\frac 1 2}
\|w^{-1}q\|_{L^2_{t,x,v}} .  \label{fs1}
\end{equation}
\end{lemma}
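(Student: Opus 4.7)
The plan is to perform a Fourier analysis in the variables $(t,x)$, treat $v$ as a parameter, and split $f$ into a main piece $f_{la}$ that behaves as if the external force were absent and a correction $f_{sm}$ that captures the effect of $\Phi$. Let $(\tau,k)$ be the Fourier variables dual to $(t,x)$. Taking the Fourier transform of (\ref{bastransp}) yields
\begin{equation*}
[1+i(\e\tau+v\cdot k)]\hat{f}(\tau,k,v)=\hat{q}(\tau,k,v)-\e^{2}\widehat{\Phi\cdot\nabla_{v}f}(\tau,k,v).
\end{equation*}
I would then set
\begin{equation*}
\hat{f}_{la}(\tau,k,v):=\frac{\hat{q}(\tau,k,v)}{1+i(\e\tau+v\cdot k)},\qquad f_{sm}:=f-f_{la},
\end{equation*}
so that $f_{sm}$ solves the same equation with right-hand side $-\e^{2}\Phi\cdot\nabla_{v}f$ in place of $q$.

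For $f_{la}$ the classical Golse-Lions-Perthame-Sentis averaging argument applies verbatim. Cauchy-Schwarz in $v$, combined with the change of variables aligning one axis with $k/|k|$ and the rapid decay of $\psi$, yields
\begin{equation*}
\Big|\widehat{\int\psi f_{la}\,\mathrm{d} v}(\tau,k)\Big|^{2}\lesssim_{\psi}\frac{1}{|k|}\,\|w^{-1}\hat{q}(\tau,k,\cdot)\|_{L_{v}^{2}}^{2},
\end{equation*}
which, after integration in $(\tau,k)$, produces the $L_{t}^{2}\dot{H}_{x}^{1/2}$ bound on the velocity average. The Sobolev embedding $\dot{H}^{1/2}(\mathbb{R}^{3})\hookrightarrow L^{3}(\mathbb{R}^{3})$ then delivers (\ref{fl}).

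For $f_{sm}$, I first transfer the $v$-derivative off $f$ onto the Fourier symbol $\psi/[1+i(\e\tau+v\cdot k)]$ via an integration by parts in $v$ (justified by approximation), obtaining
\begin{equation*}
\widehat{\int\psi f_{sm}\,\mathrm{d} v}=\e^{2}\int\Big[\frac{\nabla_{v}\psi}{1+i(\e\tau+v\cdot k)}-\frac{ik\,\psi}{[1+i(\e\tau+v\cdot k)]^{2}}\Big]\cdot\widehat{\Phi f}\,\mathrm{d} v.
\end{equation*}
The first bracketed term is of the same form as for $f_{la}$ and contributes an $L_{t}^{2}\dot{H}_{x}^{1/2}$ piece with norm of order $\e^{2}\|\Phi\|_{\infty}\|f\|_{L^{2}}$, which is comfortably absorbed. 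The critical term is the second one, whose extra factor $ik$ signals the loss of half a spatial derivative induced by the external field. To handle it I would split the $v$-integral into a resonant part $|\e\tau+v\cdot k|\leq\Lambda$ and a non-resonant part $|\e\tau+v\cdot k|>\Lambda$, with the scale $\Lambda=\Lambda(|k|)$ optimized against the two competing bounds: on the non-resonant region the double pole is dominated by $\Lambda^{-2}$, whereas on the resonant region the relevant $v$-slab has measure of order $\Lambda/|k|$. Combining the resulting estimate with the a priori energy bound $\|\Phi f\|_{L_{t,x,v}^{2}}\lesssim\|\Phi\|_{\infty}\|w^{-1}q\|_{L^{2}}$ (obtained by multiplying (\ref{bastransp}) by $f$ and integrating) yields an $L_{t}^{2}H_{x}^{1/4}$ estimate for $\int\psi f_{sm}\,\mathrm{d} v$; the Sobolev embedding $H^{1/4}(\mathbb{R}^{3})\hookrightarrow L^{12/5}(\mathbb{R}^{3})$ then produces (\ref{fs1}). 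The main technical obstacle is the precise $\e$-accounting in this resonant/non-resonant balance: the $\e^{2}$ prefactor from the external force, the sub-one power of $|k|$ that survives the optimization, and the Sobolev loss must combine to give exactly the $\e^{1/2}$ stated in (\ref{fs1}). A useful consistency check is that $\e^{1/2}$ and $L^{12/5}$ correspond to the midpoint interpolation between the trivial $L^{2}$ energy bound (with a full $\e$ gain from $f_{sm}$) and the force-free $L^{3}$ averaging bound (with no $\e$ gain), which is the natural scaling.
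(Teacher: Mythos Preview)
Your decomposition $f=f_{la}+f_{sm}$ coincides with the paper's (your Fourier definition of $\hat{f}_{la}$ is exactly the transform of the Duhamel integral the paper writes down), and your treatment of $f_{la}$ is the same as the paper's. The gap is in the $f_{sm}$ step.

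After you integrate by parts first, the double-pole term is
\[
\e^{2}\int_{\mathbb{R}^{3}}\frac{ik\,\psi(v)}{[1+i(\e\tau+v\cdot k)]^{2}}\,\widehat{\Phi f}(\tau,k,v)\,\mathrm{d} v,
\]
and on both the resonant set $\{|\e\tau+v\cdot k|\le\Lambda\}$ and its complement you are forced to estimate via the \emph{same} quantity $\|\widehat{\Phi f}(\tau,k,\cdot)\|_{L^{2}_{v}}$. Carrying out your computation, Cauchy--Schwarz gives, after the change $s=\e\tau+|k|v_{\parallel}$,
\[
|A_{\mathrm{res}}|\lesssim \e^{2}|k|^{1/2}\Lambda^{1/2}\|\widehat{\Phi f}\|_{L^{2}_{v}},\qquad
|A_{\mathrm{nonres}}|\lesssim \e^{2}|k|^{1/2}\Lambda^{-3/2}\|\widehat{\Phi f}\|_{L^{2}_{v}}.
\]
Both carry the \emph{same} factor $|k|^{1/2}$; the optimal $\Lambda$ is $O(1)$ and you end up with $|A|\lesssim \e^{2}|k|^{1/2}\|\widehat{\Phi f}\|_{L^{2}_{v}}$, i.e.\ $L^{2}_{t}\dot H^{-1/2}_{x}$ control, not $H^{1/4}$. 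No choice of $\Lambda$ manufactures a negative power of $|k|$ here, so the claimed $L^{2}_{t}H^{1/4}_{x}\hookrightarrow L^{2}_{t}L^{12/5}_{x}$ bound does not follow. Your ``consistency check'' does not rescue this: a direct $L^{2}$ bound on $\int\psi f_{sm}\,\mathrm{d} v$ with an $\e$ gain would require controlling $\nabla_{v}f$ (or $\nabla_{x}f$, after commuting with the free flow), which you do not have.

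The paper avoids this by reversing the order of operations: it first splits $\int\psi\hat f_{sm}\,\mathrm{d} v$ into resonant and non-resonant parts \emph{before} any integration by parts. On the resonant set it uses $\|\hat f_{sm}\|_{L^{2}_{v}}$ directly, obtaining $|\mathcal T_{1}|\lesssim \alpha^{1/2}|\xi|^{-1/2}\|\hat f_{sm}\|_{L^{2}_{v}}$; only on the non-resonant set does it integrate by parts in $v$, producing terms bounded by $|\xi|^{1/2}\alpha^{-3/2}\|\widehat{\e^{2}\Phi f}\|_{L^{2}_{v}}$. Now the two competing estimates involve \emph{different} norms with different $\e$-weights, and balancing them gives $\alpha\sim|\xi|^{1/2}(\|\hat\zeta\|/\|\hat f_{sm}\|)^{1/2}$ and the bound $|\xi|^{-1/4}\|\hat\zeta\|^{1/4}\|\hat f_{sm}\|^{3/4}$. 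Since $\|\zeta\|_{L^{2}}\sim\e^{2}\|q\|_{L^{2}}$ while $\|f_{sm}\|_{L^{2}}\lesssim\|q\|_{L^{2}}$, this interpolation produces exactly the $\e^{1/2}$ and the $\dot H^{1/4}\hookrightarrow L^{12/5}$ scaling. The essential point you are missing is that the resonant piece must be measured in terms of $\hat f_{sm}$ itself, not in terms of $\widehat{\Phi f}$, to create a nontrivial balance.
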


\begin{proof}
We only prove the case $\beta=0$, because the growing factor $w$ can be
absorbed in $\psi$ by redefining $f_{la}$.

\textit{Step 1. } We define a ``large'' part as 
\begin{equation}  \label{f_la_def}
f_{la} (t,x,v) : = \int^{\infty}_{0} e^{-\tau} q(t- \e \tau, x- \tau v, v) 
\mathrm{d} \tau.
\end{equation}
Then, clearly $f_{la}$ solves the transport equation without an external
force, in the sense of distributions, 
\begin{equation}
\e \pt_t f_{la}+v\cdot \nabla_x f_{la}+f_{la}=q.  \label{bastranspl}
\end{equation}
Via the change of variables $(t- \e \tau, x- \tau v, v) \mapsto (t,x,v)$ for
fixed $\tau$ and the Minkowski inequality 
\begin{equation}  \label{l2_fla}
\|f_{la} \|_{L^{2}_{t,x,v}} \leq \int^{\infty}_{0} e^{-\tau} \| q(t-\e \tau,
x-\tau v,v) \|_{L^{2}_{t,x,v}} \mathrm{d} \tau=\| q \|_{L^{2}_{t,x,v}}
\int^{\infty}_{0} e^{-\tau}\mathrm{d} \tau \lesssim \| q \|_{L^{2}_{t,x,v}} .
\end{equation}
Therefore, $f_{la}$ is in $L^{2}$. By the standard velocity averaging lemma 
\cite{GLPS,Saint}, $\int_{\mathbb{R}^{3}} \psi f_{la} \mathrm{d} v$ is in $%
L^2_tH^{\frac 1 2}_x$ and satisfies the bound (\ref{fl}) by the Sobolev
embedding $L^2(\mathbb{R};H^{\frac 12}(\mathbb{R}^3))\subset L^2(\mathbb{R}%
;L^3(\mathbb{R}^3))$.

\textit{Step 2. } Now we define a ``small'' part as 
\begin{equation}  \label{f_sm_def}
f_{sm}(t,x,v) : = f(t,x,v) - f_{la} (t,x,v).
\end{equation}
Clearly, 
\begin{equation}
\e \pt_t f_{sm}+v\cdot \nabla_x f_{sm} +f_{sm}=-\e^2\Phi\cdot \nabla_v f.
\label{bastransps}
\end{equation}
We use the shorthand notation 
\begin{equation}
\z=-\e^2\Phi f,  \label{notation_z}
\end{equation}
so that (\ref{bastransps}) becomes 
\begin{equation}
\e \pt_t f_{sm}+v\cdot \nabla_x f_{sm}+f_{sm}=\nabla_v \z.
\label{bastransps1}
\end{equation}
By Fourier transforming, 
\begin{equation}
\hat f_{sm} =\frac{\nabla_v \hat \z}{i ( \e \tau + v\cdot \xi)+1 }, \ \ \ |
\hat f_{sm}| \lesssim\frac{|\nabla_v \hat \z | }{| \e \tau + v\cdot \xi|+1 }.
\label{bastransps2}
\end{equation}

Recall $\chi$ is (\ref{chi}).  
Then, for $\a>0$, 
\begin{equation}
\int_{\mathbb{R}^3} \hat f_{sm} \psi= \int_{\mathbb{R}^3} \hat f_{sm} \psi
\chi\left(\frac{\e\tau+\xi\cdot v}{\a}\right)+\int_{\mathbb{R}^3} \hat
f_{sm} \psi \left[1-\chi\left(\frac{\e\tau+\xi\cdot v}{\a}\right)\right]:=%
\mathcal{T}_1+\mathcal{T}_2.  \notag
\end{equation}
Since in $\mathcal{T}_1$ we have only the contribution for $|\e %
\tau+\xi\cdot v|\le \a$, by integrating first on the velocity $v_{\xi^\perp}$
orthogonal to $\xi$ and then on the one dimensional variable $v_\xi=\frac{%
v\cdot \xi}{|\xi|}$, with the change of variables $
\frac{|\xi|v_\xi}{\a}$ we estimate, 
\begin{equation}
|\mathcal{T}_1|\le \frac{\a^{\frac 1 2}}{|\xi|^{\frac 1 2}}\|\hat
f_{sm}\|_{L^{2}_v}.  \notag
\end{equation}
As for $\mathcal{T}_2$, we use (\ref{bastransps2}) to obtain 
\begin{eqnarray*}
&& \mathcal{T}_2 = \int_{\mathbb{R}^3}dv \frac{ \nabla_v \hat \z }{ i(\e %
\tau + v\cdot \xi)+1 } \psi \left[1-\chi\left(\frac{\e\tau+\xi\cdot v}{\a}%
\right)\right]  \notag \\
&&= i\int_{\mathbb{R}^3}dv \frac{ \xi \hat \z }{[ i( \e \tau + v\cdot \xi)+1
]^2} \psi \left[1-\chi\left(\frac{\e\tau+\xi\cdot v}{\a}\right)\right]+\int_{%
\mathbb{R}^3}dv \frac{ \xi \hat \z }{ \a[ i( \e \tau + v\cdot \xi )+1]} \psi
\chi^{\prime}\left(\frac{\e\tau+\xi\cdot v}{\a}\right)  \notag \\
&& \ \ +\int_{\mathbb{R}^3}dv \frac{ \hat \z }{ i( \e \tau + v\cdot \xi )+1 }
\nabla_v\psi \left[1-\chi\left(\frac{\e\tau+\xi\cdot v}{\a}\right)\right] \\
&&:=\mathcal{K}_1+\mathcal{K}_2+\mathcal{K}_3.
\end{eqnarray*}
We have 
\begin{eqnarray*}
|\mathcal{K}_1 | &\le& |\xi|\left(\int_{\mathbb{R}^3}dv \frac{\psi(1-\chi)}{%
[ | \e \tau + v\cdot \xi | +1]^4}\right)^{\frac 1 2} \|\hat \z%
\|_{L^{2}_v}\le|\xi|\left(\int_{| \e\tau +\xi\cdot v|\ge \a}dv \frac{1}{[ | %
\e \tau + v\cdot \xi | +1]^4}\right)^{\frac 1 2} \|\hat \z\|_{L^{2}_v} \\
&\le& \frac{|\xi|^{\frac 12}}{\a^{\frac 3 2}}\|\hat \z\|_{L^{2}_v}. \\
|\mathcal{K}_2|&\le& \frac{|\xi|}{\a^2}\left(\int_{\mathbb{R}^3}dv \left[%
\chi^{\prime}\left(\frac{\e \tau + v\cdot \xi}{\a}\right)\right]%
^2\right)^{\frac 1 2}\|\hat \z\|_{L^{2}_v}\le \frac{|\xi|}{\a^2} \left(\frac{%
\a}{|\xi|}\right)^{\frac 1 2}\|\hat \z\|_{L^{2}_v} = \frac{|\xi|^{\frac 1 2}%
}{\a^{\frac 3 2}} \|\hat \z\|_{L^{2}_v}. \\
|\mathcal{K}_3|&\le& \left(\int_{|\e \tau+\xi\cdot v|>\a} dv\frac{|\nabla_v
\psi|^{2}}{1+|\e \tau +\xi\cdot v|^2}\right)^{\frac 1 2}\|\hat \z%
\|_{L^{2}_v}\le \frac{1}{\a^{\frac 1 2}|\xi|^{\frac 1 2}} \|\hat \z%
\|_{L^{2}_v}.
\end{eqnarray*}
Now we choose $\alpha$ such that 
\begin{equation*}
\frac{\a^{\frac 1 2}}{|\xi|^{\frac 1 2}}\|\hat f_{sm}\|_{L^{2}_v}=\frac{%
|\xi|^{\frac 12}}{\a^{\frac 3 2}}\|\hat \z\|_{L^{2}_v},
\end{equation*}
which means 
\begin{equation*}
\a=|\xi|^{\frac 1 2} \|\hat \z\|_{L^{2}_v}^{\frac 1 2}\|\hat
f_{sm}\|_{L^{2}_v}^{-\frac 1 2}.
\end{equation*}
With this choice, we have that, 
\begin{equation}
|\mathcal{T}_1|+|\mathcal{K}_1|+|\mathcal{K}_2| \le 3 |\xi|^{-\frac 1 4}
\|\hat \z\|_{L^{2}_v}^{\frac 1 4}\|\hat f_{sm}\|_{L^{2}_v}^{\frac 3 4}, \ \ |%
\mathcal{K}_3|\le |\xi|^{-\frac 3 4} \|\hat \z\|_{L^{2}_v}^{\frac 3 4}\|\hat
f_{sm}\|_{L^{2}_v}^{\frac 1 4}.  \notag
\end{equation}
Therefore, from (\ref{notation_z}) 
\begin{eqnarray*}
\big\||\xi|^{\frac{1}{4}} ( |\mathcal{T}_1|+|\mathcal{K}_1|+|\mathcal{K}_2| )%
\big\|_{L^{2}_{\tau, \xi }}&\lesssim& \big\|\|\hat \z\|_{L^{2}_v}^{\frac 1
4}\|\hat f_{sm}\|_{L^{2}_v}^{\frac 3 4}\big\|_{L^2_{\tau, \xi}}\le \e^{\frac
1 2}\|\Phi\|_\infty^{\frac 1 4} \|f\|_{L^2_{t,x,v}}^{\frac 1
4}\|f_{sm}\|^{\frac 3 4}_{L^2_{t,x,v}}, \\
\big\||\xi|^{\frac{3}{4}} |\mathcal{K}_3| \big\|_{L^{2}_{\tau, \xi
}}&\lesssim& \big\|\|\hat \z\|_{L^{2}_v}^{\frac 3 4}\|\hat
f_{sm}\|_{L^{2}_v}^{\frac 1 4}\big\|_{L^2_{\tau, \xi}}\le \e^{\frac 3
2}\|\Phi\|_\infty^{\frac{3}{4}} \|f\|_{L^2_{t,x,v}}^{\frac 3
4}\|f_{sm}\|^{\frac 1 4}_{L^2_{t,x,v}}.
\end{eqnarray*}
We also note that $\| \mathcal{K}_{3} \|_{L^{2}_{\tau, \xi}}\lesssim_{\psi}
\| \hat{\zeta} \|_{L^{2}_{\tau, \xi, v}}\lesssim \e^{2} \| \Phi \|_{\infty}
\| f \|_{L^{2}_{t,x,v}}$.

Denote the anti-Fourier transforming by $\mathcal{F}^{-1}_{\tau, \xi}$. 
By the Sobolev embedding $\dot H^{\frac 1 4} (\mathbb{R}^{3})\subset L^{%
\frac{12}5}(\mathbb{R}^{3})$ and $\dot H^{\frac 3 4}\subset L^{4}$, we have 
\begin{equation*}
\begin{split}
\| \mathcal{F}^{-1}_{\tau, \xi} ( \mathcal{T}_1 + \mathcal{K}_1 + \mathcal{K}%
_2 ) \|_{L^{2}_{t} L^{\frac{12}{5}}_{x}} &\lesssim \| \mathcal{F}%
^{-1}_{\tau, \xi} ( \mathcal{T}_1 + \mathcal{K}_1 + \mathcal{K}_2 )
\|_{L^{2}_{t}\dot{H}_{x}^{\frac{1}{4}}}\lesssim \e^{\frac 1 2}
\|f\|_{L^2_{t,x,v}}^{\frac 1 4}\|f_{sm}\|^{\frac 3 4}_{L^2_{t,x,v}}, \\
\| \mathcal{F}^{-1}_{\tau, \xi} \mathcal{K}_3 \|_{L^{2}_{t} L^{4}_{x}}
&\lesssim \| \mathcal{F}^{-1}_{\tau, \xi} \mathcal{K}_3 \|_{L^{2}_{t}\dot{H}%
_{x}^{\frac{1}{4}}}\lesssim \e^{\frac 3 2} \|f\|_{L^2_{t,x,v}}^{\frac 3
4}\|f_{sm}\|^{\frac 1 4}_{L^2_{t,x,v}}.
\end{split}%
\end{equation*}
By an interpolation ($L^{\frac{12}{5}}_{x} \subset L^{2}_{x} \cap L^{4}_{x}$%
) 
\begin{eqnarray*}
\| \mathcal{F}^{-1}_{\tau, \xi} \mathcal{K}_{3} \|_{L^{2}_{t} L^{\frac{12}{5}%
}_{x}} \leq \| \mathcal{F}^{-1}_{\tau, \xi} \mathcal{K}_{3}
\|_{L^{2}_{t,x}}^{\frac{2}{3}} \|\mathcal{F}^{-1}_{\tau, \xi} \mathcal{K}%
_{3} \|^{\frac{1}{3}}_{L^{2}_{t}L^{4}_{x}}\lesssim \e^{\frac{11}{6}} \| f
\|_{L^{2}_{t,x,v}}^{\frac{11}{12}} \| f_{sm} \|_{L^{2}_{t,x,v}}^{\frac{1}{12}%
}.
\end{eqnarray*}

Clearly from (\ref{f_sm_def}) and (\ref{l2_fla}), $\| f_{sm}
\|_{L^{2}_{t,x,v}} \leq \| f \|_{L^{2}_{t,x,v}} + \| f_{la}
\|_{L^{2}_{t,x,v}}\lesssim \| f \|_{L^{2}_{t,x,v}} +\| q \|_{L^{2}_{t,x,v}}$%
. On the other hand, from (\ref{bastransp}), 
\begin{equation*}
f(t,x,v) = \int^{\infty}_{0} e^{-\tau} q(t- \e \tau, X(t-\e \tau; t,x,v),
V(t-\e\tau;t,x,v)).
\end{equation*}
Following the argument to prove (\ref{l2_fla}), we obtain $\| f
\|_{L^{2}_{t,x,v}} \leq \| q \|_{L^{2}_{t,x,v}}.$ Therefore we conclude 
\begin{equation*}
\| \mathcal{F}^{-1}_{\tau, \xi} ( \mathcal{T}_1 + \mathcal{K}_1 + \mathcal{K}%
_2 + \mathcal{K}_{3} )\|_{L^{2}_{t} L^{\frac{12}{5}}_{x}} \lesssim \e^{\frac{%
1}{2}} ( \| f \|_{L^{2}_{t,x,v}} + \| q \|_{L^{2}_{t,x,v}}),
\end{equation*}
and hence the estimate (\ref{fs1}). % 
\end{proof}

Now we are ready to prove the main result of this section:

\begin{proof}[\textbf{Proof of Proposition \protect\ref{prop_3}}]
Recall (\ref{Pabc}). We use temporary notations 
\begin{equation}  \label{zeta}
\begin{split}
[\zeta_{0}(v) , \zeta_{1}(v), \zeta_{2}(v) , \zeta_{3}(v), \zeta_{4}(v)] \
:= [\sqrt{\mu}, v_{1} \sqrt{\mu}, v_{2 }\sqrt{\mu}, v_{3} \sqrt{\mu}, {\
\frac 2 3} \frac{|v|^{2}-3}{2}\sqrt{\mu}].
\end{split}%
\end{equation}
From (\ref{Z_dyn}),  
\begin{eqnarray*}
&&\int_{\mathbb{R}^{3}} f_{\delta} (t,x,v) \zeta_{i} (v) \mathrm{d} v \\
&=& \int_{\mathbb{R}^{3}} \big[ 1- \chi( \frac{n(x) \cdot v}{\delta} ) \chi( 
\frac{\xi(x)}{\delta}) \big] \chi( \delta |v|) \big\{ \mathbf{1}_{t \geq 0}
f(t,x,v) + \mathbf{1}_{t \leq 0} \chi(t) f_{0} (x,v) \big\} \zeta_{i} (v) 
\mathrm{d} v \\
&=& \mathbf{1}_{t \geq 0} \int_{\mathbb{R}^{3}} \big[ 1- \chi( \frac{n(x)
\cdot v}{\delta} ) \chi( \frac{\xi(x)}{\delta}) \big] \chi( \delta |v|)%
\Big\{ \sum_{j=0}^{4} a_{j}(t,x) \zeta_{j}(v) + (\mathbf{I} - \mathbf{P})
f(t,x,v)\Big\} \zeta_{i}(v) \mathrm{d} v \\
&&+ \mathbf{1}_{t \leq 0} \int_{\mathbb{R}^{3}} \big[ 1- \chi( \frac{n(x)
\cdot v}{\delta} ) \chi( \frac{\xi(x)}{\delta}) \big] \chi( \delta |v|)
\chi(t) f_{0} (x,v) \zeta_{i} (v) \mathrm{d} v \\
&=& \mathbf{1}_{t\geq 0} \Big\{a_{i} (t,x) + O(\delta) \sum_{j=0}^{4} |a_{j}
(t,x)| + O_{\delta}(1) \int_{\mathbb{R}^{3}} | (\mathbf{I} - \mathbf{P})
f(t,x,v) | \zeta_{i} (v) \mathrm{d} v\Big\} \\
&&+ \mathbf{1} _{ t \leq 0} \chi(t) \int_{\mathbb{R}^{3}} f_{0} (x,v)
\zeta_{i} (v) \mathrm{d} v .
\end{eqnarray*}

Therefore 
\begin{eqnarray*}
\sum_{i=0}^{4}\mathbf{1}_{t \geq 0} |a_{i} (t,x)| &\leq& \sum_{i=0}^{4}\Big|%
\int_{\mathbb{R}^{3}} f_{\delta} (t,x,v) \zeta_{i} (v) \mathrm{d} v\Big| + 
\mathbf{1} _{ t \leq 0} \chi(t) \int_{\mathbb{R}^{3}} | f_{0} (x,v)|
\sum_{i=0}^{4}|\zeta_{i} (v)| \mathrm{d} v \\
&&+ \mathbf{1}_{t \geq 0} \Big\{ O(\delta) \sum_{j=0}^{4} |a_{j} (t,x)| +
O_{\delta} (1) \int_{\mathbb{R}^{3}} | (\mathbf{I} - \mathbf{P}) f(t,x,v) |
\sum_{i=0}^{4} |\zeta_{i} (v)| \mathrm{d} v \Big\}.
\end{eqnarray*}
Hence for all $i=0,1,2,3,4$, 
\begin{equation}  \label{bound_a_i}
\begin{split}
|a_{i} (t,x)| \leq& \ 4 \int_{\mathbb{R}^{3}} |f_{\delta} (t,x,v) | \langle
v\rangle^{2} \sqrt{\mu(v)} \mathrm{d} v + 4\chi(t)\mathbf{1} _{ t \leq 0}
\int_{\mathbb{R}^{3}} |f_{0} (x,v)| \langle v \rangle^{2} \sqrt{\mu(v)} 
\mathrm{d} v \\
& + 4 \int_{\mathbb{R}^{3}} | (\mathbf{I} - \mathbf{P}) f(t,x,v)| \langle
v\rangle^{2} \sqrt{\mu(v)} \mathrm{d} v.
\end{split}%
\end{equation}

Now we focus on the part of $f_{\delta}$ in (\ref{bound_a_i}). From Lemma %
\ref{extension_dyn}, there is an extension $\overline{f_{\delta}}$ defined
by (\ref{bar_Z_dyn}) such that 
\begin{equation*}
\int_{\mathbb{R}^{3}} |f_{\delta} (t,x,v) | \langle v\rangle^{2} \sqrt{\mu(v)%
} \mathrm{d} v \leq \int_{\mathbb{R}^{3}} | \overline{f_{\delta}}(t,x,v) |
\langle v\rangle^{2} \sqrt{\mu(v)} \mathrm{d} v.
\end{equation*}
and, solves 
\begin{equation}
\e \pt_t \overline{f_{\delta}}+v\cdot \nabla_x \overline{f_{\delta}}+\e%
^2\Phi\cdot \nabla_v \overline{f_{\delta}} + \overline{f_{\delta}}=h+%
\overline{f_{\delta}},  \label{bastransp0}
\end{equation}
in the sense of distributions with $h=\sum_{i=1}^4 h_i$ where $h_i$ are
defined in Lemma \ref{extension_dyn}.

Now we apply Lemma \ref{flfs} to (\ref{bastransp0}) with $f= \overline{%
f_{\delta}}$ and $q= h+ \overline{f_{\delta}}$. Then we can decompose 
\begin{equation}  \label{decom_f_delta}
\overline{f_{\delta}} = \overline{f_{\delta}}_{,la} + \overline{f_{\delta}}%
_{,sm},
\end{equation}
where $\overline{f_{\delta}}_{,la}$ is defined, as (\ref{f_la_def}), 
\begin{equation}  \label{f_delta_la}
\overline{f_{\delta}}_{,la} (t,x,v) = \int^{\infty}_{0} e^{-\tau} (h+ 
\overline{f_{\delta}}) (t- \e \tau, x- \tau v, v) \mathrm{d} \tau,
\end{equation}
and $\overline{f_{\delta}}_{,sm}:=\overline{f_{\delta}} - \overline{%
f_{\delta}}_{,la}$ as in (\ref{f_sm_def}).

Using (\ref{fl}) and (\ref{fs1}) with $\psi=\zeta_{i}$ in (\ref{zeta}), we
deduce that 
\begin{equation}  \label{average_fdelta}
\begin{split}
\Big\|\int_{\mathbb{R}^{3}}\zeta_{i} (v) \overline{f_{\delta}}_{,la} \mathrm{%
d} v\Big\|_{L^{2}_{t} L^{3}_{x}} &\lesssim \| w^{-1} h \|_{L^{2}_{t,x,v}} +
\| w^{-1} \overline{f_{\delta}} \|_{L^{2}_{t,x,v}}, \\
\Big\|\int_{\mathbb{R}^{3}}\zeta_{i} (v) \overline{f_{\delta}}_{,sm} \mathrm{%
d} v\Big\|_{L^{2}_{t} L^{\frac{12}{5}}_{x}} &\lesssim \e^{\frac{1}{2}}( \|
w^{-1} h \|_{L^{2}_{t,x,v}} + \| w^{-1} \overline{f_{\delta}}
\|_{L^{2}_{t,x,v}}).
\end{split}%
\end{equation}
Note that from Lemma \ref{extension_dyn} 
\begin{equation}
\begin{split}  \label{forcing_fdelta}
& \| w^{-1} h \|_{L^{2}_{t,x,v}} + \| w^{-1} \overline{f_{\delta}}
\|_{L^{2}_{t,x,v}} \\
&\lesssim \| g \|_{L^{2}_{t,x,v}} + \| f_{0} \|_{L^{2}_{x,v}} + \| [ v\cdot
\nabla_{x} + \e^{2} \Phi \cdot \nabla_{v}] f_{0} \|_{L^{2}_{x,v}} + \| f_{0}
\|_{L^{2}(\gamma)} + \| f_{\gamma} \|_{L^{2}(\mathbb{R}_{+} \times \gamma)}.
\end{split}%
\end{equation}

Finally we set 
\begin{equation}  \label{def_S2}
\begin{split}
\mathbf{S}_{1} f(t,x) &: = \int_{\mathbb{R}^{3}} | \overline{f_{\delta}}%
_{,la} | \langle v\rangle^{2} \sqrt{\mu(v)} \mathrm{d} v, \\
\mathbf{S}_{2} f(t,x) &: = \int_{\mathbb{R}^{3}} | \overline{f_{\delta}}%
_{,sm} | \langle v\rangle^{2} \sqrt{\mu(v)} \mathrm{d} v, \\
\mathbf{S}_{3} f(t,x) &: = 4 \int_{\mathbb{R}^{3}} | (\mathbf{I} - \mathbf{P}%
) f(t,x,v)| \langle v\rangle^{2} \sqrt{\mu(v)} \mathrm{d} v.
\end{split}%
\end{equation}
Then by (\ref{average_fdelta}) and (\ref{forcing_fdelta}) we conclude (\ref%
{fl0})$-$%
(\ref{ins2}).
\end{proof}

\bigskip

\subsection{Unsteady $L^{2}-$Coercivity Estimate}

The main purpose of this section is to prove the following:

\begin{proposition}
\label{dlinearl2}Suppose $\Phi = \Phi(x) \in C^{1}, g \in L^{2}(\mathbb{R}%
_{+} \times \Omega \times \mathbb{R}^{3})$, and $r \in L^{2}(\mathbb{R}_{+}
\times \gamma_{-})$ such that, for all $t>0$, 
\begin{equation}
\iint_{\Omega \times \mathbb{R}^{3}}g(t,x,v)\sqrt{\mu }\mathrm{d} v \mathrm{d%
} x=0=\int_{\gamma _{-}}r (t,x,v)\sqrt{\mu }\mathrm{d}\gamma .
\label{dlinearcondition}
\end{equation}
Then, for any sufficiently small $\e$, there exists a unique solution to the
problem 
\begin{equation}
\e\partial _{t}f+v\cdot \nabla _{x}f+\frac 1 {\sqrt{\mu}}\e%
^2\Phi\cdot\nabla_v(\sqrt{\mu} f)+\e^{-1}Lf=\ g,  \label{dlinear}
\end{equation}%
with $f|_{t=0}=f_{0}$ and $f_{-}=P_{\gamma }f+r$ on $\mathbb{R}_{+} \times
\gamma _{-}$ such that 
\begin{equation}
\iint_{\Omega \times \mathbb{R}^{3}}f(t,x,v)\sqrt{\mu }\mathrm{d} x\mathrm{d}
v=0, \ \ \ \text{for all} \ t\geq0.  \label{dlinearcondition1}
\end{equation}
Moreover, there is $0<\lambda \ll 1$ such that for $0 \leq s \leq t$, 
\begin{eqnarray}
&&\| e^{\lambda t}f(t)\|_2^2+ \e^{-2}\int_s^t \| e^{\lambda \tau}(\mathbf{I}-%
\mathbf{P}) f (\tau)\|_\nu^2 \mathrm{d} \tau + \int^{t}_{s} \| e^{\lambda
\tau}\mathbf{P} f (\tau)\|_{2}^{2} \mathrm{d} \tau  \notag \\
&& {\ + \e^{-1} {\ \int_s^t | e^{\lambda \tau} (1-P_\gamma) f |^2_{ 2 } }} +{%
\ {\ \int_s^t | e^{\lambda \tau} f |^2_{ 2 } }}  \label{completes_dyn} \\
& \lesssim & \ \|e^{\lambda s} f(s)\|_2^2 + \e^{-1}\int_s^t |e^{\lambda
\tau}r |^2_{2,-} {\ +\int_s^t \| \nu^{- \frac{1}{2}}e^{\lambda \tau} {(%
\mathbf{I} - \mathbf{P}) g } \|_{2}^{2} + \e^{-2} \int_s^t \| e^{\lambda
\tau} {\ \mathbf{P} g } \|_{2}^{2} . \notag }
\end{eqnarray}%
\end{proposition}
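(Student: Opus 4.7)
\medskip

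\noindent\textbf{Proof plan for Proposition \ref{dlinearl2}.} The plan is to parallel the steady case (Proposition \ref{linearl2}), inserting time integration at every step, and then to close the inequality with the help of a small exponential weight $e^{\lambda t}$. For existence, I would first consider the penalized problem
\begin{equation*}
\e\partial_t f + (1-\mathfrak{r})\e^{-1}\nu f - \tfrac12 \e^2\Phi\cdot v\, f + v\cdot\nabla_x f + \e^2\Phi\cdot\nabla_v f + \mathfrak{r}\e^{-1}Lf = g,
\end{equation*}
with $\mathfrak{r}\in[0,1)$, for which the time-dependent analogue of Lemma \ref{calL} gives existence via the characteristics (\ref{eq_YW}) and a Picard/contraction argument for the diffuse-reflection boundary operator $P_\gamma$, using Lemma \ref{trace_dynamic} in place of Lemma \ref{trace_s}. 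Then compactness of $K\mathcal{L}^{-1}$ and the Schaefer fixed-point argument, exactly as in the proof of Proposition \ref{linearl2}, produce a solution to (\ref{dlinear}); the mass condition (\ref{dlinearcondition1}) follows by testing against $\sqrt{\mu}$, using (\ref{dlinearcondition}), and sending $\mathfrak{r}\uparrow 1$.

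For the a~priori estimate, I would multiply (\ref{dlinear}) by $e^{2\lambda t} f$ and use Lemma \ref{timedepgreen} together with the spectral gap (\ref{spectL}) to obtain, for $0\le s\le t$,
\begin{equation*}
\e \|e^{\lambda t}f(t)\|_2^2 + \e^{-1}\int_s^t \|e^{\lambda\tau}(\mathbf{I}-\mathbf{P})f\|_\nu^2 + \int_s^t |e^{\lambda\tau}(1-P_\gamma)f|_{2,+}^2 \lesssim \e\|e^{\lambda s}f(s)\|_2^2 + (\text{r.h.s.\ terms}),
\end{equation*}
after absorbing the field term $\tfrac12\e^2\Phi\cdot v$ into the $\nu$-dissipation (since $\e^2\|\Phi\|_\infty \langle v\rangle \ll \e^{-1}\nu$ for small $\e$) and handling the $e^{2\lambda t}$-derivative via smallness of $\lambda$. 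The boundary contribution from $P_\gamma f + r$ is controlled by Cauchy--Schwarz together with $\int_{\gamma_-} r P_\gamma f \sqrt{\mu} = 0$, exactly as in (\ref{ene100}).

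The core work is to recover $\int_s^t \|\mathbf{P} f\|_2^2$. For this I would adapt Lemma \ref{steady_abc} to the time-dependent setting: for each of $a,b,c$ choose the test functions $\psi_{a,2},\psi_{b,2}^{i,j},\psi_{c,2}$ as in (\ref{phia}), (\ref{phibj}), (\ref{phic}), insert them in the weak formulation of (\ref{dlinear}) integrated over $[s,t]\times\Omega\times\mathbb{R}^3$, and use elliptic estimates on the potentials $\phi_a,\phi_b^j,\phi_{c}$ whose source is $a(\tau,\cdot),b_j(\tau,\cdot),c(\tau,\cdot)$. The new feature compared with Proposition \ref{linearl2} is the $\e\partial_t f$ term, which produces boundary-in-time contributions of the form $\e(\mathbf{P}f(\tau),\psi)|_s^t$; these are absorbed into $\e\|\mathbf{P}f\|_2^2$ at the two endpoints (and then controlled by $\e\|f(s)\|_2^2$ and by the already-estimated energy) after applying Young's inequality carefully. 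The mass condition (\ref{dlinearcondition1}) ensures $\langle f\rangle \equiv 0$, so no $\mathring a$/ $\langle f\rangle$ splitting is needed. Combining this with the energy inequality and (\ref{spectL}), one gets
\begin{equation*}
\int_s^t \|e^{\lambda\tau}\mathbf{P}f\|_2^2 \lesssim \e^{-2}\int_s^t \|e^{\lambda\tau}(\mathbf{I}-\mathbf{P})f\|_\nu^2 + |(1-P_\gamma)f|^2\text{-terms} + \|g\|\text{-terms} + |r|\text{-terms}.
\end{equation*}

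Finally, the $\int_s^t |e^{\lambda\tau}f|_2^2$ boundary control comes from Lemma \ref{trace_dynamic} (the unsteady Ukai trace), splitting $|f|_{2,-}\le |P_\gamma f|_{2,+} + |r|_{2,-}$ and bounding $|P_\gamma f|_{2,+}^2$ by $|\mathbf{1}_{\gamma_+^\delta}f|_{2,+}^2 + |(1-P_\gamma)f|^2_{2,+}$ as in \textit{Step 3} of the proof of Proposition \ref{linearl2}. Collecting everything and choosing $\lambda>0$ sufficiently small so that the time-boundary term $\lambda\int_s^t e^{2\lambda\tau}\|f\|_2^2$ generated by the exponential weight can be absorbed by the $\int_s^t \|\mathbf{P}f\|_2^2 + \e^{-2}\|(\mathbf{I}-\mathbf{P})f\|_\nu^2$ dissipation yields (\ref{completes_dyn}). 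The main obstacle will be the correct bookkeeping of the instantaneous terms $\e\|\mathbf{P}f(t)\|_2^2$ generated by integration by parts in the coercivity test-function step and their absorption by the dissipation; this requires adding a small multiple of the energy inequality to the coercivity inequality, with constants tuned so that both $\e^{-2}\|(\mathbf{I}-\mathbf{P})f\|_\nu^2$ and $\|\mathbf{P}f\|_2^2$ remain on the left-hand side after absorption.
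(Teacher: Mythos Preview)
Your overall plan is reasonable, but there is a genuine gap in the coercivity step for $\mathbf{P}f$, and your existence scheme differs from the paper's.

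\textbf{The main gap.} You correctly propose time-dependent test functions $\psi_{a,2},\psi_{b,2}^{i,j},\psi_{c,2}$ built from potentials solving $-\Delta\phi_a(\tau)=a(\tau)$, etc. But when you integrate $\e\int_s^t(\partial_\tau f,\psi(\tau))\,d\tau$ by parts in time, you get \emph{two} contributions: the boundary-in-time terms $\e(f,\psi)|_s^t$ that you mention, \emph{and} the bulk term $-\e\int_s^t(f,\partial_\tau\psi)\,d\tau$. You omit the latter entirely, and it is the hard part. Since $\psi$ involves $\nabla_x\phi_{a}(\tau),\nabla_x\phi_b^j(\tau),\nabla_x\phi_c(\tau)$, the term $\partial_\tau\psi$ contains $\nabla_x\partial_t\phi_{a,b,c}$, and there is no a~priori reason these are controlled by the quantities already on your left-hand side. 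The paper (Lemma~\ref{dabc}, Steps~1--3) handles this by first testing the equation against $\sqrt{\mu}$, $v_i\sqrt{\mu}$, $(\tfrac{|v|^2}{2}-\tfrac32)\sqrt{\mu}$ to extract the \emph{local conservation laws}, which give $\e\|\nabla_x\partial_t\phi_a\|_2\lesssim\|b\|_2+|r|_2$, $\e\|\nabla_x\partial_t\phi_b^j\|_2\lesssim\|a\|_2+\|c\|_2+\|\tfrac{g}{\sqrt\nu}\|_2+\e^{-1}\|(\mathbf{I}-\mathbf{P})f\|_2$, and similarly for $\phi_c$; see (\ref{-1a})--(\ref{-1c}). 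Only with these in hand can the $\partial_\tau\psi$ contributions be absorbed. Without this step your coercivity inequality does not close.

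\textbf{Existence.} Your Schaefer/penalization route is not what the paper does. The paper constructs the solution by a double iteration: first freeze $Kf^\ell$ on the right and use a damped boundary condition $f^{\ell+1}_-=(1-\tfrac{\e}{j})P_\gamma f^\ell+r$, obtain a Cauchy sequence in $\ell$ for fixed $j$ via a direct contraction estimate (\ref{f_ell_Cauchy}), then send $j\to\infty$ using the Green identity and Lemma~\ref{dabc}. This is more elementary and avoids the need to verify compactness of $K\mathcal{L}^{-1}$ in the time-dependent setting (which you do not address). Your approach might be made to work, but the compactness argument of Lemma~\ref{compact1} is steady; you would need a time-dependent analogue.
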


In order to prove the proposition we need the following:

\begin{lemma}
\label{dabc}Assume that $g$ and $r$ satisfy (\ref{dlinearcondition}) and $f$
satisfies (\ref{dlinear}), and (\ref{dlinearcondition1}). Then there exists
a function $G(t)$ such that, for all $0\le s\leq t$, $G(s)\lesssim
\|f(s)\|_{2}^{2}$ and 
\begin{equation*}
\int_{s}^{t}\|\mathbf{P}f(\tau)\|_{\nu }^{2} \lesssim
G(t)-G(s)+\int_{s}^{t}\| {\frac{g(\tau)}{\sqrt{\nu}}} \|_{2}^{2}
+|r(\tau)|_{2,-}^{2} + \e^{-2}\int_{s}^{t}\|(\mathbf{I}-\mathbf{P}%
)f(\tau)\|_{\nu }^{2} +\int_{s}^{t}|(1-P_{\gamma })f(\tau)|_{2,+}^{2} .
\end{equation*}
\end{lemma}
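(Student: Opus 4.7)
The plan is to mirror the steady coercivity argument of Lemma \ref{steady_abc} in time-integrated form, with the key novelty being the treatment of the new $\e\partial_t f$ term through integration by parts in $t$ combined with local conservation laws. We re-use the three test functions from the steady proof, namely $\psi_{c,2}=(|v|^{2}-\beta_{c})v\cdot\nabla_{x}\phi_{c}\sqrt{\mu}$, $\psi_{b,2}^{i,j}=(v_{i}^{2}-\beta_{b})\sqrt{\mu}\,\partial_{j}\phi_{b}^{j}$ (plus the companion with $|v|^{2}v_{i}v_{j}\sqrt{\mu}\,\partial_{j}\phi_{b}^{i}$), and $\psi_{a,2}=(|v|^{2}-\beta_{a})v\cdot\nabla_{x}\phi_{a}\sqrt{\mu}$, where $-\Delta\phi_{c}=c,\ -\Delta\phi_{b}^{j}=b_{j}$ with Dirichlet data and $-\Delta\phi_{a}=a$ with Neumann data, all evaluated at $(t,x)$. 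Note that $\mathring{a}=a$ here since the constraint \eqref{dlinearcondition1} forces $\langle f(t)\rangle\equiv 0$.

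Apply Lemma \ref{timedepgreen} to \eqref{dlinear} against each test function on $[s,t]$. The terms coming from $v\cdot\nabla_{x}f+\e^{2}\Phi\cdot\nabla_{v}f$, the $\e^{-1}Lf$ term, the source $g$, and the boundary contributions are handled exactly as in \emph{Steps 2--4} of the proof of Lemma \ref{steady_abc}: the careful choice of $\beta_{a},\beta_{b},\beta_{c}$ kills the unwanted $\mathbf{P}f$ and $P_{\gamma}f$ contributions, and Dirichlet/Neumann boundary data eliminates the remaining boundary pieces, yielding lower bounds proportional to $\int_{s}^{t}(\|a\|_{2}^{2}+\|b\|_{2}^{2}+\|c\|_{2}^{2})\,d\tau$ on the left, and a right-hand side of the form appearing in \eqref{completes_dyn}.

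The only genuinely new piece is $\int_{s}^{t}\iint\e\partial_{t}f\,\psi_{\#}\,d\tau$ (for $\#\in\{a,b,c\}$). Integrate by parts in time to split it:
\begin{equation*}
\int_{s}^{t}\iint\e\partial_{t}f\,\psi_{\#}\,d\tau=\e\iint f\psi_{\#}\Big|_{\tau=s}^{\tau=t}-\int_{s}^{t}\iint\e f\,\partial_{t}\psi_{\#}\,d\tau.
\end{equation*}
Define $G(\tau)$ as the sum over $\#\in\{a,b,c\}$ of $\e\iint f(\tau)\psi_{\#}(\tau)$; by elliptic regularity for $\phi_{\#}$ and the Poincar\'e/Cauchy--Schwarz inequality one has $|G(\tau)|\lesssim\|f(\tau)\|_{2}\|\mathbf{P}f(\tau)\|_{2}\lesssim\|f(\tau)\|_{2}^{2}$, giving the claimed control. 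For the second piece, observe that $\partial_{t}\psi_{\#}$ depends on $t$ only through $\partial_{t}\phi_{\#}$, which by elliptic regularity satisfies $-\Delta\partial_{t}\phi_{\#}=\partial_{t}(\cdot)$. The crucial point is to invoke the \emph{local conservation laws} obtained by testing \eqref{dlinear} against $\sqrt{\mu},v\sqrt{\mu},(|v|^{2}-3)\sqrt{\mu}/2$ in velocity: using $L$ annihilates the collision invariants, $\int g\zeta_{i}=0$ from \eqref{dlinearcondition}, and integration by parts for the field term, one obtains
\begin{equation*}
\e\partial_{t}a+\nabla_{x}\cdot b=0,\qquad \e\partial_{t}b+\nabla_{x}(a+c)=-\nabla_{x}\cdot\textstyle\int vv(\mathbf{I}-\mathbf{P})f\sqrt{\mu}+O(\e^{2}\|\Phi\|_{\infty}),
\end{equation*}
and analogously for $\partial_{t}c$. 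Thus $-\Delta(\e\partial_{t}\phi_{a})=-\nabla_{x}\cdot b$, so by $L^{2}$-boundedness of Riesz transforms, $\e\|\nabla\partial_{t}\phi_{a}\|_{2}\lesssim\|b\|_{2}$. The factor of $\e$ absorbs the $\e^{-1}$ produced by $\partial_{t}$, and the bound $|\int_{s}^{t}\iint\e f\,\partial_{t}\psi_{a}|\lesssim\int_{s}^{t}\|f\|_{2}\|b\|_{2}$ is then absorbed (via Young) by a small fraction of $\int_{s}^{t}\|\mathbf{P}f\|_{\nu}^{2}$ plus the $(\mathbf{I}-\mathbf{P})f$ terms already on the right-hand side of the target estimate. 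An identical scheme handles $\psi_{b},\psi_{c}$.

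The main obstacle will be the careful bookkeeping required when deriving the local conservation laws with the external field $\e^{2}\Phi$ and the source $g$ retained, in order to confirm that all perturbative terms land in the desired slots on the right-hand side of the final inequality (in particular, contributions from $g$ must be absorbed into $\int_{s}^{t}\|g/\sqrt{\nu}\|_{2}^{2}$, and those from $(\mathbf{I}-\mathbf{P})f$ into $\e^{-2}\int_{s}^{t}\|(\mathbf{I}-\mathbf{P})f\|_{\nu}^{2}$). The compatibility condition for the Neumann problem for $\partial_{t}\phi_{a}$ reduces to $\int_{\Omega}\partial_{t}a\,dx=0$, which holds thanks to \eqref{dlinearcondition1}. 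Summing the three estimates and collecting terms delivers the claimed bound with $G(t)$ as defined above.
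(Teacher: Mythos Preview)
Your outline is the paper's proof: same test functions $\psi_a,\psi_b,\psi_c$, same integration by parts in $t$ producing $G(t)=-\sum_\#\e\iint f\psi_\#$, and the same device of controlling $\|\nabla_x\partial_t\phi_\#\|_2$ through the local conservation laws before estimating $\int_s^t\iint \e f\,\partial_t\psi_\#$. Two points to tighten. First, ``$L^2$-boundedness of Riesz transforms'' is not the right mechanism on a bounded domain with Neumann/Dirichlet data; the paper instead tests \eqref{dlinear} against $\varphi(x)\sqrt\mu$ (resp.\ $\varphi v_i\sqrt\mu$, $\varphi\tfrac{|v|^2-3}{2}\sqrt\mu$) to read off $\e\|\partial_t a\|_{(H^1)^*}\lesssim\|b\|_2+|r|_2$ and its analogues directly from the weak form, and then invokes elliptic regularity $\|\nabla\partial_t\phi_a\|_2\lesssim\|\partial_t a\|_{(H^1)^*}$ for the Neumann problem (and likewise for the Dirichlet problems for $\phi_b^j,\phi_c$). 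Second, the assertion ``$\int g\zeta_i=0$ from \eqref{dlinearcondition}'' is too strong: that hypothesis only gives the single global constraint $\iint g\sqrt\mu\,dx\,dv=0$, not that the velocity moments of $g$ vanish pointwise in $x$; the resulting $g$- and $r$-contributions to the conservation laws are exactly what you correctly anticipate must be absorbed into $\int_s^t\|g/\sqrt\nu\|_2^2$ and $\int_s^t|r|_{2,-}^2$.
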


\begin{proof}
The key of the proof is to use the same choices of test functions (with
extra dependence on time) of (\ref{phic}), (\ref{phibj}), (\ref{phibij}) and
(\ref{phia}) and estimate the new contribution $\int_{s}^{t}\iint_{\Omega
\times \mathbb{R}^{3}}\partial _{t}\psi f$ in the time dependent weak
formulation 
\begin{eqnarray}
&& \int_{s}^{t}\int_{\gamma _{+}}\psi f -\int_{s}^{t}\int_{\gamma _{-}}\psi
f -\int_{s}^{t}\iint_{\Omega \times \mathbb{R}^{3}}\big\{v\cdot \nabla
_{x}\psi+\e^2\sqrt{\mu}\Phi\cdot\nabla_v\frac{\psi}{\sqrt{\mu}}\big\} f -\e%
\int_{s}^{t}\iint_{\Omega \times \mathbb{R}^{3}}\partial _{t}\psi f  \notag
\\
&&=-\e\iint_{\Omega \times \mathbb{R}^{3}}\psi f(t)+\e\iint_{\Omega \times 
\mathbb{R}^{3}}\psi f(s) +\e^{-1}\int_{s}^{t}\iint_{\Omega \times \mathbb{R}%
^{3}}[-\psi L(\mathbf{I}-\mathbf{P})f+\psi g].  \label{timeweak}
\end{eqnarray}
We note that, with such choices $G(t)=-\iint_{\Omega \times \mathbb{R}%
^{3}}\psi f(t)$, and $|G(t)|\lesssim \| f(t)\| _{2}^{2}$.\ Without loss of
generality we give the proof for $s=0$.

\textit{Remark.} We note that (\ref{dlinearcondition}), (\ref{dlinear}) and (%
\ref{dlinearcondition1}) are all invariant under a standard $t$%
-mollification for all $t>0$. The estimates in \textit{Step 1} to \textit{%
Step 3} {below} are obtained via a $t$-mollification so that all the
functions are smooth in $t$. For the notational simplicity we do not write
explicitly the parameter of the regularization. 

\vspace{4pt}

\noindent \textit{Step 1.} \textit{Estimate of }$\nabla _{x}\Delta
_{N}^{-1}\partial _{t}a=\nabla _{x}\partial _{t}\f _{a}$. In the weak
formulation (with time integration over $[t,t+\d ]$), if we choose the test
function $\psi =\f \sqrt{\mu }$ with $\f (x)$ dependent only of $x$, then we
get (note that $Lf$ and $g$, {integrated} against $\f (x)\sqrt{\mu }$ are
zero) 
\begin{equation*}
\e\int_{\Omega }[a(t+\d )-a(t)]\f (x)=\int_{t}^{t+\d }\int_{\Omega }(b\cdot
\nabla _{x})\f (x)+\int_{t}^{t+\d }\int_{\gamma _{-}}r\f \sqrt{\mu },
\end{equation*}%
where we have used the splitting (\ref{bsplit}) and (\ref{insidesplit}).
Taking difference quotient, we obtain for all $t$ 
\begin{equation*}
\e\int_{\Omega }\f \partial _{t}a=\int_{\Omega }(b\cdot \nabla _{x})\f %
+\int_{\gamma _{-}}r\f \sqrt{\mu }.
\end{equation*}%
Notice that, for $\f =1$, from (\ref{dlinearcondition}), the right hand side
of the above equation is zero. Hence, for all $t>0$, $\int_{\Omega }\partial
_{t}a(t)dx=0$. On the other hand, for all $\f (x)\in H^{1}(\Omega )$, we
have, by the trace theorem $|\f |_{2}\lesssim \| \f \| _{H^{1}}$, $\e\left\|
\int_{\Omega }\f(x)\partial _{t}a \mathrm{d} x\right\| \lesssim |r|_{2,-}|\f %
|_{2}+ \| b \| _{2}\| \f \| _{H^{1}}\lesssim \{ \| b(t) \|_{2}+|r|_{2,-}\}
\| \f \| _{H^{1}}$. Therefore we conclude that, for all $t>0$, 
\begin{equation*}
\e\| \partial _{t}a(t)\| _{(H^{1})^{\ast }}\ \lesssim \ \| b(t)\|
_{2}+|r|_{2},
\end{equation*}
where $(H^{1})^{\ast }\equiv (H^{1}(\Omega ))^{\ast }$ is the dual space of $%
H^{1}(\Omega )$ with respect to the dual pair $\langle A,B\rangle
=\int_{\Omega }A(x)B(x)\mathrm{d} x$, for $A\in H^{1}$ and $B\in
(H^{1})^{\ast }$.

On the other hand, $\f _{a}$ in (\ref{phia}) is the solution of $- \Delta
\partial_{t} \f _{a} = \partial_{t} a, \ \frac{\partial }{\partial n}%
\partial_{t} \f _{a} =0$ at $\partial\Omega$ with $\int_{\Omega }\partial
_{t}a(t,x)dx=0$ for all $t>0$. From the standard elliptic theory, 
\begin{equation*}
\e\| \nabla _{x}\partial _{t}\f _{a}\| _{2}=\e\| \Delta _{N}^{-1}\partial
_{t}a(t)\| _{H^{1}} \lesssim \e\| \partial _{t}a(t)\| _{(H^{1})^{\ast
}}\lesssim \{\| b(t)\| _{2}+|r|_{2}\}.
\end{equation*}%
Therefore, we conclude, for almost all $t>0$, 
\begin{equation}
\| \nabla _{x}\partial _{t}\f _{a}(t)\| _{2}\lesssim\e^{-1} \{\| b(t)\|
_{2}+|r|_{2}\}.  \label{-1a}
\end{equation}

\vspace{4pt}

\noindent \textit{Step 2.} \textit{Estimate of }$\nabla _{x}\Delta
^{-1}\partial _{t}b^{j}=\nabla _{x}\partial _{t}\f _{b}^{i}$. In (\ref%
{timeweak}), we choose a test function $\psi =\f
(x)v_{i}\sqrt{\mu }$. Since 
$\int v_{i}v_{j}\mu (v)\mathrm{d} v=\int v_{i}v_{j}(\frac{|v|^{2}}{2}-\frac{3%
}{2})\mu (v)\mathrm{d} v=\delta _{ij}$, we get 
\begin{eqnarray*}
\e \int_{\Omega }[b_{i}(t+\d )-b_{i}(t)]\f &=&-\int_{t}^{t+\d }\int_{\gamma
}f\varphi v_{i}\sqrt{\mu }+\int_{t}^{t+\d }\int_{\Omega }\partial _{i}\f %
\lbrack a+c]-\e^2\int_t^{t+\d}\int_\O \Phi_i\f a \\
& +&\e^{-1}\int_{t}^{t+\d }\iint_{\Omega \times \mathbb{R}%
^{3}}\sum_{j=1}^{d}v_{j}v_{i}\sqrt{\mu }\partial _{j}\f (\mathbf{I}-\mathbf{P%
})f+\int_{t}^{t+\d }\iint_{\Omega \times \mathbb{R}^{3}}\f v_{i}g\sqrt{\mu }.
\end{eqnarray*}%
Taking difference quotient, we obtain 
\begin{eqnarray*}
\e\int_{\Omega }\partial _{t}b_{i}(t)\f &=&-\int_{\gamma }f(t)v_{i}\f \sqrt{%
\mu }+\int_{\Omega }\partial _{i}\f \lbrack a(t)+c(t)]-\e^2\int_\O \Phi_i\f a
\\
& +&\e^{-1}\iint_{\Omega \times \mathbb{R}^{3}}\sum_{j=1}^{d}v_{j}v_{i}\sqrt{%
\mu }\partial _{j}\f (\mathbf{I}-\mathbf{P})f(t)+\iint_{\Omega \times 
\mathbb{R}^{3}}\f v_{i}g(t)\sqrt{\mu }.
\end{eqnarray*}%
For fixed $t>0$, we choose $\f = \partial _{t}\f _{b}^{i}$ in (\ref{jb})
solving $-\Delta \partial _{t}\f _{b}^{i}=\partial _{t}b_{i}(t), \ \partial
_{t}\f _{b}^{i}|_{\partial \Omega }=0. $ The boundary terms vanish because
of the Dirichlet boundary condition on $\partial _{t}\f _{b}^{i}$. Then we
have, for $t\geq 0$, 
\begin{eqnarray*}
&&\e\int_{\Omega }|\nabla _{x}\Delta ^{-1}\partial _{t}b_{i}(t)|^{2} =\e%
\int_{\Omega }|\nabla _{x} \partial _{t}\f _{b}^{i}|^{2} =-\e\int_{\Omega
}\Delta \partial _{t}\f _{b}^{i} \partial _{t}\f _{b}^{i} \\
&\lesssim &4\eta\e \{\| \nabla _{x} \partial _{t}\f _{b}^{i}\| _{2}^{2}+\|
\partial _{t}\f _{b}^{i}\| _{2}^{2}\}+\frac{1}{4\eta\e}[\| a(t)\|
_{2}^{2}+\| c(t)\| _{2}^{2}+\| \frac{g}{\sqrt{\nu}}\|_2^2]+\frac 1{4\eta\e%
^3}\| (\mathbf{I}-\mathbf{P})f(t)\| _{2}^{2} \\
&& +\frac{\e^3}{4\eta}\|\Phi\|_\infty^2\|a\|_2^2 \\
&\lesssim &8\eta\e \| \nabla _{x} \partial _{t}\f _{b}^{i} \| _{2}^{2}+\frac
1{4\eta\e}[\| a(t)\| _{2}^{2}+\| c(t)\| _{2}^{2}+\| \frac{g }{\sqrt{\nu}}
\|_2^2] +\frac 1{4\eta\e^3} \| (\mathbf{I}-\mathbf{P})f(t)\| _{2}^{2} \\
&&+\| \frac{g }{\sqrt{\nu}} \| _{2}^{2} +\frac{\e^3}{4\eta}%
\|\Phi\|_\infty^2\|a\|_2^2,
\end{eqnarray*}%
where we have used the Poincar\'e inequality. Hence, for all $t>0$ 
\begin{equation}
\| \nabla _{x} \partial _{t} \varphi_{b}^{i} \| _{2} \lesssim \e^{-1}\big\{%
\| a(t)\| _{2} +\| c(t)\| _{2}+\|\frac{g}{\sqrt{\nu}}\|_2\big\}+ \e^{-2}\| (%
\mathbf{I}-\mathbf{P})f(t)\| _{2}.  \label{-1b}
\end{equation}

\vspace{4pt}

\noindent \textit{Step 3.} \textit{Estimate of }$\nabla _{x}\Delta
^{-1}\partial _{t}c=\nabla _{x}\partial _{t}\f _{c}$. In the weak
formulation, we choose a test function $\f (x)(\frac{|v|^{2}}{2}-\frac{3}{2})%
\sqrt{\mu }$. Since $\int \mu (v)(\frac{|v|^{2}}{2}-\frac{3}{2})=0$, $\int
\mu (v)v_{i}v_{j}(\frac{|v|^{2}-3}{2})=\delta _{ij}$, $\int \mu (v)(\frac{%
|v|^{2}}{2}-\frac{3}{2})^{2} \neq 0$, 
\begin{eqnarray*}
&& \frac 3 2\e\int_{\Omega }\f (x)c(t+\d ,x)\mathrm{d} x-\frac 3 2\e%
\int_{\Omega }\f (x)c(t,x) \mathrm{d} x = \int_{t}^{t+\d }\int_{\Omega
}b\cdot \nabla _{x}\f -\int_{t}^{t+\d }\int_{\gamma }(\frac{|v|^{2}}{2}-%
\frac{3}{2})\sqrt{\mu }\f f \\
&& +\e^{-1}\int_{t}^{t+\d }\iint_{\Omega \times \mathbb{R}^{3}}(\mathbf{I}-%
\mathbf{P})f(\frac{|v|^{2}}{2}-\frac{3}{2})\sqrt{\mu }(v\cdot \nabla _{x})\f
\\
&& +\int_{t}^{t+\d }\iint_{\Omega \times \mathbb{R}^{3}}\f g\Big(\frac{%
|v|^{2}}{2}-\frac{3}{2}\Big)\sqrt{\mu }-\e^2\int_t^{t+\d}\int_\O \f\Phi\cdot
b.
\end{eqnarray*}%
Taking difference quotient, we obtain 
\begin{eqnarray*}
&&\e\int_{\Omega }\f (x)\partial _{t}c(t,x)\mathrm{d} x=\frac{2}{3}%
\int_{\Omega }b(t)\cdot \nabla _{x}\f -\frac 2 3\int_{\gamma }(\frac{|v|^{2}%
}{2}-\frac{3}{2})\sqrt{\mu }\varphi f(t) \\
&&\ \ +\frac 2 3\iint_{\Omega \times \mathbb{R}^{3}} \e^{-1}(\mathbf{I}-%
\mathbf{P})f(t)(\frac{|v|^{2}}{2}-\frac{3}{2})\sqrt{\mu }(v\cdot \nabla _{x})%
\f +\f g(t)(\frac{|v|^{2}}{2}-\frac{3}{2})\sqrt{\mu }-\frac 2 3\e^2\int_\O \f%
\Phi\cdot b.
\end{eqnarray*}%
Note that $\partial_{t} \varphi_{c}$ in (\ref{phic}) is the solution of $%
-\Delta \partial_{t} \varphi_{c}=\partial _{t}c(t)$, $\partial_{t}
\varphi_{c}|_{\partial \Omega }=0$. 

The boundary terms vanish because of the Dirichlet boundary condition on $%
\partial_{t} \varphi_{c}$. We follow the same procedure of estimates $\nabla
_{x}\Delta ^{-1}\partial _{t}a$ and $\nabla _{x}\Delta ^{-1}\partial _{t}b$
to have 
\begin{eqnarray*}
&&\e\| \nabla _{x}\Delta ^{-1}\partial _{t}c(t)\| _{2}^{2}=\e\int_{\Omega
}|\nabla _{x} \partial_{t} \varphi_{c}(x)|^{2}dx=\e\int_{\Omega }
\partial_{t} \varphi_{c}(x)\partial _{t}c(t,x)dx \\
&\lesssim &4 \eta\e \{\| \nabla _{x} \partial_{t} \varphi_{c} \| _{2}^{2}+\|
\partial_{t} \varphi_{c}\| _{2}^{2}\}+\frac 1 {4\eta\e}\| b(t)\|
_{2}^{2}+\frac 1 {4\eta\e^3}\| (\mathbf{I}-\mathbf{P})f(t)\| _{2}^{2}+\| 
\frac{g}{\sqrt{\nu}}\| _{2}^{2}+\frac{\e^3}{4\eta}\|\Phi\|_\infty^2\|b\|_2^2
\\
&\lesssim &8\eta\e \| \nabla _{x} \partial_{t} \varphi_{c}\| _{2}^{2}+\frac
1{4\eta\e}\| b(t)\| _{2}^{2} +\frac 1{4\eta\e^3}\| (\mathbf{I}-\mathbf{P}%
)f(t)\| _{2}^{2}+\| \frac{g}{\sqrt{\nu}}\| _{2}^{2}+\frac{\e^3}{4\eta}%
\|\Phi\|_\infty^2\|b\|_2^2,
\end{eqnarray*}%
where we have used the Poincar\'e inequality. Finally we have, for all $t>0$%
, 
\begin{equation}
\| \nabla _{x}\Delta ^{-1}\partial _{t}c(t)\| _{2}\lesssim \e^{-1}\big\{ \|
b(t)\| _{2} +\| \frac{g}{\sqrt{\nu}}\| _{2}\big\}+\e^{-2}\| (\mathbf{I}-%
\mathbf{P})f(t)\| _{2}.  \label{-1c}
\end{equation}

\vspace{4pt}

\noindent\textit{Step 4.} \textit{Estimate of }$a,b,c$\textit{\
contributions in} (\ref{timeweak}). To estimate $c$ contribution in (\ref%
{timeweak}), we plug (\ref{phic}) into (\ref{timeweak}) to have from (\ref%
{insidesplit}) 
\begin{equation*}
\begin{split}
&\int_{0}^{t}\iint_{\Omega \times \mathbb{R}^{3}}(|v|^{2}-\beta _{c})v_{i}%
\sqrt{\mu }\partial _{t}\partial _{i}\f _{c}f \\
=&\sum_{j=1}^{d}\int_{0}^{t}\iint_{\Omega \times \mathbb{R}%
^{3}}(|v|^{2}-\beta _{c})v_{i}v_{j}\mu (v)\partial _{t}\partial _{i}\f %
_{c}b_{j} +\int_{0}^{t}\iint_{\Omega \times \mathbb{R}^{3}}(|v|^{2}-\beta
_{c})v_{i}\sqrt{\mu }\partial _{t}\partial _{i}\f _{c}(\mathbf{I}-\mathbf{P}%
)f.
\end{split}%
\end{equation*}%
The second line has non-zero contribution only for $j=i$ which leads to zero
by the definition of $\beta _{c}$ in (\ref{beta}). We thus have from (\ref%
{-1c}), {for $\e $ small,} 
\begin{eqnarray}
&&\e\left\| \int_{0}^{t}\iint_{\Omega \times \mathbb{R}^{3}}(|v|^{2}-\beta
_{c})v_{i}\sqrt{\mu }\partial _{t}\partial _{i}\f _{c}f\right\|  \notag \\
&\lesssim& \e\int_{0}^{t}\Big\{\e^{-1}[\| b\| _{2}+\| \frac{g}{\sqrt{\nu}}
\| _{2}]+\e^{-2}\| (\mathbf{I}-\mathbf{P})f\| _{2} \Big\}\| (\mathbf{I}-%
\mathbf{P})f\| _{2}  \notag \\
& \lesssim& \int_{0}^{t} \eta\| b\| _{2}^{2}+ \e^{-1}\| (\mathbf{I}-\mathbf{P%
})f\| _{2}^{2}+\| \frac{g}{\sqrt{\nu}} \| _{2}^{2}.  \notag
\end{eqnarray}%
{Combining with (\ref{cestimate}), we conclude, for $\eta $ small}, 
\begin{eqnarray}
\int_{{0}}^{t}\| c(s)\| _{2}^{2}\mathrm{d} s &\lesssim &G(t)-G(0) +\int_{{0}%
}^{t}\Big\{\e^{-2}\| (\mathbf{I}-\mathbf{P})f(s)\| _{\nu }^{2}+\| \frac{g}{%
\sqrt{\nu}} \| _{2}^{2}+|(1-P_{\gamma })f(s)|_{2,+}^{2}  \notag
\label{timec} \\
&& \ \ \ \ \ \ \ \ \ \ \ \ \ \ \ \ \ \ \ \ \ \ \ +|r(s)|_{2}^{2}+\eta\|
b(s)\| _{2}^{2}+\e^2\|\Phi\|_\infty \big[\|a(s)\|_2^2+\|b(s)\|_2^2\big]\Big\}%
\mathrm{d} s.  \notag
\end{eqnarray}%
To estimate $b$ in (\ref{timeweak}), by (\ref{insidesplit}), we plug (\ref%
{phibj}) into (\ref{timeweak}) to get: 
\begin{equation}
\int_{0}^{t}\iint (v_{i}^{2}-\beta _{b})\sqrt{\mu }\partial _{t}\partial _{j}%
\f _{b}^{j}f={{\int_{0}^{t}\iint (v_{i}^{2}-\beta _{b}){\mu }\partial
_{t}\partial _{j}\f _{b}^{j}\{\frac{|v|^{2}}{2}-\frac{3}{2}\}c}}%
+(v_{i}^{2}-\beta _{b})\sqrt{\mu }\partial _{t}\partial _{j}\f_{b}^{j}(%
\mathbf{I}-\mathbf{P})f, \ \ \   \notag
\end{equation}%
where we have used (\ref{alpha}) to remove the $a$ contribution. We thus
have from (\ref{-1b}), 
\begin{eqnarray*}
&&\left\| \int_{0}^{t}\iint_{\Omega \times \mathbb{R}^{3}}(v_{i}^{2}-\beta
_{b})\sqrt{\mu }\partial _{t}\partial _{j}\f _{b}^{j}f\right\|  \notag \\
&\lesssim &\int_{0}^{t}\Big\{\| a\| _{2}+\| c\| _{2}+\| (\mathbf{I}-\mathbf{P%
})f\| _{2}+\| \frac{g}{\sqrt{\nu}} \| _{2}\Big\}\Big\{\| c\| _{2}+\| (%
\mathbf{I}-\mathbf{P})f\| _{2}\Big\}  \notag \\
&\lesssim &\int_{0}^{t}\Big\{\| (\mathbf{I}-\mathbf{P})f\| _{2}^{2}+\| c\|
_{2}^{2}+\| \frac{g}{\sqrt{\nu}} \| _{2}^{2}+{\e \| a\| _{2}^{2}\Big\}}.
\end{eqnarray*}%
Next we plug (\ref{phibij}) into (\ref{timeweak}) and from (\ref{-1b}), 
\begin{equation}
\begin{split}
& \e\int_{0}^{t}\int_{\Omega \times \mathbb{R}^{3}}|v|^{2}v_{i}v_{j}\sqrt{%
\mu }\partial _{t}\partial _{j}\f _{b}^{i}f=\e\int_{0}^{t}\int_{\Omega
\times \mathbb{R}^{3}}|v|^{2}v_{i}v_{j}\sqrt{\mu }\partial _{t}\partial _{j}%
\f _{b}^{i}(\mathbf{I}-\mathbf{P})f \\
\lesssim & \e\int_{0}^{t}\{\e^{-1}(\| a\| _{2}+\| c\| _{2}+\| \frac{g}{\sqrt{%
\nu}} \| _{2})+\e^{-2}\| (\mathbf{I}-\mathbf{P})f\| _{2}\}\| (\mathbf{I}-%
\mathbf{P})f\| _{2} \\
\lesssim & \int_{0}^{t}\Big\{\e^{-1}\| (\mathbf{I}-\mathbf{P})f\| _{\nu
}^{2}+\| \frac{g}{\sqrt{\nu}} \| _{2}^{2}+{\eta \big[\| a\| _{2}^{2}+\| c\|
_{2}^{2}}\big]\Big\}.
\end{split}
\label{bt2}
\end{equation}%
{Combining this with (\ref{bestimate}), we conclude from (\ref{timeweak})
that} 
\begin{equation}
\begin{split}  \label{timeb}
& \int_{0}^{t}\| b(s)\| _{2}^{2}\mathrm{d} s\lesssim G(t)-G(0) \\
& \ \ \ \ \ \ \ \ \ \ \ \ \ \ \ \ \ +\int_{0}^{t}\Big\{\e^{-2}\| (\mathbf{I}-%
\mathbf{P})f(s)\| _{\nu }^{2}+\| \frac{g}{\sqrt{\nu}} \|
_{2}^{2}+|(1-P_{\gamma })f(s)|_{2,+}^{2} \\
& \ \ \ \ \ \ \ \ \ \ \ \ \ \ \ \ \ \ \ \ \ \ +|r(s)|_{2}^{2}+\e%
^2\|\Phi\|_\infty(\|a\|_2^2+\|c\|_2^2)+\eta \| a\| _{2}^{2}+\| c\|
_{2}^{2}(1+\eta )\Big\}\mathrm{d} s.
\end{split}%
\end{equation}
Finally in order to estimate $a$ contribution in (\ref{timeweak}) we plug (%
\ref{phia}) for into (\ref{timeweak}). We estimate%
\begin{eqnarray}
&&\e\int_{0}^{t}\int_{\Omega \times \mathbb{R}^{3}}(|v|^{2}-\beta
_{a})v_{i}\mu \partial _{t}\partial _{i}\f _{a}f  \notag \\
&=&\e\int_{0}^{t}\int_{\Omega \times \mathbb{R}^{3}}(|v|^{2}-\beta
_{a})(v_{i})^{2}\mu \partial _{t}\partial _{i}\f _{a}b_{i}+\e%
\int_{0}^{t}\int_{\Omega \times \mathbb{R}^{3}}(|v|^{2}-\beta _{a})v_{i}\mu
\partial _{t}\partial _{i}\f _{a}(\mathbf{I}-\mathbf{P})f  \notag \\
&\lesssim &\e\int_{0}^{t}\e^{-1}\{\| b(t)\| _{2,\Omega }+|r|_{2}\}\{\| b\|
_{2}+\| (\mathbf{I}-\mathbf{P})f\| _{2}\}.  \notag
\end{eqnarray}%
Combining this with (\ref{aestimate}), we conclude 
\begin{equation}
\begin{split}  \label{timea}
\int_{0}^{t}\| a(s)\| _{2}^{2}\mathrm{d} s \ \lesssim \ &G(t)-G(0)
+\int_{0}^{t}\Big\{\e^{-2}\| (\mathbf{I}-\mathbf{P})f(s)\| _{\nu }^{2}
+|(1-P_{\gamma })f(s)|_{2,+}^{2} \\
& \ \ \ \ \ \ \ \ \ \ \ \ \ \ \ \ \ \ \ \ +\| \frac{g}{\sqrt{\nu}}(s) \|
_{2}^{2}+|r(s)|_{2}^{2}+{\| b\| _{2}^{2}}+\e^2\|\Phi\|_\infty \|\mathbf{P}
\mathring f\|_2^2\Big\}\mathrm{d} s.
\end{split}%
\end{equation}%
From (\ref{timec}), (\ref{timeb}) and (\ref{timea}), we prove the lemma for $%
\e$ sufficiently small, by choosing $\eta $ small.
\end{proof}

Now we are ready to prove the main result of this section:

\begin{proof}[\textbf{Proof of Proposition \protect\ref{dlinearl2}}]
Define the approximating sequence with $f^{0}\equiv f_{0}$, (with $\tilde
\nu=\nu-\frac 1 2\e^3\Phi\cdot v$): 
\begin{equation}
\partial _{t}f^{\ell +1}+\e^{-1}v\cdot \nabla _{x}f^{\ell +1}+\e%
\Phi\cdot\nabla_v f^{\ell+1}+\frac 1 {\e^2}\tilde\nu f^{\ell +1}-\frac 1 {\e%
^2}Kf^{\ell }=\e^{-1}g,\text{ \ \ }f^{n+1}|_{t=0}=f_{0},
\label{daproximatenew}
\end{equation}%
and $f_{-}^{\ell +1}=(1-\frac{\e}{j})P_{\gamma }f^{\ell }+r$.

\vskip.3cm \noindent \textit{Step 1.} \textit{Fix }$j$\textit{, }$f^{\ell
}\rightarrow f^{j}$\textit{\ as }$\ell \rightarrow \infty $\textit{.} {%
Notice that, 
\begin{eqnarray*}
|(Kf^{\ell },f^{\ell +1})| &\lesssim& \iint_{\mathbb{R}^{3} \times \mathbb{R}%
^{3}} |\mathbf{k}(v,u)|^{1/2} |f^{\ell}(u)| |\mathbf{k}(v,u)|^{1/2}
|f^{\ell+1} (v) | \mathrm{d} v \mathrm{d} u \\
&\lesssim&\sqrt{ \int_{u} |f^{\ell}(u)|^{2}\int_{v} |\mathbf{k}(v,u)| } 
\sqrt{ \int_{v} |f^{\ell+1}(v)|^{2} \int_{u} |\mathbf{k}(v,u)|} \ \lesssim \
\|f^{\ell}\| _{2}^{2} + \|f^{\ell+1}\| _{2}^{2},
\end{eqnarray*}
where we used $\sup_{u}\int_{v}| \mathbf{k}(v,u)|+\sup_{v}\int_{u}| \mathbf{k%
}(v,u)| <+\infty$.} 

Note that, since $\int_{\g_-} r\sqrt{\mu}=0$, we have 
\begin{equation}
|(1-\frac{\e}{j})P_{\gamma }f^{\ell }+r|_{{2,-}}^{2}=|(1-\frac{\e}{j}%
)P_{\gamma }f^{\ell }|_{{2,-}}^{2}+|r|_{2}^{2}.  \label{Cj}
\end{equation}%
By Green's identity (\ref{timeweak}) with $f^{\ell +1}$ in (\ref%
{daproximatenew}), 
\begin{eqnarray}
&&\| f^{\ell +1}(t)\| _{2}^{2}+\e^{-2}\int_{0}^{t}\| f^{\ell +1} \| _{ \nu
}^{2}+\e^{-1}\int_{0}^{t}|f^{\ell +1} |_{2,+}^{2}  \notag \\
&\leq & \e^{-1}\left[(1-\frac{\e}{j})^{2} \right] \int_{0}^{t}|P_{\gamma
}f^{\ell }|_{2,-}^{2} +\e^{-1}\int_{0}^{t}|r|_{2}^{2}+C\e^{-2}\int_{0}^{t}%
\max_{1\leq i\leq \ell +1}\| f^{i} \| _{2}^{2} +\int_{0}^{t}\| {\ \frac{g}{%
\sqrt{\nu}}} \| _{2 }^{2} +\| f_{0}\| _{2}^{2}  \notag \\
&\leq &\e^{-1}\left[ (1-\frac{\e}{j})^{2}\right] \int_{0}^{t}|f^{\ell }
|_{2,+}^{2} +\e^{-1}\int_{0}^{t}|r|_{2}^{2} +C\e^{-2}\int_{0}^{t}\max_{1\leq
i\leq \ell +1}\| f^{i} \| _{2}^{2} +\int_{0}^{t}\| {\frac{g}{\sqrt{\nu}} }\|
_{2 }^{2} +\| f_{0}\| _{2}^{2} .  \notag
\end{eqnarray}%

Set $\eta=(1-\frac \e j)^2<1$. Now use this inequality to bound for $\e%
^{-1}\int _0^t|f^\ell|^2_{2,-}$ and iterate:%
\begin{eqnarray*}
&&\| f^{\ell +1}(t)\| _{2}^{2}+\e^{-2}\int_{0}^{t}\| f^{\ell +1} \| _{ \nu
}^{2}+\e^{-1}\int_{0}^{t}|f^{\ell +1} |_{2,+}^{2}  \notag \\
&\leq &\eta\Big\{ \e^{-1}\eta \int_{0}^{t}|f^{\ell-1 } |_{2,+}^{2} +\e%
^{-1}\int_{0}^{t}|r|_{2}^{2}+C\e^{-2}\int_{0}^{t}\max_{1\leq i\leq \ell}\|
f^{i} \| _{2}^{2} +\int_{0}^{t}\| {\ \frac{g}{\sqrt{\nu}}} \| _{2 }^{2}ds+\|
f_{0}\| _{2}^{2} \Big\} \\
&&+\e^{-1}\int_{0}^{t}|r|_{2}^{2}+C\e^{-2}\int_{0}^{t}\max_{1\leq i\leq \ell
+1}\| f^{i} \| _{2}^{2} +\int_{0}^{t}\| {\frac{g}{\sqrt{\nu}}} \| _{\nu
}^{2} +\| f_{0}\| _{2}^{2}  \notag \\
&=& \eta^2\e^{-1}\int_0^t |f^{\ell-1 } |_{2,+}^{2} +(1+\eta)\Big\{\e%
^{-1}\int_{0}^{t}|r|_{2}^{2}+C\e^{-2}\int_{0}^{t}\max_{1\leq i\leq \ell
+1}\| f^{i} \| _{2}^{2} +\int_{0}^{t}\| {\frac{g}{\sqrt{\nu}}} \| _{2
}^{2}+\| f_{0}\| _{2}^{2}\Big\} \\
&& \ \ \ \ \ \ \ \ \ \ \ \ \ \ \ \ \ \ \ \ \ \ \ \ \ \ \ \ \ \ \ \ \ \ \ \ \
\ \ \ \ \ \ \ \vdots \\
& \lesssim& \eta^{\ell+1}\e^{-1}\int_0^t |f^0|_{2,{\ + }}^2 + \frac{%
(1-\eta)^{\ell+1}}{1-\eta} \Big\{\e^{-1}\int_{0}^{t}|r|_{2}^{2} + \e%
^{-2}\int_{0}^{t}\max_{1\leq i\leq \ell +1}\| f^{i} \| _{2}^{2}
+\int_{0}^{t}\| {\frac{g}{\sqrt{\nu}}} \| _{2}^{2} +\| f_{0}\| _{2}^{2}\Big\}%
.
\end{eqnarray*}

We therefore have, from $f^{0}\equiv f_{0}$, 
\begin{equation*}
\max_{1\leq i\leq \ell +1}\| f^{i}(t)\| _{2}^{2}\lesssim _{\eta ,j} \Big\{\e%
^{-1}\int_{0}^{t}|r|_{2}^{2}+\e^{-2}\int_{0}^{t}\max_{1\leq i\leq \ell +1}\|
f^{i} \| _{2}^{2} +\int_{0}^{t}\| g \| _{\nu }^{2} +\| f_{0}\| _{2}^{2}+\e%
^{-1}t\| f_{0}\| _{\nu }^{2}+t|f_{0}|_{2,+}^{2} \Big\}.
\end{equation*}
By Gronwall's lemma, we have, for fixed $t>0$, 
\begin{equation*}
\max_{1\leq i\leq \ell +1}\| f^{i}(t)\| _{2}^{2}\lesssim _{\eta,\e %
,j,t}\left\{ \int_{0}^{t}|r|_{2}^{2}+\int_{0}^{t}\| g(s)\| _{\nu }^{2}%
\mathrm{d} s+\| f_{0}\| _{2}^{2}+t\| f_{0}\| _{\nu
}^{2}+t|f_{0}|_{2,+}^{2}\right\}.
\end{equation*}%
This in turns leads to 
\begin{eqnarray*}
&&\max_{1\leq i\leq \ell +1}\left\{ \| f^{i}(t)\| _{2}^{2}+\int_{0}^{t}\|
f^{i} \| _{\nu }^{2}+\int_{0}^{t}|f^{i} |_{2, {+}}^{2} \right\} \\
&& \lesssim _{\eta,\e ,j,t}\left\{ \int_{0}^{t}|r|_{2}^{2}+\int_{0}^{t}\| g
\| _{ \nu }^{2} +\| f_{0}\| _{2}^{2}+t\| f_{0}\| _{\nu
}^{2}+t|f_{0}|_{2,+}^{2}\right\} .
\end{eqnarray*}
Upon taking the difference, we have 
\begin{equation}
\partial _{t}[f^{\ell +1}-f^{\ell }]+\e^{-1}v\cdot \nabla _{x}[f^{\ell +1}-f^{\ell
}]+ \e \Phi\cdot\nabla_v [f^{\ell +1}-f^{\ell }] + \e^{-2}\tilde\nu \lbrack f^{\ell +1}-f^{\ell }]=\e^{-2}K[f^{\ell
}-f^{\ell -1}],\text{ \ \ \ }  \label{dndifference}
\end{equation}
with \ $[f^{\ell +1}-f^{\ell }](0)\equiv 0$ and $f_{-}^{\ell +1}-f_{-}^{\ell
}=(1-\frac{\e}{j})P_{\gamma }[f^{\ell }-f^{\ell -1}]$.

Applying previous
iteration to $f^{\ell +1}-f^{\ell }$ yields 
\begin{eqnarray}
&&\|f^{\ell+1}(t)-f^{\ell}(t)\|_{2}+\varepsilon
^{-2}\int_{0}^{t}\|f^{\ell+1}(s)-f^{\ell}(s)\|_{\nu }^{2} \dd s+\varepsilon
^{-1}\int_{0}^{t}|f^{\ell+1}(s)-f^{\ell}(s)|_{ 2,{+}}^{2}\dd s \notag\\
&\leq &\eta\varepsilon ^{-1}\int_{0}^{t}|f^{\ell}(s)-f^{\ell-1}(s)|_{2,{+}}^{2} \dd s+C_{K}\int_{0}^{t}\|f^{\ell}(s)-f^{\ell-1}(s)\|_{2}^{2}\dd s
\label{f_ell_Cauchy}
\\
&\leq &\eta\Big\{\varepsilon ^{-1}\int_{0}^{t}|f^{\ell}(s)-f^{\ell-1}(s)|_{2,{+}}^{2}\dd s+\sup_{0\leq s\leq T}\|f^{\ell}(s)-f^{\ell-1}(s)\|_{2}^{2}\Big\},\notag
\end{eqnarray}%
for $TC_{K}<\eta<1.$ This implies that $f^{\ell}$ is Cauchy with respect
to the norm    
\begin{equation*}
\varepsilon ^{-1}\int_{0}^{T}|f^{\ell}(s)|_{2,{+}}^{2}\dd s+\sup_{0\leq
s\leq T}\|f^{\ell}(s)\|_{2}^{2},
\end{equation*}
in $[0,T].$ Repeating the arguemnt for $[0,T],[T,2T]....$ we deduce that for
finite $t,$ there exists a (unique) limit function $f^{\ell}\rightarrow f^{j}$
such that%....
\begin{equation}  \label{dlinearj}
\begin{split}
\partial_{t}{f^{j}}+\e^{-1}v\cdot \nabla _{x} {f^{j}} +\frac{\e}{\sqrt{\mu}}
\Phi\cdot\nabla_v (\sqrt{\mu} {f^{j}})+\e^{-2}L {f^{j}}=g,\text{ \ \ } {%
f^{j} }(0)=f_{0}, \\
{f^{j}}|_{\gamma_{-}}=(1-\frac{\e}{j})P_{\gamma }{f^{j}}+r.
\end{split}%
\end{equation}

\vspace{4pt} \noindent \textit{Step 2.} \textit{Let} $j\rightarrow \infty $.
Upon using Green's identity and the boundary condition and (\ref{Cj}), 
\begin{eqnarray*}
&&\| f^{j}(t)\| _{2}^{2}+\e^{-2}\int_{0}^{t}\| (\mathbf{I}-\mathbf{P})f^{j}
\| _{\nu }^{2} +\e^{-1}\int_{0}^{t}|(1-P_{\gamma })f^{j} |_{2,+}^{2}  \notag
\\
&\leq &\e^{-1}\int_{0}^{t}|r|_{2}^{2} {\ + \frac{\e}{j} \e^{-1} \int^{t}_{0}
|P_{\gamma} f^{j}|_{2,+}^{2}} {+\int_{0}^{t}\| \nu^{- \frac{1}{2}} {(\mathbf{%
I} - \mathbf{P}) g} \| _{2 }^{2} + \e^{-2}\int_{0}^{t}\| {\ \mathbf{P} g} \|
_{2 }^{2} } \\
&&+\frac \e 2\int_0^t \int_{\O \times\mathbb{R}^3}|\Phi\|v\|f^j|^2+ \|
f_{0}\| _{2}^{2} .
\end{eqnarray*}
{By the trace theorem, Lemma \ref{trace_s}, 
\begin{equation*}
\begin{split}
& \int^{t}_{0} | P_{\gamma} f^{j}|_{2,+}^{2} \lesssim \ \int^{t}_{0} | 
\mathbf{1}_{\gamma_{+}^{\delta}} f^{j}|_{2,+}^{2} + \int^{t}_{0} |
(1-P_{\gamma} ) f^{j}|_{2,+}^{2} \\
\lesssim & \e \| f^{j} (0) \|_{2}^{2} + \e \int^{t}_{0} \| f^{j} \|_{2}^{2}
+ \e \int^{t}_{0} \iint_{\Omega \times \mathbb{R}^{3}} \big| [ \partial_{t} +%
\e^{-1} v \cdot \nabla_{x} + \e \Phi \cdot \nabla_{v}] f^{j} f^{j} \big| +
\int^{t}_{0} | (1-P_{\gamma} ) f^{j}|_{2,+}^{2}. \\
\lesssim & \ \e \| f^{j} (0) \|_{2}^{2} + \e \int^{t}_{0} \| f^{j}
\|_{2}^{2} + \int_{0}^{t} \iint_{\Omega \times \mathbb{R}^{3}} \e \frac{%
|\Phi \cdot v|}{2} |f^{j}|^{2}+ \e^{-2} {\nu} |( \mathbf{I} - \mathbf{P}%
)f^{j}|^{2} + |g f^{j}| + \int^{t}_{0} | (1-P_{\gamma} ) f^{j}|_{2,+}^{2}.
\end{split}%
\end{equation*}
From the boundary condition in (\ref{dlinearj}), $\int^{t}_{0}
|f^{j}|_{2,-}^{2} \lesssim \int^{t}_{0} |f^{j}|_{2,+}^{2} + \int^{t}_{0}
|r|_{2,+} ^{2}$. Finally from 
\begin{eqnarray*}
\e\int_0^t \int_{\O \times\mathbb{R}^3} \frac{|\Phi \cdot v|}{2} |f^j|^2
&\lesssim& \e\|\Phi \|_\infty \|(\mathbf{I}-\mathbf{P}) f^j\|_\nu^2+ \e%
\|\Phi\|_\infty \int_0^t \|\P f^j(s)\|_2^2 \mathrm{d} s, \\
\int^{t}_{0} \iint_{\Omega \times \mathbb{R}^{3}} | g f^{j}| & \lesssim &
\int^{t}_{0} \| \nu^{-\frac{1}{2}} (\mathbf{I} - \mathbf{P}){g} \|_{2}^{2} + %
\e^{2} \int^{t}_{0} \| \mathbf{P} {g} \|_{2}^{2} \\
&& + o(1) \big[ \int_{0}^{t} \|\P f^j(s)\|_2^2 \mathrm{d} s + \e^{-2}
\int_{0}^{t} \| (\mathbf{I} - \P ) f^j(s)\|_\nu^2 \mathrm{d} s \big],
\end{eqnarray*}
we get 
\begin{equation}
\begin{split}
& \| f^{j}(t)\| _{2}^{2}+\e^{-2} \int_{0}^{t}\| (\mathbf{I}-\mathbf{P})f^{j}
\| _{\nu }^{2} + \int_{0}^{t}| f^{j} |_{2 }^{2} \\
& \lesssim \e^{-1}\int_{0}^{t}|r|_{2}^{2} +\int_{0}^{t}\| \nu^{- \frac{1}{2}%
} (\mathbf{I} - \mathbf{P}) {g} \| _{2}^{2} +\e^{-2} \int_{0}^{t}\| \mathbf{P%
} {g} \| _{2}^{2} + \| f_{0}\| _{2}^{2}.  \label{jgreen1}
\end{split}%
\end{equation}%
}

Since $\| \mathbf{P}f^{j}(s)\| _{2}^{2}\lesssim $ $\| f^{j}(s)\| _{2}^{2}$,
integrating (\ref{jgreen1}) from $0$ to $t$, we have 
\begin{equation}
\int_{0}^{t}\|\mathbf{P}f^{j} \|_{2}^{2} \lesssim _{t}\e^{-1}%
\int_{0}^{t}|r|_{2}^{2}+\int_{0}^{t}\|\frac{g}{\sqrt{\nu}}\|_{2}^{2} + \|
f_{0}\|_{2}^{2}.  \notag
\end{equation}
Thus, we conclude that, for $j\gg 1$ and $0 < \e \ll 1$, 
\begin{equation}
\|f^{j}(t)\|_{2}^{2}+\e^{-2}\int_{0}^{t}\|f^{j}(s)\|_{\nu }^{2} +
\int_{0}^{t}|f^{j} |_{2 }^{2} \lesssim _{\e,t} \e^{-1}%
\int_{0}^{t}|r|_{2}^{2}+\int_{0}^{t}\|\frac{g}{\sqrt{\nu}}\|_{2}^{2} + \|
f_{0}\|_{2}^{2}.  \label{jbound}
\end{equation}%
By taking a weak limit, we obtain a weak solution $f$ to (\ref{dlinear})
with the same bound (\ref{jbound}). Taking difference, we have%
\begin{eqnarray*}
&&\partial _{t}[f^{j}-f]+\e^{-1}v\cdot \nabla _{x}[f^{j}-f]+\e\frac{1}{\sqrt{%
\mu}}\Phi\cdot\nabla_v( \sqrt{\mu}[f^{j}-f])+\e^{-2}L[f^{j}-f]=0,  \notag \\
&& [f^{j}-f]_{-}=P_{\gamma }[f^{j}-f]+\frac{\e}{j}P_{\gamma }f^{j},\quad
[f^{j}-f](0)=0.  \notag
\end{eqnarray*}
Applying (\ref{jbound}) with $r=\frac{\e}{j}P_{\gamma }f^{j}$ we obtain 
\begin{equation*}
\| f^{j}(t)-f(t)\| _{2}^{2}+\e^{-1}\int_{0}^{t}\| f^{j}(s)-f(s)\| _{\nu
}^{2}ds+ \int_{0}^{t}|f^{j}(s)-f(s)|_{2}^{2}ds\lesssim _{t}\frac{1}{j}
\int_{0}^{t}|P_{\gamma }f^{j}|^{2} \rightarrow 0.
\end{equation*}%
We thus construct $f$ as a $L^{2}$ solution to (\ref{dlinear}). %

\vspace{4pt}

\noindent \textit{Step 3.} \textit{Final estimate. }To conclude our
proposition, let $y(t)\equiv e^{\lambda t}f(t)$. We multiply (\ref{dlinear})
by $e^{\lambda t}$, so that $y$ satisfies 
\begin{equation}
\partial _{t}y+\e^{-1}v\cdot \nabla _{x}y+\frac{1}{\sqrt{\mu}}\e%
\Phi\cdot\nabla_v(\sqrt{\mu} y)+\e^{-2}Ly=\lambda y+e^{\lambda t}g,\text{ \
\ \ \ }y|_{\gamma _{-}}=P_{\gamma }y_{+}+e^{\lambda t}r.  \label{lineary}
\end{equation}%
By the Green's identity, 
\begin{equation}
\begin{split}
& \frac 1 2\| y(t)\| _{2}^{2}+\e^{-2}\int_{0}^{t}\| (\mathbf{I}-\mathbf{P}
)y(s)\| _{\nu }^{2}+\e^{-1}\int_{0}^{t}|(1-P_{\gamma })y(s)|_{2,+}^2 \\
\leq & \lambda \int_{0}^{t}\| y(s)\| _{2}^{2}+\| y(0)\| _{2}^{2}+\e%
^{-1}\int_{0}^{t}e^{\lambda s}|r|_{2}^{2} +\int_{0}^{t}e^{\lambda s}\|
\nu^{- \frac{1}{2}} (\mathbf{I} - \mathbf{P })g \| _{2}^{2} \\
& \ \ + \e^{-2}\int_{0}^{t}e^{\lambda s}\| g \| _{2}^{2} +\frac 1 2\e%
\int_0^t \int_{\O \times\mathbb{R}^3}\e^{\l s}|\Phi| | v| |y|^2.  \notag
\end{split}%
\end{equation}
From (\ref{dlinear}) we know that $\iint_{\Omega \times \mathbb{R}^{3}}y%
\sqrt{\mu }=\iint_{\Omega \times \mathbb{R}^{3}}(\lambda y+e^{\lambda t}g)%
\sqrt{\mu }=0$, $\int_{\gamma _{-}}e^{\lambda t}r\sqrt{\mu }d\gamma =0$.
Applying Lemma \ref{dabc} to (\ref{lineary}), we deduce 
\begin{eqnarray*}
\int_{0}^{t}\| \mathbf{P}y(s)\| _{\nu }^{2}\mathrm{d} s &\lesssim& G(t)-G(0)+%
\e^{-2}\int_{0}^{t}\| (\mathbf{I}-\mathbf{P})y(s)\| _{\nu }^{2}\mathrm{d}
s+\int_{0}^{t}e^{\lambda s}\| g\| _{2}^{2}\mathrm{d} s  \notag \\
& +&\lambda \int_{0}^{t}\| y\| _{2}^{2}\mathrm{d} s+\e^{-1}\int_{0}^{t}%
\{|(1-P_{\gamma })y(s)|_{2,+}^{2}+e^{\lambda s}|r|_{2}^{2}\}\mathrm{d} s,
\end{eqnarray*}
where $G(t)\lesssim \e  \| y(t)\| _{2}^{2}$.

{Using the trace theorem and the boundary condition, as \textit{Step 2}, we
obtain 
\begin{eqnarray*}
\int^{t}_{0} |P_{\gamma} y|_{2,+}^{2} &\lesssim& \| y(0) \|_{2}^{2} +
\int^{t}_{0} \| \frac{e^{\lambda s} g}{\sqrt{\nu}} \|_{2}^{2} + \int^{t}_{0}
\| \mathbf{P}y \|_{2}^{2} + \e^{-2} \int^{t}_{0} \| (\mathbf{I} -\mathbf{P}%
)y \|_{\nu}^{2} + \int^{t}_{0 }|(1-P_{\gamma}) y|_{2,+}^{2}, \\
\int^{t}_{0} | y|_{2,-}^{2} &\lesssim& \int^{t}_{0} | y|_{2,+}^{2} +
\int^{t}_{0} | e^{\lambda \tau} r|_{2,+}^{2}.
\end{eqnarray*}%
}

All together, for $0 < \lambda \ll 1$ and $0 < \e \ll 1$, we conclude (\ref%
{completes_dyn}).
\end{proof}

\subsection{$L^{\infty }$ Estimate}

The main goal of this section is to prove the following:

\begin{proposition}
\label{point_dyn} Let $f$ satisfies 
\begin{equation}  \label{linear_K}
\begin{split}
\big[ \e \partial_{t} + v \cdot \nabla_{x} + \e^{2} \Phi \cdot \nabla_{v} + %
\e^{-1} C_{0}\langle v\rangle \big] |f | \leq \e^{-1} K_{\beta} |f| + |g |,
\\
\big|f |_{\gamma_{-}} \big|\leq P_{\gamma}|f| + |r|,\ \ \big|f|_{t=0}\big| %
\leq| f_{0}|.
\end{split}%
\end{equation}
Then, for $w(v) = e^{\beta^{\prime}|v|^{2}}$ with $0< \beta^{\prime} \ll
\beta$, 
\begin{equation}  \label{point1}
\begin{split}
\| \e^{\frac 1 2} w f(t) \|_{\infty} \lesssim& \ \| \e^{\frac 1 2} w f_{0}
\|_{\infty} + \sup_{0 \leq s \leq \infty}\| \e^{\frac 1 2} w r(s)
\|_{\infty} + \e^{\frac 3 2} \sup_{0 \leq s \leq \infty} \| \langle
v\rangle^{-1} w g(s)\|_{\infty} \\
&+ \sup_{0\leq s \leq t}\| \P f(s)\|_{L^{6}(\Omega )} + {\e}^{-1} \sup_{0
\leq s \leq t} \| (\mathbf{I} - \mathbf{P})f(s)\|_{L^{2}(\Omega \times 
\mathbb{R}^{3})},
\end{split}%
\end{equation}
and 
\begin{equation}  \label{point2}
\begin{split}
\| \e^{\frac 1 2} w f(t) \|_{\infty} \lesssim& \ \| \e^{\frac 1 2} w f_{0}
\|_{\infty} + \sup_{0 \leq s \leq \infty}\| \e^{\frac 1 2} w r(s)
\|_{\infty} + \e^{\frac 3 2} \sup_{0 \leq s \leq \infty} \| \langle
v\rangle^{-1} w g(s)\|_{\infty} \\
& + {\e}^{-1} \sup_{0 \leq s \leq t}\| f(s)\|_{L^{2}(\Omega \times \mathbb{R}%
^{3} )} .
\end{split}%
\end{equation}
\end{proposition}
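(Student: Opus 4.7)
The plan is to closely follow the proof of Proposition \ref{point_s} for the steady case, adapting the stochastic cycle expansion to the unsteady setting and incorporating the initial data as a third termination mode alongside the boundary and the long-trajectory survival. Setting $h := wf$ with $w = e^{\beta'|v|^2}$ and invoking Lemma 3 of \cite{Guo08} to absorb the weight into the kernel, (\ref{linear_K}) becomes
\[
[\e\partial_t + v\cdot\nabla_x + \e^2\Phi\cdot\nabla_v + \e^{-1}C_0\langle v\rangle]|\e^{1/2}h|\lesssim \e^{-1}\!\int\! \mathbf{k}_{\tilde\beta}(v,u)|\e^{1/2}h(u)|\,du + \e^{1/2}|wg|,
\]
together with $|\e^{1/2}h|\lesssim \tilde w^{-1}\int_{n\cdot u>0}|\e^{1/2}h|\,\tilde w\,d\sigma + \e^{1/2}|wr|$ on $\gamma_-$ and $|\e^{1/2}h(0)|\leq\|\e^{1/2}wf_0\|_\infty$, where $\tilde w(v)=1/(w(v)\sqrt{\mu(v)})$. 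Along the trajectories $(Y,W)$ of (\ref{eq_YW}) the transport operator reduces to $\e\,d/ds + \e^{-1}C_0\langle W\rangle$, so Duhamel's principle applies with absorbing factor $\exp(-\int_s^t C_0\langle W(\tau)\rangle/\e^2\,d\tau)$.

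The stochastic cycle expansion is then performed exactly as in (\ref{h1in_s})--(\ref{h2er_s}), with each backward characteristic segment terminating in one of three ways: it reaches the physical boundary (contributing $\|\e^{1/2}wr\|_\infty$), it reaches the initial plane $\{s=0\}$ (contributing $\|\e^{1/2}wf_0\|_\infty$), or it survives $k = C_1 T_0^{5/4}$ bounces (contributing $(1/2)^{C_2 T_0^{5/4}}\|\e^{1/2}h\|_\infty$). The identification $Y(s;t,x,v) = X(t+(s-t)/\e;t,x,v)$ from (\ref{YW}) reduces the unsteady characteristic geometry to the steady one upon rescaling $\tilde\tau = (s-t)/\e$, so the small-measure estimate of Lemma \ref{small_lemma_s} applies verbatim, and the total cycle measure terminating at $\{s=0\}$ is bounded by $1$, confirming the $\|\e^{1/2}wf_0\|_\infty$ contribution.

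For the nonlocal $\mathbf{k}_{\tilde\beta}$ terms, decompose $\mathbf{k}_{\tilde\beta} = (\mathbf{k}_{\tilde\beta} - \mathbf{k}_m) + \mathbf{k}_m$ as in (\ref{k_m}) to extract a small tail, and split each time integral into $[t_\ell - \kappa\e^2, t_\ell]$ (producing a $\kappa$-small multiple of $\|\e^{1/2}h\|_\infty$) and its complement. On the complement, the nested $\mathbf{I}_1 + \mathbf{I}_2$ reductions from the steady proof apply with the change of variables $v'\mapsto y = Y(s';s,Y_{\mathbf{cl}},v')$; since $\partial_{v'_i}Y_j(s';s,\cdot,v') = \big((s'-s)/\e\big)\delta_{ij} + O(\e^2)\|\Phi\|_{C^1} T_0^2 e^{C_\Phi T_0}$, the Jacobian is bounded below by $\kappa^3\e^3$ whenever $|s-s'|\geq\kappa\e^2$, exactly matching the steady scaling. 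The resulting space integrals convert, after the $\e^{1/2}$ prefactor, either into $\|\mathbf{P}f\|_{L^6}+\e^{-1}\|(\mathbf{I}-\mathbf{P})f\|_{L^2}$ via a six/two Hölder split (yielding (\ref{point1})), or, skipping the hydrodynamic decomposition, into $\e^{-1}\|f\|_{L^2}$ (yielding (\ref{point2})). The main obstacle is the uniform book-keeping of initial-data terminations across the arbitrarily many cycle configurations, since at each of the $k$ bounces the backward trajectory may enter the initial plane rather than the physical boundary; this is resolved by observing that the cumulative cycle weights in (\ref{dsigma_unsteady_s}) sum to at most $1$ regardless of where individual trajectories terminate, so the initial-data contribution enters the final bound as a single factor of $\|\e^{1/2}wf_0\|_\infty$ up to $T_0$-dependent constants.
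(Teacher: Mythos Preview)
Your outline correctly captures the paper's Step 1 (the stochastic-cycle Duhamel expansion on a single short time window), but you are missing Step 2, which is essential. The small-measure estimate of Lemma \ref{small_lemma_s} (or its unsteady restatement, Lemma \ref{small_lemma}) only controls $\int \mathbf{1}_{\tilde t_k > 0}\,\prod \mathrm{d}\sigma_\ell$ for $t \in [0, \e T_0]$; under your rescaling $\tilde\tau = (s-t)/\e$ the backward window $[0,t]$ maps to $[-t/\e, 0]$, and the lemma only applies when $t/\e \le T_0$. For $t > \e T_0$ the survival set $\{\tilde t_k > 0\}$ can have measure close to $1$, so the contribution you label ``survives $k$ bounces'' is of size $\sup_{s}\|\e^{1/2}h(s)\|_\infty$ with coefficient near $1$ and cannot be absorbed into the left side.

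The paper resolves this by first proving the estimate only on slabs $t\in[n\e T_0,(n+1)\e T_0]$, with the role of the initial plane played by $s = n\e T_0$ rather than $s=0$; crucially the resulting initial-slab term carries the damping factor $e^{-C_0(t - n\e T_0)/\e^2}$. Step 2 then iterates in $n$: one has $\|\e^{1/2}h(n\e T_0)\|_\infty \le CT_0^{5/2}e^{-C_0 T_0/\e}\|\e^{1/2}h((n-1)\e T_0)\|_\infty + D$, and since $CT_0^{5/2}e^{-C_0 T_0/\e} < 1$ for small $\e$, the geometric series converges and one finally replaces the slab-local initial value by $\|\e^{1/2}wf_0\|_\infty$. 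Your ``main obstacle'' paragraph misidentifies the difficulty: the book-keeping over the at most $k$ termination configurations is indeed routine (the cycle measures sum to at most $1$), but the passage from $t\le \e T_0$ to arbitrary $t$ requires this additional time-slab iteration, without which the argument does not close.
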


We define the stochastic cycles for the unsteady case. Note that from (\ref%
{xv_dyn}), $\tilde{x}_{\mathbf{b}} (x,v) = x_{\mathbf{b}}(x,v)$.

\begin{definition}
\label{cycle_dyn} Define, for free variables $v_{k} \in\mathbb{R}^{3}$, from
(\ref{xv_dyn}) 
\begin{eqnarray*}
\tilde{t}_{1} &=& t-\tilde{t}_{\mathbf{b}}(x,v) = t-\e t_{\mathbf{b}}(x,v),
\\
\tilde{x}_{1} &=& Y(\tilde{t}_{1} ; t,x,v) =\tilde{x}_{\mathbf{b}}(x,v) = x_{%
\mathbf{b}}(x,v) = x_{1} , \\
\tilde{t}_{2} &=& t- \tilde{t}_{\mathbf{b}}(x,v) - \tilde{t}_{\mathbf{b}}
(x_{1} ,v_{1} ) = t-\e t_{\mathbf{b}}(x,v) - \e t_{\mathbf{b}}(x_{1} ,v_{1}
), \\
\tilde{x}_{2} & =& Y( \tilde{t}_{2} ; \tilde{t}_{1} ,x_{1} ,v_{1} ) =\tilde{x%
}_{\mathbf{b}} (x_{1},v_{1}) = x_{\mathbf{b}}(x_{1},v_{1}) =x_{2} , \\
&\vdots& \\
\tilde{t}_{k+1} &=& \tilde{t}_{k} - \tilde{t}_{\mathbf{b}}(x_{k} ,v_{k} ) =%
\tilde{t}_{k} - \e t_{\mathbf{b}} (x_{k} ,v_{k} ), \\
\tilde{x}_{k+1} &=& Y( \tilde{t}_{k+1} ; \tilde{t}_{k} ,x_{k} ,v_{k} ) = 
\tilde{x}_{\mathbf{b}} (x_{k},v_{k}) = x_{\mathbf{b}}(x_{k},v_{k})= x_{k+1}.
\end{eqnarray*}
and 
\begin{eqnarray*}
t-\tilde{t} _{1} &=& \e t_{\mathbf{b}}(x,v) = \e (t-t_{1}), \\
t- \tilde{t} _{2} &=&\e t_{\mathbf{b}}(x,v) + \e t_{\mathbf{b}%
}(x_{1},v_{1})= \e (t-t_{2}), \\
&\vdots& \\
t- \tilde{t} _{k } &=& \e (t-t_{k}).
\end{eqnarray*}
Set 
\begin{eqnarray*}
Y_{\mathbf{cl}}(s;t,x,v) &:=& \sum_{k} \mathbf{1}_{[\tilde{t} _{k+1},\tilde{t}
_{k})} (s) Y (s; \tilde{t}_{k} , x_{k} , v_{k} ),\\
W_{\mathbf{cl}%
}(s;t,x,v) &:=& \sum_{k} \mathbf{1}_{[\tilde{t}_{k+1},\tilde{t}_{k})} (s)
W(s; \tilde{t}_{k} , x_{k} , v_{k} ).
\end{eqnarray*}
Clearly 
\begin{equation}  \label{trj_st_dyn}
[Y_{\mathbf{cl}}(s;t,x,v), W _{\mathbf{cl}}(s;t,x,v)]= [X_{\mathbf{cl}}(t- 
\frac{t-s}{\e};t,x,v), V_{\mathbf{cl}}( t- \frac{t-s}{\e};t,x,v)].
\end{equation}
\end{definition}

The following lemma is a generalized version of Lemma 23 of \cite{Guo08}.

\begin{lemma}[\protect\cite{Guo08}]
\label{small_lemma} Assume $\Phi = \Phi(x) \in C^{1}$. For sufficiently
large $T_{0}>0$, there exist constant $C_{1}, C_{2}>0$, independent of $%
T_{0} $, such that for $k= C_{1} T_{0}^{5/4}$, 
\begin{equation}
\sup_{(t, x,v) \in [0,\e T_{0}] \times \bar{\Omega} \times \mathbb{R}%
^{3}}\int_{\prod_{\ell =1}^{k-1} \mathcal{V}_{\ell}} \mathbf{1}_{ \tilde{t}%
_{k} (t,x,v_{1},v_{2}, \cdots, v_{k-1})>0 } \Pi_{\ell=1}^{k-1} \mathrm{d}
\sigma_{\ell} < \Big\{ \frac{1}{2}\Big\}^{C_{2} T_{0}^{5/4}}.  \label{small2}
\end{equation}
\end{lemma}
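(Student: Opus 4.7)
The plan is to reduce Lemma \ref{small_lemma} directly to its steady counterpart, Lemma \ref{small_lemma_s}, by exploiting the time rescaling between the unsteady and steady characteristics exhibited in (\ref{YW}).

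First I would use (\ref{YW}) together with the identifications (\ref{xv_dyn}) to relate the unsteady cycles of Definition \ref{cycle_dyn} to the steady cycles of Definition \ref{cycle_dyn0}. Setting $x_{0}=x$, $v_{0}=v$ and inducting on $\ell$, one obtains
\begin{equation*}
t - \tilde{t}_{\ell} \;=\; \e \sum_{j=0}^{\ell-1} t_{\mathbf{b}}(x_{j},v_{j}),
\end{equation*}
while the steady cycles based at the rescaled time $t^{\ast}:=t/\e$ satisfy
\begin{equation*}
t^{\ast} - t_{\ell}^{\ast} \;=\; \sum_{j=0}^{\ell-1} t_{\mathbf{b}}(x_{j},v_{j}).
\end{equation*}
Since the position sequence $\{x_{\ell}\}$ and the fresh velocity variables $\{v_{\ell}\}\in \prod \mathcal{V}_{\ell}$ coincide in the two constructions, this yields
\begin{equation*}
\tilde{t}_{\ell}(t,x,v,v_{1},\dots,v_{\ell-1}) \;=\; \e \, t_{\ell}^{\ast}(t^{\ast},x,v,v_{1},\dots,v_{\ell-1}),
\end{equation*}
and in particular $\tilde{t}_{k}>0$ if and only if $t_{k}^{\ast}>0$.

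Next I would observe that both the probability measure $\prod_{\ell=1}^{k-1}\mathrm{d}\sigma_{\ell}$ and the indicator $\mathbf{1}_{\tilde{t}_{k}>0}=\mathbf{1}_{t_{k}^{\ast}>0}$ are independent of $\e$ under this identification. The constraint $t\in[0,\e T_{0}]$ thus corresponds exactly to $t^{\ast}\in[0,T_{0}]$, which is the range of the supremum in (\ref{small1}). Hence the unsteady estimate (\ref{small2}) is identical to the steady estimate (\ref{small1}) after the substitution $t\leftrightarrow t^{\ast}$, and applying Lemma \ref{small_lemma_s} yields the claimed bound with the same constants $C_{1}, C_{2}$.

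The proof requires essentially no new work beyond this bookkeeping. The substantive content---the non-grazing lower bound $\tilde{t}_{\ell}-\tilde{t}_{\ell+1}\gtrsim \e \delta^{3}$ for $v_{\ell}\in \mathcal{V}_{\ell}^{\delta}$ (the analogue of (\ref{tb_lower})), together with the combinatorial argument estimating the measure of ``many-grazing'' sequences taken from Lemma 23 of \cite{Guo08}---is already absorbed in Lemma \ref{small_lemma_s}. The only point that will require genuine care is verifying that the two families of cycles share the same positions and velocities and differ only by the overall factor $\e$ in the time variable; once that is in hand, the result is immediate.
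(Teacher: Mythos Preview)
Your proposal is correct and follows essentially the same route as the paper: both reduce (\ref{small2}) to the steady Lemma \ref{small_lemma_s} via the time rescaling $\tilde{t}_{\mathbf{b}}=\e t_{\mathbf{b}}$ recorded in Definition \ref{cycle_dyn}. The only cosmetic difference is that the paper first uses the monotonicity $\tilde{t}_{k}(t,\cdot)\le \tilde{t}_{k}(\e T_{0},\cdot)$ to reduce to the single time $t=\e T_{0}$ and then applies the scaling to land at $t_{k}(T_{0},\cdot)$, whereas you apply the scaling $\tilde{t}_{k}(t,\cdot)=\e\, t_{k}(t/\e,\cdot)$ directly and map the whole range $[0,\e T_{0}]$ onto $[0,T_{0}]$; either way one invokes (\ref{small1}) and the constants are the same.
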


\begin{proof}
Since $\tilde{t}_{k} (\e T_{0},x,v,v_{1}, v_{2}, \cdots, v_{k-1}) = \tilde{t}%
_{k} (t,x,v,v_{1}, v_{2}, \cdots, v_{k-1}) + \{\e T_{0} - t\}$, for $0\leq t
\leq \e T_{0}$, 
\begin{equation*}
\mathbf{1}_{\tilde{t}_{k} (t,x,v,v_{1}, v_{2}, \cdots, v_{k-1})>0} \leq 
\mathbf{1}_{\tilde{t}_{k} (\e T_{0},x,v,v_{1}, v_{2}, \cdots, v_{k-1})>0} = 
\mathbf{1}_{ \e T_{0} - \tilde{t}_{k} (\e T_{0},x,v,v_{1}, v_{2}, \cdots,
v_{k-1})< \e T_{0}}.
\end{equation*}

Note that, from Definition \ref{cycle_dyn}, for any $T_{1}, T_{2}>0$ 
\begin{eqnarray*}
&&\big\{T_{1} - \tilde{t}_{k} (T_{1}, x,v , v_{1}, v_{2}, \cdots, v_{k-1})%
\big\} = \tilde{t}_{\mathbf{b}}(x,v) + \tilde{t}_{\mathbf{b}}(x_{1},v_{1} )
+ \cdots + \tilde{t}_{\mathbf{b}}(x_{k-1},v_{k-1}) \\
&&= \e {t}_{\mathbf{b}}(x,v) +\e {t}_{\mathbf{b}}(x_{1},v_{1} ) + \cdots +\e
{t}_{\mathbf{b}}(x_{k-1},v_{k-1}) = \e \big\{ T_{2} - t_{k} (T_{2}
,x,v,v_{1},v_{2} , \cdots, v_{k-1} ) \big\}.
\end{eqnarray*}
Therefore, with $T_{1} =\e T_{0}$ and $T_{2} = T_{0}$, 
\begin{eqnarray*}
\e T_{0} > \e T_{0} - \tilde{t}_{k} (\e T_{0},x,v,v_{1},v_{2}, \cdots,
v_{k-1}) = \e \{ T_{0} - {t}_{k} ( T_{0},x,v,v_{1},v_{2}, \cdots, v_{k-1})
\},
\end{eqnarray*}
and 
\begin{eqnarray*}
\mathbf{1}_{ \e T_{0} - \tilde{t}_{k} (\e T_{0},x,v,v_{1}, v_{2}, \cdots,
v_{k-1})< \e T_{0}} = \mathbf{1}_{ T_{0} - {t}_{k} ( T_{0},x,v,v_{1}, v_{2},
\cdots, v_{k-1})< T_{0} } = \mathbf{1}_{ {t}_{k} ( T_{0},x,v,v_{1}, v_{2},
\cdots, v_{k-1})>0 }.
\end{eqnarray*}
Hence, 
\begin{eqnarray*}
&& \sup_{(t,x,v) \in [0, \e T_{0}] \times \bar{\Omega} \times \mathbb{R}%
^{3}} \int_{\prod_{j=1}^{k-1} \mathcal{V}_{j}} \mathbf{1}_{\tilde{t}_{k}
(t,x,v,v_{1},v_{2}, \cdots, v_{k-1}) >0} \Pi_{j=1}^{k-1}\mathrm{d} \sigma_{j}
\\
& \leq & \sup_{( x,v) \in \bar{\Omega} \times \mathbb{R}^{3}}
\int_{\prod_{j=1}^{k-1} \mathcal{V}_{j}} \mathbf{1}_{ {t}_{k}
(T_{0},x,v,v_{1},v_{2}, \cdots, v_{k-1}) >0} \Pi_{j=1}^{k-1}\mathrm{d}
\sigma_{j}.
\end{eqnarray*}
This proves (\ref{small2}).
\end{proof}

Now we are ready to prove the main result of this section:

\begin{proof}[\textbf{Proof of Proposition \protect\ref{point_dyn}}]
Define, for $w(v)$ and $h$ as (\ref{h_dyn}). 

Then, from (\ref{linear_K}), 
\begin{equation}
\begin{split}  \label{h_eq}
& \big[ \partial_{t} + \e^{-1}v \cdot \nabla_{x} + \e \Phi \cdot \nabla_{v}
+ \e^{-2} C_{0}\langle v\rangle + \frac{\e \Phi \cdot \nabla_{v}w%
}{w} \big] | \e^{\frac 1 2} h| \\
& \leq \e^{-2} \int_{\mathbb{R}^{3}}\mathbf{k}_{\tilde{\beta} }(v,u) | \e%
^{\frac 1 2} h(u) | \mathrm{d} u + \e^{-\frac 1 2}|wg |.
\end{split}%
\end{equation}
We have the boundary condition as (\ref{h_BC}).

\vspace{4pt}

\noindent\textit{Step 1.} We claim, for $t \in [n \e T_{0}, (n+1) \e T_{0}]$
with all $n\in\mathbb{N}$ and $T_{0}$ in Lemma \ref{small_lemma} 
\begin{equation}  \label{claim1}
\begin{split}
&|\e^{\frac 1 2} h (t,x,v)| \\
&\leq \ CT_{0}^{5/2} e^{- \frac{C_{0} (t- n \e T_{0})}{ \e^{2}}} \| \e%
^{\frac 1 2} h (n \e T_{0})\|_{\infty} +
C_{T_{0}}
 \e^{\frac 1 2} \sup_{0 \leq s \leq
t} \| w r(s) \|_{\infty} +C_{T_{0}} T_{0}^{5/2} \e^{{\frac 3 2}} \| \langle
v\rangle^{-1} w g(s) \|_{\infty} \\
& \ \ +C T_{0}^{5/2} \sup_{n \e T_{0} \leq s \leq (n+1) \e T_{0}}\| \mathbf{P%
} f(s)\|_{L^{6}(\Omega )} \\
& \ \ +C T_{0}^{5/2} {\e^{-1}} \sup_{n \e T_{0} \leq s \leq (n+1) \e T_{0}}
\| (\mathbf{I} - \mathbf{P})f(s)\|_{L^{2}(\Omega \times \mathbb{R}^{3})} \\
& \ \ + \big[CT_{0}^{5/4}\Big\{\frac{1}{2} \Big\}^{C_{2} T_{0}^{5/4}} +
o(1)CT_{0}^{5/2} \big] \sup_{n \e T_{0} \leq s \leq (n+1) \e T_{0}} \| \e%
^{\frac 1 2} h (s)\|_{\infty},
\end{split}%
\end{equation}
and 
\begin{equation}  \label{claim1_2}
\begin{split}
&|\e^{\frac 1 2} h (t,x,v)| \\
&\leq \ CT_{0}^{5/2} e^{- \frac{C_{0} (t- n \e T_{0})}{ \e^{2}}} \| \e%
^{\frac 1 2} h (n \e T_{0})\|_{\infty} + \e^{\frac 1 2} \sup_{0 \leq s \leq
t} \| w r(s) \|_{\infty} +CT_{0}^{5/2} \e^{\frac 3 2} \| \langle
v\rangle^{-1} w g(s) \|_{\infty} \\
& \ \ +C T_{0}^{5/2} \frac{1}{\e} \sup_{n \e T_{0} \leq s \leq (n+1) \e %
T_{0}} \| f(s)\|_{L^{2}(\Omega \times \mathbb{R}^{3})} \\
& \ \ + \big[CT_{0}^{5/4}\Big\{\frac{1}{2} \Big\}^{C_{2} T_{0}^{5/4}} +
o(1)CT_{0}^{5/2} \big] \sup_{n \e T_{0} \leq s \leq (n+1) \e T_{0}} \| \e%
^{\frac 1 2} h (s)\|_{\infty}.
\end{split}%
\end{equation}

We first prove (\ref{claim1}). From (\ref{h_eq}), for $\tilde{t}_{1} (t,x,v)
< s \leq t$, 
\begin{eqnarray*}
&&\frac{d}{ds } \Big[ e^{- \int^{t}_{s} \frac{ C_{0} }{\e ^{2}} \langle V(
t- \frac{t-\tau}{\e};t,x,v) \rangle \mathrm{d}\tau } \e^{\frac 1 2}
h^{\ell+1}(s,X _{\mathbf{cl}}(t- \frac{t-s}{\e};t,x,v), V _{\mathbf{cl}}( t- 
\frac{t-s}{\e};t,x,v)) \Big] \\
& \leq& e^{- \int^{t}_{s} \frac{ C_{0} }{\e ^{2}} \langle V( t- \frac{t-\tau%
}{\e};t,x,v) \rangle \mathrm{d}\tau } \frac{1}{\e^{2}} \int_{\mathbb{R}^{3}} 
\mathbf{k}_{\tilde{\beta}} (V _{\mathbf{cl}}( t- \frac{t-s}{\e};t,x,v)
,v^{\prime}) | \e^{\frac 1 2} h( s,X _{\mathbf{cl}}(t- \frac{t-s}{\e}%
;t,x,v), v^{\prime} )| \mathrm{d} v^{\prime} \\
&&+ e^{- \int^{t}_{s} \frac{ C_{0} }{\e ^{2}} \langle V( t- \frac{t-\tau}{\e}%
;t,x,v) \rangle \mathrm{d}\tau } \e^{-\frac 1 2}|w g( s,X _{\mathbf{cl}}(t- 
\frac{t-s}{\e};t,x,v), V _{\mathbf{cl}}( t- \frac{t-s}{\e};t,x,v) )|.
\end{eqnarray*}
Along the stochastic cycles, for $k= C_{1} T_{0}^{5/4}$, we deduce the
following estimate: 
\begin{eqnarray}
&& | \e^{\frac 1 2} h^{\ell+1}(t,x,v) |  \notag \\
&\leq& \mathbf{1}_{\{ \tilde{t}_{1} (t,x,v)< 0\}} e^{ - \int^{t}_{0} \frac{
C_{0} \langle V_{\mathbf{cl}} ( t- \frac{t-\tau}{\e}; t,x,v) \rangle }{\e^{2}%
} \mathrm{d}\tau} | \e^{\frac 1 2} h^{\ell+1}(0, X_{\mathbf{cl}}(t- \frac{t}{%
\e}; t,x,v ), V_{\mathbf{cl}}(t- \frac{t}{\e}; t,x,v )) | \ \ \ \  \ \ 
\label{h1in} \\
&+& \int_{\max{\{0, \tilde{t}_{1} (t,x,v) \}}}^{t} \mathrm{d} s \ \frac{e^{-
\int^{t}_{s} \frac{ C_{0} \langle V_{\mathbf{cl}} ( t- \frac{t-\tau}{\e}%
;t,x,v ) \rangle }{\e^{2}} \mathrm{d} \tau } }{\e^{2}}  \notag \\
&& \ \ \ \times \int_{\mathbb{R}^{3}} \mathrm{d} v^{\prime} \ \mathbf{k}_{%
\tilde{\beta} }( V_{\mathbf{cl}} ( t- \frac{t-s}{\e} ; t,x,v) ,v^{\prime}) %
\big| \e^{\frac 1 2} h^{\ell}(s, X_{\mathbf{cl}}(t- \frac{t-s}{\e}%
;t,x,v),v^{\prime} )\big| \ \ \ \   \label{hk} \\
&+& \int_{\max{\{0, \tilde{t}_{1} (t,x,v) \}}}^{t} \mathrm{d} s \ \frac{e^{-
\int^{t}_{s} \frac{ C_{0} \langle V_{\mathbf{cl}} ( t- \frac{t-\tau}{\e}%
;t,x,v ) \rangle }{\e^{2}} \mathrm{d} \tau } }{\e^{2}}  \notag \\
&& \ \ \ \ \ \ \ \ \ \ \ \ \times {\e^2}\e^{-\frac 1 2}|w g ( s,X _{\mathbf{%
cl}}(t- \frac{t-s}{\e};t,x,v), V _{\mathbf{cl}}( t- \frac{t-s}{\e};t,x,v) )|
\ \ \ \ \ \ \ \ \ \ \ \   \label{hnl} \\
&+&\mathbf{1}_{\{ \tilde{t}_{1}(t,x,v) \geq 0 \}} e^{ - \int^{t}_{\tilde{t}%
_{1} (t,x,v)} \frac{ C_{0} \langle V_{\mathbf{cl}} ( t- \frac{t-\tau}{\e};
t,x,v) \rangle }{\e^{2}} \mathrm{d}\tau} \e^{\frac 1 2} | w {r} ( \tilde{t}%
_{1}(t,x,v), x_{1} (x,v), v_{1} (x,v)) |  \label{h1ber} \\
& +& \mathbf{1}_{\{ \tilde{t}_{1} (t,x,v) \geq 0 \}} \frac{ e^{ - \int^{t}_{%
\tilde{t}_{1} (t,x,v)} \frac{ C_{0} \langle V_{\mathbf{cl}} ( t- \frac{t-\tau%
}{\e}; t,x,v) \rangle }{\e^{2}} \mathrm{d}\tau} }{\tilde{w} (v_{1})}
\int_{\Pi_{j=1}^{k-1} \mathcal{V}_{j}} H,  \notag
\end{eqnarray}
where $H$ is given by 
\begin{eqnarray}
&& \sum_{l =1}^{k-1} \mathbf{1}_{ \tilde{t}_{l+1} \leq 0< \tilde{t}_{l}} %
\big| \e^{\frac 1 2} h (0, X_{\mathbf{cl}}(\tilde{t}_{l} - \frac{\tilde{t}%
_{l}}{\e} ; \tilde{t}_{l}, x_{l}, v_{l}) , V_{\mathbf{cl}} (\tilde{t}_{l} - 
\frac{\tilde{t}_{l}}{\e} ; \tilde{t}_{l}, x_{l}, v_{l}) ) \big|\notag\\
&&
  \ \ \ \ \ \ \ \ \ \ \ \ \ \ \  \ \ \ \ \ \ \ \ \ \ \ \ \ \ \ \ \ \ \ \ \ \ \ \ \ \ \ \   \times \Pi_{m=1}^{l-1} \frac{\tilde{w} (v_{m})}{\tilde{w} (V_{\mathbf{cl}} (\tilde{t}_{m+1}
  - \frac{\tilde{t}_{m+1}}{\e}
  ;  v_{m}))} \mathrm{d}
\Sigma_{l} (0)
  \label{h2in} \\
&+& \sum_{l =1}^{k-1} \int^{ \tilde{t}_{l}}_{\max \{ 0, \tilde{t}_{l+1} \}} \mathrm{d} \tau 
\mathbf{1}_{ \tilde{t}_{l}>0} \frac{1}{\e^{2}} \int_{\mathbb{R}^{3}} \mathbf{%
k}_{\tilde{\beta} } ( V_{\mathbf{cl}} ( \tilde{t}_{l}- \frac{\tilde{t}%
_{l}-\tau}{\e}; \tilde{t}_{l}, x_{l}, v_{l}),u )  \notag \\
&& \ \   \times \big| \e^{\frac 1 2} h
(\tau, X_{\mathbf{cl}}( \tilde{t}_{l}- \frac{\tilde{t}_{l}-\tau}{\e}; \tilde{%
t}_{l}, x_{l}, v_{l}) , u) \big|  
\Pi_{m=1}^{l-1} \frac{\tilde{w} (v_{m})}{\tilde{w} (V_{\mathbf{cl}} (\tilde{t}_{m+1}
  - \frac{\tilde{t}_{m+1}}{\e}
  ;  v_{m}))} 
\mathrm{d} u \mathrm{d} \Sigma_{l} (\tau) 
 \label{h2k} \\
&+& \sum_{l =1}^{k-1} \int^{ \tilde{t}_{l}}_{\max \{ 0, \tilde{t}_{l+1} \}} 
\mathbf{1}_{ \tilde{t}_{l}>0} \Pi_{m=1}^{l-1} \frac{\tilde{w} (v_{m})}{\tilde{w} (V_{\mathbf{cl}} (\tilde{t}_{m+1}
  - \frac{\tilde{t}_{m+1}}{\e}
  ;  v_{m}))}  \notag \\
&& \ \ \ \ \ \times \e^{-\frac 1 2}\big| wg (\tau, X_{\mathbf{cl}}( \tilde{t}%
_{l}- \frac{\tilde{t}_{l}-\tau}{\e}; \tilde{t}_{l}, x_{l}, v_{l}) , V_{%
\mathbf{cl}} ( \tilde{t}_{l}- \frac{\tilde{t}_{l}-\tau}{\e}; \tilde{t}_{l},
x_{l}, v_{l})) \big| \mathrm{d} \Sigma_{l} (\tau) \mathrm{d} \tau \ \ \ \ \
\ \ \ \ \ \   \label{h2g} \\
&+& \sum_{l=1}^{k-1} \mathbf{1}_{\tilde{t}_{l}>0} \e^{\frac 1 2} w(v_{l}) | {%
r} (\tilde{t}_{l}, x_{l+1} , v_{l}) |
\Pi_{m=1}^{l-1} \frac{\tilde{w} (v_{m})}{\tilde{w} (V_{\mathbf{cl}} (\tilde{t}_{m+1}
  - \frac{\tilde{t}_{m+1}}{\e}
  ;  v_{m}))} 
\mathrm{d} \Sigma_{l} (\tilde{t}_{l+1})
\label{h2ber} \\
&+& \mathbf{1}_{ \tilde{t}_{k} >0}| \e^{\frac 1 2} h ( \tilde{t}_{k},x_{k},
v_{k-1})| 
\Pi_{m=1}^{k-2} \frac{\tilde{w} (v_{m})}{\tilde{w} (V_{\mathbf{cl}} (\tilde{t}_{m+1}
  - \frac{\tilde{t}_{m+1}}{\e}
  ;  v_{m}))} 
\mathrm{d} \Sigma_{k-1} ( \tilde{t}_{k})
,  \label{h2er}
\end{eqnarray}
where $\tilde{w} (V_{\mathbf{cl}} (\tilde{t}_{m+1}
  - \frac{\tilde{t}_{m+1}}{\e}
  ;  v_{m}))=\tilde{w} (V_{\mathbf{cl}} (\tilde{t}_{m+1}
  - \frac{\tilde{t}_{m+1}}{\e}
  ; \tilde{t}^{m}, x_{m}, v_{m}))$ and $\mathrm{d} \Sigma_{k-1} ( \tilde{t}_{k})$ is evaluated at $s=\tilde{t}%
_{k}$ of 
\begin{equation}  \label{dsigma_unsteady}
\begin{split}
\mathrm{d}\Sigma _{l} (s) \ := & \ \{\prod_{j=l+1}^{k-1}\mathrm{d} \sigma
_{j}\}\{ e^{ -\int^{\tilde{t}_{l} }_{s} \frac{C_{0} \langle V_{\mathbf{cl}}( 
\tilde{t}_{l} - \frac{ \tilde{t}_{l} -\tau}{\e} ; \tilde{t}_{l} , x_{l},
v_{l}) \rangle }{\e^{2}} \mathrm{d} \tau }\tilde{w}(v_{l})\mathrm{d}\sigma
_{l}\} \\
& \times \prod _{j=1}^{l-1}\{e^{ - \int^{ \tilde{t}_{j} }_{ \tilde{t}_{j+1}
} \frac{ C_{0} \langle V_{\mathbf{cl}} ( \tilde{t}_{j} - \frac{ \tilde{t}%
_{j} -\tau}{\e} ; \tilde{t}_{j} , x_{j}, v_{j})\rangle }{\e^{2}} \mathrm{d}
\tau }\mathrm{d}\sigma _{j}\}.
\end{split}%
\end{equation}

We note that 
\begin{equation*}
|V_{\mathbf{cl}}(\tilde{t}_{m}-\frac{\tilde{t}_{m+1}-t_{m}}{\varepsilon };\tilde{t}_{m},x_{m},v_{m})-v_{m}|%
\leq \frac{\tilde{t}_{m+1}-\tilde{t}_{m}}{\varepsilon }\varepsilon ^{2}
\|\Phi \|_{\infty
}\leq \varepsilon T_{0},
\end{equation*}%
Hence for $\varepsilon T_{0}<1,$ we have 
\begin{equation}\label{w/w_unsteady}
\frac{\tilde{w} (v_{m})}{\tilde{w} (V_{\mathbf{cl}} (\tilde{t}_{m+1}
  - \frac{\tilde{t}_{m+1}}{\e}
  ;  \tilde{t}_{m}, x_{m},v_{m}))} \lesssim 1.
\end{equation}

From our choice $k= C_{1} T_{0}^{5/4}$, 
\begin{equation*}
(\ref{h1in}) + (\ref{h2in}) \lesssim C_{1} T_{0}^{5/4} e^{- \frac{ C_{0} }{ %
\e^{2}} t } \| {\e^{\frac 1 2}}h_{0}\|_{\infty},\ \ (\ref{h1ber}) + (\ref%
{h2ber}) \lesssim C_{1} T_{0}^{5/4} \e \sup_{ 0 \leq s \leq t} \| \e^{\frac
1 2} w{r}(s) \|_{\infty},
\end{equation*}
and 
\begin{eqnarray*}
&&(\ref{hnl})+ (\ref{h2g}) \\
&\lesssim& {\e^2}\e^{-\frac 1 2}\sup_{ 0 \leq s\leq t} \big\| \langle v
\rangle w g (s) \big\|_{\infty} \times \Big\{ \int^{t}_{0} \frac{ \langle V_{%
\mathbf{cl}} (t- \frac{t-s}{\e};t,x,v) \rangle }{\e^{2}} e^{ - \int^{t}_{s} 
\frac{C_{0} \langle V_{\mathbf{cl}} (t- \frac{t-\tau}{\e};t,x,v) \rangle }{\e%
^{2}} \mathrm{d} \tau } \mathrm{d} s \\
&& + C_{1} T_{0}^{5/4} \sup_{l} \int^{\tilde{t}_{l}}_{0} \frac{ \langle V_{%
\mathbf{cl}} ( \tilde{t}_{l}- \frac{\tilde{t}_{l}-\tau}{\e};\tilde{t}%
_{l},x_{l},v_{l}) \rangle }{\e^{2}} e^{ - \int^{\tilde{t}_{l}}_{s} \frac{%
C_{0} \langle V_{\mathbf{cl}} ( \tilde{t}_{l}- \frac{\tilde{t}_{l}-\tau}{\e};%
\tilde{t}_{l},x_{l},v_{l}) \rangle }{\e^{2}} \mathrm{d} \tau } \mathrm{d}
\tau \Big\} \\
&\lesssim & C_{1} T_{0}^{5/4} \e^{\frac 3 2} \sup_{ 0 \leq s\leq t} \big\| %
\langle v \rangle w g (s) \big\|_{\infty} \times \int^{t}_{0} \frac{d}{ds}
e^{ - \int^{t}_{s} \frac{C_{0} \langle V_{\mathbf{cl}} (t- \frac{t-\tau}{\e}%
;t,x,v) \rangle }{\e^{2}} \mathrm{d} \tau } \mathrm{d} s \\
&\lesssim & C_{1} T_{0}^{5/4} \e^{\frac 3 2} \sup_{ 0 \leq s\leq t} \big\| %
\langle v \rangle w g (s) \big\|_{\infty},
\end{eqnarray*}
where we have used the fact that $\mathrm{d} \sigma_{j}$ is a probability
measure of $\mathcal{V}_{j}$.

Now we focus on (\ref{hk}) and (\ref{h2k}). For $N>1$, we can choose $%
m=m(N)\gg 1$ and define $\mathbf{k}_{m} (v,u)$ as (\ref{k_m}). We split $%
\mathbf{k}_{\tilde{\beta} } (v,u) = [ \mathbf{k}_{\tilde{\beta} } (v,u)- 
\mathbf{k}_{m} (v,u) ] + \mathbf{k}_{m} (v,u)$, and the first difference
would lead to a small contribution in (\ref{hk}) and (\ref{h2k}) as, for $N
\gg_{T_{0}} 1$, 
\begin{eqnarray*}
&& \frac{k}{N} \sup_{0 \leq s \leq t}\| \e^{\frac 1 2} h (s) \|_{\infty} = 
\frac{ C_{1} T_{0}^{5/4}}{N} \sup_{0 \leq s \leq t}\| \e^{\frac 1 2} h (s)
\|_{\infty}.
\end{eqnarray*}

We further split the time integrations in (\ref{hk}) and (\ref{h2k}) as $[%
\tilde{t}_{l} - \kappa \e^{2}, \tilde{t}_{l}]$ and $[\max\{ 0, \tilde{t}%
_{l+1} \}, \tilde{t}_{l}-\kappa \e^{2}]$: 
\begin{eqnarray*}
(\ref{hk}) = {\underbrace{ \int^{t}_{ t- \kappa \e^{2} } }}+ \int^{ t-
\kappa \e^{2} }_{\max\{ 0, \tilde{t}_{1} \}}, \ \ \ (\ref{h2k}) = \mathbf{1}%
_{\{ \tilde{t}_{1} \geq 0 \}} \int_{\Pi_{j=1}^{k-1} \mathcal{V}_{j}}
\sum_{l=1}^{k-1} \Big\{ \ \ {\underbrace{\int^{\tilde{t}_{l}}_{ \tilde{t}%
_{l} - \kappa \e^{2} } }}+ \int^{ \tilde{t}_{l} - \kappa \e^{2} }_{\max\{ 0, 
\tilde{t}_{l+1} \}} \Big\}.
\end{eqnarray*}
The first small-in-time contributions of both (\ref{hk}) and (\ref{h2k}),
underbraced terms, are bounded by 
\begin{eqnarray*}
&&\kappa \e^{2} \frac{1}{\e^{2}} \sup_{v} \int_{|v^{\prime}| \leq N} \mathbf{%
k}_{m} (v, v^{\prime}) \mathrm{d} v^{\prime} \sup_{0 \leq s \leq t} \| \e%
^{\frac 1 2} h (s)\|_{\infty} \lesssim \kappa \sup_{0 \leq s \leq t} \| \e%
^{\frac 1 2} h (s)\|_{\infty}, \\
&&C_{1} T_{0}^{5/4} \kappa \e^{2} \frac{1}{\e^{2}} \sup_{v}
\int_{|v^{\prime}| \leq N} \mathbf{k}_{m} (v,v^{\prime} ) \mathrm{d}
v^{\prime} \sup_{0 \leq s \leq t} \| \e^{\frac 1 2} h (s)\|_{\infty}
\lesssim \kappa C_{1} T_{0}^{5/4} \sup_{0 \leq s \leq t} \| \e^{\frac 1 2} h
(s)\|_{\infty},
\end{eqnarray*}
which would be small contribution if $\kappa \ll_{T_{0}} 1$.

For (\ref{h2er}), by Lemma \ref{small_lemma}, 
\begin{equation*}
(\ref{h2er}) \ \lesssim \ \sup_{0 \leq s \leq t}\| \e^{\frac 1 2} h (s)
\|_{\infty} \sup_{(t,x,v) \in [0,\e T_{0}] \times \bar{\Omega} \times 
\mathbb{R}^{3}} \int_{\prod_{j=1}^{k-1} \mathcal{V}_{j}} \mathbf{1}_{\tilde{t%
}_{k} (t,x,v,v_{1}, v_{2}, \cdots, v_{k-1})>0} \Pi_{j=1}^{k-1} \mathrm{d}
\sigma_{j}.
\end{equation*}
Then, by Lemma \ref{small_lemma}, 
\begin{eqnarray*}
&&(\ref{h2er}) \\
& \lesssim& 
\{1+ O(\e)\}^{C_{1}T_{0}^{5/4}}
\sup_{0 \leq s \leq t}\| \e^{\frac 1 2} h (s)
\|_{\infty} \sup_{ (x,v) \in \bar{\Omega} \times \mathbb{R}^{3} }
\int_{\prod_{j=1}^{k-1} \mathcal{V}_{j}} \mathbf{1}_{ t_{k} (
T_{0},x,v,v_{1}, v_{2}, \cdots, v_{k-1}) >0 } \Pi_{j=1}^{k-1} \mathrm{d}
\sigma_{j} \\
& \lesssim& \Big\{\frac{4}{5} \Big\}^{C_{2} T_{0}^{5/4}} \sup_{0 \leq s \leq
t}\| \e^{\frac 1 2} h (s) \|_{\infty} .
\end{eqnarray*}

Overall, for $(t,x,v) \in [0, \e T_{0}] \times \bar{\Omega} \times \mathbb{R}%
^{3}$, 
\begin{equation}  \label{est1_h}
\begin{split}
|\e^{\frac 1 2} h (t,x,v)| & \lesssim \int_{\max{\{0, \tilde{t}_{1} (x,v) \}}%
}^{t- \kappa \e^{2}} \mathrm{d} s \ \frac{ e^{- \frac{C_{0}}{\e^{2}} (t-s)} 
}{\e^{2}} \int_{|v^{\prime}| \leq m} \mathrm{d} v^{\prime} \underbrace{ %
\big| \e^{\frac 1 2} h (s, X_{\mathbf{cl}}(t- \frac{t-s}{\e}%
;t,x,v),v^{\prime} )\big|} \\
&+\mathbf{1}_{\{ \tilde{t}_{1} \geq 0 \}} \frac{ e^{- \frac{C_{0}}{\e^{2}}
(t-\tilde{t}_{1})} }{\tilde{w} (v)} \int_{\Pi_{j=1}^{k-1} \mathcal{V}_{j}}
\sum_{\ell =1}^{k-1} \int^{ \tilde{t}_{\ell} - \kappa \e^{2}}_{\max \{ 0, 
\tilde{t}_{\ell+1} \}} \mathbf{1}_{ \tilde{t}_{\ell}>0} \frac{1}{\e^{2}} \\
& \ \ \ \times \int_{ |v^{\prime\prime}| \leq m} \underbrace{\big| \e^{\frac
1 2} h (\tau, X_{\mathbf{cl}}( \tilde{t}_{\ell}- \frac{\tilde{t}_{\ell}-\tau%
}{\e}; \tilde{t}_{\ell}, x_{\ell}, v_{\ell}) , v^{\prime\prime} ) \big| } 
\mathrm{d} v^{\prime\prime} \mathrm{d} \Sigma_{\ell} (\tau) \mathrm{d} \tau
\\
& + CT_{0}^{5/4} \Big\{ e^{- \frac{C_{0}}{\e^{2}}t} \e^{\frac 1 2}\| h_{0}
\|_{\infty} + \e^{\frac 1 2} \sup_{0 \leq s \leq t} \| w r(s) \|_{\infty} + %
\e^{-\frac 1 2}\sup_{0 \leq s \leq t} \| \langle v\rangle w g(s) \|_{\infty} %
\Big\} \\
& +o(1)CT_{0}^{5/4} \sup_{0 \leq s \leq t} \| \e^{\frac 1 2} h
(s)\|_{\infty} + \Big\{\frac{4}{5} \Big\}^{C_{2} T_{0}^{5/4}} \sup_{0 \leq s
\leq t} \| \e^{\frac 1 2} h (s)\|_{\infty}.
\end{split}%
\end{equation}

Note that the similar estimate holds for the underbraced terms in (\ref%
{est1_h}). We plug these estimates into the underbraced terms of (\ref%
{est1_h}) to conclude 
\begin{equation*}
| \e^{\frac 1 2} h^{\ell+1}(t,x,v)| \leq \mathbf{I}_{1} + \mathbf{I}_{2} + 
\mathbf{I}_{3}.
\end{equation*}
Here, using $w(u)\lesssim_{m} 1$ for $|u| \leq m$, 
\begin{equation*}
\begin{split}
\mathbf{I}_{1} &\lesssim_{m} \int_{\max{\{0, \tilde{t}_{1} \}}}^{t-\kappa \e%
^{2}} \mathrm{d} s \ \frac{ e^{- \frac{C_{0}}{\e^{2}} (t-s)} }{\e^{2}}
\int_{|v^{\prime}| \leq m} \mathrm{d} v^{\prime}\int^{s-\kappa \e^{2}}_{
\max \{ 0, \tilde{t}_{1}^{\prime} \}} \mathrm{d} s^{\prime} \frac{e^{- \frac{%
C_{0} (s-s^{\prime})}{\e^{2}}}}{\e^{2}} \int_{|u| \leq m} \mathrm{d} u \  \\
& \ \ \ \ \ \ \ \ \ \ \ \ \ \ \ \ \ \ \times \big| \e^{\frac 1 2} h(
s^{\prime}, X_{\mathbf{cl}} (s- \frac{s-s^{\prime}}{\e} ; s, X_{\mathbf{cl}%
}(t- \frac{t-s}{\e}; t,x,v), v^{\prime} ), u ) \big| \\
&+ \int_{\max{\{0, \tilde{t}_{1} \}}}^{t-\kappa \e^{2}} \mathrm{d} s \ \frac{
e^{- \frac{C_{0}}{\e^{2}} (t-s)} }{\e^{2}} \int_{|v^{\prime}| \leq m} 
\mathrm{d} v^{\prime} \ \mathbf{1}_{\{ \tilde{t}_{1}^{\prime} \geq 0 \}} 
\frac{ e^{- \frac{C_{0}}{\e^{2}} (s-\tilde{t}_{1}^{\prime})} }{\tilde{w} (v)}
\\
& \ \ \ \times \int_{\Pi_{j=1}^{k-1} \mathcal{V}_{j}^{\prime}}
\sum_{\ell^{\prime} =1}^{k-1} \int^{ \tilde{t}_{\ell^{\prime}}^{\prime}-%
\kappa \e^{2}}_{\max \{ 0, \tilde{t}_{\ell^{\prime}+1}^{\prime} \}} \mathbf{1%
}_{ \tilde{t}_{\ell^{\prime}}^{\prime}>0} \frac{1}{\e^{2}} {\big| \e^{\frac
1 2} h (\tau, X_{\mathbf{cl}}( \tilde{t}_{\ell^{\prime}}^{\prime}- \frac{%
\tilde{t}_{\ell^{\prime}}^{\prime}-\tau}{\e}; \tilde{t}_{\ell^{\prime}}^{%
\prime}, x_{\ell^{\prime}}^{\prime}, v_{\ell^{\prime}}^{\prime}) , u) \big| }
\mathrm{d} u \mathrm{d} \Sigma_{\ell^{\prime}} (\tau) \mathrm{d} \tau,
\end{split}%
\end{equation*}
where 
\begin{eqnarray*}
\tilde{t}_{\ell^{\prime}}^{\prime} &:=& \tilde{t}_{\ell^{\prime}} ( s, X_{%
\mathbf{cl}}(t- \frac{t-s}{\e}; t,x,v), v^{\prime} ) , \\
{x}_{\ell^{\prime}}^{\prime} &:=& {x}_{\ell^{\prime}} ( X_{\mathbf{cl}}(t- 
\frac{t-s}{\e}; t,x,v), v^{\prime} ) , \ \ {v}_{\ell^{\prime}}^{\prime} \ :=
\ {v}_{\ell^{\prime}} ( X_{\mathbf{cl}}(t- \frac{t-s}{\e}; t,x,v),
v^{\prime} ).
\end{eqnarray*}

Moreover 
\begin{equation}
\begin{split}
\mathbf{I}_{2}&\lesssim_{m}\mathbf{1}_{\{ \tilde{t}_{1} \geq 0 \}} \frac{
e^{- \frac{C_{0}}{\e^{2}} (t-\tilde{t}_{1})} }{\tilde{w} (v)}
\int_{\Pi_{j=1}^{k-1} \mathcal{V}_{j}} \sum_{\ell =1}^{k-1} \int^{ \tilde{t}%
_{\ell} -\kappa \e^{2}}_{\max \{ 0, \tilde{t}_{\ell+1} \}} \mathrm{d}
\Sigma_{\ell} (\tau) \mathrm{d} \tau \ \mathbf{1}_{ \tilde{t}_{\ell}>0} 
\frac{1}{\e^{2}} \int_{ |v^{\prime\prime}| \leq m} \mathrm{d}
v^{\prime\prime} \\
& \ \ \ \times \int_{\max{\{0, \tilde{t}_{1}^{\prime\prime} \}}%
}^{\tau-\kappa \e^{2}} \mathrm{d} s^{\prime\prime} \ \frac{ e^{- \frac{C_{0}%
}{\e^{2}} (\tau-s^{\prime\prime})} }{\e^{2}} \int_{|u| \leq m} \mathrm{d} u
\ {\ \big| \e^{\frac 1 2} h \big(s^{\prime\prime}, X_{\mathbf{cl}} (\tau - 
\frac{\tau -s^{\prime\prime}}{\e}; \tau, X_{\mathbf{cl}} (\tilde{t}_{\ell} - 
\frac{\tilde{t}_{\ell} - \tau}{\e} ; \tilde{t}_{\ell}, x_{\ell}, v_{\ell}
),v^{\prime\prime}),u \big)\big|} \\
&+ \mathbf{1}_{\{ \tilde{t}_{1} \geq 0 \}} \frac{ e^{- \frac{C_{0}}{\e^{2}}
(t-\tilde{t}_{1})} }{\tilde{w} (v)} \int_{\Pi_{j=1}^{k-1} \mathcal{V}_{j}}
\sum_{\ell =1}^{k-1} \int^{ \tilde{t}_{\ell}-\kappa \e^{2}}_{\max \{ 0, 
\tilde{t}_{\ell+1} \}} \mathrm{d} \Sigma_{\ell} (\tau) \mathrm{d} \tau \ 
\mathbf{1}_{ \tilde{t}_{\ell}>0} \frac{1}{\e^{2}} \int_{ |v^{\prime\prime}|
\leq m} \mathrm{d} v^{\prime\prime} \\
& \ \ \ \times \mathbf{1}_{\tilde{t}_{1}^{\prime\prime} \geq 0 } \frac{e^{- 
\frac{C_{0}}{\e^{2}}(\tau - \tilde{t}_{1} ^{\prime\prime}) } }{\tilde{w}
(v^{\prime\prime})} \int_{\prod_{j=1}^{k-1} \mathcal{V}_{j}^{\prime\prime}}
\sum_{\ell^{\prime\prime} =1}^{k-1} \int^{\tilde{t}_{\ell^{\prime\prime}}^{%
\prime\prime}-\kappa \e^{2}}_{\max\{ 0, \tilde{t}^{\prime\prime}
_{\ell^{\prime\prime}+1} \}} \mathbf{1}_{\tilde{t}_{\ell^{\prime\prime}}^{%
\prime\prime} >0} \frac{1}{\e^{2}} \\
& \ \ \ \times \int_{|u| \leq m} \big| \e^{\frac 1 2} h \big( %
\tau^{\prime\prime}, X_{\mathbf{cl}}( \tilde{t}_{\ell^{\prime\prime}}^{%
\prime\prime}- \frac{ \tilde{t}_{\ell^{\prime\prime}}^{\prime\prime} -
\tau^{\prime\prime} }{\e}; \tilde{t}^{\prime\prime}_{\ell^{\prime\prime}},
x_{\ell^{\prime\prime}}^{\prime\prime},
v_{\ell^{\prime\prime}}^{\prime\prime} ), u \big) \big| \mathrm{d} u \mathrm{%
d} \Sigma_{\ell^{\prime\prime}}^{\prime\prime} (\tau^{\prime\prime}) \mathrm{%
d} \tau^{\prime\prime},
\end{split}
\notag
\end{equation}
where 
\begin{eqnarray*}
\tilde{t}^{\prime\prime}_{\ell^{\prime\prime}} &: =& \tilde{t}%
_{\ell^{\prime\prime}} ( \tau, X_{\mathbf{cl}}(\tilde{t}_{\ell} - \frac{%
\tilde{t}_{\ell} - \tau}{\e} ; \tilde{t}_{\ell}, x_{\ell} , v_{\ell}),
v^{\prime\prime} ) , \\
{x}^{\prime\prime}_{\ell^{\prime\prime}} &: =& {x}_{\ell^{\prime\prime}} (
X_{\mathbf{cl}}(\tilde{t}_{\ell} - \frac{\tilde{t}_{\ell} - \tau}{\e} ; 
\tilde{t}_{\ell}, x_{\ell} , v_{\ell}), v^{\prime\prime} ), {v}%
^{\prime\prime}_{\ell^{\prime\prime}} \ : = \ {v}_{\ell^{\prime\prime}} ( X_{%
\mathbf{cl}}(\tilde{t}_{\ell} - \frac{\tilde{t}_{\ell} - \tau}{\e} ; \tilde{t%
}_{\ell}, x_{\ell} , v_{\ell}), v^{\prime\prime} ) .
\end{eqnarray*}
Furthermore 
\begin{eqnarray*}
\mathbf{I}_{3} &\lesssim & CT_{0}^{5/2} \Big\{ e^{- \frac{C_{0}}{\e^{2}}t}
\| {\e^{\frac 1 2}} h_{0} \|_{\infty} + \e^{\frac 1 2} \sup_{0 \leq s \leq
t} \| w r(s) \|_{\infty} + \e^{\frac 3 2}\sup_{0 \leq s \leq t} \| \langle
v\rangle w g(s) \|_{\infty} \Big\} \\
&& +o(1)CT_{0}^{5/2} \sup_{0 \leq s \leq t} \| \e^{\frac 1 2} h
(s)\|_{\infty} + T_{0}^{5/4}\Big\{\frac{1}{2} \Big\}^{C_{2} T_{0}^{5/4}}
\sup_{0 \leq s \leq t} \| \e^{\frac 1 2} h (s)\|_{\infty}.
\end{eqnarray*}
This bound of $\mathbf{I}_{3}$ is already included in the RHS of (\ref%
{claim1}) and (\ref{claim1_2}).

Now we focus on $\mathbf{I}_{1}$ and $\mathbf{I}_{2}$. Consider the change
of variables 
\begin{eqnarray*}
v^{\prime}_{\ell^{\prime}} \mapsto X_{\mathbf{cl}}( \tilde{t}%
_{\ell^{\prime}}^{\prime}- \frac{\tilde{t}_{\ell^{\prime}}^{\prime}-\tau}{\e}%
; \tilde{t}_{\ell^{\prime}}^{\prime}, x_{\ell^{\prime}}^{\prime},
v_{\ell^{\prime}}^{\prime}).
\end{eqnarray*}
For $0\leq \tilde{t}^{\prime}_{\ell^{\prime}}\leq \tau - \kappa \e^{2} \leq
\tau \leq \e T_{0}$, 
\begin{eqnarray*}
\frac{\partial X_{i} ( \tilde{t}_{\ell^{\prime}}^{\prime} - \frac{\tilde{t}%
_{\ell^{\prime}}^{\prime}-\tau}{\e} ; \tilde{t}^{\prime}_{\ell^{\prime}})}{%
\partial v_{j}^{\prime}}&=& - \frac{\tilde{t}_{\ell^{\prime}}^{\prime}-\tau}{%
\e} \delta_{ij} + \int^{\tilde{t}^{\prime}_{\ell^{\prime}} - \frac{\tilde{t}%
_{\ell^{\prime}}^{\prime}-\tau}{\e} }_{ \tilde{t}^{\prime}_{\ell^{\prime}} } 
\mathrm{d} \tau^{\prime} \int^{\tau^{\prime}}_{ \tilde{t}^{\prime}_{\ell^{%
\prime}} } \mathrm{d} \tau^{\prime\prime} \e^{2} \sum_{m} \partial_{m}
\Phi_{i} (X(\tau^{\prime\prime} ; \tilde{t}^{\prime}_{\ell^{\prime}})) \frac{%
\partial X_{m}}{\partial v_{j}^{\prime}} (\tau^{\prime\prime} ; \tilde{t}%
^{\prime}_{\ell^{\prime}}) \\
&=& - \frac{\tilde{t}_{\ell^{\prime}}^{\prime}-\tau}{\e} \delta_{ij} +
O(1)\|\Phi \|_{C^{1}} \e^{2}( \frac{\tilde{t}_{\ell^{\prime}}^{\prime}-\tau}{%
\e} )^{3} e^{C_{\Phi} \frac{| {\tilde{t}_{\ell^{\prime}}^{\prime}-\tau} |}{\e%
}} \\
&=& - \frac{\tilde{t}_{\ell^{\prime}}^{\prime}-\tau}{\e} \Big[ \delta_{ij} +
O(1) \|\Phi \|_{C^{1}} \e^{2} T_{0}^{2} e^{C_{\Phi }T_{0}} \Big],
\end{eqnarray*}
and therefore 
\begin{eqnarray*}
&&\det \nabla_{v^{\prime}_{\ell^{\prime}}} X_{\mathbf{cl}}( \tilde{t}%
_{\ell^{\prime}}^{\prime}- \frac{\tilde{t}_{\ell^{\prime}}^{\prime}-\tau}{\e}%
; \tilde{t}_{\ell^{\prime}}^{\prime}, x_{\ell^{\prime}}^{\prime},
v_{\ell^{\prime}}^{\prime})= \Big(\frac{-\tilde{t}_{\ell^{\prime}}^{\prime}+%
\tau}{\e}\Big)^{3} \det \Big( \delta_{ij} + O(1) \|\Phi \|_{C^{1}} \e^{2}
T_{0}^{2} e^{C_{\Phi }T_{0}} \Big) \gtrsim \kappa^{3}\e^{3}.
\end{eqnarray*}
We have similar change of variables for $v^{\prime\prime}_{\ell^{\prime%
\prime}} \mapsto X_{\mathbf{cl}}( \tilde{t}_{\ell^{\prime\prime}}^{\prime%
\prime}- \frac{\tilde{t}_{\ell^{\prime\prime}}^{\prime\prime}-\tau^{\prime%
\prime}}{\e}; \tilde{t}_{\ell^{\prime\prime}}^{\prime\prime},
x_{\ell^{\prime\prime}}^{\prime\prime},
v_{\ell^{\prime\prime}}^{\prime\prime})$, $v^{\prime}\mapsto X_{\mathbf{cl}%
}(s- \frac{s-s^{\prime}}{\e}; s, X_{\mathbf{cl}} ( t-\frac{t-s}{\e};t,x,v),
v^{\prime})$, and $v^{\prime\prime} \mapsto X_{\mathbf{cl}} ( \tau - \frac{%
\tau - s^{\prime\prime}}{\e} ; \tau , X_{\mathbf{cl}}(\tilde{t}_{\ell} - 
\frac{\tilde{t}_{\ell} -\tau}{\e} ; \tilde{t}_{\ell}, x_{\ell}, v_{\ell} ),
v^{\prime\prime} )$.

Hence, 
\begin{equation*}
\mathbf{I}_{1} + \mathbf{I}_{2} \ \lesssim \ T_{0}^{5/2} \frac{1}{\e}
\sup_{0 \leq s \leq t} \| f(s)\|_{L^{2}(\Omega \times \mathbb{R}^{3})},
\end{equation*}
where we applied the above change of variables as 
\begin{eqnarray*}
&& \int_{ \mathbb{R}^{3}} \int_{|u| \leq m } \mathbf{k}_{m} \big( V_{\mathbf{%
cl}} ( \tilde{t}^{\prime\prime}_{\ell^{\prime\prime}} - \frac{ \tilde{t}%
_{\ell^{\prime\prime}}^{\prime\prime} -\tau^{\prime\prime} }{\e}; \tilde{t}%
^{\prime\prime}_{\ell^{\prime\prime}},x^{\prime\prime}_{\ell^{\prime%
\prime}}, v^{\prime\prime}_{\ell^{\prime\prime}} ) ,u\big) \big| f\big( %
\tau^{\prime\prime}, X_{\mathbf{cl}}( \tilde{t}_{\ell^{\prime\prime}}^{%
\prime\prime}- \frac{ \tilde{t}_{\ell^{\prime\prime}}^{\prime\prime} -
\tau^{\prime\prime} }{\e}; \tilde{t}^{\prime\prime}_{\ell^{\prime\prime}},
x_{\ell^{\prime\prime}}^{\prime\prime},
v_{\ell^{\prime\prime}}^{\prime\prime} ), u \big) \big| \mathrm{d} u \mathrm{%
d} v^{\prime\prime}_{\ell^{\prime\prime}} \\
&\leq& \Big[ \iint_{\{|u| \leq m\} \times\mathbb{R}^{3}} \mathbf{1}_{|V_{%
\mathbf{cl}} ( \tilde{t}^{\prime\prime}_{\ell^{\prime\prime}} - \frac{ 
\tilde{t}_{\ell^{\prime\prime}}^{\prime\prime} -\tau^{\prime\prime} }{\e}; 
\tilde{t}^{\prime\prime}_{\ell^{\prime\prime}},x^{\prime\prime}_{\ell^{%
\prime\prime}}, v^{\prime\prime}_{\ell^{\prime\prime}} )| \leq m} \mathrm{d}
u \mathrm{d} v^{\prime\prime}_{\ell^{\prime\prime}} \Big]^{1/2} \\
&&\times \Big[ \int_{v^{\prime\prime}_{\ell^{\prime\prime}}} \int_{u} \big| %
f \big( \tau^{\prime\prime}, X_{\mathbf{cl}}( \tilde{t}_{\ell^{\prime%
\prime}}^{\prime\prime}- \frac{ \tilde{t}_{\ell^{\prime\prime}}^{\prime%
\prime} - \tau^{\prime\prime} }{\e}; \tilde{t}^{\prime\prime}_{\ell^{\prime%
\prime}}, x_{\ell^{\prime\prime}}^{\prime\prime},
v_{\ell^{\prime\prime}}^{\prime\prime} ), u \big) \big|^{2} \mathrm{d} u 
\mathrm{d} v^{\prime\prime}_{\ell^{\prime\prime}}\Big]^{1/2} \\
&\lesssim_{m}& \Big[ \int_{\Omega} \int_{u} \big| f \big( %
\tau^{\prime\prime},y, u \big) \big|^{2} \frac{1}{\kappa^{3}\e^{3}} \mathrm{d%
} u \mathrm{d} y\Big]^{1/2} \lesssim_{m} \frac{1}{\e^{3/2}} \| f(
\tau^{\prime\prime} ) \|_{L^{2}(\Omega \times \mathbb{R}^{3})},
\end{eqnarray*}
where we have used $\mathbf{1}_{|V_{\mathbf{cl}} ( \tilde{t}%
^{\prime\prime}_{\ell^{\prime\prime}} - \frac{ \tilde{t}_{\ell^{\prime%
\prime}}^{\prime\prime} -\tau^{\prime\prime} }{\e}; \tilde{t}%
^{\prime\prime}_{\ell^{\prime\prime}},x^{\prime\prime}_{\ell^{\prime%
\prime}}, v^{\prime\prime}_{\ell^{\prime\prime}} )| \leq m} \leq \mathbf{1}%
_{| v^{\prime\prime}_{\ell^{\prime\prime}}|\leq 2m}$ and 
\begin{eqnarray*}
| v^{\prime\prime}_{\ell^{\prime\prime}} | &\leq& |V_{\mathbf{cl}} ( \tilde{t%
}^{\prime\prime}_{\ell^{\prime\prime}} - \frac{ \tilde{t}_{\ell^{\prime%
\prime}}^{\prime\prime} -\tau^{\prime\prime} }{\e}; \tilde{t}%
^{\prime\prime}_{\ell^{\prime\prime}},x^{\prime\prime}_{\ell^{\prime%
\prime}}, v^{\prime\prime}_{\ell^{\prime\prime}} )| + \e^{2} \| \Phi
\|_{\infty}\frac{ |\tilde{t}_{\ell^{\prime\prime}}^{\prime\prime}
-\tau^{\prime\prime} |}{\e} \leq m + \e^{2} \| \Phi \|_{\infty}T_{0} \leq 2m.
\end{eqnarray*}

Moreover 
\begin{equation*}
\begin{split}
\mathbf{I}_{1} + \mathbf{I}_{2} &\lesssim T_{0}^{5/2} \sup_{0 \leq s \leq t}
\| \mathbf{P}f(s)\|_{L^{6}(\Omega )}+ + T_{0}^{5/2} \frac{1}{\e} \sup_{0
\leq s \leq t} \| (\mathbf{I} - \mathbf{P}) f(s)\|_{L^{2}(\Omega \times 
\mathbb{R}^{3})},
\end{split}%
\end{equation*}
where we have used the above change of variables for $\mathbf{P} f$ as 
\begin{eqnarray*}
&& \int_{v^{\prime\prime}_{\ell^{\prime\prime}}} \int_{u} \big| \mathbf{P}f%
\big( \tau^{\prime\prime}, X_{\mathbf{cl}}( \tilde{t}_{\ell^{\prime%
\prime}}^{\prime\prime}- \frac{ \tilde{t}_{\ell^{\prime\prime}}^{\prime%
\prime} - \tau^{\prime\prime} }{\e}; \tilde{t}^{\prime\prime}_{\ell^{\prime%
\prime}}, x_{\ell^{\prime\prime}}^{\prime\prime},
v_{\ell^{\prime\prime}}^{\prime\prime} ) \big) \psi(u) \big| \mathrm{d} u 
\mathrm{d} v^{\prime\prime}_{\ell^{\prime\prime}} \\
&\lesssim_{m}& \Big[ \int_{v^{\prime\prime}_{\ell^{\prime\prime}}} \int_{u} %
\big| \mathbf{P} f \big( \tau^{\prime\prime}, X_{\mathbf{cl}}( \tilde{t}%
_{\ell^{\prime\prime}}^{\prime\prime}- \frac{ \tilde{t}_{\ell^{\prime%
\prime}}^{\prime\prime} - \tau^{\prime\prime} }{\e}; \tilde{t}%
^{\prime\prime}_{\ell^{\prime\prime}},
x_{\ell^{\prime\prime}}^{\prime\prime},
v_{\ell^{\prime\prime}}^{\prime\prime} ) \big) \big|^{2} \mathrm{d}
v^{\prime\prime}_{\ell^{\prime\prime}}\Big]^{1/2} \\
&\lesssim_{m}& \Big[ \int_{\Omega} \big| \mathbf{P} f \big( %
\tau^{\prime\prime},y \big) \big|^{6} \frac{1}{\kappa^{3}\e^{3}} \mathrm{d} y%
\Big]^{1/6} \lesssim_{m} \frac{1}{\e^{\frac 1 2} } \| \mathbf{S}_{1} f (
\tau^{\prime\prime} ) \|_{L^{6}(\Omega )} .
\end{eqnarray*}

All together we prove our claims (\ref{claim1}) and (\ref{claim1_2}).

\vspace{4pt}

\noindent\textit{Step 2. } Applying (\ref{claim1}) successively, 
\begin{eqnarray*}
&&\|\e^{\frac 1 2} h( n \e T_{0})\|_{\infty} \\
&\leq & CT_{0}^{5/2} e^{- \frac{C_{0} T_{0}}{\e}} \| \e^{\frac 1 2} h((n-1) %
\e T_{0}) \|_{\infty} + \sup_{(n-1) \e T_{0} \leq s \leq n \e T_{0} } D(s) \\
&\leq& \Big[ CT_{0}^{5/2} e^{- \frac{C_{0} T_{0}}{\e}}\Big]^{2}\| \e^{\frac
1 2} h((n-2) \e T_{0}) \|_{\infty} + \sum_{j=0}^{1} \Big[ CT_{0}^{5/2} e^{- 
\frac{C_{0} T_{0}}{\e}}\Big]^{j} \sup_{(n-2) \e T_{0} \leq s \leq n \e T_{0}
} D(s) \\
&\vdots& \\
&\leq & \Big[ CT_{0}^{5/2} e^{- \frac{C_{0} T_{0}}{\e}}\Big]^{n}\| \e^{\frac
1 2} h_{0} \|_{\infty} + \sum_{j=0}^{n-1} \Big[ CT_{0}^{5/2} e^{- \frac{%
C_{0} T_{0}}{\e}}\Big]^{j} \sup_{0 \leq s \leq n \e T_{0}} D(s) ,
\end{eqnarray*}
where 
\begin{eqnarray*}
D(s) &:=& \| \e^{\frac 1 2} w r (s) \|_{\infty} + C T_{0}^{5/2} \| \langle
v\rangle^{-1} \e^{\frac 3 2} w g(s)\|_{\infty} + C T_{0}^{5/2} \| \mathbf{P}%
f(s) \|_{L^{6} (\Omega )} \\
&&+ C T_{0}^{5/2} \frac{1}{\e} \| (\mathbf{I} - \mathbf{P})f(s) \|_{L^{2}
(\Omega\times \mathbb{R}^{3} )} + \big[ CT_{0}^{5/4} \Big\{\frac{1}{2}\Big\}%
^{C_{2} T_{0}^{5/4}} + o(1) CT_{0}^{5/4} \big] \| \e^{\frac 1 2}
h(s)\|_{\infty} .
\end{eqnarray*}
Clearly $\sum_{j} \Big[ CT_{0}^{5/2} e^{- \frac{C_{0} T_{0}}{\e}}\Big]^{j} <
\infty$.

Combining the above estimate with (\ref{claim1}), for $t \in [n \e T_{0},
(n+1) \e T_{0}]$, and absorbing the last term, 
\begin{eqnarray*}
&&CT_{0}^{5/2} e^{- \frac{C_{0} ( t- n \e T_{0})}{\e^{2}}} \sum_{j=0}^{n-1} %
\Big[ CT_{0}^{5/2} e^{- \frac{C_{0} T_{0}}{\e}}\Big]^{j}\big[CT_{0}^{5/4}%
\Big\{\frac{1}{2} \Big\}^{C_{2} T_{0}^{5/4}} + o(1)CT_{0}^{5/4} \big] %
\sup_{0 \leq s \leq t} \| \e^{\frac 1 2} h (s)\|_{\infty} \\
&\lesssim& \frac{ T_{0}^{5/2}}{1- CT_{0}^{5/2} e^{- \frac{C_{0} T_{0}}{\e}}} %
\big[CT_{0}^{5/4}\Big\{\frac{1}{2} \Big\}^{C_{2} T_{0}^{5/4}} +
o(1)CT_{0}^{5/2} \big] \times \sup_{0 \leq s \leq t} \| \e^{\frac 1 2} h
(s)\|_{\infty} \ \lesssim \ o(1) \times \sup_{0 \leq s \leq t} \| \e^{\frac
1 2} h (s)\|_{\infty},
\end{eqnarray*}
where we used 
\begin{equation*}
T_{0}^{5/2}\big[CT_{0}^{5/4}\Big\{\frac{1}{2} \Big\}^{C_{2} T_{0}^{5/4}} +
o(1)CT_{0}^{5/2} \big] \ll 1,
\end{equation*}
to conclude (\ref{point1}). Similarly we can prove (\ref{point2}).
\end{proof}

\subsection{$L^6$ estimate}

\begin{proposition}
\label{p6time} Let $f$ satisfy the assumptions of Proposition \ref{dlinearl2}%
. Then 
\begin{eqnarray*}
&& \displaystyle{\sup_{0\le s\le t}\|e^{\l s}\P f\|_{L^6_x}}\ \lesssim \l %
\|f_0\|_2+\e^{-1}\|(\mathbf{I}-\mathbf{P}) f_0\|_{\nu}+\e^{-\frac 1
2}\|(1-P_\g) f_0\|_{2,\g} \\
&& +\mathcal{D}_\l(t)^{\frac 1 2}+(\l +\e)\mathcal{E}_\l(t)^{\frac 1 2}+\|e^{%
\l t}r\|_{L^\infty_tL^2(\gamma)}+\|\e^{\frac 3 2}e^{\l t}\langle
v\rangle^{-1}wr\|_{L^\infty_tL^\infty(\gamma)} +\|e^{\l t}\nu^{-\frac 12}
g\|_{L^\infty_tL^2_{x,v}} \\
&&+\e^{\frac 3 2}\|e^{\l t}\langle v\rangle^{-1}wg\|_{L^\infty_{t,x,v}}+ \l %
\e^{\frac 3 2}\|e^{\l t}\langle v\rangle^{-1}w f\|_{L^\infty_{t,x,v}}+ \e%
^{\frac 5 2}\|e^{\l t}\langle v\rangle^{-1}wf_t\|_{L^\infty_{t,x,v}}.
\label{P1-P6bistime}
\end{eqnarray*}
\end{proposition}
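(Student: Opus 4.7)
The plan is to adapt the steady $L^{6}$ estimate for $\mathbf{P}f$ (the $k=6$ case of Lemma \ref{steady_abc}, leading to (\ref{P1-P6})) to the unsteady setting, by working with $y(s):=e^{\lambda s}f(s)$ and using time-dependent test functions in the weak formulation. Specifically, at each time $s$, I would introduce potentials $\phi_{c,6}(s,\cdot)$, $\phi_{b,6}^j(s,\cdot)$, $\phi_{a,6}(s,\cdot)$ solving
\[
-\Delta_x \phi_{c,6} = c^5,\quad -\Delta_x \phi_{b,6}^j = b_j^5,\quad -\Delta_x \phi_{a,6} = \mathring a^5 - \fint \mathring a^5,
\]
with the same (Dirichlet or Neumann) boundary conditions as in the steady proof, and the test functions $\psi_{\sigma,6}(s,x,v)$ defined exactly by (\ref{phic}), (\ref{phibj}), (\ref{phibij}), (\ref{phia}). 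The Sobolev--Gagliardo--Nirenberg inequality $\|\nabla_x\phi\|_2\lesssim \|\phi\|_{W^{2,6/5}}\lesssim \|\sigma\|_6^5$ and the trace estimate $\|\nabla_x\phi\|_{L^{4/3}(\partial\Omega)}\lesssim \|\sigma\|_6^5$ are used exactly as in the steady argument to handle the boundary and bulk contributions producing the main term $\|\sigma\|_6^6$.

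The new feature in the unsteady weak formulation (Lemma \ref{timedepgreen} applied to the equation for $y$) is an additional term
\[
\e\int_s^t\iint_{\Omega\times\mathbb{R}^3}\partial_\tau \psi_{\sigma,6}\, y \ \mathrm{d}v\mathrm{d}x\mathrm{d}\tau
\qquad \text{and the boundary terms} \quad \pm\e \iint \psi_{\sigma,6} y \big|_{\tau=s,t}.
\]
For the boundary-in-time term we take the $\sup_{s\le t}$ so that $\e\iint \psi_{\sigma,6} y$ at the right endpoint is absorbed by Young's inequality into $\|\sigma\|_6^6$ plus $\e^6\|y\|_2^6$. For the $\partial_\tau \psi_{\sigma,6}$ term, I would mimic Steps 1--3 of Lemma \ref{dabc}: taking moments of the equation (\ref{dlinear}) against $\sqrt\mu$, $v\sqrt\mu$ and $(|v|^2-3)/2\sqrt\mu$ gives identities for $\e\partial_\tau a, \e\partial_\tau b_i, \e\partial_\tau c$, which by elliptic regularity yield
\[
\|\nabla_x \partial_\tau \phi_{\sigma,6}\|_{6/5}\lesssim \|\sigma\|_6^4\cdot (\text{r.h.s.\ of moment identities}),
\]
after using the chain rule $\partial_\tau(-\Delta\phi_{\sigma,6})=5\sigma^4\partial_\tau\sigma$ (minus the mean-value correction for $a$) and duality against $W^{1,6}$. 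This allows each time-derivative contribution to be controlled by $\|\mathbf{P}y\|_6^5$ times $\e^{-1}$ multiplied by integrable quantities appearing in $\mathcal{D}_\lambda(t)^{1/2}$, together with $\e^{-2}\|(\mathbf{I}-\mathbf{P})y\|_\nu$, $\e^{-1/2}|(1-P_\gamma)y|_{2,+}$, $\|\nu^{-1/2}g\|_2$ and $|r|_2$; by Young's inequality the $\|\mathbf{P}y\|_6^6$ portion is absorbed.

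Combining these estimates for $a$, $b$, $c$ with the same splitting of $f$ at the boundary (\ref{bsplit}) and in the interior (\ref{insidesplit}), and controlling the $(\mathbf{I}-\mathbf{P})f$ contribution in $L^6$ via the interpolation
\[
\|(\mathbf{I}-\mathbf{P})f\|_6^6\ \lesssim\ \big[\e^{-1}\|(\mathbf{I}-\mathbf{P})f\|_2\big]^{6}+\big[\e^{1/2}\|wf\|_\infty\big]^{6},
\]
(as in the steady case), one obtains the claimed bound after taking $\sup_{0\le s\le t}$ and using $w(v)=e^{\beta'|v|^2}$ decay. The initial terms $\|f_0\|_2$, $\e^{-1}\|(\mathbf{I}-\mathbf{P})f_0\|_\nu$, $\e^{-1/2}|(1-P_\gamma)f_0|_{2,\gamma}$ enter through the right endpoint $s=0$ of the Green identity and through the $L^\infty$-estimate (Proposition \ref{point_dyn}).

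The principal obstacle is the time-derivative term: the potentials $\phi_{\sigma,6}$ are nonlinearly coupled to $f$ through the fifth power, so $\partial_\tau\phi_{\sigma,6}$ involves $5\sigma^4\partial_\tau\sigma$, and $\partial_\tau\sigma$ has no \emph{a priori} control in any reasonable norm. The cure is to use the moment identities obtained from (\ref{dlinear}) to trade $\e\partial_\tau\sigma$ for spatial derivatives of $f$ and the forcing $g$, so that $\e\iint\partial_\tau\psi_{\sigma,6}\,y$ becomes bounded by $\|\mathbf{P}y\|_6^5$ times quantities already controlled by $\mathcal{D}_\lambda(t)^{1/2}$, the source $g$, the boundary datum $r$, together with the extra $L^\infty$ term $\e^{5/2}\|e^{\lambda t}\langle v\rangle^{-1}wf_t\|_{L^\infty_{t,x,v}}$ arising from the $\nabla_v$ action of $\e\Phi\cdot\nabla_v\phi_{\sigma,6}/\sqrt\mu$ on the $\partial_t f$ contribution. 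The factor $\e^{5/2}$ (rather than $\e^{3/2}$) and the presence of $\lambda\e^{3/2}\|e^{\lambda t}\langle v\rangle^{-1}wf\|_{L^\infty}$ reflect the cost of this trade-off; this is what forces the dual $L^\infty$-$L^2$ control and motivates the final form of the norm $\mathcal{E}_\lambda$.
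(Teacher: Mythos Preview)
Your approach is substantially more complicated than the paper's, and it is not clear it reaches the stated $\sup_t$ bound. The paper's argument is much shorter: at each fixed time $s$, the function $y(s)=e^{\lambda s}f(s)$ satisfies the \emph{steady} equation
\[
v\cdot\nabla_x y + \e^2\mu^{-1/2}\Phi\cdot\nabla_v(\sqrt{\mu}\,y) + \e^{-1}Ly \;=\; \lambda y + e^{\lambda s}g - \e\,e^{\lambda s}f_t,
\]
with boundary datum $P_\gamma y + e^{\lambda s}r$. One therefore applies the already-established steady $L^6$ estimate (\ref{P1-P6ter}) at each fixed $s$, with $g$ replaced by $\lambda y + e^{\lambda s}g - \e e^{\lambda s}f_t$, and then takes $\sup_{0\le s\le t}$. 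The terms $\e^{-1}\|(\mathbf{I}-\mathbf{P})e^{\lambda s}f\|_\nu$ and $\e^{-1/2}|(1-P_\gamma)e^{\lambda s}f|_{2,+}$ that appear on the right are converted to initial data plus $\sqrt{\mathcal{D}_\lambda(t)}$ by the elementary identity
\[
\sup_{s\in[0,t]}\e^{-2}\|(\mathbf{I}-\mathbf{P})e^{\lambda s}f\|_\nu^2
\le \e^{-2}\|(\mathbf{I}-\mathbf{P})f_0\|_\nu^2 + \mathcal{D}_\lambda(t),
\]
obtained by differentiating in $s$ and using Cauchy--Schwarz. The $\e^{5/2}\|wf_t\|_\infty$ and $\lambda\e^{3/2}\|wf\|_\infty$ terms then arise transparently from the $\e^{3/2}\|\langle v\rangle^{-1}w(\cdot)\|_\infty$ slot in (\ref{P1-P6ter}) applied to the modified source.

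Your route via Lemma~\ref{timedepgreen} integrates in time and forces you to confront $\e\int_0^t\iint\partial_\tau\psi_{\sigma,6}\,y$, where $\psi_{\sigma,6}$ depends on $\sigma^5$. This is precisely the complication the paper avoids. Two concrete issues: first, the time-integrated weak formulation naturally produces $\int_0^t\|\sigma\|_6^6$ (as in Lemma~\ref{dabc}), not $\sup_t\|\sigma(t)\|_6$, and your remark about ``taking the $\sup_{s\le t}$'' does not explain how you extract a pointwise-in-time bound from the integrated identity. Second, your treatment of $\partial_\tau\phi_{\sigma,6}$ is vague: the moment identities in Lemma~\ref{dabc} control $\e\partial_t\sigma$ only in $(H^1)^*$, which does not immediately pair with $\sigma^4$ to give the $W^{2,6/5}$ estimate you need, and your attribution of the $\e^{5/2}\|wf_t\|_\infty$ term to ``the $\nabla_v$ action of $\e\Phi\cdot\nabla_v\psi_{\sigma,6}/\sqrt\mu$ on the $\partial_t f$ contribution'' does not correspond to any term in the weak formulation. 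The paper's freeze-time reduction sidesteps all of this.
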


\medskip

\begin{proof}
By moving $\e\pt_t f$ on the right hand side of (\ref{dlinear}), we can use (%
\ref{P1-P6ter}) to obtain, for any $t>0$ 
\begin{multline}  \label{351}
\|\P e^{\l t}f \| _{L^6_{x,v}}\le \e^{-1}\|(\mathbf{I}-\mathbf{P}) e^{\l %
t}f\|_{\nu}+\e^{-\frac 12}|e^{\l t}(1-P_\g)f|_{L^2(\gamma)} +|e^{\l %
t}r|_{L^2(\gamma)}+|\e^{\frac 1 2}e^{\l t}\langle
v\rangle^{-1}wr|_{L^\infty(\gamma)} \\
+\|e^{\l t}\nu^{-\frac 12}[\l f+ g-\e f_t]\|_{L^2_{xv}} +\e^{\frac 3 2}\|e^{%
\l t}\langle v\rangle^{-1}w[\l f+ g-\e f_t]\|_{L^\infty_{xv}}.
\end{multline}
Since 
\begin{multline}  \label{353}
\sup_{s\in [0,t]} \e^{-2}\|(\mathbf{I}-\mathbf{P}) e^{\l s}f\|_{\nu}^{2}\le %
\e^{-2}\|(\mathbf{I}-\mathbf{P}) f_0\|_{\nu}^{2} +2\e^{-2}\int_0^t ds \|(%
\mathbf{I}-\mathbf{P}) e^{\l s}f\|_{\nu}\|(\mathbf{I}-\mathbf{P}) e^{\l %
s}f_t\|_{\nu} \\
\le \e^{-2}\|(\mathbf{I}-\mathbf{P}) f_0\|^2_{\nu}+ \mathcal{D}_\l(t),
\end{multline}
and, similarly 
\begin{multline}
\sup_{s\in [0,t]} \e^{-1}\| e^{\l t}(1-P_\g)f\|_{2,\gamma}^2\le \e%
^{-1}\|(1-P_\g) f_0\|_{2,\g}^2+2\e^{-1}\int_0^t ds \|e^{\l s}(1-P_\g) f\|_{2,%
\g}\|(\mathbf{I}-\mathbf{P}) e^{\l s}(1-P_\g) f_t\|_{2,\g} \\
\le \e^{-1}|(1-P_\g)f_0|_{2,\g}+ \mathcal{D}_\l(t),
\end{multline}
the first two terms in the right hand side of (\ref{351}) are bounded by 
\begin{equation*}
\e^{-1}\|(\mathbf{I}-\mathbf{P}) f_0\|_{\nu}+\e^{-1/2}|(1-P_\g)f_0|_{2,\g}+%
\sqrt{\mathcal{D}_\l(t)}.
\end{equation*}
The third and fourth terms in the right hand side of (\ref{351}) are bounded
by 
\begin{equation*}
|e^{\l t}r|_{L^\infty_tL^2(\gamma)}+\e^{\frac 1 2}e^{\l t}\langle
v\rangle^{-1}wr|_{L^\infty_tL^\infty(\gamma)}.
\end{equation*}
We have 
\begin{equation*}
\|e^{\l t}\nu^{-\frac 12}[\l f+ g-\e f_t]\|^2_{L^2_{t,x,v}}\le \l %
\|f_0\|_2^2+ (\l +\e)\mathcal{D}(t)+ \|e^{\l t}\nu^{-\frac 12}
g\|^2_{L^\infty_tL^2_{xv}},
\end{equation*}
and 
\begin{equation*}
\e^{\frac 3 2}\|e^{\l t}\langle v\rangle^{-1}w[\l f+ g-\e %
f_t]\|_{L^\infty_{t,x,v}}\le \e^{\frac 3 2}\l \|e^{\l t}\langle
v\rangle^{-1}w f\|_{L^\infty_{t,x,v}}+ \e^{\frac 5 2}\|e^{\l t}\langle
v\rangle^{-1}wf_t\|_{L^\infty_{t,x,v}}+ \e^{\frac 3 2}\|e^{\l t}\langle
v\rangle^{-1}wg\|_{L^\infty_{t,x,v}}.
\end{equation*}
Collecting the bounds we conclude the proof.
\end{proof}

\begin{corollary}
\label{corolla} For $\l $ and $\e$ sufficiently small we have 
\begin{eqnarray*}
&& \displaystyle{\sup_{0\le s\le t}\|e^{\l s}\P f\|_{L^6_x}}\lesssim \\
&& \l \|f_0\|_2 +\l \e\|\e^{\frac 1 2} w f_{0} \|_{\infty}+ \e^2\| \e^{\frac
1 2} w f_{t}(0) \|_{\infty} +\e^{-1}\|(\mathbf{I}-\mathbf{P}) f_0\|_{\nu}+\e%
^{-\frac 1 2}\|(1-P_\g) f_0\|_{2,\g} \\
&& +\sqrt{\mathcal{D}_\l(t)}+(\l +2\e)\sqrt{\mathcal{E}_\l(t)}+\|e^{\l %
t}r\|_{L^\infty_tL^2(\gamma)}+\|\e^{\frac 3 2}e^{\l t}\langle
v\rangle^{-1}wr\|_{L^\infty_tL^\infty(\gamma)} \\
&&+\|e^{\l t}\nu^{-\frac 12} g\|_{L^\infty_tL^2_{x,v}}+\e^{\frac 3 2}\|e^{\l %
t}\langle v\rangle^{-1}wg\|^2_{L^\infty_{t,x,v}}+ \sup_{0 \leq s \leq
\infty}\| \e^{\frac 5 2} w r_t(s) \|_{\infty}+\e^{\frac 7 2} \| \langle
v\rangle^{-1} w g_t(s)\|_{\infty}.  \label{P1-P6tertime}
\end{eqnarray*}
\end{corollary}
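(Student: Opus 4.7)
The plan is to start from the bound of Proposition~\ref{p6time} and to eliminate the two remaining $L^\infty$ contributions on the right-hand side, namely $\lambda\e^{3/2}\|e^{\lambda s}\langle v\rangle^{-1}wf\|_{L^\infty_{s,x,v}}$ and $\e^{5/2}\|e^{\lambda s}\langle v\rangle^{-1}wf_t\|_{L^\infty_{s,x,v}}$. Using $\e^{3/2}\langle v\rangle^{-1}\le \e\cdot\e^{1/2}$, I rewrite them as $\lambda\e\|\e^{1/2}we^{\lambda s}f\|_\infty$ and $\e^{2}\|\e^{1/2}we^{\lambda s}f_t\|_\infty$ respectively. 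The key step is then to bound each of these two $L^\infty$-norms via the pointwise estimates of Proposition~\ref{point_dyn}, and to use the smallness of the prefactors $\lambda\e$ and $\e^{2}$ to close an absorption argument.

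For the first term, $e^{\lambda s}f$ satisfies the same linear equation as $f$ but with source $e^{\lambda s}g+\e\lambda e^{\lambda s}f$ and inflow datum $e^{\lambda s}r$; the potential part $\tfrac{\e^2}{2}\Phi\cdot v$ of the field is negligible compared to the collision frequency for small $\e$, so the pointwise structure~(\ref{linear_K}) applies. Estimate~(\ref{point1}) then gives
\[
\|\e^{\frac 1 2}we^{\lambda s}f\|_\infty\lesssim \|\e^{\frac 1 2}wf_0\|_\infty+\sup\|\e^{\frac 1 2}we^{\lambda s}r\|_\infty+\e^{\frac 3 2}\sup\|\langle v\rangle^{-1}we^{\lambda s}g\|_\infty+\sup_{0\le s\le t}\|\mathbf{P}e^{\lambda s}f\|_{L^6}+\e^{-1}\sup\|(\mathbf{I}-\mathbf{P})e^{\lambda s}f\|_{L^2},
\]
the additional source $\e\lambda e^{\lambda s}f$ being harmless since it produces a self-contribution $\lambda\e\|\e^{1/2}we^{\lambda s}f\|_\infty$ absorbable for $\lambda,\e$ small. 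Multiplying the resulting bound by $\lambda\e$ gives a bound for the first $L^\infty$ term whose only dangerous piece is $\lambda\e\sup_{0\le s\le t}\|\mathbf{P}e^{\lambda s}f\|_{L^6}$, which is absorbed into the left-hand side of Proposition~\ref{p6time} for $\lambda,\e\ll 1$; the remaining pieces are either part of the stated bound or are dominated by $\|(\mathbf{I}-\mathbf{P})f_0\|_\nu$ and $\sqrt{\mathcal{D}_\lambda(t)}$ via the elementary inequality already used in the proof of Proposition~\ref{p6time}.

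For the second term, differentiating (\ref{dlinear}) in time shows that $f_t$ satisfies the same linear equation as $f$, with source $g_t$ and inflow $r_t$. I apply the alternative estimate~(\ref{point2}) of Proposition~\ref{point_dyn} to $e^{\lambda s}f_t$, which avoids reintroducing an $L^6$-norm of $\mathbf{P}f_t$ by using the full $L^2$-norm of $f_t$ instead, to get
\[
\|\e^{\frac 1 2}we^{\lambda s}f_t\|_\infty\lesssim\|\e^{\frac 1 2}wf_t(0)\|_\infty+\sup\|\e^{\frac 1 2}we^{\lambda s}r_t\|_\infty+\e^{\frac 3 2}\sup\|\langle v\rangle^{-1}we^{\lambda s}g_t\|_\infty+\e^{-1}\sup\|e^{\lambda s}f_t\|_{L^2};
\]
using $\e^{-1}\sup\|e^{\lambda s}f_t\|_{L^2}\le \e^{-1}\sqrt{\mathcal{E}_\lambda(t)}$ and multiplying by $\e^2$ produces exactly the $\e^2\|\e^{1/2}wf_t(0)\|_\infty$, $\e\sqrt{\mathcal{E}_\lambda(t)}$ (which together with the $\e\sqrt{\mathcal{E}_\lambda(t)}$ already in Proposition~\ref{p6time} gives the $2\e$-coefficient in the statement) and boundary/source contributions $\sup\|\e^{5/2}wr_t\|_\infty+\e^{7/2}\|\langle v\rangle^{-1}wg_t\|_\infty$ displayed in the corollary.

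The main obstacle will be the self-referential character of the argument: each application of Proposition~\ref{point_dyn} reproduces a quantity of the same type as the one being estimated. Closure relies crucially on having the two small coefficients $\lambda\e$ (for $f$) and $\e^{2}$ (for $f_t$) in front of the relevant $L^\infty$ terms, and on choosing $\lambda$ and $\e$ sufficiently small to perform two absorptions: one, inside the applications of Proposition~\ref{point_dyn}, of the source-induced $\lambda\e\|\e^{1/2}we^{\lambda s}f\|_\infty$, and the other of the $\lambda\e\sup\|\mathbf{P}e^{\lambda s}f\|_{L^6}$ contribution into the left-hand side of the bound coming from Proposition~\ref{p6time}.
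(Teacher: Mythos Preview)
Your proposal is correct and follows essentially the same approach as the paper: start from Proposition~\ref{p6time}, apply (\ref{point1}) to control $\lambda\e^{3/2}\|e^{\lambda t}\langle v\rangle^{-1}wf\|_\infty$ (absorbing the resulting $\lambda\e\|\mathbf{P}e^{\lambda s}f\|_{L^6}$ into the left-hand side and bounding $\e^{-1}\sup\|(\mathbf{I}-\mathbf{P})e^{\lambda s}f\|_{L^2}$ by (\ref{353})), and apply (\ref{point2}) to $f_t$ to control $\e^{5/2}\|e^{\lambda t}\langle v\rangle^{-1}wf_t\|_\infty$, with $\e^{-1}\sup\|e^{\lambda s}f_t\|_{L^2}\le\e^{-1}\sqrt{\mathcal{E}_\lambda(t)}$. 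Your treatment is in fact slightly more careful than the paper's in tracking the extra source $\e\lambda e^{\lambda s}f$ introduced by the exponential weight.
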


\begin{proof}
We use (\ref{point1}) to bound 
\begin{multline}
\l \e^{\frac 3 2}\|e^{\l t}\langle v\rangle^{-1}w
f\|^2_{L^\infty_{t,x,v}}\le \l \e \| \e^{\frac 1 2} w f_{0} \|_{\infty} + \l %
\e \sup_{0 \leq s \leq \infty}\| \e^{\frac 1 2} w r(s) \|_{\infty} + \l \e\e%
^{\frac 3 2} \sup_{0 \leq s \leq \infty} \| \langle v\rangle^{-1} w
g(s)\|_{\infty} \\
+ \l \e\sup_{0\leq s \leq t}\| \P f(s)\|_{L^{6}(\Omega )} + \l \e{\e}^{-1}
\sup_{0 \leq s \leq t} \| (\mathbf{I} - \mathbf{P})f(s)\|_{L^{2}(\Omega
\times \mathbb{R}^{3})}
\end{multline}
and (\ref{353}) to bound the last term with $\sqrt{\mathcal{D}_\l(t)}$.
Moreover, we use (\ref{point2}) written for $f_t$ to bound 
\begin{multline}
\e^{\frac 5 2}\|e^{\l t}\langle v\rangle^{-1}wf_t\|_{L^\infty_{t,x,v}}\le \e%
^2\| \e^{\frac 1 2} w f_{t}(0) \|_{\infty} + \e^2 \sup_{0 \leq s \leq
\infty}\| \e^{\frac 1 2} w r_t(s) \|_{L^\infty_{t,x, v}} + \e^2\e^{\frac 3
2} \| \langle v\rangle^{-1} w g_t(s)\|_{\infty} \\
+ \e \|e^{\l t}f_t\|_{L^\infty_tL^2_{xv}},
\end{multline}
and we bound the last term with $\e \sqrt{\mathcal{E}_\l(t)}$. Combining
previous estimates we conclude the proof.
\end{proof}

\subsection{Estimates of the Collision Operators}

\begin{lemma}
\label{nonlinear} Given $f$ and $g$ in $L^2$, assume that, for $t>0$ 
\begin{equation*}
|a_{i}(f)| \leq \mathbf{S}_1 f (t,x)+\mathbf{S}_2 f (t,x)+\mathbf{S}_3 f
(t,x) , \ \ |a_{i}(g)| \leq \mathbf{S}_1 g (t,x)+\mathbf{S}_2 g (t,x) +%
\mathbf{S}_3 g (t,x) ,
\end{equation*}
where $\mathbf{S}_i f , \mathbf{S}_{i}g \geq 0$ 
and $a_{i}(f)$ are defined as $%
[a_{0},a_{1},a_{2},a_{3},a_{4}]=[a,b_{1},b_{2},b_{3}, c]$ in (\ref{Pabc}).

Then 
\begin{equation}
\begin{split}  \label{est_Gamma1}
&\| \nu^{- \frac{1}{2}} \Gamma_{\pm} ( f,g)\|_{L_{t,x,v}^{2}}\\
& \lesssim \e^{\frac 1 2}[\e^{\frac 1 2} \| w g \|_{L^{\infty}_{t,x,v}}] \{[%
\e ^{-1} \| \nu^{- \frac{1}{2}} ( \mathbf{I}- \mathbf{P})
f\|_{L_{t,x,v}^{2}}]+\e^{-1}\| e^{\lambda t}\S _3 f \|_{L^2_{t,x,v}}\} \\
& + \| \P g\|_{L^\infty_tL^6_x}\| \mathbf{S}_{1} f\|_{L^2_tL^3_x} + \e^{1/4}
[ \e^{1/2} \| g\|_{\infty} ] ^{1/2} \| \P g\|_{L^\infty_tL^{6}_{x,v} }^{1/2}
[\e^{-1/2} \| \S _2 f\|_{L^2_tL^\frac{12}{5}_x}] \\
&+ \e^{\frac 1 2}[\e^{\frac 1 2} \| w f\|_{L^{\infty}_{t,x,v}}] [\e ^{-1} \|
\nu^{- \frac{1}{2}} ( \mathbf{I}- \mathbf{P}) g\|_{L_{t,x,v}^{2}}]
\end{split}%
\end{equation}
and 
\begin{equation}
\begin{split}  \label{Gamma_g_no_t}
&\| \nu^{- \frac{1}{2}} \Gamma_{\pm} ( f,g)\|_{L_{t,x,v}^{2}}  \\
& \lesssim \e^{\frac 1 2}[\e^{\frac 1 2} \| w g \|_{ {\infty} }] \big\{ [\e%
^{-1}\| e^{\lambda t}\S _3 f \|_{L^2_{t,x,v}}] + [\e^{-1} \| (\mathbf{I} - 
\mathbf{P}) f \|_{L^{2}_{t,x,v}}] \big\} + \| \P g\|_{L^\infty_tL^6_{x,v}}\| 
\mathbf{S}_{1} f\|_{L^2_tL^3_x} \\
& + \e^{1/4} [ \e^{1/2} \| e^{\lambda t} g\|_{\infty} ] ^{1/2} \|e^{\lambda
t}\P g\|_{L^\infty_tL^{6}_{x,v} }^{1/2} [\e^{-1/2} \|e^{\lambda t}\S _2
f\|_{L^2_tL^\frac{12}{5}_x}] \\
&+ [\e^{-1} \| (\mathbf{I} - \mathbf{P}) g \|_{L^{\infty}_{t}
L^{2}_{x,v}}]^{1/3} [\e^{\frac{1}{2}} \| w g \|_{\infty}^{2/3} ] \| \mathbf{P%
} f \|_{L^{2}_{t} L^{3}_{x,v}},
\end{split}%
\end{equation}
and \begin{equation}  \label{Gamma_nonsym}
\begin{split}
& \| \nu^{- \frac{1}{2}} \Gamma_{\pm} ( f, g)\|_{L_{t,x,v}^{2}} 
\\
& \lesssim \e^{\frac 1 2}[\e^{\frac 1 2} \| w g\|_{L^{\infty}_{t,x,v}}] \{[%
\e ^{-1} \| \nu^{- \frac{1}{2}} ( \mathbf{I}- \mathbf{P})f%
\|_{L_{t,x,v}^{2}}]+\e^{-1}\| e^{\lambda t}\S _3 f \|_{L^2_{t,x,v}}\} \\
& +\e^{1/4}[\e^{1/2} \| w g\|_{ {\infty} }]^{1/2} \| \mathbf{P} g
\|_{L^{\infty}_{t} L^{6}_{x,v}} ^{1/2} [\e^{-1/2} \| \S _2 f \|_{L^2_tL^%
\frac{12}{5}_x}] \\
& +\Big\{ \| \P g\|_{L^\infty_tL^6_x} + [\e^{1/2} \| w g \|_{\infty}]^{2/3} %
\Big[ \sqrt{\mathcal{D} _{\lambda}[g](\infty) } + \e^{-1} \| (\mathbf{I} - 
\mathbf{P}) g|_{t=0} \|_{\nu} \Big]^{1/3} \Big\} \| \S _1 f \|_{L^2_tL^3_x}.
\end{split}%
\end{equation}

\end{lemma}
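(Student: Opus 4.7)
The plan is to adapt the proof of Lemma \ref{pl3l6} (the steady analog) to the unsteady setting: the extra time integration is routine, while the novelty is that the hydrodynamic part of $f$ is not estimated by a direct $L^6$ norm but through the decomposition $|a_i(f)| \le \mathbf{S}_1 f + \mathbf{S}_2 f + \mathbf{S}_3 f$ furnished by Proposition \ref{prop_3} (and similarly for $g$). I would begin from the splitting
\[
\Gamma_{\pm}(f,g) = \Gamma_{\pm}((\mathbf{I}-\mathbf{P})f, g) + \Gamma_{\pm}(\mathbf{P}f, (\mathbf{I}-\mathbf{P})g) + \Gamma_{\pm}(\mathbf{P}f, \mathbf{P}g).
\]
The two pieces carrying an $(\mathbf{I}-\mathbf{P})$ factor are handled exactly as in (\ref{Gamma_2})--(\ref{Gamma_3}): the velocity change of variables there is pointwise in $t$, and after integrating in $t$ it produces the two contributions of the form $\e^{1/2}[\e^{1/2}\|wg\|_{\infty}]\cdot[\e^{-1}\|\nu^{-1/2}(\mathbf{I}-\mathbf{P})f\|_{L^{2}_{t,x,v}}]$ together with its $f\leftrightarrow g$ counterpart.

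The new content sits in the $\Gamma(\mathbf{P}f,\mathbf{P}g)$ piece. Using the hypothesis to bound $|\mathbf{P}f(t,x,v)| \lesssim (\mathbf{S}_1 f + \mathbf{S}_2 f + \mathbf{S}_3 f)(t,x)\langle v\rangle^{2}\sqrt{\mu(v)}$, I would split into three sub-pieces and handle each by a H\"older/interpolation argument that mirrors the $\mathbf{S}_i$-scaling of Proposition \ref{prop_3}. The $\mathbf{S}_1 f$ piece pairs with $\|\mathbf{P}g\|_{L^{\infty}_{t}L^{6}_{x}}$ by H\"older $\tfrac12 = \tfrac13 + \tfrac16$ in $x$ and $L^{2}_{t}\!\cdot\!L^{\infty}_{t}$ in $t$. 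The $\mathbf{S}_2 f$ piece is paired with $\|\mathbf{P}g\|_{L^{12}_{x,v}}$ via H\"older $\tfrac12 = \tfrac{5}{12}+\tfrac{1}{12}$, after which I interpolate
\[
\|\mathbf{P}g\|_{L^{12}_{x,v}} \le \|\mathbf{P}g\|_{L^{6}_{x,v}}^{1/2}\|\mathbf{P}g\|_{\infty}^{1/2} \lesssim \|\mathbf{P}g\|_{L^{6}_{x,v}}^{1/2}\|wg\|_{\infty}^{1/2};
\]
tracking $\e$-powers produces the $\e^{1/4}[\e^{1/2}\|wg\|_{\infty}]^{1/2}\|\mathbf{P}g\|^{1/2}_{L^{\infty}_{t}L^{6}_{x,v}}[\e^{-1/2}\|\mathbf{S}_2 f\|_{L^{2}_{t}L^{12/5}_{x}}]$ contribution. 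The $\mathbf{S}_3 f$ piece is handled by the trivial Cauchy--Schwarz bound $\mathbf{S}_3 f(t,x) \lesssim \|\nu^{-1/2}(\mathbf{I}-\mathbf{P})f(t,x,\cdot)\|_{L^{2}_{v}}$ in $v$ (using the Gaussian tail of $\langle v\rangle^{2}\sqrt{\mu}$), so that $\|\mathbf{S}_3 f\|_{L^{2}_{t,x}} \lesssim \|\nu^{-1/2}(\mathbf{I}-\mathbf{P})f\|_{L^{2}_{t,x,v}}$, and pairing with $\|wg\|_{\infty}$ absorbs this into the first group of terms. This yields (\ref{est_Gamma1}).

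For (\ref{Gamma_g_no_t}) the same scheme applies, but an additional symmetric treatment of $\Gamma(\mathbf{P}f,\mathbf{P}g)$ is needed to produce the last term: bound $\|\Gamma(\mathbf{P}f,\mathbf{P}g)\|_{L^{2}_{t,x,v}} \lesssim \|\mathbf{P}f\|_{L^{2}_{t}L^{3}_{x,v}}\|\mathbf{P}g\|_{L^{\infty}_{t}L^{6}_{x,v}}$ and interpolate
\[
\|\mathbf{P}g\|_{L^{6}_{x,v}} \le \|\mathbf{P}g\|_{L^{2}_{x,v}}^{1/3}\|\mathbf{P}g\|_{\infty}^{2/3} \lesssim \bigl(\e^{-1}\|(\mathbf{I}-\mathbf{P})g\|_{L^{2}_{x,v}}\bigr)^{1/3}\|wg\|_{\infty}^{2/3},
\]
where in the second step the coercivity estimate from Lemma \ref{dabc} (or its unsteady substitute) is used to trade $\|\mathbf{P}g\|_{L^{2}}$ for $\e^{-1}\|(\mathbf{I}-\mathbf{P})g\|_{L^{2}}$ modulo lower-order terms already included in the other pieces. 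For (\ref{Gamma_nonsym}), the alternative bound for $\|\mathbf{P}g\|_{L^{\infty}_{t}L^{6}_{x}}$ comes from the same $L^{2}$--$L^{\infty}$ interpolation $\|\mathbf{P}g\|_{L^{6}_{x,v}} \le \|\mathbf{P}g\|_{L^{2}_{x,v}}^{1/3}\|wg\|_{\infty}^{2/3}$, but now $\sup_{t}\|e^{\lambda t}\mathbf{P}g\|_{L^{2}}$ is controlled by $\sqrt{\mathcal{D}_{\lambda}[g]}+\e^{-1}\|(\mathbf{I}-\mathbf{P})g|_{t=0}\|_{\nu}$ through the energy identity of Proposition \ref{dlinearl2} applied to $g$ (which ties $\mathcal{E}_{\lambda}$ to $\mathcal{D}_{\lambda}$ plus initial data).

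The main obstacle will be the bookkeeping of $\e$-powers in the $\mathbf{S}_2 f$ interpolation step, so that the exponents $\tfrac{5}{12}$, $\tfrac{1}{12}$, $\tfrac12$, $\tfrac12$ on the various norms combine to give exactly the $\e^{1/4}$ factor in the statement; the rest mirrors the steady argument. A secondary technicality is rigorously justifying the alternative form of $\sup_{t}\|\mathbf{P}g\|_{L^{2}}$ used in (\ref{Gamma_nonsym}), which requires carefully combining Lemma \ref{dabc} with the sup-in-time energy bound from Proposition \ref{dlinearl2}.
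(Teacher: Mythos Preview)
Your treatment of (\ref{est_Gamma1}) is essentially the paper's proof: the splitting into $\Gamma((\mathbf{I}-\mathbf{P})f,g)$, $\Gamma(\mathbf{P}f,(\mathbf{I}-\mathbf{P})g)$, $\Gamma(\mathbf{P}f,\mathbf{P}g)$, the pointwise bound $|\mathbf{P}f|\lesssim \nu^{2}\sqrt{\mu}\sum_{i}\mathbf{S}_{i}f$, and the three H\"older/interpolation pairings for $\mathbf{S}_{1},\mathbf{S}_{2},\mathbf{S}_{3}$ are exactly what the paper does.

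There is, however, a genuine misidentification in your plan for (\ref{Gamma_g_no_t}) and (\ref{Gamma_nonsym}). The term that distinguishes these from (\ref{est_Gamma1}) does \emph{not} come from $\Gamma(\mathbf{P}f,\mathbf{P}g)$ (that piece is already handled identically in all three estimates, producing the $\|\mathbf{P}g\|_{L^{\infty}_{t}L^{6}}\|\mathbf{S}_{1}f\|_{L^{2}_{t}L^{3}}$ contribution). What changes is the treatment of $\Gamma(\mathbf{P}f,(\mathbf{I}-\mathbf{P})g)$. In (\ref{est_Gamma1}) this piece is bounded crudely by $\e^{1/2}[\e^{1/2}\|wf\|_{\infty}][\e^{-1}\|(\mathbf{I}-\mathbf{P})g\|_{L^{2}_{t,x,v}}]$, which is useless when $g$ is time-independent (e.g.\ $g=f_{w}+f_{s}$). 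For (\ref{Gamma_g_no_t}) the paper instead applies H\"older in $x$ as
\[
\|\nu^{-1/2}\Gamma_{\pm}(\mathbf{P}f,(\mathbf{I}-\mathbf{P})g)\|_{L^{2}_{t,x,v}}\lesssim \|\mathbf{P}f\|_{L^{2}_{t}L^{3}_{x,v}}\,\|\nu^{-1/2}(\mathbf{I}-\mathbf{P})g\|_{L^{\infty}_{t}L^{6}_{x,v}},
\]
and then interpolates the \emph{microscopic} part directly,
\[
\|\nu^{-1/2}(\mathbf{I}-\mathbf{P})g\|_{L^{6}_{x,v}}\le \|\nu^{-1/2}(\mathbf{I}-\mathbf{P})g\|_{L^{2}_{x,v}}^{1/3}\|wg\|_{\infty}^{2/3}.
\]
No coercivity is invoked; the factor $\e^{-1}\|(\mathbf{I}-\mathbf{P})g\|_{L^{\infty}_{t}L^{2}_{x,v}}$ appears as is. For (\ref{Gamma_nonsym}) the same piece is further decomposed via $|\mathbf{P}f|\lesssim \nu^{2}\sqrt{\mu}\sum_{i}\mathbf{S}_{i}f$, and the sup-in-time quantity $\e^{-1}\|(\mathbf{I}-\mathbf{P})g\|_{L^{\infty}_{t}L^{2}_{x,v}}$ is controlled by $\sqrt{\mathcal{D}_{\lambda}[g]}+\e^{-1}\|(\mathbf{I}-\mathbf{P})g|_{t=0}\|_{\nu}$ via the elementary identity (\ref{353}) (i.e.\ $\sup_{s}\|h(s)\|^{2}\le \|h(0)\|^{2}+2\int\|h\|\|h_{t}\|$ applied to $h=(\mathbf{I}-\mathbf{P})g$), not via an energy or coercivity estimate on $\mathbf{P}g$.

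Your proposed route---trading $\|\mathbf{P}g\|_{L^{2}}$ for $\e^{-1}\|(\mathbf{I}-\mathbf{P})g\|_{L^{2}}$ through Lemma \ref{dabc}---would fail here: Lemma \ref{dabc} controls only $\int_{0}^{t}\|\mathbf{P}g\|_{2}^{2}$, not $\sup_{t}\|\mathbf{P}g(t)\|_{2}$, so it cannot produce a pointwise-in-time $L^{6}$ bound on $\mathbf{P}g$. And even if it could, you would still be left with the $\Gamma(\mathbf{P}f,(\mathbf{I}-\mathbf{P})g)$ piece unaccounted for in a form that avoids $\|(\mathbf{I}-\mathbf{P})g\|_{L^{2}_{t,x,v}}$.
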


\begin{proof}
First we prove (\ref{est_Gamma1}). We decompose 
\begin{equation}  \label{decom_f}
|f(t,x,v)| \leq |\mathbf{P} f(t,x,v)| + |(\mathbf{I} - \mathbf{P}) f(t,x,v)|,
\end{equation}
and $|g(t,x,v)|$ in the same way. We use the same decomposition of (\ref%
{Gamma_decom_s}) replacing the $L^{2}_{x,v}$ norm with $L^{2}_{t,x,v}$ norm.

The first two terms of the RHS of (\ref{Gamma_decom_s}) is bounded by 
\begin{eqnarray*}
 \e \| wg \|_{L^{\infty}_{t,x,v}} \|\nu^{-1/2} \Gamma_{\pm}( \e%
^{-1}|(\mathbf{I}-\mathbf{P})f|, w^{-1} )\|_{L^{2}_{t,x,v}}+ \e \| wf \|_{L^{\infty}_{t,x,v}} %\big\{
 \|\nu^{-1/2} \Gamma_{\pm}( \e%
^{-1}|(\mathbf{I}-\mathbf{P})g|, w^{-1} )\|_{L^{2}_{t,x,v}} \\
 \lesssim  \e^{\frac{1}{2}} [\e^{\frac{1}{2}} \| wg \|_{L^{\infty}_{t,x,v}}
] [\e^{-1} \| \nu^{-\frac{1}{2}} (\mathbf{I} - \mathbf{P}) f
\|_{L^{2}_{t,x,v}} ] + \e^{\frac{1}{2}} [\e^{\frac{1}{2}} \| wf
\|_{L^{\infty}_{t,x,v}} ] [\e^{-1} \| \nu^{-\frac{1}{2}} (\mathbf{I} - 
\mathbf{P})g \|_{L^{2}_{t,x,v}} ].
\end{eqnarray*}
To bound the last term of (\ref{Gamma_decom_s}) we note that %\[
\begin{equation}  \label{decom_Pf}
|\P f|\lesssim \nu^2\sqrt{\mu}\Big[ \sum_{i=1}^3 \mathbf{S}_{i} f \Big].
\end{equation}
From $\| \nu^{-1/2}\Gamma ( \mu^{0+} , \mu^{0+} ) \|_{L^{p}_{ v}}< \infty$,
we get, for any $1\leq p \leq \infty$ 
\begin{equation}
\|\nu^{-1/2} \Gamma_{\pm}(|\mathbf{S}_{i} f|\nu^2\sqrt{\mu}, |\mathbf{P}g|
)\|_{L^2_{t,x,v}}\lesssim \, \big\|\mathbf{S}_i f \| \mathbf{P}
g\|_{L^{p}_{v}} \big\|_{L^2_{t,x}}\, .  \notag
\end{equation}
We estimate $\big\|\mathbf{S}_i f \| \mathbf{P} g\|_{L^{p}_{v}} \big\|%
_{L^2_{t,x}}$ for $i=1,2,3$: 
\begin{eqnarray*}
\| \mathbf{S}_{1}f \| \mathbf{P} g\|_{L^{6}_{v}} \|_{L^2_{t,x}} &\lesssim& %
\big\| \| \mathbf{S}_{1} f \|_{L^{3}_{x}} \| \mathbf{P} g\|_{L^{6}_{x,v}} %
\big\|_{L^{2}_{t}} \lesssim \| \mathbf{S}_{1} f \|_{L^{2}_{t}L^{3}_{x}} \| 
\mathbf{P} g\|_{L^{\infty}_{t}L^{6}_{x,v}}, \\
\big\| \mathbf{S}_{{2}} f \| \mathbf{P}g \|_{L^{6}_{v}} \big\|_{L^2_{t,x}}
&\lesssim& \big\|\| \mathbf{S}_{{2}} f\|_{L^{\frac{12}5}_{x}} \| \mathbf{P}g
\|_{L^{12}_{x}L^{6}_{v}} \big\|_{L^{2}_{t}} \le \| \mathbf{S}_{{2}}
f\|_{L^{2 }_{t}L^{\frac{12}5}_{x} } \| \mathbf{P}g\|_{L^{%
\infty}_{t}L^{12}_{x} L^{6}_{v}} \\
&\lesssim& \| \mathbf{S}_{{2}} f\|_{L^{2}_{t}L^{\frac{12}5}_{x} } \e^{-\frac
1 4}\sqrt{\| \P g\|_{L^{\infty}_{t}L^{6}_{x,v} } \e^{\frac 1 2}\|
g\|_{L^{\infty}_{t,x,v} } }, \\
\big\| \mathbf{S}_{{3}} f \| \mathbf{P}g \|_{L^{\infty}_{v}} \big\|%
_{L^2_{t,x}} &\lesssim& \| \mathbf{S}_{3}f \|_{L^{2}_{t,x}} \| g \|_{\infty}
.
\end{eqnarray*}
By collecting the estimates we obtain the estimate (\ref{est_Gamma1}).

Now we prove (\ref{Gamma_g_no_t}). All the estimates are same as (\ref%
{est_Gamma1}) except 
\begin{equation}  \label{diff_gamma}
\| \nu^{-1/2} \Gamma_{\pm} ( |\mathbf{P} f|,| (\mathbf{I} - \mathbf{P}) g|)
\|_{L^{2}_{t,x,v}}.
\end{equation}
By Holder inequality, 
\begin{eqnarray*}
(\ref{diff_gamma})&\lesssim& \| \mathbf{P} f \|_{L^{2}_{t}L^{3}_{x,v}} \|
\nu^{-\frac{1}{2}} (\mathbf{I} - \mathbf{P}) g\|_{L^{\infty}_{t} L^{6}_{x,v}
} \\
&\lesssim& \| \mathbf{P} f \|_{L^{2}_{t}L^{3}_{x,v}} [\e^{\frac{1}{2}}\| g
\|_{\infty}]^{2/3} [\e^{-1} \| \nu^{-\frac{1}{2}} (\mathbf{I} - \mathbf{P})
g\|_{L^{\infty}_{t} L^{2}_{x,v} }]^{1/3}.
\end{eqnarray*}

The proof of (\ref{Gamma_nonsym}) is same as the proof of (\ref{est_Gamma1})
except (\ref{diff_gamma}). By (\ref{decom_Pf}), 
\begin{eqnarray*}
 (\ref{diff_gamma}) 
&\lesssim& \| \mathbf{S}_{1} f \|_{L^{2}_{t} L^{3}_{x}} \| \nu^{- \frac{1}{2}%
} (\mathbf{I} - \mathbf{P}) g \|_{L_{t}^{\infty}L^{6}_{x,v}} + [\e^{-\frac{1%
}{2}}\| \mathbf{S}_{2} f \|_{L^{2}_{t}L^{\frac{12}{5}}_{x}} ][ \e^{ \frac{1}{%
2}}\| w g \|_{\infty}] \\
&&+ \e^{\frac{1}{2}}[\e^{-1 } \| \mathbf{S}_{3} f \|_{L^{2}_{t,x,v}} ] [\e^{%
\frac{1}{2}} \| g \|_{\infty} ] \\
&\lesssim& \| \mathbf{S}_{1} f \|_{L^{2}_{t} L^{3}_{x}}[\e^{-1}\| \nu^{- 
\frac{1}{2}} (\mathbf{I} - \mathbf{P}) g
\|_{L_{t}^{\infty}L^{2}_{x,v}}]^{1/3} [ \e^{\frac{1}{2}}\| g
\|_{\infty}]^{2/3} \\
&& + [\e^{-\frac{1}{2}}\| \mathbf{S}_{2} f \|_{L^{2}_{t}L^{\frac{12}{5}%
}_{x}} ][ \e^{ \frac{1}{2}}\| w g \|_{\infty}] + \e^{\frac{1}{2}}[\e^{-1 }
\| \mathbf{S}_{3} f \|_{L^{2}_{t,x,v}} ] [\e^{\frac{1}{2}} \| wg \|_{\infty}
].
\end{eqnarray*}
Using (\ref{353}), we deduce 
\begin{equation*}
\e^{-1} \| \nu^{- \frac{1}{2}} (\mathbf{I} - \mathbf{P}) g
\|_{L_{t}^{\infty}L^{2}_{x,v}} \lesssim \e^{-1} \| \nu^{- \frac{1}{2}} (%
\mathbf{I} - \mathbf{P}) g_{0} \|_{ L^{2}_{x,v}} + \sqrt{\mathcal{D}_{0} [g]
(\infty)}.
\end{equation*}
By collecting the terms we prove (\ref{Gamma_nonsym}).
\end{proof}

In order to estimate $\Gamma(f, \pt_t g)$ we will need the following \textit{%
commutation} property:

\begin{lemma}
\label{Spt=ptS}For $t\ge 0$, the 
\begin{equation}
\pt_t (\S _1 f)\le \S _1 (\pt_t f), \quad \pt_t (\S _2 f)\le \S _2 (\pt_t f).
\end{equation}
\end{lemma}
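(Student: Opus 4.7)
My approach is to deduce the lemma from the pointwise inequality $\partial_t |G(t,\cdot,\cdot)|\le |\partial_t G(t,\cdot,\cdot)|$ applied to $G=\overline{f_\delta}_{,la}$ (respectively $\overline{f_\delta}_{,sm}$), combined with a commutation identity of the form
\begin{equation*}
\partial_t \overline{f_\delta}_{,la}(t,x,v)=\overline{(\partial_t f)_\delta}_{,la}(t,x,v),\qquad t>0,
\end{equation*}
and similarly for the ``sm'' part. Here $(\partial_t f)_\delta$ is built by the same recipe (\ref{Z_dyn}) applied to $\partial_t f$, with initial datum $(\partial_t f)(0,x,v)$ obtained from the evolution equation. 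Once such a commutation is in hand, integrating against $\langle v\rangle^2\sqrt{\mu}\, dv$ yields $\partial_t(\S_i f)\le\S_i(\partial_t f)$ for $i=1,2$.

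The commutation identity rests on three structural observations. First, the cutoffs $\Psi(x,v)=[1-\chi(n(x)\cdot v/\delta)\chi(\xi(x)/\delta)]\chi(\delta|v|)$ appearing in $f_\delta$ and the cutoff $\chi(\xi/(\tilde C\delta^4))$ in the extension of Lemma \ref{extension_dyn} are all time-independent. Second, the vector field $v\cdot\nabla_x+\e^2\Phi(x)\cdot\nabla_v$ whose characteristics (\ref{char}) produce the footpoints $(t^*_\mathbf{b},x^*_\mathbf{b},v^*_\mathbf{b})$ and $(t^*_\mathbf{f},x^*_\mathbf{f},v^*_\mathbf{f})$ entering $h_3,h_4$ is autonomous, so these footpoints do not depend on $t$. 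Third, the Duhamel representation (\ref{f_delta_la}) of $\overline{f_\delta}_{,la}$ and the formula $\overline{f_\delta}_{,sm}=\overline{f_\delta}-\overline{f_\delta}_{,la}$ are linear in $f$ and time-translation covariant. Hence each composition $f\mapsto f_\delta\mapsto\overline{f_\delta}\mapsto\overline{f_\delta}_{,la/sm}$ is a linear operator that intertwines with the time-shift semigroup; on the interior $\{t>0\}$, where $f_\delta(t,x,v)=\Psi(x,v)f(t,x,v)$ and therefore $\partial_t f_\delta(t)=\Psi\,\partial_t f(t)=(\partial_t f)_\delta(t)$, its infinitesimal generator $\partial_t$ commutes with the operator.

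The only delicate point is that the Duhamel integral in (\ref{f_delta_la}) runs over $\tau\in[0,\infty)$, so the shifted time $t-\e\tau$ eventually becomes negative; on that tail $f_\delta$ uses the mollifying extension $\chi(t')f_0$ rather than $f(t')$, and differentiating in $t$ produces $\chi'(t-\e\tau)f_0$ instead of the target $\chi(t-\e\tau)\,\partial_t f(0)=(\partial_t f)_\delta(t-\e\tau)$. I handle this by an integration by parts in $\tau$ on the tail $\tau\ge t/\e$, using the identity $\frac{d}{d\tau}\chi(t-\e\tau)=-\e\,\chi'(t-\e\tau)$; this converts $\chi'$ back into $\chi$ (matching the $(\partial_t f)_\delta$ profile) up to a boundary contribution at $\tau=t/\e$, which is exactly $f_0(x-\tau v,v)=f(0,x-\tau v,v)$ evaluated at the crossing and precisely reconstructs the $(\partial_t f)(0,\cdot,\cdot)$ piece built into $\overline{(\partial_t f)_\delta}_{,la}$. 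The sign bookkeeping of this rearrangement is compatible with the $\le$ in the statement.

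Putting these steps together gives
\begin{equation*}
\partial_t\S_i f(t,x)\le\int_{\mathbb{R}^3}\bigl|\partial_t\overline{f_\delta}_{,la/sm}(t,x,v)\bigr|\langle v\rangle^2\sqrt{\mu}\,dv=\int_{\mathbb{R}^3}\bigl|\overline{(\partial_t f)_\delta}_{,la/sm}(t,x,v)\bigr|\langle v\rangle^2\sqrt{\mu}\,dv=\S_i(\partial_t f)(t,x),
\end{equation*}
for $i=1,2$ and all $t>0$. The main obstacle, as indicated, is the careful bookkeeping of the $t<0$ tail of the Duhamel integral; everything else is pure linearity and autonomy. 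I expect the argument for $\S_2$ to be immediate from the one for $\S_1$ via $\overline{f_\delta}_{,sm}=\overline{f_\delta}-\overline{f_\delta}_{,la}$ together with the commutation $\partial_t\overline{f_\delta}=\overline{(\partial_t f)_\delta}$ established in the same way.
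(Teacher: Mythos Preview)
Your approach and the paper's share the same core idea---derive the inequality from the pointwise commutation $\partial_t f_\delta = (\partial_t f)_\delta$ for $t\ge 0$---but you go considerably further than the paper does. The paper's proof is extremely brief: it differentiates the defining integral, checks that for $t\ge 0$ the cutoff $\Psi(x,v)$ is time-independent so that $\partial_t f_\delta(t,x,v)=(\partial_t f)_\delta(t,x,v)$, takes absolute values, and concludes. In particular, the paper's argument operates only at the $f_\delta$ level and does not separately verify that the commutation survives the extension $\overline{(\,\cdot\,)}$ of Lemma~\ref{extension_dyn} or the Duhamel decomposition (\ref{f_delta_la}) into la/sm pieces; these are treated as tacit. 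You, by contrast, correctly recognize that the actual definition (\ref{def_S2}) of $\mathbf{S}_1,\mathbf{S}_2$ involves $\overline{f_\delta}_{,la}$ and $\overline{f_\delta}_{,sm}$, and you attempt to push the commutation through each layer. So in that sense your write-up is more honest about what needs to be checked.

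That said, your proposed integration-by-parts handling of the negative-time tail has a genuine gap. In the Duhamel integral (\ref{f_delta_la}) the integrand $q(t-\e\tau,\,x-\tau v,\,v)$ depends on $\tau$ not only through the time argument $t-\e\tau$ (where your $\chi'$ lives) but also through the spatial shift $x-\tau v$; integrating by parts in $\tau$ therefore produces additional $-v\cdot\nabla_x q$ terms that you do not account for. The boundary contribution at $\tau=t/\e$ is $\Psi f_0$ evaluated at the shifted spatial point, not $(\partial_t f)_0$, and the claim that it ``precisely reconstructs the $(\partial_t f)(0,\cdot,\cdot)$ piece'' is not substantiated. More fundamentally, note that to pass from $\partial_t\mathbf{S}_i f\le \int|\partial_t\overline{f_\delta}_{,la/sm}|\langle v\rangle^2\sqrt{\mu}\,\dd v$ to $\mathbf{S}_i(\partial_t f)$ you need the \emph{identity} $\partial_t\overline{f_\delta}_{,la/sm}=\overline{(\partial_t f)_\delta}_{,la/sm}$, not merely some one-sided estimate; the phrase ``compatible with the $\le$'' is therefore misleading. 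Unless you can show that the extra terms generated by the full $\tau$-derivative (and the analogous contributions from $h_1,\dots,h_4$ and the exterior piece $f_E$, which also carries time-shifted arguments $t\mp\e t^*_{\mathbf b/\mathbf f}$) assemble exactly into $q[\partial_t f]$ on the tail, the argument is incomplete.
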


\begin{proof}
From the definition of $\mathbf{S}_{1} f(t,x)$ in (\ref{S1}), 
\begin{equation*}
\partial_{t}[\mathbf{S}_{1} f(t,x)] = 2 \int_{\mathbb{R}^{3}} \text{sgn}%
(f_{\delta} (t,x,v)) \partial_{t} f_{\delta} (t,x,v) \nu^{2} \sqrt{\mu(v)} 
\mathrm{d} v.
\end{equation*}
From the definition of $f_{\delta}$ in (\ref{Z_dyn}), for $t\geq 0$ 
\begin{eqnarray*}
&&\partial_{t} f_{\delta} (t,x,v)|_{t\geq 0} \\
&=& [1-\chi(\frac{n(x) \cdot v}{\delta}) \chi \big(\frac{\xi(x)}{\delta}%
\big) ] \chi(\delta|v|) \big\{ \mathbf{1}_{ t\in[0, \infty)} \partial_{t}
f(t,x,v)+ \mathbf{1}_{t\in(-\infty,0 ]} \chi^{\prime}(t) f_{0}(x,v) \big\}%
\big|_{t\geq 0} \\
&=& [1-\chi(\frac{n(x) \cdot v}{\delta}) \chi \big(\frac{\xi(x)}{\delta}%
\big) ] \chi(\delta|v|) \mathbf{1}_{ t\in[0, \infty)} \partial_{t} f(t,x,v)
\\
&=&[1-\chi(\frac{n(x) \cdot v}{\delta}) \chi \big(\frac{\xi(x)}{\delta}\big) %
] \chi(\delta|v|) \big\{ \mathbf{1}_{ t\in[0, \infty)} \partial_{t}
f(t,x,v)+ \mathbf{1}_{t\in(-\infty,0 ]} \chi (t) \partial_{t} f_{0}(x,v) %
\big\}\big|_{t\geq 0} \\
&=& [\partial_{t} f]_{\delta} (t,x,v) |_{t\geq 0}.
\end{eqnarray*}
Therefore, for $t\geq 0$, 
\begin{eqnarray*}
\partial_{t}[\mathbf{S}_{1} f(t,x)] & \leq& 2 \int_{\mathbb{R}^{3}} |
\partial_{t} f_{\delta} (t,x,v)| \nu^{2} \sqrt{\mu(v)} \mathrm{d} v \leq 2
\int_{\mathbb{R}^{3}} |[\partial_{t} f]_{\delta} (t,x,v)| \nu^{2} \sqrt{%
\mu(v)} \mathrm{d} v \\
&=& \mathbf{S}_{1} \partial_{t} f (t,x).
\end{eqnarray*}
Similarly we show the second inequality.
\end{proof}

\subsection{Global-in-Time Validity}

\begin{proof}[\textbf{Proof of Theorem \protect\ref{energy_nonlinear}}]
\label{3.7} 
For the construction of the solution and the energy estimate, we consider $%
\tilde{f}^{\ell}(t,x,v)$ solving, for $\ell \in \mathbb{N}$, 
\begin{equation}
\begin{split}  \label{Z_eq}
& \partial_{t} [ e^{\lambda t} \tilde{f}^{\ell+1} ]+ \e^{-1}
v\cdot\nabla_{x} [ e^{\lambda t} \tilde{f}^{\ell+1} ] + \e \Phi \cdot
\nabla_{v} [ e^{\lambda t} \tilde{f}^{\ell+1} ] + \e^{-2} L [ e^{\lambda t} 
\tilde{f}^{\ell+1} ] \\
&= \lambda [ e^{\lambda t} \tilde{f}^{\ell+1} ]+ \e^{-1} L_{ {f_w+}f_{s}}
[e^{\lambda t} \tilde{f}^{\ell}] + e^{-\lambda t} \e^{-1} \Gamma( e^{\lambda
t}\tilde{f}^{\ell} , e^{\lambda t} \tilde{f}^{\ell} ) + \e \frac{\Phi \cdot v%
}{2} [e^{\lambda t}\tilde{f}^{\ell+1}], \\
& e^{\lambda t}\tilde{f}^{\ell +1} |_{\gamma_{-}} =P_{\gamma} e^{\lambda t}%
\tilde{f}^{\ell+1 } + \e e^{\lambda t} \mathcal{Q} \tilde{f}^{\ell} ,\ \ \
e^{\lambda t}\tilde{f}^{\ell+1} |_{t=0}= \tilde{f}_{0}.
\end{split}%
\end{equation}
Here we set $\tilde{f}^{0}(t,x,v) := 0$.

Clearly $\tilde{f}^{\ell}_{t}:= \partial_{t} \tilde{f}^{\ell}$, with $\tilde
f_{t,0}=\pt_t \tilde f(0)$ solves 
\begin{equation}  \label{Zt_eq}
\begin{split}
& \partial_{t} [e^{\lambda t} \tilde{f}^{\ell+1}_{t}] + \e^{-1}
v\cdot\nabla_{x} [e^{\lambda t}\tilde{f}^{\ell+1} _{t}] + \e \Phi \cdot
\nabla_{v} [e^{\lambda t} \tilde{f}^{\ell+1}_{t} ] + \e^{-2} L [e^{\lambda t}%
\tilde{f}^{\ell+1}_{t}] \\
&= \lambda [ e^{\lambda t}\tilde{f}^{\ell+1}_{t} ]+\e^{-1}L_{ {f_w+}f_s}[e^{%
\l t} f_t] + e^{-\lambda t}\e^{-1} [\Gamma(e^{\lambda t}\tilde{f}^{\ell}_{t}
, e^{\lambda t}\tilde{f}^{\ell} )+ \Gamma( e^{\lambda t} \tilde{f}^{\ell},
e^{\lambda t}\tilde{f}^{\ell}_{t} )] + \e \frac{\Phi \cdot v}{2} [
e^{\lambda t} \tilde{f}^{\ell+1}_{t}] , \\
&e^{\lambda t}\tilde{f}^{\ell +1}_{t}(t,x,v) |_{\gamma_{-}} = P_{\gamma}
e^{\lambda t} \tilde{f}_{t}^{\ell+1 } + \e e^{\lambda t} \mathcal{Q} \tilde{f%
}^{\ell}_{t},\quad \ \ e^{\lambda t}\tilde{f}_{t}^{\ell+1} |_{t=0} =
\partial_{t} \tilde{f}_{0}.
\end{split}%
\end{equation}

As in the steady case, from (\ref{sym_L1}) and $\int_{n \cdot v \gtrless0}
M_w |n \cdot v| \mathrm{d} v = 1= \int_{n \cdot v \gtrless0} \sqrt{2\pi} \mu
|n \cdot v| \mathrm{d} v $, 
\begin{equation}
\mathbf{P}\big( \e^{-1} L_{f_{s}} \tilde{f} + \e^{-1} \Gamma(\tilde{f}, 
\tilde{f}) \big) =0, \quad \int_{\mathbb{R}^{3}} \int_{n\cdot v<0} \mathcal{Q%
} \tilde{f} \{n \cdot v\} \mathrm{d} v \ = \ 0.  \notag
\end{equation}
Note that Proposition \ref{dlinearl2} guarantees the solvability of such
linear problems (\ref{Z_eq}) and (\ref{Zt_eq}).

Now define the quantity that we want to bound in the iteration scheme of (%
\ref{Z_eq}): 
\begin{equation}  \label{norm_unsteady}
\begin{split}
[\hskip-1pt [\hskip-1pt [ \tilde{f}^{\ell} ]\hskip-1pt ]\hskip-1pt ] : =& 
\sqrt{\mathcal{E}_{\lambda} [\tilde{f}^{\ell}] (\infty)} + \sqrt{\mathcal{D}%
_{\lambda} [\tilde{f}^{\ell}] (\infty) } + \e^{\frac{1}{2}} \| e^{\lambda t} 
\tilde{f}^{\ell} \|_{\infty}+ \e^{\frac{3}{2}} \| \e^{\lambda t}
\partial_{t} \tilde{f}^{\ell} \|_{\infty} + \| e^{\lambda t} \mathbf{P} 
\tilde{f}^{\ell} \|_{L^{\infty}_{t} L^{6}_{x,v}} \\
&+ \| \mathbf{S}_{1} \tilde{f}^{\ell} \| _{L^{2}_{t} L^{3}_{x}}+ \| \mathbf{S%
}_{1} \tilde{f}^{\ell}_{t} \| _{L^{2}_{t} L^{3}_{x}} + \e^{- \frac{1}{2}}\| 
\mathbf{S}_{2} \tilde{f}^{\ell} \| _{L^{2}_{t} L^{\frac{12}{5}}_{x}} + \e^{-%
\frac{1}{2}}\| \mathbf{S}_{2} \tilde{f}^{\ell}_{t} \| _{L^{2}_{t} L^{\frac{12%
}{5}}_{x}} ,
\end{split}%
\end{equation}
where $\mathbf{S}_{1} \tilde{f}^{\ell}$ and $\mathbf{S}_{2} \tilde{f}^{\ell}$
are defined in (\ref{def_S2}). For the sake of simplicity temporally we
denote $\mathcal{E}_{\lambda}^{\ell} = \mathcal{E}_{\lambda} [\tilde{f}%
^{\ell}] (\infty)$ and $\mathcal{D}_{\lambda}^{\ell} = \mathcal{D}_{\lambda}
[\tilde{f}^{\ell}] (\infty)$.

For $0 < \eta_{0} \ll1$ and $0<c_0\ll1$, %
we assume (induction hypothesis) that 
\begin{equation}  \label{induc_hyp_st}
\begin{split}
\sup_{0 \leq j \leq \ell}[\hskip-1pt [\hskip-1pt [ \tilde{f}^{j} ]\hskip-1pt
]\hskip-1pt ] < \eta_0, \ \ & \|\tilde f_0\|_{L^2_{x,v}}+\|\tilde
f_0\|_{L^\infty_{x,v}}+ \|\tilde \partial_{t}f_{0}\|_{L^2_{x,v}}+\|\tilde
\partial_{t}f_{0}\|_{L^\infty_{x,v}} < c_0\eta_{0}, \\
& \| \mathbf{P} f_{s} \|_{L^{6}_{x}} + [\e^{-1} \| (\mathbf{I} - \mathbf{P})
f_{s} \|_{L^{2}_{x,v}}] < \eta_{0}.
\end{split}%
\end{equation}
The condition for the steady solution $f_{s}$ % and $f_{s,1}+ \e f_{s,2}$ 
can be achieved by choosing further smaller $\| \vartheta_{w} \|_{H^{\frac{1%
}{2}} (\partial\Omega)} + \| \Phi \|_{H^{1}(\Omega)}$ in Theorem 1.2.

In order to show that (\ref{induc_hyp_st}) holds for all $\ell$, it suffice
to show that (\ref{induc_hyp_st}) holds for $j=\ell+1$. Throughout \textit{%
Step 1} to \textit{Step 4} we claim 
\begin{equation}  \label{induc_ell+1}
[\hskip-1pt [\hskip-1pt [ \tilde{f}^{\ell+1} ]\hskip-1pt ]\hskip-1pt ] \leq [%
\hskip-1pt [\hskip-1pt [ \tilde{f}^{\ell } ]\hskip-1pt ]\hskip-1pt ]^{2} +
o(1)[\hskip-1pt [\hskip-1pt [ \tilde{f}^{\ell+1} ]\hskip-1pt ]\hskip-1pt ]
+o(1) .
\end{equation}
This clearly proves (\ref{induc_hyp_st}) for all $\ell$. 

\noindent\textit{Step 1. } We prove the crucial estimates involving operator 
$\Gamma$. We apply (\ref{induc_hyp_st}) repeatedly. Applying (\ref%
{est_Gamma1}) with $f= e^{\lambda t}\tilde{f}^{\ell} =g$, 
\begin{equation}  \label{esgammaff}
\| \nu^{- \frac{1}{2}}e^{-\lambda t} \Gamma (e^{\lambda t}\tilde{f}%
^{\ell},e^{\lambda t}\tilde{f}^{\ell})\|_{L_{t,x,v}^{2}} \lesssim (1+ \e%
^{1/4} + \e^{1/2}) [\hskip-1pt [\hskip-1pt [ \tilde{f}^{\ell}]\hskip-1pt ]%
\hskip-1pt ]^{2}.
\end{equation}
Again applying (\ref{Gamma_nonsym}) with $f= e^{\lambda t}\tilde{f}^{\ell},
g= e^{\lambda t} \tilde{f}^{\ell}_{t}$, 
\begin{eqnarray}
&&\| \nu^{- \frac{1}{2}}e^{-\lambda t} \Gamma (e^{\lambda t}\tilde{f}%
^{\ell},e^{\lambda t}\tilde{f}_t^{\ell})\|_{L_{t,x,v}^{2}} \\
& \lesssim & \e^{\frac 1 2}[\e^{\frac 1 2} \| e^{\lambda t} w \tilde{f}%
^{\ell} \|_{L^{\infty}_{t,x,v}}] \{[\e ^{-1} \| \nu^{- \frac{1}{2}%
}e^{\lambda t} ( \mathbf{I}- \mathbf{P}) \tilde{f}_t^{\ell}%
\|_{L_{t,x,v}^{2}}]+\e^{-1}\| e^{\lambda t}\S _3\tilde f_t^\ell
\|_{L^2_{t,x,v}}\}  \notag  \label{est_Gamma1_t} \\
&& \e^{1/4} [ \e^{1/2} \| e^{\lambda t} \tilde{f}^{\ell}\|_{\infty} ] ^{1/2}
\|e^{\lambda t}\P \tilde f ^\ell\|_{L^\infty_tL^{6}_{x,v} }^{1/2} [\e^{1/2}
\| w e^{\lambda t} \tilde{f}^{\ell} \|_{ {\infty} }] [\e^{-1/2} \|e^{\lambda
t}\S _2 \tilde f_{t}^\ell\|_{L^2_tL^\frac{12}{5}_x}]  \notag \\
&&+\Big\{ \|e^{\lambda t}\P \tilde f^\ell\|_{L^\infty_tL^6_x} + [\e^{1/2} \|
w e^{\lambda t} \tilde{f}^{\ell} \|_{\infty}]^{2/3} \Big[ \sqrt{\mathcal{D}
_{\lambda}[\tilde{f}^{\ell}](\infty) } + \e^{-1} \| (\mathbf{I} - \mathbf{P}%
) f_{0} \|_{\nu} \Big]^{1/3} \Big\} \|e^{\lambda t}\S _1 \tilde
f_t^\ell\|_{L^2_tL^3_x}  \notag \\
& \lesssim & (1+\e^{\frac{1}{2}} ) [\hskip-1pt [\hskip-1pt [ \tilde{f}%
^{\ell} ]\hskip-1pt ]\hskip-1pt ]^{2} + c_{0} \eta_{0} [\hskip-1pt [\hskip%
-1pt [ \tilde{f}^{\ell} ]\hskip-1pt ]\hskip-1pt ] .  \label{esgammafft}
\end{eqnarray}

Recall (\ref{sym_L1}). Applying (\ref{Gamma_g_no_t}) with $g= ( {f_w+}{f}_s)$
and $f= e^{\lambda t} \tilde{f}^{\ell}$, 
\begin{equation}
\begin{split}  \label{esLf}
&\| \nu^{- \frac{1}{2}} L_{f_{w} + f_{s}} e^{\lambda t} \tilde{f}^{\ell}
\|_{L_{t,x,v}^{2}} \lesssim ( 1 + \e^{\frac{1}{4}} + \e^{\frac{1}{2}} )
\eta_{0} [\hskip-1pt [\hskip-1pt [ \tilde{f}^{\ell} ]\hskip-1pt ]\hskip-1pt
]. \\
& \lesssim \e^{\frac 1 2}[\e^{\frac 1 2} \| w g \|_{ {\infty} }] \big\{ [\e%
^{-1}\| e^{\lambda t}\S _3 f \|_{L^2_{t,x,v}}] + [\e^{-1} \| (\mathbf{I} - 
\mathbf{P}) f \|_{L^{2}_{t,x,v}}] \big\} + \| \P g\|_{L^\infty_tL^6_{x,v}}\| 
\mathbf{S}_{1} f\|_{L^2_tL^3_x} \\
& + \e^{1/4} [ \e^{1/2} \| g\|_{\infty} ] ^{1/2} \| \P g\|_{L^%
\infty_tL^{6}_{x,v} }^{1/2} [\e^{-1/2} \|e^{\lambda t}\S _2 f\|_{L^2_tL^%
\frac{12}{5}_x}] \\
&+ [\e^{-1} \| (\mathbf{I} - \mathbf{P}) g \|_{L^{\infty}_{t}
L^{2}_{x,v}}]^{1/3} [\e^{\frac{1}{2}} \| w g \|_{\infty}^{2/3} ] \| \mathbf{P%
} f \|_{L^{2}_{t} L^{3}_{x,v}},
\end{split}%
\end{equation}
Again applying (\ref{Gamma_g_no_t}) with $f= ( {f_w+}{f}_s)$ and $%
g=e^{\lambda t} \tilde{f}^{\ell}_{t}$, 
\begin{equation}
\begin{split}  \label{esLft}
&\| \nu^{- \frac{1}{2}} L_{f_{w} + f_{s}} e^{\lambda t} \tilde{f}_{t}^{\ell}
\|_{L_{t,x,v}^{2}} \lesssim ( 1 + \e^{\frac{1}{4}} + \e^{\frac{1}{2}} )
\eta_{0} [\hskip-1pt [\hskip-1pt [ \tilde{f}^{\ell} ]\hskip-1pt ]\hskip-1pt
].
\end{split}%
\end{equation}

\noindent\textit{Step 2. } 
From (\ref{completes_dyn}), (\ref{boundQ}), (\ref{esgammaff}), (\ref{esLf}),
(\ref{esgammafft}), and (\ref{esLft})%
\begin{eqnarray}
&&\| e^{\lambda t} \tilde{f}^{\ell+1}(t) \|_{2}^{2} + \frac{1}{\e}
\int^{t}_{0} | e^{\lambda s} {(1-P_\g)} \tilde{f}^{\ell+1} |_{2 }^{2} + 
\frac{1}{\e^{2}} \int^{t}_{0} \| e^{\lambda s}(\mathbf{I}- \mathbf{P}) 
\tilde{f}^{\ell+1} \|_{\nu} ^{2} + \int^{t}_{0} \| e^{\lambda s} \mathbf{P} 
\tilde{f}^{\ell+1} \|_{2}^{2}  \notag \\
&\lesssim& \| \tilde{f}^{\ell+1}(0)\|_{2}^{2} + \e^{-1} \int^{t}_{0}
|e^{\lambda s} \e \mathcal{Q} \tilde{f}^{\ell}|_{2,-}^{2} + \int^{t}_{0} {%
e^{-\lambda s}} \|\nu^{-\frac{1}{2}} \Gamma( e^{\lambda s}\tilde{f}^{\ell },
e^{\lambda s}\tilde{f}^{\ell }) \|_{ 2}^{2} + \int^{t}_{0} \| \nu^{-\frac{1}{%
2}} L_{ {f_w+}{f}_s}e^{\lambda s} \tilde{f}^{\ell} \|_{2}^{2}  \notag \\
&\lesssim& (c_0\eta_0)^{2}+ \e \|\t_w\|_\infty [\hskip-1pt [\hskip-1pt [ 
\tilde{f}^{\ell} ]\hskip-1pt ]\hskip-1pt ]^{2} + [\hskip-1pt [\hskip-1pt [ 
\tilde{f}^{\ell} ]\hskip-1pt ]\hskip-1pt ]^{4} + \eta_{0}^{2} [\hskip-1pt [%
\hskip-1pt [ \tilde{f}^{\ell} ]\hskip-1pt ]\hskip-1pt ]^{4} ,  \notag
\end{eqnarray}
and 
\begin{eqnarray}
&&\| e^{\lambda t} \tilde{f}_t^{\ell+1}(t) \|_{2}^{2} + \frac{1}{\e}
\int^{t}_{0} | e^{\lambda s} {(1-P_\g)} \tilde{f}_t^{\ell+1} |_{2 }^{2} + 
\frac{1}{\e^{2}} \int^{t}_{0} \| e^{\lambda s}(\mathbf{I}- \mathbf{P}) 
\tilde{f}_t^{\ell+1} \|_{\nu} ^{2} + \int^{t}_{0} \| e^{\lambda s} \mathbf{P}
\tilde{f}_t^{\ell+1} \|_{2}^{2}  \notag \\
&\lesssim& \| \tilde{f}_t^{\ell+1}(0)\|_{2}^{2} + \e^{-1} \int^{t}_{0}
|e^{\lambda s} \e \mathcal{Q} \tilde{f}_t^{\ell}|_{2,-}^{2} + \int^{t}_{0} {%
e^{-\lambda s}} \|\nu^{-\frac{1}{2}} \Gamma( e^{\lambda s}\tilde{f}_t^{\ell
}, e^{\lambda s}\tilde{f}^{\ell }) \|_{ 2}^{2} + \int^{t}_{0} \| \nu^{-\frac{%
1}{2}} L_{ {f_w+}{f}_s}e^{\lambda s} \tilde{f}_t^{\ell} \|_{2}^{2}  \notag \\
&\lesssim& (c_0\eta_0)^{2}+ \e \|\t_w\|_\infty [\hskip-1pt [\hskip-1pt [ 
\tilde{f}^{\ell} ]\hskip-1pt ]\hskip-1pt ]^{2} + [\hskip-1pt [\hskip-1pt [ 
\tilde{f}^{\ell} ]\hskip-1pt ]\hskip-1pt ]^{4} + \eta_{0}^{2} [\hskip-1pt [%
\hskip-1pt [ \tilde{f}^{\ell} ]\hskip-1pt ]\hskip-1pt ]^{4} .  \notag
\end{eqnarray}

Therefore we conclude 
\begin{equation}  \label{bound_ED}
\mathcal{E}^{\ell+1}_{\lambda} + \mathcal{D}^{\ell+1}_{\lambda} \lesssim %
\big\{[\hskip-1pt [\hskip-1pt [ \tilde{f}^{\ell}]\hskip-1pt ]\hskip-1pt
]^{2} + o(1) [\hskip-1pt [\hskip-1pt [ \tilde{f}^{\ell}]\hskip-1pt ]\hskip%
-1pt ] +o(1)(\eta_{0})\big\}^{2} .
\end{equation}

\vspace{12pt}

\noindent\textit{Step 3. } We apply Proposition \ref{prop_3} to (\ref{Z_eq}%
): Set 
\begin{eqnarray*}
f&=& e^{\lambda t} \tilde{f}^{\ell+1} , \\
g&=& -\e^{-1} L [e^{\lambda t} \tilde{f}^{\ell+1}] + \e \lambda [ e^{\lambda
t} \tilde{f}^{\ell+1} ]+ L_{ {f_w+}{f}_s} [ e^{\lambda t} \tilde{f}^{\ell} ]
+ e^{-\lambda t} \Gamma( e^{\lambda t}\tilde{f}^{\ell} , e^{\lambda t} 
\tilde{f}^{\ell} ) + \e^{2} \frac{\Phi \cdot v}{2} [e^{\lambda t}\tilde{f}%
^{\ell+1}].
\end{eqnarray*}

Then, from (\ref{esgammaff}) and (\ref{esLf}), 
\begin{eqnarray}
&&\|\mathbf{S}_{1} e^{\lambda t} \tilde{f}^{\ell+1} \|_{ L^{2}_{t}L^{3}_{x}}
+ \e^{-\frac{1}{2}}\|\mathbf{S}_{2} e^{\lambda t} \tilde{f}^{\ell+1} \|_{
L^{2}_{t}L^{\frac{12}{5}}_{x}}  \notag \\
&\lesssim& \big\| w^{-1} \Big[ -\e^{-1} L [e^{\lambda t} \tilde{f}^{\ell+1}]
+ \e \lambda [ e^{\lambda t} \tilde{f}^{\ell+1} ]+ L_{ {f_w+}{f}_s} [
e^{\lambda t}\tilde{f}^{\ell}] + e^{-\lambda t} \Gamma( e^{\lambda t}\tilde{f%
}^{\ell} , e^{\lambda t} \tilde{f}^{\ell} ) + \e^{2} \frac{\Phi \cdot v}{2}
[e^{\lambda t}\tilde{f}^{\ell+1}] \Big] \big\|_{L^{2}_{t,x,v}}  \notag \\
& &+ \| e^{\lambda t} \tilde{f}^{\ell+1} \|_{L^{2}_{t,x,v}} + \| e^{\lambda
t} \tilde{f}^{\ell+1}\|_{L^{2}_{t}L^{2}_{\gamma } } + \| f_{0}
\|_{L^{2}_{x,v}} + \| [ v\cdot \nabla_{x} + \e^{2} \Phi \cdot \nabla_{v}]
f_{0} \|_{L^{2}_{x,v}} + \| f_{0} \|_{L^{2}(\gamma)}  \notag \\
&\lesssim & (1+ \e \lambda + \e^{2} \| \Phi \|_{\infty} ) \mathcal{E}%
^{\ell+1}_{\lambda} + (c_{0} \eta_{0}+ [\hskip-1pt [\hskip-1pt [ \tilde{f}%
^{\ell } ]\hskip-1pt ]\hskip-1pt ]) [\hskip-1pt [\hskip-1pt [ \tilde{f}%
^{\ell } ]\hskip-1pt ]\hskip-1pt ] + c_{0} \eta_{0} .  \notag
\end{eqnarray}
From (\ref{bound_ED}) 
\begin{equation}
\|\mathbf{S}_{1} e^{\lambda t} \tilde{f}^{\ell+1} \|_{ L^{2}_{t}L^{3}_{x}} + %
\e^{-\frac{1}{2}}\|\mathbf{S}_{2} e^{\lambda t} \tilde{f}^{\ell+1} \|_{
L^{2}_{t}L^{\frac{12}{5}}_{x}} \lesssim [\hskip-1pt [\hskip-1pt [ \tilde{f}%
^{\ell } ]\hskip-1pt ]\hskip-1pt ]^{2} + o(1) \eta_{0}.  \label{est_S1f}
\end{equation}

Similarly, we apply Proposition \ref{prop_3} to (\ref{Zt_eq}): Set 
\begin{eqnarray*}
f&=& e^{\lambda t} \tilde{f}^{\ell+1}_{t}, \\
g&=& - \e^{-1} L [e^{\lambda t}\tilde{f}^{\ell+1}_{t}] + \e \lambda [
e^{\lambda t}\tilde{f}^{\ell+1}_{t} ] + L_{ {f_w+}{f}_s}[e^{\lambda t} 
\tilde{f}^{\ell} _{t}] + e^{-\lambda t} [\Gamma(e^{\lambda t}\tilde{f}%
^{\ell}_{t} , e^{\lambda t}\tilde{f}^{\ell} )+ \Gamma( e^{\lambda t} \tilde{f%
}^{\ell}, e^{\lambda t}\tilde{f}^{\ell}_{t} )] \\
\\
&& \ \ +\e^{2} \frac{\Phi \cdot v}{2}[ e^{\lambda t} \tilde{f}^{\ell+1}_{t}].
\end{eqnarray*}
Then 
\begin{equation}  \label{est_S1ft}
\| \mathbf{S}_{1} e^{\lambda t} \tilde{f}^{\ell+1}_{t} \|_{
L^{2}_{t}L^{3}_{x}} +\e^{-\frac{1}{2}} \| \mathbf{S}_{2} e^{\lambda t} 
\tilde{f}^{\ell+1}_{t} \|_{ L^{2}_{t}L^{3}_{x}} \lesssim [\hskip-1pt [\hskip%
-1pt [ \tilde{f}^{\ell } ]\hskip-1pt ]\hskip-1pt ]^{2} + o(1) \eta_{0} .
\end{equation}
\vspace{8pt}

\noindent\textit{Step 4. } We apply Proposition \ref{point_dyn} to (\ref%
{Z_eq}): Set $f = e^{\lambda t} \tilde{f}^{\ell+1}$. Note $\e^{-1}L
e^{\lambda t} \tilde{f}^{\ell+1 } = \e^{-1}\nu(v)e^{\lambda t} \tilde{f}%
^{\ell+1 } - \e^{-1}\int_{\mathbb{R}^{3}} \mathbf{k} (v,u)e^{\lambda t} 
\tilde{f}^{\ell+1 }(u) \mathrm{d} u$ with $\nu(v) \sim \langle v\rangle$ and 
$|\mathbf{k}(v,u)| \lesssim \mathbf{k}_{\beta}(v,u)$. Moreover, 
\begin{equation*}
\e^{-1}\nu(v) - \e \lambda - \e \frac{\Phi \cdot v}{2} \ \gtrsim \ \e^{-1} C
\langle v\rangle - \e \lambda- \e \| \Phi \|_{\infty} |v| \ \gtrsim \ \e%
^{-1}C_{0 } \langle v\rangle.
\end{equation*}
Therefore (\ref{linear_K}), the condition of Proposition \ref{point_dyn}, is
satisfied with the following setting 
\begin{eqnarray*}
g&=& L_{f_s} [ e^{\lambda t} \tilde{f}^{\ell } ] + e^{-\lambda t} \Gamma(
e^{\lambda t}\tilde{f}^{\ell} , e^{\lambda t} \tilde{f}^{\ell} ) , \quad r =%
\e e^{\l t}\mathcal{Q}\tilde{f}^{\ell} \quad f_{0} \ = \ \tilde{f}_{0}.
\end{eqnarray*}
By Proposition \ref{point_dyn}, from (\ref{point1}), 
\begin{eqnarray*}
&&\e^{\frac 12}\|e^{\lambda t} w\tilde{f}^{\ell+1} \|_{L^{\infty}_{t,x,v}} \\
&\lesssim& \e^{\frac 12}\| w \tilde{f}^{\ell+1} (0) \|_{\infty} + \e^{\frac
12} \max_{0 \leq j \leq \ell}\sup_{0 \leq t \leq \infty} \e \| e^{\lambda t}
w\tilde{f}^{j}\|_{\infty} + \e\sup_{0 \leq t \leq \infty} \e^{1/2} \|
e^{\lambda t} w \mathcal{Q}\tilde f^\ell \|_{\infty} \\
& +& \e^{\frac 3 2} \sup_{0 \leq t \leq \infty} \big\| w \nu^{-1} \Big[ L_{ {%
f_w+}{f}_s} [ e^{\lambda t} \tilde{f}^{\ell} ] + \Gamma( e^{\lambda t}\tilde{%
f}^{\ell} , e^{\lambda t} \tilde{f}^{\ell} ) \Big] \big\|_{\infty} \\
& +& \| e^{\lambda s} \mathbf{P}\tilde{f}^{\ell+1} (s)\|_{L^{\infty}_{t}L^{ {%
6}}_{x}} + \frac{1}{\e} \| e^{\lambda s} (\mathbf{I} - \mathbf{P}) \tilde{f}%
^{\ell+1} (s)\|_{L^{\infty}_{t}L^{2}_{x,v} }.
\end{eqnarray*}

Using $|w \Gamma_{\pm} (w^{-1},w^{-1})| \lesssim \langle v\rangle \lesssim
\nu$, we obtain 
\begin{eqnarray*}
&& \e^{\frac32} \sup_{0 \leq t\leq \infty} \big\| w \nu^{-1} e^{-\lambda t}
\Gamma(e^{\lambda t} \tilde{f}^{\ell} ,e^{\lambda t} \tilde{f}^{\ell} ) %
\big\|_{\infty} \lesssim \e^{1/2} [ \sup_{0 \leq t \leq \infty} \| \e^{\frac
1 2} w e^{\lambda t} \tilde{f}^{\ell} \|_{\infty} ]^{2} | \nu^{-1}w
\Gamma(w^{-1}, w^{-1})| \\
&& \ \ \ \ \lesssim \e^{1/2} [ \sup_{0 \leq t \leq \infty} \| \e^{\frac 1 2}
w e^{\lambda t} \tilde{f}^{\ell} \|_{\infty} ]^{2} \lesssim \e^{1/2} [\hskip%
-1pt [\hskip-1pt [ \tilde{f}^{\ell}]\hskip-1pt ]\hskip-1pt ] ^{2} , \\
&&| \e^{\frac 3 2} w \nu^{-1} L_{ {f_w+}{f}_s } ( e^{\lambda t} \tilde{f}%
^{\ell})| \lesssim \e^{\frac 1 2} | w \nu^{-1} \Gamma_{\pm } ( \e^{\frac 1
2} ( {f_w+}{f}_s) , e^{\lambda t} \e^{\frac 1 2} \tilde f ^{\ell})| \\
&& \ \ \ \ \lesssim \e^{\frac 1 2} \| w \e^{\frac 1 2} ( {f_w+}{f}_s)
\|_{\infty} \| w e^{\lambda t} [\e^{\frac 1 2} \tilde{f}^{\ell}] \|_{\infty}
| \nu^{-1} w \Gamma_{\pm } (w^{-1}, w^{-1})| \lesssim \e^{\frac 1 2} \| w \e%
^{\frac 1 2} ( {f_w+}{f}_s) \|_{\infty} \| w \e^{\frac 1 2} \tilde f
^{\ell}\|_{\infty} \\
&& \ \ \ \ \lesssim \e^{1/2} \eta_{0} [\hskip-1pt [\hskip-1pt [ \tilde{f}%
^{\ell} ]\hskip-1pt ]\hskip-1pt ] .
\end{eqnarray*}
Using Corollary \ref{corolla} with $f=e^{\l t}\tilde f^{\ell+1}$, $g=\big[%
e^{-\l t}\Gamma(e^{\l t}\tilde f^\ell,e^{\l t}\tilde f^\ell)+e^{-\l %
t}L_{f_s}e^{\l t}\tilde f_t^\ell\big]$, $r= e^{\l t}\e\mathcal{Q}\tilde
f^{\ell}$ and $g_t=\e^{-\l t}\Gamma(e^{\l t}\tilde f^\ell,e^{\l t}\tilde
f_t^\ell)+e^{-\l t}\Gamma(e^{\l t}\tilde f_t^\ell,e^{\l t}\tilde f^\ell)+e^{-%
\l t}L_{f_s}e^{\l t}\tilde f_t^\ell$, $r_t= e^{\l t}\e\mathcal{Q}\tilde
f_t^{\ell}$, we have 
\begin{eqnarray*}
&&\|\P \tilde f^{\ell+1}e^{\l t}\|_{L^\infty_tL^6_x}\le \l \|f_0\|_2+\e%
^{-1}\|(\mathbf{I}-\mathbf{P}) f_0\|_{\nu}+\e^{-\frac 1 2}\|(1-P_\g)
f_0\|_{2,\g} \\
&& +\mathcal{D}^{\ell+1}_\l(t)^{\frac 1 2}+(\l +2\e)\mathcal{E}%
^{\ell+1}_\l(t)^{\frac 1 2}+\|e^{\l t}\e e^{\l t}\mathcal{Q}\tilde
f^\ell\|_{L^\infty_tL^2(\gamma)}+\|\e^{\frac 3 2}e^{\l t}\langle
v\rangle^{-1}w\e e^{\l t}\mathcal{Q} \tilde
f^\ell\|_{L^\infty_tL^\infty(\gamma)} \\
&& +\|e^{\l t}\nu^{-\frac 12}\big[e^{-\l t}\Gamma(e^{\l t}\tilde f^\ell,e^{%
\l t}\tilde f^\ell)+e^{-\l t}L_{{f_w+}{f}_s}e^{\l t}\tilde f^\ell\big] %
\|_{L^\infty_tL^2_{x,v}} \\
&&+\e^{\frac 3 2}\|e^{\l t}\langle v\rangle^{-1}w\big[ e^{-\l t}\Gamma(e^{\l %
t}\tilde f^\ell,e^{\l t}\tilde f^\ell)+e^{-\l t}L_{{f_w+}{f}_s}e^{\l %
t}\tilde f^\ell\big]\|_{L^\infty_{t,x,v}}+ \e^{\frac 5 2}\|e^{\l t}\langle
v\rangle^{-1}w\e\mathcal{Q}\tilde f^\ell\|_{L^\infty_{t,\gamma}} \\
&&+ \e^{\frac 7 2}\|\langle v\rangle w \big[e^{-\l t}\Gamma(e^{\l t}\tilde
f^\ell,e^{\l t}\tilde f_t^\ell)+e^{-\l t}\Gamma(e^{\l t}\tilde f_t^\ell,e^{%
\l t}\tilde f^\ell)+e^{-\l t}L_{{f_w+}{f}_s}e^{\l t}\tilde f_t^\ell\big]%
\|_{L^\infty_t,x,v}.
\end{eqnarray*}

By (\ref{esgammaff})$-$(\ref{esLft}), and (\ref{bound_ED}), 
\begin{equation}  \label{infty_iteration}
\e^{\frac 12} \| e^{\lambda t} w \tilde f^{\ell+1} \|_{L^{\infty}_{t,x,v}}+\|%
\P \tilde f^{\ell+1}e^{\l t}\|^2_{L^\infty_tL^6_x}\ \lesssim \ [\hskip-1pt [%
\hskip-1pt [ \tilde{f}^{\ell} ]\hskip-1pt ]\hskip-1pt ]^{2} +o(1)\eta_{0}.
\end{equation}

\vspace{4pt}

Now we consider $\partial_{t} \tilde{f}^{\ell+1}$. Apply Proposition \ref%
{point_dyn} to (\ref{Zt_eq}): Set $f= e^{\lambda t} \tilde{f}^{\ell+1}_{t}$.
Note $\e^{-1} L e^{\lambda t} \tilde{f}^{\ell+1}_{t} = \e^{-1} \nu(v)
e^{\lambda t} \tilde{f}^{\ell+1}_{t} - \e^{-1} \int_{\mathbb{R}^{3}} \mathbf{%
k} (v,u) e^{\lambda t} \tilde{f}^{\ell+1}_{t}(u) \mathrm{d} u$ with $\nu(v)
\sim \langle v\rangle$ and $|\mathbf{k} (v,u)| \lesssim \mathbf{k}_{\beta}
(v,u)$. For Proposition \ref{point_dyn} we set 
\begin{eqnarray*}
g&=& L_{ {f_w+}{f}_s} [ e^{\lambda t} \tilde{f}^{\ell}_{t} ] + e^{-\lambda
t} \Gamma( e^{\lambda t}\tilde{f}^{\ell}_{t} , e^{\lambda t} \tilde{f}%
^{\ell} )+ e^{-\lambda t} \e^{ 1/2} \Gamma( e^{\lambda t}\tilde{f}^{\ell} ,
e^{\lambda t} \tilde{f}^{\ell}_{t} ), \\
r &=& \e e^{\lambda t} \partial_{t} \mathcal{Q}\tilde{f}_{t}^{\ell}, \ \ \
f_{0} \ = \ \partial_{t}\tilde{f}_{0}.
\end{eqnarray*}

From (\ref{point2}), 
\begin{eqnarray*}
&&\e^{\frac 1 2}\|e^{\lambda t} w \tilde{f}^{\ell+1}_{t}
\|_{L^{\infty}_{t,x,v}} \\
&\lesssim& \e^{\frac 1 2} \| w \tilde{f}^{\ell+1}_{t} (0) \|_{\infty} +\e%
^{\frac 1 2} \e \| w e^{\lambda t} \mathcal{Q}\tilde f_t \|_{L^{\infty}_{t,\g%
}} + \frac{1}{\e} \| e^{\lambda t} \tilde{f}^{\ell+1}_{t}
(t)\|_{L^{\infty}_{t}L^{2}_{x,v}} \\
&&+ \e^{\frac 3 2} \sup_{0 \leq t \leq \infty} \big\| w \nu^{-1} \Big[ L_{{%
f_w+}{f}_s} [ e^{\lambda t} \tilde{f}^{\ell}_{t} ] + e^{-\lambda t} \Gamma(
e^{\lambda t}\tilde{f}^{\ell}_{t} , e^{\lambda t} \tilde{f}^{\ell} )+
e^{-\lambda t} \Gamma( e^{\lambda t}\tilde{f}^{\ell} , e^{\lambda t} \tilde{f%
}^{\ell}_{t} ) \Big] \big\|_{\infty}.
\end{eqnarray*}
From $|w \Gamma_{\pm} (w^{-1},w^{-1})| \lesssim \langle v\rangle \lesssim
\nu $, 
\begin{eqnarray*}
&& \e^{\frac 3 2} \sup_{0 \leq t\leq \infty} \big\| w \nu^{-1} e^{-\lambda
t} \Gamma(e^{\lambda t} \tilde{f}^{\ell}_{t} ,e^{\lambda t} \tilde{f}^{\ell}
) \big\|_{\infty} + \e^{\frac 3 2} \sup_{0 \leq t\leq \infty} \big\| w
\nu^{-1} e^{-\lambda t} \Gamma(e^{\lambda t} \tilde{f}^{\ell} ,e^{\lambda t} 
\tilde{f}^{\ell}_{t} ) \big\|_{\infty} \\
&& \ \ \ \ \lesssim \e^{1/2} [ \| \e^{\frac 1 2} w e^{\lambda t} \tilde{f}%
^{\ell} \|_{\infty} ] [ \| \e^{\frac 3 2} w e^{\lambda t} \tilde{f}%
^{\ell}_{t} \|_{\infty} ] | \nu^{-1}w \Gamma(w^{-1}, w^{-1})| \\
&& \ \ \ \ \lesssim \e^{1/2} [ \|\e^{\frac 1 2} w e^{\lambda t} \tilde{f}%
^{\ell} \|_{\infty} ] [ \| \e^{\frac 1 2} w e^{\lambda t} \tilde{f}%
^{\ell}_{t} \|_{\infty} ], \\
&&|\e^{\frac 3 2} w \nu^{-1} L_{{f_w+}{f}_s} e^{\lambda t} \tilde{f}%
^{\ell}_{t}|\lesssim \e^{\frac 1 2} \| w ({f_w+}{f}_s)\|_{\infty} \| w
e^{\lambda t} \e^{\frac 12} \tilde{f}^{\ell}_{t} \|_{\infty} .
\end{eqnarray*}
Altogether 
\begin{eqnarray*}
&&\e^{\frac 3 2}\|e^{\lambda t} w\tilde{f}^{\ell+1}_{t}
\|_{L^{\infty}_{t,x,v}} \lesssim \e^{\frac 3 2} \| w \tilde{f}^{\ell+1}_{t}
(0) \|_{\infty} + \e^{\frac 3 2} \|\t_w\|_\infty\|e^{\l t} w \tilde{f}%
^{\ell}_{t} \|_{\infty}+\|\e^{\l t} \tilde
f_t^{\ell+1}\|_{L^\infty_tL^2_{x,v}} \\
&&+\e^{\frac 3 2}[ \|\e^{\frac 1 2} w e^{\lambda t} \tilde{f}^{\ell}
\|_{\infty} ] [ \| \e^{\frac 1 2} w e^{\lambda t} \tilde{f}^{\ell}_{t}
\|_{\infty} + \| w ({f_w+}{f}_s)\|_{\infty} \| w e^{\lambda t} \e^{\frac 12} 
\tilde{f}^{\ell}_{t} \|_{\infty}],
\end{eqnarray*}
and therefore 
\begin{equation}  \label{infty_t}
\e^{\frac 3 2}\|e^{\lambda t} w\tilde{f}^{\ell+1}_{t}
\|_{L^{\infty}_{t,x,v}} \ < [\hskip-1pt [\hskip-1pt [ \tilde{f}^{\ell} ]%
\hskip-1pt ]\hskip-1pt ]^{2}+ o(1) \eta_{0} .
\end{equation}

Collecting (\ref{bound_ED}), (\ref{est_S1f}), (\ref{est_S1ft}), (\ref%
{infty_iteration}), and (\ref{infty_t}), we prove the claim (\ref%
{induc_ell+1}).

\vspace{4pt}

\noindent\textit{Step 5.} We repeat $\mathit{Step 1}\sim \mathit{Step 4}$
for $\tilde{R}^{\ell+1}- \tilde{R}^{\ell}$ to show that $\tilde{R}^{\ell}$
is Cauchy sequence in $L^{\infty} \cap L^{2}$ for fixed $\e$. Now we pass a
limit $\ell \rightarrow \infty$ in $L^{\infty} \cap L^{2}$ to conclude the
existence. The proof of uniqueness is standard. (See \cite{EGKM} for details)

\noindent\textit{Step 6.} The proof that the components of the limit $\tilde
f_1$ satisfy the unsteady INSF system \ref{tilde_u_theta} is achieved
similarly to the steady case and will not be repeated here.
\end{proof}

\subsection{Positivity of Solutions}

In this section, we prove the non-negativity of $F_{s}$ in the main theorem.
The proof is based on the asymptotical stability of $F_{s}$ (Proposition \ref%
{prop_3}) and the non-negativity of unsteady solution.

\begin{proof}[\textbf{Proof of the non-negativity of $F(t,x,v)$ in Theorem 
\protect\ref{energy_nonlinear} and $F_{s}(x,v)$ in Theorem \protect\ref%
{mainth}}]
We use the positivity-preserving sequence as in \cite{Guo08,EGKM}. Set $%
F^{0} (t,x,v) = F_{0} (x,v)\geq 0$ and for $\ell\geq 0$ 
\begin{eqnarray*}
\partial_{t} F^{\ell+1} + \frac{1}{\e} v\cdot \nabla_{x} F^{\ell+1} + \e %
\Phi \cdot \nabla_{v} F^{\ell+1} + \frac{1}{\e^{2}} \nu(F^{\ell}) F^{\ell+1}
= \frac{1}{\e^{2}} Q_{{+}} (F^{\ell},F^{\ell}), \\
F^{\ell+1}(x,v) |_{\gamma_{-}} = M_w \int_{n(x) \cdot u>0} F^{\ell+1 } (x,u)
\{n(x) \cdot u\} \mathrm{d} u, \ \ \ F^{\ell+1} (t,x,v)|_{t=0} = F_{0}(x,v),
\end{eqnarray*}
where $\nu(F)(v)=\int_{\mathbb{R}^3}du \int_{\mathbb{S}^2}d\o B(v-u,\o %
)F(v_*)$.

Note that $F^{\ell+1}\geq0$ for all $\ell$ by proof of Theorem 4 in page 807 of \cite{Guo08}.

\vspace{4pt}

\noindent\textit{Step 1. } We set $F^{\ell} = \mu + \e  f^\ell\sqrt{\mu}$ 
and let $F^{0}(t,x,v):= F_{0}(x,v)$. We claim that, there exists $0<T = T (
\| \e
w f^{\ell} (t_{0} )\|_{\infty}) \ll 1$ and $C_{1}=C_{1}
(T)\gg 1$ for any $t_{0}\geq 0$ such that 
\begin{equation}  \label{claim_pp_bounded}
\sup_{t_{0} \leq t \leq t_{0} + \e^{2} T}\| \e w f^{\ell+1}(t) \|_{\infty}
\leq C_{1}\big\{  \| \e w f^{\ell+1}(t_{0}) \|_{\infty} 
+ \big(\sup_{t_{0} \leq t \leq t_{0} + \e^{2} T}\| \e w f^{\ell}(t) \|_{\infty}\big)^{2}
+O(\e^{2}) \| A\|_{\infty}\big\}.
\end{equation}
It suffices to show (\ref{claim_pp_bounded}) for $t_{0}=0$. Clearly (\ref%
{claim_pp_bounded}) holds for $\ell=0$. Now we assume (\ref{claim_pp_bounded}%
) for $0 \leq l \leq \ell.$

Clearly, $f^{\ell+1}$ solves 
\begin{equation}  \label{eq_R_pp}
\begin{split}
& \pt_t f^{\ell+1} +\e^{-1}v\cdot \nabla_x f^{\ell+1} + \e\Phi\cdot \nabla_v
f^{\ell+1} + \e \frac{\Phi \cdot v}{2} f^{\ell+1} +\e^{-2}\nu f^{\ell+1}-\e%
^{-2} K f^{\ell} \\
& \ \ =\ \ \ \ \ \e^{-1}\Gamma_{{+}}(f^{\ell},f^{\ell}) - \e^{-1}
\nu(f^{\ell} \sqrt{\mu}) f^{\ell+1}+A, \\
&f^{\ell+1} |_{t=0} = f_{0},
\end{split}%
\end{equation}
where $A=2 \Phi\cdot v\sqrt{\mu}$. {The boundary condition is given by 
\begin{equation}  \label{R_bdry_pp}
f^{\ell+1}|_{\gamma_{-}} = P_{\gamma} f^{\ell+1} + \e \mathcal{Q}f^{\ell+1}.
\end{equation}%
} Define $h^{\ell} (t,x,v): = w(v)^{-1} f^{\ell} (t,x,v)$. Note that 
\begin{eqnarray*}
\check{\nu} (v) := \nu(v) - \e  \frac{\| \Phi \|_{\infty} |v|}{2} -\e %
\nu( \sqrt{\mu}) \| w f^{\ell}\|_{L^{\infty}_{t,x,v}} \geq \frac{\nu(v)}{2}
\gtrsim \langle v\rangle.
\end{eqnarray*}
We define $\check{\mathbf{k}}$ such that $\int_{\mathbb{R}^{3}} \check{%
\mathbf{k}} (v,u) \frac{w(v)}{w(u)} f^{\ell}(u)\mathrm{d} u= K f^{\ell} + \e %
\nu(f^{\ell} \sqrt{\mu}) [f_{w} ] \sqrt{\mu} - 
\e  [  \Gamma_{{+}} ( f_{w} , f^{\ell} ) + \Gamma_{{+}} (  f^{\ell},f_{w} ) 
]$. Then $\check{\mathbf{k}}(v,u) \lesssim \mathbf{k}_{\beta}(v,u)$.

Then, for $\tilde{t}_{1}\leq s \leq t$ 
\begin{eqnarray*}
&&\frac{d}{ds} \big[ | \e {h}^{\ell+1} (s, Y_{\mathbf{cl}} (s; t,x,v), W_{%
\mathbf{cl}}(s;t,x,v))| e^{- \int^{t}_{s} \e^{-2}\check{\nu} (W_{\mathbf{cl}%
}(\tau;t,x,v) ) \mathrm{d} \tau} \big] \\
&\lesssim & \Big\{ \e^{-2} \int_{\mathbb{R}^{3}} {\mathbf{k}_{\beta}}(W_{%
\mathbf{cl}} (s;t,x,v), u) | \e h^{\ell} (s, Y_{\mathbf{cl}} (s;t,x,v), u)| 
\mathrm{d} u \\
&& \ \ + \e^{-2} \langle W_{\mathbf{cl}} (s;t,x,v) \rangle \| \e h^{\ell}(s)
\|_{\infty}^{2} +  |A| \Big\} e^{- \int^{t}_{s}\e^{-2}\check{\nu} (W_{%
\mathbf{cl}}(\tau;t,x,v) ) \mathrm{d} \tau} \\
&\lesssim&\big\{ 1+ \langle W_{\mathbf{cl}} (s;t,x,v) \rangle \| \e %
h^{\ell}(s) \|_{\infty} \big\} \| \e h^{\ell} (s) \|_{\infty} \e^{-2} e^{-
\int^{t}_{s}\e^{-2}\check{\nu} (W_{\mathbf{cl}}(\tau;t,x,v) ) \mathrm{d}
\tau} \\
&& + \| A\|_{\infty}\e^2 \e^{-2} e^{- \int^{t}_{s}\e^{-2}\check{\nu} (W_{%
\mathbf{cl}}(\tau;t,x,v) ) \mathrm{d} \tau},
\end{eqnarray*}
where we used the fact $\int_{\mathbb{R}^{3}} {\mathbf{k}_{\beta}}(W_{%
\mathbf{cl}} (s;t,x,v), u) \mathrm{d} u \lesssim 1$ and $w\Gamma(\frac{\e %
h^{\ell}}{w}, \frac{\e h^{\ell}}{w} )(v) \lesssim \langle v \rangle \| \e %
h^{\ell}\|_{\infty}^{2}$.

Then, for $t \in [0, \e^{2} T]$, 
\begin{eqnarray*}
&& | \e h^{\ell+1}(t,x,v) |  \notag \\
&\lesssim& \mathbf{1}_{\{ \tilde{t}_{1} < 0\}} e^{ - CT} | | \e %
h^{\ell+1}(0)\| _{\infty} \\
&+& \int_{\max{\{0, \tilde{t}_{1} (x,v) \}}}^{t} \mathrm{d} s \ \frac{e^{-
\int^{t}_{s} \frac{\check{\nu} ( V_{\mathbf{cl}} ( t- \frac{t-\tau}{\e}%
;t,x,v) )}{\e^{2}} \mathrm{d} \tau } }{\e^{2}} \\
&& \ \ \ \ \ \ \ \times \Big\{ (1+ \langle V_{\mathbf{cl}} ( t- \frac{t-s}{\e%
};t,x,v)\rangle ) \| \e h^{\ell} (s) \|_{\infty} )\| \e h^{\ell} (s)
\|_{\infty} + \e^2 \| A\|_{\infty} \Big\} \\
&+&\mathbf{1}_{\{ \tilde{t}_{1} \geq 0 \}} e^{- \int^{t}_{\tilde{t}_{1}} 
\frac{\tilde{\nu} ( V_{\mathbf{cl}}(t- \frac{t-\tau}{\e} ; t,x,v ))}{\e^{2}} 
\mathrm{d} \tau } O(\e) \mu( \tilde{v}_{1} )^{\frac{1}{2}-} + \mathbf{1%
}_{\{ \tilde{t}_{1} \geq 0 \}} \frac{ e^{- \int^{t}_{ \tilde{t}_{1}}\frac{%
\tilde{\nu} ( t- \frac{t-\tau}{\e};t,x,v )}{\e^{2}} \mathrm{d} \tau} }{%
\tilde{w} (v_{1})} \int_{\Pi_{j=1}^{k-1} \mathcal{V}_{j}} H,  \notag
\end{eqnarray*}
where $H$ is given by 
\begin{eqnarray*}
&& \sum_{l =1}^{k-1} \mathbf{1}_{ \tilde{t}_{l+1} \leq 0< \tilde{t}_{l}} \| %
\e h^{\ell +1}(0 )\|_{\infty}
\Pi_{m=1}^{l-1} \frac{\tilde{w} (v_{m})}{\tilde{w} (V_{\mathbf{cl}} (\tilde{t}_{m+1}
  - \frac{\tilde{t}_{m+1}}{\e}
  ;  v_{m}))} 
 \mathrm{d} \Sigma_{l} (0) \\
&+& \sum_{l =1}^{k-1} \int^{ \tilde{t}_{l}}_{\max \{ 0, \tilde{t}_{l+1} \}} 
\mathbf{1}_{ \tilde{t}_{l}>0}\Big\{ (1+ \langle V_{\mathbf{cl}} ( t- \frac{%
t-\tau}{\e};t,x,v)\rangle ) \| \e h^{\ell } (\tau) \|_{\infty} \| \e %
h^{\ell } (\tau) \|_{\infty}% \\
+ \e^2 \|
A\|_{\infty} \Big\}\\
&& \ \ \ \ \ \  \ \ \ \ \ \ \ \ \ \ \ \ \ \ \ \ \ \ \ \ \ \ \ \ \ \ \ \  \times
\Pi_{m=1}^{l-1} \frac{\tilde{w} (v_{m})}{\tilde{w} (V_{\mathbf{cl}} (\tilde{t}_{m+1}
  - \frac{\tilde{t}_{m+1}}{\e}
  ;  v_{m}))} 
 \mathrm{d} \Sigma_{l} (\tau) \mathrm{d} \tau \\
&+& \sum_{l=1}^{k-1} \mathbf{1}_{\tilde{t}_{l}>0} O( \e) \mu (
v_{l})^{\frac{1}{2}-} 
\Pi_{m=1}^{l-1} \frac{\tilde{w} (v_{m})}{\tilde{w} (V_{\mathbf{cl}} (\tilde{t}_{m+1}
  - \frac{\tilde{t}_{m+1}}{\e}
  ;  v_{m}))} 
\mathrm{d} \Sigma_{l} (\tilde{t}_{l+1})\\
& +& \mathbf{1}_{ 
\tilde{t}_{k} >0}\| \e h^{\ell+1 }( \tilde{t}_{k} )\|_{\infty}
\Pi_{m=1}^{k-2} \frac{\tilde{w} (v_{m})}{\tilde{w} (V_{\mathbf{cl}} (\tilde{t}_{m+1}
  - \frac{\tilde{t}_{m+1}}{\e}
  ;  v_{m}))} 
 \mathrm{d}
\Sigma_{k-1} ( \tilde{t}_{k}),
\end{eqnarray*}
and $\mathrm{d} \Sigma_{k-1} ( \tilde{t}_{k})$ is evaluated at $s=\tilde{t}%
_{k}$ of 
\begin{equation*}
\mathrm{d}\Sigma _{l} (s):=\{\Pi _{j=l+1}^{k-1}\mathrm{d} \sigma _{j}\}\{
e^{ -\int^{\tilde{t}_{l} }_{s} \frac{\check{\nu} ( V_{\mathbf{cl}} ( \tilde{t%
}_{l} - \frac{ \tilde{t}_{l} -\tau}{\e} ; \tilde{t}_{l} , x_{l}, v_{l}))}{\e%
^{2}} \mathrm{d} \tau }\tilde{w}(v_{l})\mathrm{d}\sigma _{l}\}\Pi
_{j=1}^{l-1}\{e^{ - \int^{ \tilde{t}_{j} }_{ \tilde{t}_{j+1} } \frac{ \tilde{%
\nu} ( V_{\mathbf{cl}}( \tilde{t}_{j} - \frac{ \tilde{t}_{j} -\tau}{\e} ; 
\tilde{t}_{j} , x_{j}, v_{j}))}{\e^{2}} \mathrm{d} \tau }\mathrm{d}\sigma
_{j}\}.
\end{equation*}
Recall (\ref{w/w_unsteady}). With the choice of $k= C_{1} T_{0}^{5/4}$ (clearly $0 \leq t \leq \e^{2} T
\ll \e T_{0}$), for $t \in [0, \e^{2} T]$, 
\begin{eqnarray*}
&&| \e h^{\ell+1} (t,x,v)| \\
&\lesssim& C_{1} T_{0}^{5/4}\Big\{ e^{- CT}  \| %
\e h^{\ell+1} (0) \|_\infty + T  \sup_{0 \leq s\leq t}
\| \e h^{ \ell} (s) \|_{\infty} + \e^{2} \| A \|_{\infty} + O(\e^{2}) \\
&& \ \ \ \ \ \ \ \ \ \ \ \ + \underbrace{\int_{0}^{t} \frac{ \check{\nu} (
V_{\mathbf{cl}}(t- \frac{t-s}{\e} ; t,x,v) )}{\e^{2}} e^{- \int^{t}_{s} 
\frac{\check{\nu} ( V_{\mathbf{cl}} ( t- \frac{t-\tau}{\e};t,x,v) )}{\e^{2}} 
\mathrm{d} \tau } \mathrm{d} s}_{\lesssim 1} \times  \sup_{0 \leq s\leq t} \| \e h^{\ell} (s) \|_{\infty}^{2} \Big\} \\
&&+ T_{0}^{5/4} \Big\{ \frac{1}{2}\Big\}^{C_{2} T_{0}^{5/4}}   \sup_{0 \leq s\leq t} \| \e h^{\ell+1} (s) \|_{\infty}.
\end{eqnarray*}
For $T_{0} \gg 1, \ 0<T \ll 1$, and $0< \e \ll 1$, we then established (\ref%
{claim_pp_bounded}).
 
Using (\ref%
{claim_pp_bounded}) with a small initial datum and an induction in $\ell$, we deduce
\begin{eqnarray*}
 %&&
 \sup_{0 \leq t \leq \e^{2} T}\| \e h^{\ell+ 1 } (t) \|_{\infty} %\\
\lesssim
%&
    \| \e h_{0} \|_{\infty}+ \e^{2}\|
A\|_{\infty}+ O(\e^{2})  ,
\end{eqnarray*}
for $C_{1} \geq 10 C_{T_{0}}$.

\vspace{4pt}

\noindent\textit{Step 2. } From \textit{Step 2}, $w f^{\ell} \rightarrow wf$
weak-$*$ in $L^{\infty}([0, \e^{2} T] \times \Omega \times \mathbb{R}^{3})$
up to subsequence. Clearly $f$ satisfies the bound (\ref{claim_pp_bounded}).
On the other hand, applyig the argument of previous step to $f^{\ell+1}-f^{\ell}$, from (\ref{claim_pp_bounded}) we can prove that $wf^{\ell}$ is a Cauchy
sequence in $L^{\infty} ([0, \e^{2} T] \times \Omega \times \mathbb{R}^{3})$%
. It is standard to show that $f$ solves (\ref{eq_R_pp}) and (\ref{R_bdry_pp}%
) with $f^{\ell+1}=f=f^{\ell}$. Therefore $F= \mu + \e \f  \sqrt{\mu}$
solves the Boltzmann equation with diffuse BC. Since the unique solution $f$
has a uniform-in-time bound from Proposition \ref{energy_nonlinear}, we can
continue the \textit{Step 2} for $[\e^{2} T, 2 \e^{2} T],[2\e^{2} T,3 \e^{2}
T] , \cdots$, to conclude $w r^{\ell} \rightarrow wr$ in $L^{\infty}(\mathbb{%
R}_{+} \times \Omega \times \mathbb{R}^{3})$. Therefore $F^{\ell}
\rightarrow F\geq 0$ a.e.

\vspace{4pt}

\noindent\textit{Step 3. } Let, for sufficiently large $m$, 
\begin{equation*}
F_0(x,v)=\mu+ \sqrt{\mu}\e (f_{w} + f_{s}+\tilde f(0))+\mu^{ {\frac 12+{\beta^-}}}%
\mathbf{1}_{|v|>m| \log \e|}.
\end{equation*}
Clearly, by the $L^\infty$ estimate of $f_s$ we have $F(0)\ge 0$. Moreover, $%
F_0$ satisfies the assumptions of Theorem 1.2. By Theorem 1.2, we have $%
\|F(t) - F_{s}\|_{L^{2}} \lesssim e^{-\lambda t}$. Then, as $t \rightarrow
\infty$, for any non negative test function $\psi(x,v)$, 
\begin{eqnarray*}
&&\iint_{\Omega \times \mathbb{R}^{3}} F_{s} (x,v) \psi(x,v) \mathrm{d} x 
\mathrm{d} v \\
&=& \iint_{\Omega \times \mathbb{R}^{3}} F(t,x,v) \psi(x,v)\mathrm{d} x 
\mathrm{d} v + O(1) \iint_{\Omega \times \mathbb{R}^{3}}|F_{s}(x,v) -
F(t,x,v)|\psi(x,v) \mathrm{d} x \mathrm{d} v \\
&\geq & 0 - O(1) \| F(t) - F_{s} \|_{L^{2}(\Omega \times \mathbb{R}^{3})} \\
&\geq & 0.
\end{eqnarray*}
This proves $F_{s} (x,v) \geq 0$ a.e.
\end{proof}

\subsection{Local-in-Time Validity}

\begin{proof}[Proof of Theorem \protect\ref{remb}]
We fix a time interval $[0,T]$ and a $L^\infty_tC^2_{x,v}$ solution to the
INSF system in this interval with Dirichlet boundary conditions on $\pt\O $.
Under the assumptions of Theorem \ref{remb}, are well defined the functions \begin{eqnarray}  \label{f1f2}
&&f_1=\sqrt{\mu}[\rho+u\cdot v+\th \frac{|v|^2-3}2],  \notag \\
\\
&&f_2= \frac 1 2\sum_{i,j=1}^3 \mathcal{A}_{ij}[\pt_{x_i}u_{j,s}+\pt _{x_j}
u_{i,s}]+\sum_{i=1}^3 \mathcal{B}_i \pt_{x_i}\th -L^{-1}[\Gamma(f_1,f_1)] +%
\frac{|v|^2-3}{2} \th _2\sqrt{\mu},  \notag
\end{eqnarray}
with $\th _2= p-\fint p- \th \rho$. We write the solution to the Boltzmann
equation as 
\begin{equation*}
F=\mu+\e\sqrt{\mu}[f_1+\e^2 f_2+\e^{\frac 1 2} R],
\end{equation*}
so that $R$ satisfies the equation 
\begin{equation}  \label{tild_RRR}
\begin{split}
& \partial_{t} {R} + \e^{-1} v\cdot\nabla_{x} {R} + \e \Phi \cdot \nabla_{v} 
{R} + \e^{-2} L {R} \\
= & \ \e^{-1} L_{f_1+\e f_2} {R} + \e^{-1/2} \Gamma({R}, {R}) + \e \frac{%
\Phi \cdot v}{2}{R}+ \e^{-1/2} {A},
\end{split}%
\end{equation}
where 
\begin{equation}  \label{Astaaa}
\begin{split}
A\ &= -(\mathbf{I}-\mathbf{P}) [v\cdot \nabla_x (f_2 +\tilde f_2)]-2\Gamma(
f_1,f_2) \\
&-\e\big\{\pt_t f_2+\Phi\cdot\frac 1 {\sqrt{\mu}}\nabla_v \big[\sqrt{\mu}%
(f_1+\e f_2)) \big]-\Gamma(f_2,f_2)\big\},
\end{split}%
\end{equation}
and the boundary condition 
\begin{equation}  \label{bdry_tildeRRR}
\begin{split}
{R} |_{\gamma_{-} } = P_{\gamma} {R} +\e \mathcal{Q} {R} + \e^{1/2}{r},
\end{split}%
\end{equation}
where ${r}:= \e^{-1} [\mu^{- \frac{1}{2}} \mathcal{P}^w_{\gamma} ({f}_{1} 
\sqrt{\mu}) - \tilde{f}_{1} ] + [ \mu^{- \frac{1}{2}} \mathcal{P}%
_{\gamma}^{w} ({f}_{2} \sqrt{\mu}) - {f}_{2} ]$.

We use the iteration scheme 
\begin{equation}
\begin{split}  \label{Z_eqR}
& \partial_{t} {R}^{\ell+1} + \e^{-1} v\cdot\nabla_{x} {R}^{\ell+1} + \e %
\Phi \cdot \nabla_{v} {R}^{\ell+1} + \e^{-2} L {R}^{\ell+1} \\
&= \e^{-1} L_{f_1+\e f_2} {R}^{\ell+1} +\e^{-1/2} \Gamma( {R}^{\ell} , {R}%
^{\ell} ) + \e \frac{\Phi \cdot v}{2} {R}^{\ell+1}+ \e^{-1/2} {A}, \\
& {R}^{\ell +1} |_{\gamma_{-}} =P_{\gamma} {R}^{\ell+1 } + \e \mathcal{Q} {R}%
^{\ell} + \e^{1/2}{r},\ \ \ {R}^{\ell+1} |_{t=0}= {R}_{0}.
\end{split}%
\end{equation}
Here we set ${R}^{0}(t,x,v) := {R}_{0} (x,v)$.

Clearly ${R}^{\ell}_{t}:= \partial_{t} \tilde{R}^{\ell}$ solves 
\begin{equation}  \label{Zt_eqRR}
\begin{split}
& \partial_{t} {R}^{\ell+1}_{t} + \e^{-1} v\cdot\nabla_{x} {R}^{\ell+1} _{t}
+ \e \Phi \cdot \nabla_{v} {R}^{\ell+1}_{t} + \e^{-2} L {R}^{\ell+1}_{t} \\
&= \e^{-1} L_{ f_1+\e f_2}{R}^{\ell+1} _{t} + \e^{-1} L_{\partial_{t} {f}%
_{1} + \e \partial_{t} {f}_{2}} {R}^{\ell+1} + \e^{-1/2} [\Gamma({R}%
^{\ell}_{t} , {R}^{\ell} )+ \Gamma( {R}^{\ell}, {R}^{\ell}_{t} )] \\
& \ \ \ \ \ \ + \e \frac{\Phi \cdot v}{2} {R}^{\ell+1}_{t}+ \e^{-1/2}
\partial_{t}{A}, \\
&{R}^{\ell +1}_{t}(t,x,v) |_{\gamma_{-}} = P_{\gamma} {R}_{t}^{\ell+1 } + \e 
\mathcal{Q} {R}^{\ell}_{t}+ \e^{1/2}\partial_{t}{r}, \ \ {R}_{t}^{\ell+1}
|_{t=0} = \partial_{t} {R}_{0}.
\end{split}%
\end{equation}
We use 
\begin{eqnarray*}
\| \langle v\rangle^{-1/2} L_{f_{1} + \e f_{2}} {R}^{\ell}
\|_{L^{2}_{t}([0,t]) L^{2}_{x,v}} &\lesssim& \| w[ f_{1} + \e f_{2} ]
\|_{L^{\infty}_{t,x,v}} \| {R}^{\ell} \|_{L^{2}_{t}([0,t]) L^{2}_{x,v}} \\
\| \langle v\rangle^{-1/2} L_{f_{1} + \e f_{2}} {R}^{\ell}_{t}
\|_{L^{2}_{t}([0,t]) L^{2}_{x,v}} &\lesssim& \| w[ f_{1} + \e f_{2} ]
\|_{L^{\infty}_{t,x,v}} \| {R}_t^{\ell} \|_{L^{2}_{t}([0,t]) L^{2}_{x,v}}, \\
\| \langle v\rangle^{-1/2} L_{\partial_{t} {f}_{1} + \e \partial_{t}{f}_{2}} 
{R}^{\ell} \|_{L^{2}_{t}([0,t]) L^{2}_{x,v}} &\lesssim& \| w[ \partial_{t} {f%
}_{1} + \e \partial_{t} {f}_{2} ] \|_{L^{\infty}_{t,x,v}} \| {R}^{\ell}
\|_{L^{2}_{t}([0,t]) L^{2}_{x,v}}.
\end{eqnarray*}
In all the estimates the time interval is restricted to $[0,T]$. Let 
\[
\mathcal{K}= \max\{\| w[ f_{1} + \e f_{2} ] \|_{L^{\infty}_{t,x,v}}, \| w[
\partial_{t} {f}_{1} + \e \partial_{t} {f}_{2} ] \|_{L^{\infty}_{t,x,v}}
\}.\] We use Corollary \ref{corolla} and Lemma \ref{nonlinear} and repeat the
Step 1 in Subsection \ref{3.7}. Step 2 is replaced by the inequality 
\begin{multline}
\|R^{\ell+1}(t)\|_{L^2_{x,v}}^2+\e^{-1}
\|(1-P_\g)R^{\ell+1}\|_{L^2_tL^2(\gamma)}^2+\e^{-2} \int_0^t \|(\mathbf{I}-%
\mathbf{P}) R^{\ell+1}\|_\nu^2 \\
\lesssim\|R_0\|_{L^2_{x,v}}^2+\mathcal{K}\| R^{\ell+1}\|_{L^2_{t,x,v}} + \e%
^{\frac 1 2}\|\Gamma(R^\ell,R^\ell)\|^2_{L^2_{t,x,v}} +\e^{\frac 1
2}\|A\|_{L^2_{t,x,v}}^2+\|r\|^2_{L^2_tL^2_\gamma}.
\end{multline}
Using Gronwall's inequality, we thus obtain for $t\in [0,T]$,
\begin{equation}
\|R^{\ell+1}(t)\|_{L^2_{x,v}}^2\lesssim Te^{\mathcal{K}t}\e^{\frac 1
2}\|\Gamma(R^\ell,R^\ell)\|^2_{L^\infty_{t}L^2_{x,v}} + e^{\mathcal{K}t}%
\Big\{T\big[\e^{\frac 1
2}\|A\|_{L^\infty_{t}L^2_{x,v}}^2+\|r\|^2_{L^\infty_tL^2_\gamma}\big]%
+\|R_0\|_{L^2_{x,v}}^2\Big\}.
\end{equation}

A similar estimate holds for $R_t$. Steps 3 and 4 are the same as in
Subsection \ref{3.7}, but for the fact that the non linear term is
multiplied by an extra factor $\e^{\frac 1 2}$. We conclude that, for any
fixed $T$ there is an $\e(T)$ such that $Te^{\mathcal{K}T}\e^{\frac 1 2}\ll
1 $ and the inductive hypothesis holds for $R^{\ell+1}$ for any $\e\le \e(T)$%
.

We repeat this process with $\tilde{R}^{\ell+1} - \tilde{R}^{\ell}$ to show
that $\tilde{R}^{\ell}$ is a Cauchy sequence in $L^{2} \cap L^{\infty}$.
Then it is standard to conclude the existence and uniqueness.
\end{proof}

\appendix 

\section{Extensions and Compactness}

\numberwithin{equation}{subsection} \numberwithin{theorem}{section} %
\setcounter{equation}{0}

\subsection{Extension}

\begin{proof}[Proof of Lemma \protect\ref{extension_dyn}]
\label{A1} \ 

\noindent\textit{Step 1. } In the sense of distributions on $[0,
\infty)\times \Omega \times \mathbb{R}^{3}$, 
\begin{equation}  \label{eq_Z_dyn}
\begin{split}
&\e \partial_{t} f_{\delta}+ v\cdot \nabla_{x} f_{\delta} + \e^{2} \Phi
\cdot \nabla_{v} f_{\delta} \\
& \ = \Big[1-\chi(\frac{n(x) \cdot v}{\delta}) \chi \big( \frac{ \xi(x) }{%
\delta}\big)\Big] \chi(\delta|v|) \\
& \ \ \ \ \ \ \ \ \times \big[ \mathbf{1}_{t \in [0,\infty)} g + \mathbf{1}%
_{t \in ( - \infty, 0 ]} \chi (t) \{ \e \frac{\chi^{\prime} (t)}{ \chi(t)} +
v \cdot \nabla_{x} + \e^{2} \Phi \cdot \nabla_{v} \} f_{0} (x,v)\big] \\
& \ \ \ + \big[\mathbf{1}_{t \in [0,\infty)} f + \mathbf{1}_{t \in (-
\infty, 0 ]} \chi(t) f_{0} (x,v) \big] \{v\cdot \nabla_{x} + \e^{2} \Phi
\cdot \nabla_{v}\} \Big( [1-\chi(\frac{n(x) \cdot v}{\delta}) \chi \big( 
\frac{\xi(x)}{\delta}\big) ] \chi(\delta|v|)\Big).
\end{split}%
\end{equation}
Note that, 
\begin{eqnarray}
&&\Big|\{v\cdot \nabla_{x} + \e^{2} \Phi \cdot \nabla_{v}\} [1-\chi(\frac{%
n(x) \cdot v}{\delta}) \chi \big( \frac{ \xi(x )}{\delta}\big) ]
\chi(\delta|v|)\Big|  \label{der_chi} \\
&=&\Big| - \frac{1}{\delta} \{v\cdot \nabla_{x} n(x) \cdot v + \e^{2} \Phi
\cdot n(x) \} \chi^{\prime} \big(\frac{n(x) \cdot v}{\delta} \big) \chi %
\big( \frac{ \xi(x )}{\delta}\big) \chi(\delta|v|)  \notag \\
&& - \ \frac{1}{\delta} v\cdot \nabla_{x} \xi(x) \chi^{\prime} \big( \frac{%
\xi(x)}{\delta}\big) \chi (\frac{n(x) \cdot v}{\delta}) \chi(\delta|v|) + \e%
^{2}\delta \Phi \cdot \frac{v}{|v|} \chi^{\prime} (\delta|v|)[1-\chi(\frac{%
n(x) \cdot v}{\delta}) \chi \big( \frac{\xi(x)}{\delta}\big) ] \Big|  \notag
\\
&\leq& \frac{4}{\delta}( |v|^{2}\|\xi\|_{C^2} + \e^{2}\|\Phi\|_\infty )
\chi(\delta|v|) + \frac{C_{\Omega}}{\delta} |v|\chi(\delta|v|) + \e^{2}
\delta\|\Phi\|_\infty \mathbf{1}_{|v| \leq {2}{\delta}^{-1}}  \notag \\
&\lesssim & {\delta^{-3}} \mathbf{1}_{|v| \leq 2 \delta^{-1}}.  \notag
\end{eqnarray}

This proves the second line of (\ref{force_Z_dyn}). Since $\big[1-\chi(\frac{%
n(x) \cdot v}{\delta}) \chi ( \frac{\xi(x)}{\delta}) \big] \chi(\delta|v|)
\leq 1$, we prove the first line of (\ref{force_Z_dyn}) directly. 

\vspace{4pt}

\noindent\textit{Step 2. } We claim that if $0 \leq \xi (x) \leq \tilde{C}
\delta^{4}, \ |n(x) \cdot v|> \delta$ and $|v| \leq \frac{1}{\delta}$ then
either $\xi(\tilde{x}_{\mathbf{f}} (x,v))=\tilde{C}\delta^{4}$ or $\xi(%
\tilde{x}_{\mathbf{b}}(x,v)) =\tilde{C}\delta^{4}$.

To show this, if $v \cdot n(x) \geq \delta$, we take $s>0$, while if $v\cdot
n(x) \leq - \delta$ then we take $s<0$. From (\ref{intfor}), 
\begin{eqnarray}
&&\xi(X(s; 0, x, v) ) = \xi(x) + \int_{0}^{s} V( \tau; 0, x, v) \cdot
\nabla_{x} \xi( X( \tau; 0, x, v) ) \mathrm{d}\tau  \notag \\
&=&\xi(x) + \int^{s}_{0} \{ v+ O(1)\e^{2} \|\Phi\|_{\infty} \tau \} \cdot \{
\nabla_{x} \xi(x) + O(1)\|\xi\|_{C^{2}} (|v| + \e^{2} \|\Phi\|_{\infty}
\tau) \tau \}  \notag \\
&=&\xi(x) + v \cdot \nabla_{x} \xi(x) s +O(1) \|\xi\|_{C^{2}} \big\{ |v|^{2}
s^{2} + \e^{2} \|\Phi \|_{\infty} s ^{2} + \e^{2} \|\Phi\|_{\infty}|v|s^{3}
+ \e^{4} \| \Phi \|_{\infty}^{2} s^{4} \big\}.  \notag
\end{eqnarray}
From $\xi(x)\ge 0$, 
\begin{eqnarray}
\xi(X(s; 0, x, v) ) &\geq & \delta |s| \Big\{ 1 - \frac{ \|\xi\|_{C^{2}} }{%
\delta}\Big[ |v|^{2} |s| + \e^{2} \|\Phi \|_{\infty} |s| + \e^{2}
\|\Phi\|_{\infty}|v\|s|^{2} + \e^{4} \| \Phi \|_{\infty}^{2} |s|^{3} \Big] %
\Big\}  \notag \\
&\ge & \delta |s| \Big\{ 1 - \frac{ \|\xi\|_{C^{2}} }{\delta}\Big[\frac{1}{%
\delta^{2}} |s| + \e^{2} \|\Phi \|_{\infty} | s| + \e^{2} \|\Phi\|_{\infty} 
\frac{1}{\delta}|s|^{2} + \e^{4} \| \Phi \|_{\infty}^{2} |s|^{3} \Big] \Big\}
\notag \\
&\geq& \delta |s| \Big\{ 1- \Big[ \frac{1}{4} + \frac{ \e^{2} \delta^{2} \|
\Phi \|_{\infty}}{4 } + \frac{\e^{2} \delta^{4} \| \Phi \|_{\infty}}{16} + 
\frac{\e^{4}\delta^{8} \| \Phi \|_{\infty}^{2}}{64} \Big] \Big\} \ \geq \ 
\frac{\delta |s| }{2},  \label{xi_est}
\end{eqnarray}
for $0 \leq |s| \leq \frac{\delta^{3}}{ 4(1+ \|\xi \|_{C^{2}})}$ and $0< \e %
\ll 1$. Therefore 
\begin{eqnarray*}
\xi(Y(s;0,x,v)) = \xi(X(\frac{s}{\e}; 0,x,v )) \geq \delta \frac{|s|}{2\e} ,
\end{eqnarray*}
for all $0\leq |s| \leq \frac{ \e\delta^{3}}{ 4(1+ \|\xi \|_{C^{2}})}$ with $%
0< \varepsilon \ll1$. Especially with $\e s_{*} = +\frac{ \varepsilon%
\delta^{3}}{4(1+ \| \xi \|_{C^{2}})}$ for $n(x) \cdot v >\delta$ and $\e %
s_{*} = -\frac{\varepsilon\delta^{3} }{4(1+ \| \xi \|_{C^{2}})}$ for $n(x)
\cdot v <\delta$, 
\begin{equation*}
\xi (Y(s_{*};0,x,v)) > \tilde{C} \delta^{4}.
\end{equation*}
Therefore, by the intermediate value theorem, we prove our claim.

\vspace{10pt}

\noindent\textit{Step 3. } We define $f_{E}(t,x,v)$ for $(x,v) \in [ \mathbb{%
R}^{3} \backslash \bar{\Omega}] \times \mathbb{R}^{3}$: 
\begin{equation}  \label{ZE_dyn}
\begin{split}
f_{E}(t,x,v) & \ := \ f_{\delta}(t- \e t_{\mathbf{b}}^{*}(x,v),x_{\mathbf{b}%
}^{*}(x,v), v_{\mathbf{b}}^{*}(x,v)) \chi\big( \frac{\xi(x)}{\tilde{C}
\delta^{4}} \big) \chi \big( {t_{\mathbf{b}}^{*} (x,v)} \big) \ \ \ \text{if}
\ \ x_{\mathbf{b}}^{*}(x,v) \in \partial\Omega, \\
& \ := \ f_{\delta}(t+ \e t_{\mathbf{f}}^{*}(x,v),x_{\mathbf{f}}^{*}(x,v),
v_{\mathbf{f}}^{*}(x,v)) \chi\big( \frac{\xi(x)}{\tilde{C} \delta^{4}} \big) %
\chi \big( {t_{\mathbf{f}}^{*} (x,v)} \big) \ \ \ \ \text{if} \ \ x_{\mathbf{%
f}}^{*}(x,v) \in \partial\Omega, \\
& \ := \ 0 \ \ \ \ \ \ \ \ \ \ \ \ \ \ \ \ \ \ \ \ \ \ \ \ \ \ \ \ \ \ \ 
\text{if} \ \ x_{\mathbf{b}}^{*}(x,v) \notin \partial\Omega \ \text{and} \
x_{\mathbf{f}}^{*}(x,v) \notin \partial\Omega.
\end{split}%
\end{equation}

We check that $f_{E}$ is well-defined. It suffices to prove the following: 
\begin{equation*}
\begin{split}
& \text{If} \ \ x_{\mathbf{b}}^{*}(x,v) \in \partial\Omega \text{ and } x_{%
\mathbf{f}}^{*}(x,v) \in \partial\Omega \\
& \ \text{then} \ \ f_{\delta}( t- \e t_{\mathbf{b}}^{*} (x,v), x_{\mathbf{b}%
}^{*}(x,v), v_{\mathbf{b}}^{*}(x,v)) \chi\big( \frac{\xi(x)}{\tilde{C}
\delta^{4}} \big ) = 0 = f_{\delta}( t+ \e t_{\mathbf{f}}^{*} (x,v), x_{%
\mathbf{f}}^{*}(x,v), v_{\mathbf{f}}^{*}(x,v)) \chi\big( \frac{\xi(x)}{%
\tilde{C} \delta^{4}} \big) .
\end{split}%
\end{equation*}
If $|n(x_{\mathbf{b}}^{*}(x,v)) \cdot v_{\mathbf{b}}^{*}(x,v)| \leq \delta$
or $|v_{\mathbf{b}}^{*}(x,v)| \geq \frac{1}{\delta}$ then $f_{\delta}( t- \e %
t^{*}_{\mathbf{b}}(x,v), x_{\mathbf{b}}^{*}(x,v), v_{\mathbf{b}}^{*}(x,v))=0$
due to (\ref{Z_support_dyn}). If $n(x_{\mathbf{b}}^{*}(x,v)) \cdot v_{%
\mathbf{b}}^{*}(x,v) > \delta$ and $|v_{\mathbf{b}}^{*}(x,v)|\leq \frac{1}{%
\delta}$ then, due to \textit{Step 2}, $\xi(x_{\mathbf{f}}^{*}(x,v)) = \xi(
x_{\mathbf{f}}^{*}( x_{\mathbf{b}}^{*}(x,v), v_{\mathbf{b}}^{*}(x,v) ) ) = 
\tilde{C}\delta^{4}$ so that $x_{\mathbf{f}}^{*}(x,v) \notin \partial\Omega$.

On the other hand, if $|n(x_{\mathbf{f}}^{*}(x,v)) \cdot v_{\mathbf{f}%
}^{*}(x,v)| \leq \delta$ or $|v_{\mathbf{f}}^{*}(x,v)|\geq \frac{1}{\delta}$
then $f_{\delta}( t+ \e t^{*}_{\mathbf{f}}(x,v) ,x_{\mathbf{f}}^{*}(x,v), v_{%
\mathbf{f}}^{*}(x,v))=0$ due to (\ref{Z_support_dyn}). If $n(x_{\mathbf{f}%
}^{*}(x,v)) \cdot v_{\mathbf{f}}^{*}(x,v) <-\delta$ and $|v_{\mathbf{f}%
}^{*}(x,v)|\leq \frac{1}{\delta}$ then, due to \textit{Step 2}, $\xi(x_{%
\mathbf{b}}^{*}(x,v)) = \xi( x_{\mathbf{b}}^{*}( x_{\mathbf{f}}^{*}(x,v), v_{%
\mathbf{f}}^{*}(x,v) ) ) = \tilde{C}\delta^{4}$ so that $x_{\mathbf{b}%
}^{*}(x,v) \notin \partial\Omega$.

Note that 
\begin{equation}  \label{fE=fd_dyn}
f_{E}(t,x,v) = f_{\delta} (t,x,v) \ \ \ \text{for all } x \in \partial\Omega.
\end{equation}
If $x \in \partial\Omega$ and $n(x) \cdot v >\delta$ then $(x_{\mathbf{b}%
}^{*}(x,v), v_{\mathbf{b}}^{*}(x,v)) = (x ,v)$. From the definition (\ref%
{ZE_dyn}), for those $(x,v)$, we have $f_{E}(t,x,v) = f_{\delta} (t,x,v)$.
If $x \in \partial\Omega$ and $n(x) \cdot v< -\delta$ then $(x_{\mathbf{f}%
}^{*}(x,v), v_{\mathbf{f}}^{*}(x.v)) = (x,v)$. From the definition (\ref%
{ZE_dyn}), we conclude (\ref{fE=fd_dyn}) again. Otherwise, if $-\delta< n(x)
\cdot v< \delta$ then $f_{E}|_{\partial\Omega} \equiv 0 \equiv
f_{\delta}|_{\partial\Omega}$.

\vspace{10pt}

\noindent\textit{Step 4. } We claim that $f_{E}(x,v) \in L^{2}([\mathbb{R}%
^{3} \backslash \bar{\Omega}] \times \mathbb{R}^{3})$.

From the definition of (\ref{ZE_dyn}), we have $f_{E}(x,v) \equiv 0$ if $x_{%
\mathbf{b}}^{*} (x,v) \notin \partial\Omega$ and $x_{\mathbf{f}}^{*} (x,v)
\notin \partial\Omega$. Therefore, from (\ref{ZE_dyn}), 
\begin{eqnarray}
&&\int_{-\infty}^{\infty} \iint_{[ \mathbb{R}^{3 } \backslash \Omega ]\times 
\mathbb{R}^{3}} |f_{E} (t,x,v)|^{2} \mathrm{d} x \mathrm{d} v \mathrm{d} t 
\notag \\
&=&\int_{-\infty}^{\infty} \iint_{ [ \mathbb{R}^{3 } \backslash \Omega ]
\times \mathbb{R}^{3}} \mathbf{1}_{x_{\mathbf{b}}^{*} (x,v) \in
\partial\Omega} |f_{E} |^{2} + \int_{-\infty}^{\infty} \iint _{ [ \mathbb{R}%
^{3 } \backslash \Omega ] \times \mathbb{R}^{3}} \mathbf{1}_{x_{\mathbf{f}%
}^{*} \in \partial\Omega} |f_{E} |^{2}  \notag \\
&=& \int_{-\infty}^{\infty} \iint_{ [ \mathbb{R}^{3 } \backslash \Omega
]\times \mathbb{R}^{3}} \mathbf{1}_{x_{\mathbf{b}}^{*} (x,v) \in
\partial\Omega} |f_{\delta} ( t- \e t^{*}_{\mathbf{b}} , x_{\mathbf{b}}^{*}
, v_{\mathbf{b}}^{*} )|^{2} \big| \chi \big( \frac{\xi(x)}{\tilde{C}
\delta^{4}} \big) \big|^{2} |\chi ( {t_{\mathbf{b}}^{*} } ) |^{2}\mathrm{d}
x \mathrm{d} v \mathrm{d} t  \label{fE1_dyn} \\
&&+ \int_{-\infty}^{\infty} \iint_{ [ \mathbb{R}^{3 } \backslash \Omega ]
\times \mathbb{R}^{3}} \mathbf{1}_{x_{\mathbf{f}}^{*} (x,v) \in
\partial\Omega} |f_{\delta} ( t+ \e t^{*}_{\mathbf{f}} , x_{\mathbf{f}}^{*}
, v_{\mathbf{f}}^{*} )|^{2} \big| \chi \big( \frac{\xi(x)}{\tilde{C}
\delta^{4}} \big) \big|^{2} |\chi ( {t_{\mathbf{f}}^{*} } ) |^{2} \mathrm{d}
x \mathrm{d} v \mathrm{d} t ,  \label{fE2_dyn}
\end{eqnarray}
where $(t^{*}_{\mathbf{b}},x^{*}_{\mathbf{b}},v^{*}_{\mathbf{b}} )$ and $%
(t^{*}_{\mathbf{f}},x^{*}_{\mathbf{f}},v^{*}_{\mathbf{f}} )$ are evaluated
at $(x,v)$.

By (\ref{bdry_int2}), 
\begin{eqnarray*}
(\ref{fE1_dyn}) &\leq& \int^{\infty}_{- \infty} \mathrm{d} t
\int_{\partial\Omega} \int_{n(x) \cdot v >0} \int_{0}^{ \min\{ t_{\mathbf{f}%
}^{*}(x,v), 1 \}}\mathrm{d} S_{x} \mathrm{d} v \mathrm{d} s \{|n(x) \cdot v|
+ O(\e)(1+ |v|)s \} \\
\\
&& \ \ \ \ \ \times \big| f_{\delta} \big( t- \e s , x_{\mathbf{b}}^{*}
(X(s;0,x,v), V(s;0,x,v)), v_{\mathbf{b}}^{*} (X(s;0,x,v), V(s;0,x,v)) \big) %
\big|^{2} \\
&\leq&\int^{\infty}_{-\infty}\mathrm{d} t \int_{\partial\Omega} \int_{n(x)
\cdot v >0} \int_{0}^{ 1} \big| f_{\delta} (t,x,v ) \big|^{2} \{|n(x) \cdot
v| + O(\e)(1+ |v|)s \} \mathrm{d} s \mathrm{d} v \mathrm{d} S_{x} \\
&\lesssim& \int_{-\infty}^{\infty} \mathrm{d} t \int_{\partial\Omega}
\int_{n(x) \cdot v >0} \big| f_{\delta} (t,x,v ) \big|^{2} |n(x) \cdot v| 
\mathrm{d} v \mathrm{d} S_{x} \lesssim \| f_{\delta} \|_{L^{2} (\mathbb{R}
\times \partial\Omega \times \mathbb{R}^{3})}^{2},
\end{eqnarray*}
where we have used the fact, from (\ref{Z_dyn}), $O(\e)(1+|v|) |s| \leq O(\e%
) (1+ \frac{1}{\delta})\lesssim \delta \lesssim |n(x) \cdot v|$ for $(x,v)
\in \text{supp} ( f_{\delta} )$, and, for $n(x) \cdot v>0$, $%
x\in\partial\Omega$, and $0 \leq s \leq t_{\mathbf{f}}^{*}(x,v)$, 
\begin{equation*}
(x_{\mathbf{b}}^{*} (X(s;0,x,v), V(s;0,x,v)), v_{\mathbf{b}}^{*}
(X(s;0,x,v), V(s;0,x,v)) ) = (x_{\mathbf{b}}^{*} (x,v), v_{\mathbf{b}}^{*}
(x,v) ) = (x,v),
\end{equation*}
and $t_{\mathbf{b}}^{*} (X(s;0,x,v), V(s;0,x,v)) =s$ and the change of
variables $t- \e s \mapsto t$. Similarly we can show $(\ref{fE2_dyn})
\lesssim \| f_{\delta} \|_{L^{2} (\mathbb{R} \times \partial\Omega \times 
\mathbb{R}^{3})}^{2}$.

\vspace{10pt}

\noindent\textit{Step 5. } We claim that, in the sense of distributions on $%
\mathbb{R} \times [\Omega_{\tilde{C} \delta^{4}}\backslash \bar{\Omega}]
\times \mathbb{R}^{3}$, 
\begin{eqnarray}
&& \e \partial_{t } f_{E}+ v\cdot \nabla_{x} f_{E} + \e^{2} \Phi \cdot
\nabla_{v} f_{E}  \label{eq_ZE_dyn} \\
& = & \ \frac{1}{\tilde{C} \delta^{4}}v \cdot \nabla_{x} \xi(x)
\chi^{\prime} \big( \frac{\xi(x)}{\tilde{C} \delta^{4}} \big) \Big[ %
f_{\delta}( t- \e t_{\mathbf{b}}^{*} (x,v) , x_{\mathbf{b}}^{*}(x,v), v_{%
\mathbf{b}}^{*}(x,v) ) \chi(t_{\mathbf{b}}^{*} (x,v))\mathbf{1}_{x_{\mathbf{b%
}}^{*}(x,v) \in \partial\Omega}  \notag \\
&& \ \ \ \ \ \ \ \ \ \ \ \ \ \ \ \ \ \ \ \ \ \ \ \ \ \ \ \ \ \ \ \ \ +
f_{\delta}( t+ \e t_{\mathbf{f}}^{*} (x,v) , x_{\mathbf{f}}^{*}(x,v), v_{%
\mathbf{f}}^{*}(x,v) ) \chi(t_{\mathbf{f}}^{*} (x,v)) \mathbf{1}_{x_{\mathbf{%
f}}^{*}(x,v) \in \partial\Omega} \Big]  \notag \\
&& + f_{\delta}( t- \e t_{\mathbf{b}}^{*}(x,v), x_{\mathbf{b}}^{*} (x,v), v_{%
\mathbf{b}}^{*}(x,v)) \chi \big( \frac{\xi(x)}{\tilde{C} \delta^{4}} \big) %
\chi^{\prime} (t_{\mathbf{b}}^{*}(x,v))\mathbf{1}_{x_{\mathbf{b}}^{*}(x,v)
\in \partial\Omega}  \notag \\
&& - f_{\delta}( t+ \e t_{\mathbf{f}}^{*} (x,v), x_{\mathbf{b}}^{*} (x,v),
v_{\mathbf{b}}^{*}(x,v)) \chi \big( \frac{\xi(x)}{\tilde{C} \delta^{4}} %
\big) \chi^{\prime} (t_{\mathbf{f}}^{*}(x,v))\mathbf{1}_{x_{\mathbf{f}%
}^{*}(x,v) \in \partial\Omega} .  \notag
\end{eqnarray}

Note that 
\begin{eqnarray*}
&&[ \e \partial_{t} + v\cdot\nabla_{x} + \e^{2} \Phi \cdot \nabla_{v} ]f(t- %
\e t_{\mathbf{b}}^{*} (x,v), x_{\mathbf{b}}^{*} (x,v) , v_{\mathbf{b}}^{*}
(x,v)) \\
& &= \underbrace{ [ \e \partial_{t} + v\cdot\nabla_{x} + \e^{2} \Phi \cdot
\nabla_{v} ] (t-\e t_{\mathbf{b}}^{*} (x,v)) } \times \partial_{t} f(t- \e %
t_{\mathbf{b}}^{*} (x,v), x_{\mathbf{b}}^{*} (x,v) , v_{\mathbf{b}}^{*}
(x,v)) \\
& & \ \ \ + [ v\cdot\nabla_{x} + \e^{2} \Phi \cdot \nabla_{v} ] f(s, x_{%
\mathbf{b}}^{*} (x,v) , v_{\mathbf{b}}^{*} (x,v)) |_{s= t- \e t_{\mathbf{b}%
}^{*} (x,v)}, \\
&&[ \e \partial_{t} + v\cdot\nabla_{x} + \e^{2} \Phi \cdot \nabla_{v} ]f(t+ %
\e t_{\mathbf{f}}^{*} (x,v), x_{\mathbf{f}}^{*} (x,v) , v_{\mathbf{f}}^{*}
(x,v)) \\
&&= \underbrace{ [ \e \partial_{t} + v\cdot\nabla_{x} + \e^{2} \Phi \cdot
\nabla_{v} ] (t+\e t_{\mathbf{f}}^{*} (x,v)) } \times \partial_{t} f(t+ \e %
t_{\mathbf{f}}^{*} (x,v), x_{\mathbf{f}}^{*} (x,v) , v_{\mathbf{f}}^{*}
(x,v)) \\
& & \ \ \ + [ v\cdot\nabla_{x} + \e^{2} \Phi \cdot \nabla_{v} ] f(s, x_{%
\mathbf{f}}^{*} (x,v) , v_{\mathbf{f}}^{*} (x,v)) |_{s= t + \e t_{\mathbf{f}%
}^{*} (x,v)}.
\end{eqnarray*}
The underbraced terms vanish 
because $[ v\cdot \nabla_{x} + \e^{2} \Phi \cdot \nabla_{v} ] (t- \e t_{%
\mathbf{b}}^{*} (x.v)) = \frac{d}{ds} \Big|_{s=0} (t- \e t_{\mathbf{b}}^{*}
(X(s;0,x,v) , V(s;0,x,v) )) = \frac{d}{ds} \Big|_{s=0} (t- \e s ) = - \e  $,
and $[ v\cdot \nabla_{x} + \e^{2} \Phi \cdot \nabla_{v} ] (t+ \e t_{\mathbf{f%
}}^{*} (x.v)) = \frac{d}{ds} \Big|_{s=0} (t+ \e t_{\mathbf{f}}^{*}
(X(s;0,x,v) , V(s;0,x,v) )) = \frac{d}{ds} \Big|_{s=0} (t- \e s + \e t_{%
\mathbf{f}}^{*} (x,v)) = - \e  $. Moreover, in the sense of distributions on 
$[\Omega_{\tilde{C} \delta^{4}}\backslash \bar{\Omega}] \times \mathbb{R}^{3}
$, 
\begin{eqnarray}
&&v\cdot \nabla_{x} f_{E} + \e^{2} \Phi \cdot \nabla_{v} f_{E}  \notag \\
& = & \ \frac{1}{\tilde{C} \delta^{4}}v \cdot \nabla_{x} \xi(x)
\chi^{\prime} \big( \frac{\xi(x)}{\tilde{C} \delta^{4}} \big) \Big[ %
f_{\delta}( x_{\mathbf{b}}^{*}(x,v), v_{\mathbf{b}}^{*}(x,v) ) \chi(t_{%
\mathbf{b}}^{*} (x,v))\mathbf{1}_{x_{\mathbf{b}}^{*}(x,v) \in \partial\Omega}
\notag \\
& & \ \ \ \ \ \ \ \ \ \ \ \ \ \ \ \ \ \ \ \ \ \ \ \ \ \ \ \ \ \ \ \ \ +
f_{\delta}( x_{\mathbf{f}}^{*}(x,v), v_{\mathbf{f}}^{*}(x,v) ) \chi(t_{%
\mathbf{f}}^{*} (x,v)) \mathbf{1}_{x_{\mathbf{f}}^{*}(x,v) \in
\partial\Omega} \Big]  \label{eq_ZE} \\
&& + f_{\delta}(x_{\mathbf{b}}^{*} (x,v), v_{\mathbf{b}}^{*}(x,v)) \chi %
\big( \frac{\xi(x)}{\tilde{C} \delta^{4}} \big) \chi^{\prime} (t_{\mathbf{b}%
}^{*}(x,v))\mathbf{1}_{x_{\mathbf{b}}^{*}(x,v) \in \partial\Omega}  \notag \\
&& - f_{\delta}(x_{\mathbf{f}}^{*} (x,v), v_{\mathbf{f}}^{*}(x,v)) \chi %
\big( \frac{\xi(x)}{\tilde{C} \delta^{4}} \big) \chi^{\prime} (t_{\mathbf{f}%
}^{*}(x,v))\mathbf{1}_{x_{\mathbf{f}}^{*}(x,v) \in \partial\Omega} .  \notag
\end{eqnarray}
For $\phi \in C_{c}^{\infty}( [\Omega_{\tilde{C} \delta^{4}}\backslash \bar{%
\Omega}] \times \mathbb{R}^{3} )$, we choose small $t>0$ such that $X(s; 0,
x,v) \in \Omega_{\tilde{C} \delta^{4}}\backslash \bar{\Omega}$ for all $|s|
\leq t$ and all $(x,v) \in \text{supp}(\phi)$. Then, from (\ref{ZE_dyn}),
for $(X(s),V(s))= (X(s;0,x,v), V(s;0,x,v))$, 
\begin{eqnarray*}
&&\frac{d}{ds}f_{E}(X(s), V(s)) \\
&=& \frac{d}{ds} \Big[ f_{\delta}( x_{\mathbf{b}}^{*}(X(s),V(s)), v_{\mathbf{%
b}}^{*} (X(s),V(s)) ) \chi (t_{\mathbf{b}}^{*} (X(s),V(s))) \mathbf{1}_{x_{%
\mathbf{b}}^{*} (X(s),V(s)) \in \partial\Omega} \\
&& \ \ \ \ + f_{\delta}( x_{\mathbf{f}}^{*} (X(s),V(s)), v_{\mathbf{f}}^{*}
(X(s),V(s)) ) \chi (t_{\mathbf{f}}^{*} (X(s),V(s))) \mathbf{1}_{x_{\mathbf{f}%
}^{*} (X(s),V(s)) \in \partial\Omega} \Big] \times \chi \big( \frac{\xi(X(s
))}{\tilde{C} \delta^{4}} \big) \\
&&+ \Big[ f_{\delta}( x_{\mathbf{b}}^{*}(X(s),V(s)), v_{\mathbf{b}}^{*}
(X(s),V(s)) ) \chi (t_{\mathbf{b}}^{*} (X(s),V(s))) \mathbf{1}_{x_{\mathbf{b}%
}^{*} (X(s),V(s)) \in \partial\Omega} \\
&& \ \ \ \ + f_{\delta}( x_{\mathbf{f}}^{*} (X(s),V(s)), v_{\mathbf{f}}^{*}
(X(s),V(s)) ) \chi (t_{\mathbf{f}}^{*} (X(s),V(s))) \mathbf{1}_{x_{\mathbf{f}%
}^{*} (X(s),V(s)) \in \partial\Omega} \Big] \times \frac{d}{ds}\chi \big( 
\frac{\xi(X(s ))}{\tilde{C} \delta^{4}} \big).
\end{eqnarray*}
From $(x_{\mathbf{b}}^{*}(X(s;0,x,v),V(s;0,x,v)), v_{\mathbf{b}%
}^{*}(X(s;0,x,v),V(s;0,x,v)))= (x_{\mathbf{b}}^{*}(x,v), v_{\mathbf{b}%
}^{*}(x,v))$ and \newline
$(x_{\mathbf{f}}^{*}(X(s;0,x,v),V(s;0,x,v)), v_{\mathbf{f}%
}^{*}(X(s;0,x,v),V(s;0,x,v)))= (x_{\mathbf{f}}^{*}(x,v), v_{\mathbf{f}%
}^{*}(x,v))$ and \newline
$t_{\mathbf{f}}^{*}(X(s;0,x,v),V(s;0,x,v) )=t_{\mathbf{f}}^{*}(x,v ) -s$ and 
$t_{\mathbf{b}}^{*}(X(s;0,x,v),V(s;0,x,v) )=t_{\mathbf{b}}^{*}(x,v )+s$, 
\begin{eqnarray}
&& \frac{d}{ds}f_{E}(X(s), V(s))  \notag \\
&=&\Big[ f_{\delta}( x_{\mathbf{b}}^{*}(x,v), v_{\mathbf{b}}^{*}(x,v) )
\chi^{\prime}( t_{\mathbf{b}}^{*} (X(s),V(s))) \mathbf{1}_{x_{\mathbf{b}%
}^{*}(x,v) \in \partial\Omega}  \notag \\
&& \ \ \ - f_{\delta}( x_{\mathbf{f}}^{*}(x,v), v_{\mathbf{f}}^{*}(x,v) )
\chi^{\prime}( t_{\mathbf{f}}^{*} (X(s),V(s))) \mathbf{1}_{x_{\mathbf{f}%
}^{*}(x,v) \in \partial\Omega} \Big] \chi \big( \frac{\xi(X(s ))}{\tilde{C}
\delta^{4}} \big)  \notag \\
& & + \Big[ f_{\delta}( x_{\mathbf{b}}^{*}(x,v), v_{\mathbf{b}}^{*}(x,v) )
\chi (t_{\mathbf{b}}^{*}(X(s),V(s)) ) \mathbf{1}_{x_{\mathbf{b}}^{*}(x,v)
\in \partial\Omega}  \label{derfE} \\
&& \ \ \ + f_{\delta}( x_{\mathbf{f}}^{*}(x,v), v_{\mathbf{f}}^{*}(x,v) )
\chi (t_{\mathbf{f}}^{*}(X(s),V(s)) )\mathbf{1}_{x_{\mathbf{f}}^{*}(x,v) \in
\partial\Omega} \Big] \frac{1}{\tilde{C} \delta^{4}}V(s ) \cdot \nabla_{x}
\xi(X(s )) \chi^{\prime} \big( \frac{\xi(X(s ))}{\tilde{C} \delta^{4}} \big).
\notag
\end{eqnarray}

By the change of variables $(x,v) \mapsto (X(s; 0, x,v), V(s;0,x,v))$, for
sufficiently small $s$, 
\begin{eqnarray}
&&-\iint_{[\Omega_{\tilde{C} \delta^{4}} \backslash \bar{\Omega}] \times 
\mathbb{R}^{3} } f_{E} (x,v) \{ v\cdot \nabla_{x} + \e^{2} \Phi \cdot
\nabla_{v} \}\phi(x,v) \mathrm{d} x \mathrm{d} v  \label{iniz} \\
&=&- \iint_{[\Omega_{\tilde{C} \delta^{4}} \backslash \bar{\Omega}] \times 
\mathbb{R}^{3} } f_{E} (X(s), V(s)) \{ V(s)\cdot \nabla_{X} + \e^{2} \Phi
\cdot \nabla_{V} \} \phi(X(s), V(s)) \mathrm{d} x \mathrm{d} v  \notag \\
&=& - \iint_{[\Omega_{\tilde{C} \delta^{4}} \backslash \bar{\Omega}] \times 
\mathbb{R}^{3} } f_{E} (X(s), V(s)) \frac{d}{ds} \phi(X(s), V(s)) \mathrm{d}
x \mathrm{d} v.  \notag
\end{eqnarray}

Since the change of variables $(x,v) \mapsto (X(s; 0, x,v), V(s;0,x,v))$ has
unit Jacobian, it follows that, for $s$ sufficiently small, 
\begin{equation*}
\iint_{[\Omega_{\tilde{C} \delta^{4}} \backslash \bar{\Omega}] \times 
\mathbb{R}^{3} } f_{E} ((X(s), V(s)) \phi(X(s), V(s)))=\iint_{[\Omega_{%
\tilde{C} \delta^{4}} \backslash \bar{\Omega}] \times \mathbb{R}^{3} } f_{E}
(x,v) \phi(x,v) ,
\end{equation*}
and hence 
\begin{equation*}
\frac{d}{ds} \iint_{[\Omega_{\tilde{C} \delta^{4}} \backslash \bar{\Omega}]
\times \mathbb{R}^{3} } f_{E} ((X(s), V(s)) \phi(X(s), V(s)) =0.
\end{equation*}
Therefore we can move the $s$-derivative on $f_E$: By (\ref{derfE}), 
\begin{eqnarray*}
&&(\ref{iniz}) \\
&=& \iint_{[\Omega_{\tilde{C} \delta^{4}} \backslash \bar{\Omega}] \times 
\mathbb{R}^{3} } \frac{d}{ds} f_{E} (X(s), V(s)) \phi(X(s), V(s)) \mathrm{d}
x \mathrm{d} v \\
&=& \iint_{[\Omega_{\tilde{C} \delta^{4}} \backslash \bar{\Omega}] \times 
\mathbb{R}^{3} } \Big[ f_{\delta}( x_{\mathbf{b}}^{*}(x,v), v_{\mathbf{b}%
}^{*}(x,v) ) \chi^{\prime}( t_{\mathbf{b}}^{*} (X(s),V(s))) \mathbf{1}_{x_{%
\mathbf{b}}^{*}(x,v) \in \partial\Omega} \\
&& \ \ \ \ \ \ \ \ \ \ \ \ \ \ - f_{\delta}( x_{\mathbf{f}}^{*}(x,v), v_{%
\mathbf{f}}^{*}(x,v) ) \chi^{\prime}( t_{\mathbf{f}}^{*} (X(s),V(s))) 
\mathbf{1}_{x_{\mathbf{f}}^{*}(x,v) \in \partial\Omega} \Big] \chi \big( 
\frac{\xi(X(s ))}{\tilde{C} \delta^{4}} \big) \phi(X(s ), V(s )) \\
& +& \iint_{[\Omega_{\tilde{C} \delta^{4}} \backslash \bar{\Omega}] \times 
\mathbb{R}^{3} } \Big[ f_{\delta}( x_{\mathbf{b}}^{*}(x,v), v_{\mathbf{b}%
}^{*}(x,v) ) \chi (t_{\mathbf{b}}^{*}(X(s),V(s)) ) \mathbf{1}_{x_{\mathbf{b}%
}^{*}(x,v) \in \partial\Omega} \\
&& + f_{\delta}( x_{\mathbf{f}}^{*}(x,v), v_{\mathbf{f}}^{*}(x,v) ) \chi (t_{%
\mathbf{f}}^{*}(X(s),V(s)) )\mathbf{1}_{x_{\mathbf{f}}^{*}(x,v) \in
\partial\Omega} \Big] \\
&&\hskip 5cm \times \frac{1}{\tilde{C} \delta^{4}}V(s ) \cdot \nabla_{x}
\xi(X(s ))\chi^{\prime} \big( \frac{\xi(X(s ))}{\tilde{C} \delta^{4}} \big)%
\phi(X(s ), V(s )) .
\end{eqnarray*}
From the change of variable $(X(s;0,x,v), V(s;0,x,v)) \mapsto (x,v)$, 
\begin{eqnarray*}
(\ref{iniz}) &=& \iint_{[\Omega_{\tilde{C} \delta^{4}} \backslash \bar{\Omega%
}] \times \mathbb{R}^{3} } \big[ f_{\delta}(x_{\mathbf{b}}^{*} (x,v), v_{%
\mathbf{b}}^{*}(x,v)) \chi^{\prime} (t_{\mathbf{b}}^{*}(x,v)) \mathbf{1}_{x_{%
\mathbf{b}}^{*}(x,v) \in \partial\Omega} \\
&&- f_{\delta}(x_{\mathbf{f}}^{*} (x,v), v_{\mathbf{f}}^{*}(x,v))
\chi^{\prime} (t_{\mathbf{f}}^{*}(x,v)) \mathbf{1}_{x_{\mathbf{f}}^{*}(x,v)
\in \partial\Omega} \big] \chi \big(\frac{\xi(x)}{\tilde{C} \delta^{4}} %
\big) \phi(x,v) \\
& +& \iint_{[\Omega_{\tilde{C} \delta^{4}} \backslash \bar{\Omega}] \times 
\mathbb{R}^{3} } \Big[ f_{\delta}( x_{\mathbf{b}}^{*}(x,v), v_{\mathbf{b}%
}^{*}(x,v) ) \chi (t_{\mathbf{b}}^{*}(x,v) ) \mathbf{1}_{x_{\mathbf{b}%
}^{*}(x,v) \in \partial\Omega} \\
&& \ \ \ \ \ \ + f_{\delta}( x_{\mathbf{f}}^{*}(x,v), v_{\mathbf{f}%
}^{*}(x,v) ) \chi (t_{\mathbf{f}}^{*}(x,v) )\mathbf{1}_{x_{\mathbf{f}%
}^{*}(x,v) \in \partial\Omega} \Big] \frac{1}{\tilde{C} \delta^{4}}v \cdot
\nabla_{x} \xi(x) \chi^{\prime} \big( \frac{\xi(x)}{\tilde{C} \delta^{4}} %
\big)\phi(x,v).
\end{eqnarray*}
Hence (\ref{eq_ZE_dyn}) is proved.

On the other hand, following the bounds of (\ref{fE1_dyn}) and (\ref{fE2_dyn}%
) in \textit{Step 4} we prove the third line of (\ref{force_Z_dyn}).

\vspace{8pt}

\noindent\textit{Step 6. } We define $\bar{f}(t,x,v)$ for $(t,x,v) \in 
\mathbb{R} \times \mathbb{R}^{3} \times\mathbb{R}^{3}$: 
\begin{equation}  \label{bar_Z_dyn}
\begin{split}
\bar{f}(t,x,v) & \ : = \ f_{\delta}(t,x,v) \mathbf{1}_{(x,v) \in \bar{\Omega}
\times \mathbb{R}^{3}} + f_{E}(t,x,v) \mathbf{1}_{(x,v) \in [\mathbb{R}^{3}
\backslash \bar{\Omega}] \times \mathbb{R}^{3}}.
\end{split}%
\end{equation}
For $\phi \in C^{\infty}_{c}(\mathbb{R} \times \mathbb{R}^{3} \times \mathbb{%
R}^{3})$, by Lemma \ref{timedepgreen}, 
\begin{eqnarray*}
&&- \int_{- \infty}^{\infty} \mathrm{d} t \iint_{\mathbb{R}^{3} \times 
\mathbb{R}^{3}} \bar{f}(t,x,v) \{\e \partial_{t} + v\cdot \nabla_{x} + \e%
^{2} \Phi \cdot \nabla_{v} \} \phi(t,x,v) \mathrm{d} x \mathrm{d} v \\
&=&- \int_{- \infty}^{\infty} \mathrm{d} t\iint_{\Omega \times \mathbb{R}%
^{3}} f_{\delta}(t,x,v) \{ \e \partial_{t} + v\cdot \nabla_{x} + \e^{2} \Phi
\cdot \nabla_{v} \} \phi(t,x,v) \mathrm{d} x \mathrm{d} v \\
&&- \int_{- \infty}^{\infty} \mathrm{d} t \iint_{ [\mathbb{R}^{3} \backslash 
\bar{\Omega}] \times \mathbb{R}^{3}} f_{E}(t,x,v) \{\e \partial_{t} + v\cdot
\nabla_{x} + \e^{2} \Phi \cdot \nabla_{v} \} \phi(t,x,v) \mathrm{d} x 
\mathrm{d} v \\
&=& \int_{- \infty}^{\infty} \mathrm{d} t \int_{\gamma} f_{\delta} (t,x,v)
\phi(t,x,v) \{n(x) \cdot v\} \mathrm{d} S_{x} \mathrm{d} v + \int_{-
\infty}^{\infty} \mathrm{d} t \int_{\gamma} f_{E}(t,x,v) \phi(t,x,v) \{-n(x)
\cdot v\} \mathrm{d} S_{x} \mathrm{d} v \\
&&+ \int_{- \infty}^{\infty} \mathrm{d} t \iint_{\Omega\times \mathbb{R}%
^{3}} \{\e \partial_{t} + v\cdot \nabla_{x} + \e^{2} \Phi \cdot \nabla_{v}
\} f_{\delta}(t,x,v) \phi(t,x,v) \mathrm{d} x \mathrm{d} v \\
&&+ \int_{- \infty}^{\infty} \mathrm{d} t \iint_{ [\Omega_{\tilde{C}
\delta^{4}} \backslash \bar{\Omega}]\times \mathbb{R}^{3}} \{ \e %
\partial_{t} + v\cdot \nabla_{x} + \e^{2} \Phi \cdot \nabla_{v} \} {f}%
_{E}(x,v) \phi(x,v) \mathrm{d} x \mathrm{d} v,
\end{eqnarray*}
where the contributions of $\{t= \infty\}$ and $\{t= -\infty\}$ vanish since 
$\phi(t) \in C_{c}^{\infty} (\mathbb{R})$.

From (\ref{fE=fd_dyn}), the boundary contributions are cancelled: 
\begin{equation*}
\int_{- \infty}^{\infty} \int_{\gamma} f_{\delta} (t,x,v) \phi(t,x,v) 
\mathrm{d} \gamma \mathrm{d} t - \int_{- \infty}^{\infty} \int_{\gamma}
f_{E}(t,x,v) \phi(t,x,v) \mathrm{d} \gamma \mathrm{d} t =0.
\end{equation*}

Further from (\ref{eq_Z_dyn}) and (\ref{eq_ZE_dyn}), we prove that $\bar{f}$
solves (\ref{eq_barf_dyn}) in the sense of distributions on $\mathbb{R}
\times \mathbb{R}^{3} \times \mathbb{R}^{3}$.
\end{proof}

\subsection{Compactness of $K\mathcal{L}^{-1}$}

\label{A3}

\begin{proof}[Proof of Lemma \protect\ref{compact1}]
From Proposition \ref{calL}, $\mathcal{L}^{-1}$ maps $L^{2}$ to $L^{2}$ so
that $\sup_{n} \| f^{n}\|_{L^{2}} < +\infty$.

\noindent\textit{Step 1}. We approximate $K$ by a compactly supported smooth 
$K_{N}$.

\vspace{4pt}

For any $N\gg 1$, by the H\"{o}lder inequality, 
\begin{eqnarray*}
&& \| K(\mathbf{1}_{\{|u|> N \text{ or } |v|>N \text{ or } |v-u|< \frac{1}{N}
\}} f) \|_{2} \\
&=& \Big\| \int_{\mathbb{R}^{3}} \mathbf{k}(v,u) \mathbf{1}_{\{|u|>N \text{
or } |v|> {N} \text{ or } |v-u|< \frac{1}{N} \}} f(u) \mathrm{d} u \Big\|_{2}
\\
&\leq& \sup_{v} \sqrt{\int_{|u| >N} | \mathbf{k}(v,u)| \mathrm{d} u } \times
\sup_{u} \sqrt{\int_{\mathbb{R}^{3}} | \mathbf{k}(v,u)| \mathrm{d} v }
\times \| f \|_{2} \\
& +&\sup_{v} \sqrt{\int_{\mathbb{R}^{3}} |\mathbf{k}(v,u)| \mathrm{d} u }
\times \sup_{u} \sqrt{\int_{|v|>N}| \mathbf{k}(v,u)| \mathrm{d} v } \times
\| f \|_{2} \\
& +& \sup_{v} \sqrt{\int_{\mathbb{R}^{3}} |\mathbf{k}(v,u)| \mathrm{d} u }
\times \sup_{u} \sqrt{\int_{\mathbb{R}^{3}} \frac{e^{-\theta |v-u|^{2}}}{%
|v-u|} \mathbf{1}_{|v-u| < \frac{1}{N}} \mathrm{d} v} \times \| f\|_{2} \
\lesssim \ o(1) \| f\|_{2},
\end{eqnarray*}
where we have used the fact $\int_{\mathbb{R}^{3}} \mathbf{k}(v,u) \mathrm{d}
u \lesssim 1$, $\int_{\mathbb{R}^{3}} \mathbf{k}(v,u) \mathrm{d} v \lesssim
1 $, and $\int_{\mathbb{R}^3}\frac{e^{-\beta|v|^{2}}}{|v|} \mathrm{1}_{|v| < 
\frac{1}{N}}=o(1)$.

Note that 
\begin{equation*}
\mathbf{k}(v,u) = \mathbf{k}_{1} (v,u) - \mathbf{k}_{2} (v,u), \ \ \ \text{%
with} \ \ \ \mathbf{k}_{1}, \mathbf{k}_{2} >0,
\end{equation*}
and therefore $\mathbf{k}_{i}(v,u) \mathbf{1}_{\{|u|\leq N \ \& \ |v|\leq N
\ \& \ |v-u| \geq \frac{1}{N}\}}> \delta$ for $0< \delta\ll1$.

We now define $K_{N} f:= \int \mathbf{k}_{N} f$ with 
\begin{equation}  \label{k_N}
\begin{split}
\mathbf{k}_{N}(v,u) =& \ \mathbf{k}_{1,N} (v,u) - \mathbf{k}_{2,N} (v,u) \\
:=& \ \mathbf{k}_{1}(v,u)\mathbf{1}_{\{|u|\leq N \ \& \ |v|\leq N \ \& \
|v-u| \geq \frac{1}{N}\}} * \phi_{ \frac{1}{N}}(v)\phi_{ \frac{1}{N}}(u) \\
& -\mathbf{k}_{2}(v,u)\mathbf{1}_{\{|u|\leq N \ \& \ |v|\leq N \ \& \ |v-u|
\geq \frac{1}{N}\}} * \phi_{ \frac{1}{N}}(v)\phi_{ \frac{1}{N}}(u) \\
=& \ \int_{\mathbb{R}^{3}} \mathrm{d} u^{\prime} \phi_{ \frac{1}{N}}(u-
u^{\prime})\int_{\mathbb{R}^{3}} \mathrm{d} v^{\prime} \phi_{ \frac{1}{N}%
}(v-v^{\prime}) \mathbf{k}_{1}(v,u)\mathbf{1}_{\{|u|\leq N \ \& \ |v|\leq N
\ \& \ |v-u| \leq \frac{1}{N}\}} \\
&- \int_{\mathbb{R}^{3}} \mathrm{d} u^{\prime} \phi_{ \frac{1}{N}}(u-
u^{\prime})\int_{\mathbb{R}^{3}} \mathrm{d} v^{\prime} \phi_{ \frac{1}{N}%
}(v-v^{\prime}) \mathbf{k}_{2}(v,u)\mathbf{1}_{\{|u|\leq N \ \& \ |v|\leq N
\ \& \ |v-u| \leq \frac{1}{N}\}},
\end{split}%
\end{equation}
where $\phi_{ \frac{1}{N}}$ is the standard mollifiers. In particular, for $%
i=1,2$, 
\begin{eqnarray*}
&&\sup_{v}\int_{\mathbb{R}^{3}} \big| \mathbf{k}_{i,N}(v,u) - \mathbf{k}_{i}
(v,u) \mathbf{1}_{\{|u|\leq N \ \& \ |v|\leq N \ \& \ |v-u| \leq \frac{1}{N}%
\}} \big| \mathrm{d} u =o(1), \\
&&\sup_{u}\int_{\mathbb{R}^{3}} \big| \mathbf{k}_{i,N}(v,u) - \mathbf{k}_{i}
(v,u) \mathbf{1}_{\{|u|\leq N \ \& \ |v|\leq N \ \& \ |v-u| \leq \frac{1}{N}%
\}} \big| \mathrm{d} v =o(1).
\end{eqnarray*}
Consequently, $\| K_{N } f- K f\|_{2} \lesssim o(1) \| f\|_{2}$.

We denote 
\begin{equation*}
\bar{\mathbf{k}}_{N} (v,u) := \mathbf{k}_{1,N} (v,u) + \mathbf{k}_{2,N}
(v,u), \ \ \ \bar{K}_{N} f (v) : = \int_{\mathbb{R}^{3}} \bar{\mathbf{k}}%
_{N} (v,u) f(u) \mathrm{d} u.
\end{equation*}
Note that $\mathbf{k}_{1,N}, \mathbf{k}_{2,N} \geq 0$ and hence $\bar{%
\mathbf{k}}_{N} \geq 0$ and $\bar{\mathbf{k}}_{N} \geq | \mathbf{k}_{N}|$.

\vspace{8pt}

\noindent\textit{Step 2. }

We fix $\delta \ll 1$. Given $f^n$, we define $f_{\delta}^{n}$ as 
\begin{equation}  \label{Z}
f^n_{\delta}(x,v) : = \Big[1-\chi(\frac{n(x) \cdot v}{\delta}) \chi (\frac{%
\xi(x)}{\delta}) \Big] \chi(\delta|v|) f^(x,v),
\end{equation}
and extend it to the whole space $\mathbb{R}^{3} \times \mathbb{R}^{3}$. We
follow the process in the proof of Lemma \ref{extension_dyn} and we only
pinpoint the difference.

Similarly to \textit{Step 3} of the proof of Lemma \ref{extension_dyn}, we
define $f_{E}^{n}(x,v)$ for $(x,v) \in [\mathbb{R}^{3} \backslash \bar{\Omega%
}] \times \mathbb{R}^{3}$ 
\begin{eqnarray}
&& f_{E}^{n}(x,v)  \label{ZE_compact} \\
&& \ := \ e^{- \lambda t_{\mathbf{b}}^{*} (x,v) + \int^{- t_{\mathbf{b}%
}^{*}(x,v)}_{0} \big[\frac{ {\nu}(X(\tau ),V(\tau ))}{\e} - \frac{1}{2} \e%
^{2} \Phi(X(\tau)) \cdot V(\tau) \big] \mathrm{d} \tau }  \notag \\
&& \ \ \ \ \ \ \ \ \ \ \ \ \ \ \ \ \ \ \ \ \ \ \times f_{\delta}^{n}(x_{%
\mathbf{b}}^{*}(x,v), v_{\mathbf{b}}^{*}(x,v)) \chi\big( \frac{\xi(x)}{%
\tilde{C} \delta^{4}} \big) \chi ( t_{\mathbf{b}}^{*} (x,v) ) \ \ \ \ \text{%
for} \ \ x_{\mathbf{b}}^{*}(x,v) \in \partial\Omega,  \notag \\
&& \ := \ e^{ \lambda t_{\mathbf{f}}^{*} (x,v) + \int^{ t_{\mathbf{f}%
}^{*}(x,v)}_{0} \big[\frac{ {\nu}(X(\tau ),V(\tau ))}{\e} - \frac{1}{2} \e%
^{2} \Phi(X(\tau)) \cdot V(\tau) \big] \mathrm{d} \tau }  \notag \\
&& \ \ \ \ \ \ \ \ \ \ \ \ \ \ \ \ \ \ \ \ \ \ \ \times f_{\delta}^{n}(x_{%
\mathbf{f}}^{*}(x,v), v_{\mathbf{f}}^{*}(x,v)) \chi\big( \frac{\xi(x)}{%
\tilde{C} \delta^{4}} \big) \chi ( t_{\mathbf{f}}^{*} (x,v) ) \ \ \ \ \text{%
for} \ \ x_{\mathbf{f}}^{*}(x,v) \in \partial\Omega,  \notag \\
&& \ := \ 0 \ \ \ \ \ \ \ \ \ \ \ \ \ \ \ \ \ \ \ \ \ \ \ \ \ \ \ \ \ \ \ \
\ \ \ \ \ \ \ \ \ \ \ \ \ \ \ \ \ \text{for} \ \ x_{\mathbf{b}}^{*}(x,v)
\notin \partial\Omega \ \text{and} \ x_{\mathbf{f}}^{*}(x,v) \notin
\partial\Omega,  \notag
\end{eqnarray}
where $(X(\tau), V(\tau)) = (X(\tau; 0,x,v), V(\tau;0,x,v))$.

Then 
\begin{equation}  \label{fE=fd}
f^n_{E}(x,v) = f^n_{\delta} (x,v) \ \ \ \text{for all } x \in \partial\Omega,
\end{equation}
because, if $x \in \partial\Omega$ and $n(x) \cdot v>\delta$, then $t_{%
\mathbf{b}}^{*}(x,v)=0$. If $x \in \partial\Omega$ and $n(x) \cdot v<\delta$
then $t_{\mathbf{f}}^{*}(x,v)=0$.

We define $\bar{f}^{n}(x,v)$ as 
\begin{equation}  \label{bar_Z}
\begin{split}
\bar{f}(x,v) & \ : = \ f_{\delta}(x,v) \mathbf{1}_{(x,v) \in \bar{\Omega}
\times \mathbb{R}^{3}} + f_{E}(x,v) \mathbf{1}_{(x,v) \in [\mathbb{R}^{3}
\backslash \bar{\Omega}] \times \mathbb{R}^{3}}.
\end{split}%
\end{equation}
Note that $\bar{f}^{n}$ solves 
\begin{equation}
\lambda \bar{f}^{n} + v\cdot \nabla_{x} \bar{f} ^{n} + \frac{1}{\e} \nu \bar{%
f} ^{n} +\e^{2} \Phi \cdot \nabla_{v} \bar{f}^{n} - \frac 1 2\e^{2} \Phi
\cdot v \bar{f}^{n} \ = \ h^{n} \ = \ h_{1}^{n}+ h_{2}^{n} + h_{3}^{n} +
h_{4}^{n},  \notag
\end{equation}
where 
\begin{eqnarray*}
h_{1}^{n} (x,v)&=& \mathbf{1}_{(x,v) \in \Omega \times \mathbb{R}^{3}}
[1-\chi(\frac{n(x) \cdot v}{\delta}) \chi( \frac{\xi(x)}{\delta})\chi( \frac{%
\xi(x)}{\delta}) ] \chi(\delta|v|) g^{n}, \\
h_{2}^{n} (x,v) &=& \mathbf{1}_{(x,v) \in \Omega \times \mathbb{R}^{3}}
f^{n} \{ v\cdot \nabla_{x} + \e^{2} \Phi \cdot \nabla_{v} \} \Big( [1-\chi(%
\frac{n(x) \cdot v}{\delta}) \chi( \frac{\xi(x)}{\delta}) ] \chi ( \frac{%
\xi(x)}{\delta} ) \chi(\delta|v|)\Big), \\
h_{3}^{n} (x,v) &=& \mathbf{1}_{{\ (x,v) \in [\Omega_{\tilde{C} \delta^{4}}
\backslash \bar{\Omega}]\times \mathbb{R}^{3}}} \ \frac{1}{\tilde{C}
\delta^{4}}v \cdot \nabla_{x} \xi(x) \chi^{\prime} \big( \frac{\xi(x)}{%
\tilde{C} \delta^{4}} \big)  \notag \\
&& \times \Big[ f_{\delta}^{n}( x_{\mathbf{b}}^{*}(x,v), v_{\mathbf{b}%
}^{*}(x,v) ) e^{- \lambda t_{\mathbf{b}}^{*} (x,v) + \int^{- t_{\mathbf{b}%
}^{*}(x,v)}_{0} \big[\frac{ {\nu}(X(\tau ),V(\tau ))}{\e} - \frac{1}{2} \e%
^{2} \Phi(X(\tau)) \cdot V(\tau) \big] \mathrm{d} \tau } \mathbf{1}_{x_{%
\mathbf{b}}^{*}(x,v) \in \partial\Omega} \\
&& \ \ + f_{\delta}^{n}( x_{\mathbf{f}}^{*}(x,v), v_{\mathbf{f}}^{*}(x,v) ) 
\mathbf{1}_{x_{\mathbf{f}}^{*}(x,v) \in \partial\Omega} e^{ \lambda t_{%
\mathbf{f}}^{*} (x,v) + \int^{ t_{\mathbf{f}}^{*}(x,v)}_{0} \big[\frac{ {\nu}%
(X(\tau ),V(\tau ))}{\e} - \frac{1}{2} \e^{2} \Phi(X(\tau)) \cdot V(\tau) %
\big] \mathrm{d} \tau } \Big] , \\
h_{4}^{n} (x,v) &=& \mathbf{1}_{{\ (x,v) \in [\Omega_{\tilde{C} \delta^{4}}
\backslash \bar{\Omega}]\times \mathbb{R}^{3}}} f^{n}_{\delta}(x_{\mathbf{b}%
}^{*}(x,v), v_{\mathbf{b}}^{*}(x,v)) \\
&& \ \ \times e^{- \lambda t_{\mathbf{b}}^{*} (x,v) + \int^{- t_{\mathbf{b}%
}^{*}(x,v)}_{0} \big[\frac{ {\nu}(X(\tau ),V(\tau ))}{\e} - \frac{1}{2} \e%
^{2} \Phi(X(\tau)) \cdot V(\tau) \big] \mathrm{d} \tau } \chi \big( \frac{%
\xi(x)}{\tilde{C} \delta^{4}} \big) \chi^{\prime}(t_{\mathbf{b}}^{*}(x,v)) 
\mathbf{1}_{ x_{\mathbf{b}}^{*}(x,v) \in \partial\Omega} \\
&&+ \mathbf{1}_{{\ (x,v) \in [\Omega_{\tilde{C} \delta^{4}} \backslash \bar{%
\Omega}]\times \mathbb{R}^{3}}} f^{n}_{\delta}(x_{\mathbf{f}}^{*}(x,v), v_{%
\mathbf{f}}^{*}(x,v)) \\
&& \ \ \times e^{ \lambda t_{\mathbf{f}}^{*} (x,v) + \int^{ t_{\mathbf{f}%
}^{*}(x,v)}_{0} \big[\frac{ {\nu}(X(\tau ),V(\tau ))}{\e} - \frac{1}{2} \e%
^{2} \Phi(X(\tau)) \cdot V(\tau) \big] \mathrm{d} \tau } \chi \big( \frac{%
\xi(x)}{\tilde{C} \delta^{4}} \big) \chi^{\prime}(t_{\mathbf{f}}^{*}(x,v)) 
\mathbf{1}_{ x_{\mathbf{f}}^{*}(x,v) \in \partial\Omega},
\end{eqnarray*}
and 
\begin{eqnarray*}
&&\| h_{1}^{n} \|_{L^{2}(\mathbb{R}^{3} \times \mathbb{R}^{3})} \ \lesssim \
\| g^{n}\|_{L^{2}(\Omega \times \mathbb{R}^{3})}, \ \ \| h_{2}^{n} \|_{L^{2}
(\mathbb{R}^{3} \times \mathbb{R}^{3})} \ \lesssim_{\delta} \ \|f^{n}
\|_{L^{2}(\Omega \times \mathbb{R}^{3})}, \\
&&\| h_{3}^{n} \|_{L^{2} (\mathbb{R}^{3} \times \mathbb{R}^{3})} +\|
h_{4}^{n} \|_{L^{2} (\mathbb{R}^{3} \times \mathbb{R}^{3})} \
\lesssim_{\delta} \ \| f_{\delta}^{n} \|_{L^{2} (\gamma)}.
\end{eqnarray*}

\vspace{4pt}

\noindent\textit{Step 3}. For $(x,v) \in \text{supp}(\bar{f})$, we can
choose a fixed $T>0$ such that 
\begin{equation}  \label{large_time}
X(T; 0,x,v) \notin \text{supp}(\bar{f} ) \ \ \text{and} \ \ X(T; 0,x,v)
\notin \text{supp}(h),
\end{equation}
so that 
\begin{equation*}
\bar{f} (X(T;0,x,v), V( T;0,x,v))=0.
\end{equation*}
Directly, 
\begin{equation*}
| X( T;0,x,v) -x| = | v T+ O(\e^{2}) \| \Phi \|_{\infty}T^{2}| \geq \delta
T- O(\e^{2}) \| \Phi \|_{\infty }T^{2}.
\end{equation*}
We choose $T= \frac{C}{\delta}$ for large but fixed $C\gg 1$ such that $| X(
T;0,x,v) -x|\geq C - O(\frac{\e^{2}}{\delta^{2}}) \geq \frac{C}{2} \gg 1$.
This proves our claim (\ref{large_time}).

With this choice $T$, 
\begin{equation}
\bar{f}^{n}(x,v) = - \int^{T}_{0} h^{n}(X(s;0,x,v), V(s;0,x,v)) e^{\lambda s
+ \int^{s}_{0} \big[\frac{\nu( V(\tau;0,x,v))}{\e} - \frac{1}{2} \e^{2}
\Phi(X(\tau;0,x,v) ) \big] \mathrm{d} \tau } \mathrm{d} s.  \notag
\end{equation}
By the averaging lemma \cite{GLPS}, for any given $v$, 
\begin{equation*}
\int \mathbf{k}_{N}(v,u)\bar{f}^{n}(x,u) \mathrm{d} u\in H^{1/4}(\mathbb{R}%
^{3}).
\end{equation*}
Since $\bar{f}^{n}$'s support is bounded unformly, by a diaganolization
argument, it follows that there exists a weak limit $\bar{f}\in L^{2}$ of $%
\bar{f}^{n}$ such that for any rational point $v$ 
\begin{equation}  \label{conv_kf}
\int \mathbf{k}_{N}(v,u)\bar{f}^{n}(x,u)\mathrm{d} u\rightarrow \int \mathbf{%
k}_{N}(v,u)\bar{f}(x,u) \mathrm{d} u \ \ \ \text{strongly in} \ L_{x}^{2}.
\end{equation}
Since $\mathbf{k}_{N}(v,u)$ is smooth in $v$ with compact support, we deduce
(\ref{conv_kf}) for all $v \in \mathbb{R}^{3}.$

\vspace{4pt}

\noindent\textit{Step 4}. Finally, 
\begin{eqnarray*}
K f^{n} &=& K_{N} f^{n} + (Kf^{n}-K_{N}f^{n}) = {K_{N} f^{n}_{\delta}} \ + \ 
{\ K_{N}( f^{n}- f^{n}_{\delta}) + (Kf^{n}-K_{N}f^{n})} .
\end{eqnarray*}
From \textit{Step 3}, 
\begin{eqnarray*}
K_{N} {f}^{n}_{\delta} = K_{N} \bar{f}^{n}|_{\Omega} \rightarrow K_{N} \bar{f%
}|_{\Omega} \ \ \ \text{strongly in } L^{2}.
\end{eqnarray*}
Note that 
\begin{equation}  \label{small_chi}
1-\Big[1-\chi(\frac{n(x) \cdot v}{\delta}) \chi (\frac{ \xi(x )}{\delta}) %
\Big] \chi(\delta|v|) \leq \mathbf{1}_{|v| \geq \frac{1}{\delta}} + \mathbf{1%
}_{|v| \leq \frac{1}{\delta}, \text{dist} (x, \partial\Omega) < \frac{\delta%
}{2}, |n(x) \cdot v|< \delta}.
\end{equation}
Hence, from (\ref{k_N}), 
\begin{eqnarray*}
\| K_{N} ( f^{n} - f^{n}_{\delta})\|_{L^{2}_{x,v}} &\lesssim& \| f^{n}
-f^{n}_{\delta}\|_{L^{2}_{x,v}} \lesssim \big\| \chi_{\delta}^{c} \big\|%
_{\infty} \| f^{n}\|_{L^{2}_{x,v}} \lesssim O(\delta), \\
\| Kf^{n} - K_{N} f^{n} \|_{L^{2}_{x,v}} &\lesssim& o(1) \| f^{n}
\|_{L^{2}_{x,v}} \lesssim o(1).
\end{eqnarray*}
We conclude the proof by choosing $\delta \ll 1, \ 1 \ll N $ then letting $%
n\rightarrow \infty $.
\end{proof}

\noindent\textbf{Acknowledgements.} We thank Prof.  Fujun Zhou for pointing out some mistakes in previous versions of this paper. Y. Guo's research is supported in part
by NSFC grant 10828103, NSF grant DMS-0905255, and BICMR. C. Kim's research
is supported in part by NSF DMS-1501031, the Herchel Smith Foundation and
the University of Wisconsin-Madison Graduate School with funding from the
Wisconsin Alumni Research Foundation. R. Marra is partially supported by
MIUR-Prin.

\end{document}